\documentclass[a4paper,10pt,leqno,twoside]{amsbook}



\newcommand{\Dim}{\hskip .1em\mathrm{dim}}

\newcommand{\mb}{\mathbb}
\newcommand{\C}{\mathbb{C}}
\newcommand{\R}{\mathbb{R}}

\newcommand{\ra}{\to}
\newcommand{\rba}{\Rightarrow}

\newcommand{\p}{\partial}

\newcommand{\orl}{\overline}

\newcommand{\vfi}{\varphi}

\newcommand{\ci}{\circ}

\newcommand{\Ind}{\mathrm{Ind}}
\newcommand{\ind}{\mbox{ind}}
\newcommand{\al}{\mathcal}
\newcommand{\scr}{\mathscr}

\newcommand{\var}{\varepsilon}

\newcommand{\bra}{\langle}
\newcommand{\ket}{\rangle}
\newcommand{\ran}{\mathrm{Range}\hskip 1pt}

\newcommand{\bx}[2]{\al{B}(#1,#2)}
\newcommand{\no}[1]{|\hskip -1pt | #1 |\hskip -1pt|}
\newcommand{\dist}{\mathrm{dist}}

\newcommand{\supp}{\hskip .1em\mathrm{supp}\hskip .1em}

\newcommand{\coker}{\hskip .1em\mathrm{coker}\hskip .1em}
\newcommand{\codim}{\hskip .1em\mathrm{codim}\hskip .1em}
\newcommand{\Sf}{\mathrm{sf}}

\newcommand{\Lim}{\hskip .1em\mathrm{Lim}\hskip .1em}

\newcommand{\set}[2]{\left\{\hskip .1em {#1}\mid{#2}\hskip .1em\right\}}
\newcommand{\Splt}{\mathrm{Splt}}

\newcommand{\img}{\mathrm{i}}
\renewcommand{\ind}{\hskip .1em\mathrm{ind}\hskip .1em}
\newcommand{\res}[2]{{#1}_{|{#2}}}
\def\polk#1{\setbox0=\hbox{#1}{\ooalign{\hidewidth
  \lower1.5ex\hbox{`}\hidewidth\crcr\unhbox0}}}
\usepackage{layout}
\usepackage{multicol}
\usepackage{glossaries}
\usepackage[all]{xy}
\usepackage[english]{babel}
\usepackage{amssymb}
\usepackage{amsmath}
\usepackage{amscd}
\usepackage{mathrsfs}
\usepackage{amsthm}
\usepackage{ifthen}
\usepackage{comment}
\usepackage{xcomment}
\usepackage{setspace}
\usepackage[colorlinks,citecolor={black}]{hyperref}
\renewcommand{\graph}{\hskip .1em\mathrm{graph}\hskip .1em}
\newcommand{\re}{\hskip .1em\mathrm{Re}\hskip .1em}
\newcommand{\im}{\hskip .1em\mathrm{Im}\hskip .1em}

\newcommand{\prc}{p}

\newcounter{counterxapp}
\setcounter{counterxapp}{0}
\newtheorem{remark}{Remark}[section]
\newtheorem{lemma}[remark]{Lemma}
\newtheorem{theorem}[remark]{Theorem}
\newtheorem{proposition}[remark]{Proposition}

\theoremstyle{definition}
\newtheorem{definition}[remark]{Definition}
\newtheorem{corollary}[remark]{Corollary}
\newtheorem{example}[remark]{Example}

\newcounter{Example}
\setcounter{Example}{0}
{\vskip .2em}
\DeclareMathAlphabet{\mathpzc}{OT1}{pzc}{m}{it}
\excludecomment{TAKEOUT}
\includecomment{TAKEIN}
\makeglossaries
\usepackage{fancyhdr}
\usepackage{ifpdf}

\ifpdf
\usepackage[pdftex]{graphicx}
\DeclareGraphicsExtensions{.pdf}
\else
\usepackage[dvips]{graphicx}
\DeclareGraphicsExtensions{.eps}
\fi

\pagestyle{fancy}

\newglossaryentry{labLEF}{name={$ \mathcal{L}(E,F) $},
description={space of bounded linear maps from $ E $ to $ F $},sort=LEF}
\newglossaryentry{labGb}{name={$ G(\mathcal{B}) $},description={},sort=GB}
\newglossaryentry{labBp}{name={$ \mathcal{B}_p $},description={},sort=Bp}
\newglossaryentry{labPB}{name={$ \mathcal{P}(\mathcal{B}) $},
description={space of idempotents},sort=PB}
\newglossaryentry{labQB}{name={$ \mathcal{Q}(\mathcal{B}) $},
description={space of square roots of identity},sort=Q}
\newglossaryentry{labHB}{name={$ \mathcal{H}(\mathcal{B}) $},
description={space of hyperbolic elements},sort=H}
\newglossaryentry{labsx}{name={$ \sigma(x),\sigma_{\mathcal{B}} (x) $},
description={spectrum of $ x $},sort=s}
\newglossaryentry{labpAx}{name={$ p(A;x) $},description={spectral projector},
sort=p}
\newglossaryentry{labseT}{name={$ \sigma_e (T) $},
description={essential spectrum of $ T $},sort=T}
\newglossaryentry{labCE}{name={$ \mathcal{C}(E) $},
description={Calkin algebra},sort=C}
\newglossaryentry{labPs}{name={$ \Psi $},description={},sort=Psi}
\newglossaryentry{labPh}{name={$ \Phi $},description={},sort=Phi}
\newglossaryentry{laborlp}{name={$ \orl{p} $},
description={the projector $ 1 - p $},sort=p}
\newglossaryentry{labGB}{name={$ G_0 (\mathcal{B}) $},
description={connected component of the unit},sort=Gb}
\newglossaryentry{labGrB}{name={$ Gr(\mathcal{B}) $},
description={Grassmannian algebra},sort=G}
\newglossaryentry{labrP}{name={$ r(P) $},
description={the range of a projector $ P $},sort=r}
\newglossaryentry{labPcE}{name={$ \mathcal{P}_c (P;E) $},
description={class of $ P $ for the relation of compact perturbation},sort=P}
\newglossaryentry{labGcE}{name={$ G_c (X;E) $},
description={class of $ X $ for the relation of commensurability},sort=G}
\newglossaryentry{labPeE}{name={$  \mathcal{P}_e (E) $},description={},sort=P}
\newglossaryentry{labGeE}{name={$ G_e (E) $},
description={essential Grassmannian},sort=G}
\newglossaryentry{labprc}{name={$ \prc_e $},description={},sort=p}
\newglossaryentry{labre}{name={$ r_e $},
description={quotient of $ r $ by the relation of compact perturbation},sort=r}
\newglossaryentry{labrc}{name={$ r_c $},
description={the restriction of $ r $ to $ \mathcal{P}(P;E) $},sort=r}
\newglossaryentry{labGLcE}{name={$ GL_c (E) $},
description={the Fredholm group},sort=G}
\newglossaryentry{labdT}{name={$ \deg T $},description={Leray-Schauder degree},
sort=d}
\newglossaryentry{labGkX}{name={$ G_k (X;E) $},
description={linear subspaces with relative dimension $ k $ with $ X $},
sort=G}
\newglossaryentry{labStX}{name={$ St(X;E) $},
description={the Stiefel space of compact perturbations of the inclusion of 
$ X $},sort=S}
\newglossaryentry{labrSt}{name={$ r_{St} $},
description={fibration map of ($ St(X;E),G_0 (X;E) $)},sort=r}
\newglossaryentry{labGpB}{name={$ G_p (\mathcal{B}) $},description={},sort=GpB}
\newglossaryentry{labBsEF}{name={$ \mathcal{B}_s (E,F) $},
description={strictly singular operators from $ E $ to $ F $},sort=B} 
\newglossaryentry{labPQ}{name={$ [P - Q] $},
description={relative dimension of the images of $ P $ and $ Q $},sort=PQ}
\newglossaryentry{labeHE}{name={$ e\mathcal{H}(E) $},
description={space of essentially hyperbolic operators},sort=eHE}
\newglossaryentry{labgpq}{name={$ g(p,q) $},description={},sort=gpq}
\newglossaryentry{labPBp}{name={$ \mathcal{P}_p (\mathcal{B}) $},
description={},sort=PBp}
\newglossaryentry{labev0}{name={$ ev_0 $},
description={evaluation at the point $ t = 0 $},sort=ev}
\newglossaryentry{labFA}{name={$ F_A $},
description={the differential operator $ F_A (u) = u' - Au $},sort=FA}
\newglossaryentry{labpp}{name={$ \pi_p $},description={},sort=pp}
\newglossaryentry{labGsg}{name={$ G_\sigma $},description={},sort=Gsg}
\newglossaryentry{labSfAP}{name={$ \Sf(A;P) $},description={},sort=SfAP}
\newglossaryentry{labSf}{name={$ \Sf $},description={},sort=Sf}
\newglossaryentry{labwf}{name={$ \widehat{f} $},description={},sort=wf}
\newglossaryentry{labdaB}{name={$ \dist(a,B) $},description={},sort=daB}
\newglossaryentry{labHX}{name={$ \scr{H}(X) $},description={Hausd\"orff space},
sort=HX}
\newglossaryentry{labrHAB}{name={$ \rho_{\scr{H}} (A,B) $},
description={Hausd\"orff semi-metric},sort=rHAB}
\newglossaryentry{labdHAB}{name={$ \delta_{\scr{H}} (A,B) $},
description={Hausd\"orff metric},sort=dHAB}
\newglossaryentry{labGE}{name={$ G(E) $},
description={Grassmannian of closed subspaces},sort=GE}
\newglossaryentry{labDY}{name={$ D(Y) $},description={unit disc},sort=DY}
\newglossaryentry{labSY}{name={$ S(Y) $},description={unit sphere},sort=SY}
\newglossaryentry{labrYZ}{name={$ \rho(Y,Z) $},
description={Grassmannian semi-metric},sort=rYZ}
\newglossaryentry{labdYZ}{name={$ \delta (Y,Z) $},
description={Grassmannian metric},sort=dYZ}
\newglossaryentry{labrSYZ}{name={$ \rho_S (Y,Z) $},
description={sphere opening semi-metric},sort=rSYZ}
\newglossaryentry{labdSYZ}{name={$ \delta_S(Y,Z) $},
description={sphere opening metric},sort=dSYZ}
\newglossaryentry{labr1YZ}{name={$ \rho_1 (Y,Z) $},
description={geometric opening semi-metric},sort=r1YZ}
\newglossaryentry{labd1YZ}{name={$ \delta_1 (Y,Z) $},
description={geometric opening metric},sort=d1YZ}
\newglossaryentry{labr0XY}{name={$ r_0 (X,Y) $},
description={Sch\"affer semi-metric},sort=r0XY}
\newglossaryentry{labrXY}{name={$ r(X,Y) $},description={Sch\"affer metric},
sort=rXY}
\newglossaryentry{labGLE}{name={$ GL(E) $},
description={group of invertible operators of $ E $},sort=G}
\newglossaryentry{labEst}{name={$ E^* $},description={topological dual space},
sort=Est}
\newglossaryentry{labSperp}{name={$ S^{\bot} $},description={annihilator},
sort=Sperp}
\newglossaryentry{labGsE}{name={$ G_s (E) $}, 
description={Grassmannian of splitting subspaces},sort=GsE}
\newglossaryentry{labE/Y}{name={$ E/Y $},description={quotient space},
sort=E/Y}
\newglossaryentry{labPYZ}{name={$ P(Y,Z) $},description={projector onto $ Y $
along $ Z $},sort=PYZ}
\newglossaryentry{labgYZ}{name={$ \gamma(Y,Z) $},description={the semi-gap},
sort=gYZ}
\newglossaryentry{labgYZh}{name={$ \hat{\gamma}(Y,Z) $},
description={the minimum gap},sort=gYZh}
\newglossaryentry{labPE}{name={$ \mathcal{P}(E) $},
description={space of projectors},sort=PE}
\newglossaryentry{labLcEF}{name={$ \mathcal{L}_c (E,F) $},
description={compact operators from $ E $ to $ F $},sort=LcEF}
\newglossaryentry{labdimXY}{name={$ \Dim(X,Y) $},
description={relative dimension},sort=dimXY}
\newglossaryentry{labfind}{name={$ \ind(T) $},
description={semi-freholm index of $ T $},sort=find}
\newglossaryentry{labindXY}{name={$ \ind(X,Y) $},
description={semi-freholm index of $ (X,Y) $},sort=indXY}
\newglossaryentry{labXA}{name={$ X_A $},
description={solution of $ U' = AU $ with $ U(0) = 1 $},sort=XA}
\newglossaryentry{labWAs}{name={$ W_A ^s $},description={stable space},
sort=WAs}
\newglossaryentry{labWAu}{name={$ W_A ^u $},description={unstable space},
sort=WAu}
\newglossaryentry{labBl}{name={$ \mathcal{B}_l $},
description={left inverses of $ \mathcal{B} $},sort=Bl}
\newglossaryentry{labBr}{name={$ \mathcal{B}_r $},
description={right inverses of $ \mathcal{B} $},sort=Br}
\newglossaryentry{labAtau}{name={$ A_{\tau} $},
description={the translation path defined as of $ A(t + \tau) $},sort=Atau}
\newglossaryentry{labH+}{name={$ \mathbb{H}^{+} $},
description={complexes with positive real part},sort=H+}
\newglossaryentry{labH-}{name={$ \mathbb{H}^- $},
description={complexes with negative real part},sort=H-}
\newglossaryentry{labphAx0}{name={$ \varphi_{A,x_0} $},description={},sort=pA}
\newglossaryentry{labLA}{name={$ L_A $},description={},sort=LA}
\newglossaryentry{labpsAy}{name={$ \psi_{A,\orl{y}} $},description={},
sort=psAy}
\newglossaryentry{labRA}{name={$ R_A $},description={},sort=RA}
\newglossaryentry{labC0}{name={ $ C_0 (\mathbb{R},E) $ },
description={continuous function vanishing at infinity},sort=C0}
\newglossaryentry{labC10}{name={ $ C^1 _0 (\mathbb{R},E) $ },
description={smooth function vanishing at infinity with their derivative},
sort=C10}
\newglossaryentry{labC10+}{name={ $ C^1 _0 (\mathbb{R}^+,E) $ },
description={functions of $ C^1 _0 $ defined on the half-line},
sort=C10+}%
\newglossaryentry{labC0+}{name={ $ C _0 (\mathbb{R}^+,E) $ },
description={functions of $ C_0 $ defined on the half-line},sort=C0+}
\newglossaryentry{lab1S}{name={$ 1_S $},
description={the characteristic function of $ S $},sort=1S}
\newglossaryentry{labRAPs}{name={ $ R_{A,P_s} ^+ $ },description={},sort=RAPs}
\newglossaryentry{labRAPu}{name={ $ R_{A,P_u} ^- $ },description={},sort=RAPu}
\newglossaryentry{labrAPs}{name={ $ r_{A,P_s} ^+ $ },description={},sort=rAPs}
\newglossaryentry{labrAPu}{name={ $ r_{A,P_u} ^- $ },description={},sort=rAPu}
\newglossaryentry{labSfA}{name={$ \Sf(A) $},
description={the spectral flow of a path $ A $},sort=SfA}
\newglossaryentry{labeHpE}{name={ $ e\mathcal{H}_+ (E) $ },
description={essentially hyperbolic with positive essential spectrum},
sort=eHpE}
\newglossaryentry{labeHmE}{name={ $ e\mathcal{H}_- (E) $ },
description={essentially hyperbolic with negative essential spectrum},
sort=eHmE}
\glossarystyle{long3col}
\glsdescwidth=25em
\glspagelistwidth=3em



\headsep = 15pt 



\textwidth=412.46pt
\headwidth=412.46pt
\headheight=15pt

\fancypagestyle{plain}{%
\fancyhf{}%

}
%

\makeatletter
\def\cleardoublepage{\clearpage\if@twoside \ifodd\c@page\else%
\hbox{}%
\thispagestyle{empty}
\newpage%
\if@twocolumn\hbox{}\newpage\fi\fi\fi}
\makeatother

%

\begin{document}
\title{Ordinary differential equations in Banach spaces and %
the spectral flow}
\author{Garrisi Daniele}
\dedicatory{To my parents and my sister}
\thanks{This work was supported by Priority 
Research Centers Program through the National Research Foundation of Korea 
(NRF) funded by the Ministry of Education, Science and Technology 
(Grant 2009-0094068) and from the Scuola Normale Superiore of Pisa}
\maketitle
\pagestyle{empty}
\tableofcontents

\clearpage
\pagestyle{fancy}
\fancyhf{}
\pagenumbering{roman}
\fancyhead[LE,RO]{\small\thepage}

\renewcommand{\chaptermark}[1]{%
\markboth{\footnotesize\Roman{chapter}.\, #1}{}}
\fancyhead[CO]{\leftmark}

\renewcommand{\sectionmark}[1]{%
\markright{\footnotesize\S\arabic{section}.\, #1}{}}
\fancyhead[CE]{\rightmark}

\setcounter{page}{0}
\pagenumbering{arabic}


\chapter*{Introduction}
Given a path $ \set{A(t)}{t\in\mathbb{R}} $, of linear operators on some 
Banach space $ E $, we consider the differential operator
\begin{gather*}
F_A u = \left(\frac{d}{dt} - A(t)\right)u
\end{gather*}
on suitable spaces of curves $ u\colon\mathbb{R}\rightarrow E $. A classical
question is whether the operator $ F_A $ is Fredholm and what is its index.
If $ A(t) $ is a path of unbounded operators 
the literature is rich. We recall the work of J.~Robbin and D.~Salamon, 
\cite{RS95}, where $ A $ is an asymptotically hyperbolic path of 
unbounded self-adjoint operators
and  defined on a common domain 
$ W\subset H $ compactly included in a Hilbert space $ H $. 
For such paths they prove that the differential operator
\begin{gather*}
F_A\colon L^2 (\mathbb{R},W)\cap W^{1,2} (\mathbb{R},H) \rightarrow 
L^2 (\mathbb{R},H), \ \ \ u\mapsto u' - A u 
\end{gather*}
is Fredholm. The index of $ F_A $ is minus the \textsl{spectral flow} of $ A $,
an integer which counts algebraically the eigenvalues of $ A(t) $
crossing $ 0 $. The result applies to \textsl{Cauchy-Riemann} operators
and it is widely used in Floer homology.
This result has been generalized to Banach spaces with the 
\textsl{unconditional martingale difference} (UMD) property by P.~Rabier in
\cite{Rab04}; the compact inclusion of the domain is still required. In
this setting, the identity
\begin{gather}
\label{ind=-sf}
\ind F_A = - \Sf(A).
\end{gather}
holds. Y.~Latushkin and T.~Tomilov in \cite{LT05} proved the Fredholmness
of the operator $ F_A $ for paths $ A $ with variable domain 
$ D(A(t))\subset E $ with $ E $ reflexive using 
\textsl{exponential dichotomies}. D.~di Giorgio, A.~Lunardi and 
R.~Schnaubelt in \cite{DGLS05} obtained the same results for 
\textsl{sectorial operators} in an arbitrary Banach space and give necessary
and sufficient conditions on the \textsl{stable} and \textsl{unstable spaces}
in order to have the Fredholmness of $ F_A $.
\vskip .2em
For the bounded case the problem has been studied by A. Abbondandolo and
P. Majer in \cite{AM03b}. This setting is suggested by the Morse Theory on a 
Hilbert manifold $ M $: given a vector field $ \xi $ on $ M $ and $ \phi_t $ 
its flow, $ x $ and $ y $ hyperbolic zeroes of $ \xi $ the stable and unstable 
manifolds
\begin{align*}
W_{\xi} ^s (x) &= \set{p\in M}{\lim_{t\rightarrow +\infty} \phi_t (p) = x} 
\\
W_{\xi} ^u (y) &= \set{p\in M}{\lim_{t\rightarrow -\infty} \phi_t (p) = y}
\end{align*}
are immersed sub-manifolds of $ M $, in fact they are sub-manifolds if the
vector field is the gradient of a Morse function on $ M $. It is not
hard to check that the intersection of the stable and unstable manifold 
of two different zeroes is a sub-manifold if, for every curve 
$ u'(t) = \xi(u(t)) $ such that
$ u(+\infty) = x $ and $ u(-\infty) = y $, the differential operator
\begin{gather*}
F_A (v) = v' - A v, \ \ A(t) = D\xi (u(t))
\end{gather*}
is surjective and $ \ker F_A $ splits. Since $ x $ and $ y $ are
hyperbolic zeroes $ A(+\infty) $ and $ A(-\infty) $ are hyperbolic operators. 
In \cite{AM03b} the study of the Fredholm index of such operator is carried 
out by considering the stable and unstable spaces
\begin{align*}
W_A ^s &= \set{x\in E}{\lim_{t\ra +\infty} X_A (t) x = 0}\\
W_A ^u &= \set{x\in E}{\lim_{t\ra -\infty} X_A (t) x = 0},
\end{align*}
where $ X_A $ is the solution of the Cauchy problem $ X' = AX $ with
$ X(0) = I $. If $ A $ is an \textsl{asymptotically hyperbolic} path on
a Hilbert space the following facts hold: \vskip .2em
\textbf{Fact 1}. The stable and unstable spaces $ W_A ^s $ and $ W_A ^u $ are
closed in $ E $ and admit topological complements, \textsc{Proposition} 1.2 of
\cite{AM03b}. \vskip .2em
\textbf{Fact 2}. The evolution of the stable space $ X_A (t) W_A ^s $ converges
to the negative eigenspace of $ A(+\infty) $, and any topological complement
of $ W_A ^s $ converges to the positive eigenspace of $ A(+\infty) $, with
a suitable topology on the set of closed linear subspaces of a Hilbert,
see \textsc{Theorem} 2.1 of \cite{AM03b}. \vskip .2em
\textbf{Fact 3}. If two paths $ A $ and $ B $ have compact difference for
every $ t\in\mathbb{R} $ the stable space $ W_A ^s $ is compact perturbation
of $ W_B ^s $, \textsc{Theorem} 3.6 of \cite{AM03b}. \vskip .2em
\textbf{Fact 4}. The operator $ F_A $ is semi-Fredholm if and only if 
$ (W_A ^s, W_A^u) $ is a semi-Fredholm pair; in this case
$ \ind F_A = \ind (W_A ^s,W_A ^u) $, \textsc{Theorem} 5.1 of \cite{AM03b} 
\vskip .2em
In the bounded setting the spectral flow is defined in 
\cite{Phi96} for paths in $ \mathcal{F}^{sa}(E) $, the set of 
Fredholm and self-adjoint bounded operators. Unlike the unbounded case
described in \cite{RS95} and in \cite{Rab04}, given an asymptotically 
hyperbolic path in $ \mathcal{F}^{sa} (E) $ the equality 
$ \ind F_A = -\Sf(A) $ does not hold in general. Examples are provided in 
\S 7 of \cite{AM03b}. Our purpose is to generalize firstly these facts to an 
arbitrary Banach space $ E $ and, secondly, to define the spectral flow for 
suitable paths and prove that for a class of paths the relation 
(\ref{ind=-sf}) holds.\vskip .2em
In the first chapter we define some metrics on the set
of closed linear subspaces of $ E $, the \textsl{Grassmannian} of $ E $,
denoted by $ G(E) $, and the subset of complemented subspaces, 
denoted by $ G_s (E) $. This is done in order to have a definition of 
\textsl{convergence of subspaces}. Our main reference
is the work of E.~Berkson, \cite{Ber63}. We also establish which pairs of
closed subspaces $ (X,Y) $ are \textsl{compact perturbation of each other}
and the relative dimension for such pairs is defined. In finite-dimensional
spaces, the relative dimension is $ \dim(X) - \dim(Y) $. A definition of
relative dimension exists in Hilbert spaces, refer \cite{AM03b,BDF73},
and we know of an existing definition in Banach spaces for pairs of projectors
$ (P,Q) $ with compact difference, in \cite{ZL99}. 
In this chapter, we extend the definition to arbitrary pairs of closed 
subspaces in every Banach space $ E $. These definitions allow to state 
\textbf{Fact 1} and \textbf{Fact 3}.
\vskip .2em
We use the notation $ \mathcal{P}(E) $ for the space of projectors on
a Banach space $ E $ and $ \mathcal{P}(\mathcal{C}(E)) $ for the space
of projectors of the Banach algebra 
$ \mathcal{C}(E) = \mathcal{L}(E)/\mathcal{L}_c (E) $. In chapter \ref{chap2},
we prove in Theorem \ref{index_homomorphism} that, for every projector $ P $, 
we can define a group homomorphism, namely $ \vfi_P $, on the fundamental 
group of $ \mathcal{P}(\mathcal{C}(E)) $ at the base point $ p(P) $ such that 
the sequence 
\[
\xymatrix{
\pi_1 (\al{P}(E),P) \ar[r]^-{\prc_*} &
\pi_1 (\al{P}(\al{C}),p(P)) \ar[r]^-{\vfi_P} &
\mb{Z}
}
\]
is exact. 
\begin{TAKEOUT}
The properties of such homomorphism are characterized 
 we prove that the restriction of quotient projection 
to the space of projectors $ \mathcal{P}(E) $ onto the projectors of
the the space of \textsl{essentially hyperbolic operators}, which are merely the sum of a hyperbolic operator and a compact 
one, and prove in Theorem \ref{topology} that this space has the homotopy
type of $ \mathcal{P}(\mathcal{C}) $

recall the definitions of some classical 
subspaces of a Banach algebra such as the \textsl{idempotent} elements and 
\textsl{square roots} of unit and the \textsl{Calkin algebra} 
$ \mathcal{C} $, obtained as the 
quotient algebra of bounded operators by the compact ones. We call an 
operator $ A $ \textsl{essentially hyperbolic} if the spectrum of
$ A + \mathcal{L}_c (E)\in\mathcal{C} $ does not meet the imaginary axis. 
In Theorem \ref{topology}, we show that $ e\mathcal{H}(E) $ has the homotopy 
type of the space of idempotent elements of the Calkin algebra. A
homotopy equivalence is 
\begin{gather*}
\Psi\colon e\mathcal{H}(E)\rightarrow\mathcal{P}(\mathcal{C}), 
\ \ A\mapsto P^+ (A + \mathcal{L}_c (E))
\end{gather*}
where $ P^+ $ denotes the eigenprojector relative to the positive spectrum.
Our aim is to define a group homomorphism on the fundamental 
group of $ \mathcal{P}(\mathcal{C}) $, the space of idempotent elements of 
$ \mathcal{C} $, with values in $ \mathbb{Z} $. In fact such 
homomorphism completes the \textsl{long exact sequence} of the fiber bundle 
\[
\prc\colon\mathcal{P}(E)\rightarrow\mathcal{P}(\mathcal{C}), 
\ \ \ P\mapsto P + \mathcal{L}_c (E)
\]
where $ \mathcal{P}(E) $ is the space of projectors on $ E $. If we say that
two projectors are compact perturbation (one of each other) if their
difference is compact, then the function that maps a projector to its range 
preserves the relation of compact perturbation of closed linear subspaces
defined in chapter \ref{chap1}. Hence, given a projector $ P $ onto a closed 
subspace $ X\subset E $, we can consider the equivalence class of 
$ P $ in the space of projectors, denoted by $ \mathcal{P}_c (P;E) $,
and the equivalence class of $ X $ in $ G_s (E) $, denoted by $ G_c (X;E) $.
We denote by $ \mathcal{P}_e (E) $ and $ G_e (E) $ the quotient spaces
respectively. The latter is called \textsl{essential Grassmannian}. The map 
$ r(P) = P(E) $ induces the homotopy equivalences
\begin{gather*}
\mathcal{P}_c (P;E)\rightarrow G_c (X;E), \ 
\mathcal{P}_e (E)\rightarrow G_e (E).
\end{gather*}
These equivalences are well known in Hilbert spaces 
(\cite{AM03a} is our main reference). In order to extend these results to an 
arbitrary Banach space some techniques used by K.~G\polk{e}ba in \cite{Geb68} 
can be adapted. Using the \textsl{Leray-Schauder degree} we prove in
Theorem \ref{components} that the connected components of 
$ \mathcal{P}_c (P;E) $ are in correspondence with $ \mathbb{Z} $. 
Hence the homomorphism is defined as the composition
\begin{gather*}
\xymatrix{
\pi_1 (\mathcal{P}(\mathcal{C}),[P])\ar[r]^(0.5){\partial}
& \pi_0 (\mathcal{P}_c (P;E))\ar[r] 
& \mathbb{Z}}
\end{gather*}
where $ \partial $ is the map induced by the long exact sequence of the
fiber bundle 
$ (\mathcal{P}(E),\mathcal{P}(\mathcal{C}),\prc) $. This
homomorphism will be denoted by $ \varphi $ or called sometimes 
\textsl{index} of the exact sequence. Given $ P $ in $ \mathcal{P}(E) $
\end{TAKEOUT}
We characterize the elements of the image of $ \varphi_P $ and give
a sufficient condition making $ \varphi_P $ injective. 
We have
\begin{itemize}
\item[h1)] $ P $ is connected to a projector $ Q $ such that 
$ Q - P\in\mathcal{L}_c (E) $ and $ \dim(Q,P) = m $ if and only if
$ m\in\text{Im}(\vfi_P) $;
\item[h2)] the connected component of $ P $ in $ \mathcal{P}(E) $ is 
simply-connected.
\end{itemize}
We show some examples of Banach space where no projectors fulfills h1)
and prove in Proposition \ref{surjective} that if $ E $ has a 
complemented subspace, sum of two subspaces isomorphic to each other and
to their closed subspaces of co-dimension $ m $, then a projector on each
of the two factors satisfies condition h1). 
These properties are verified by an orthogonal projection in a Hilbert space
with infinite-dimensional kernel and range. The most common Banach spaces
such as the measure spaces 
$ L\sp p  (\Omega,\mu) $, $ L\sp\infty (\Omega,\mu) $ and spaces of sequences 
$ \ell\sp p,m,c_0 $ 
(see \cite{Mit70,Sch98} for a richer list and references) have a 
projector satisfying conditions h1,2). When two projectors are connected
by a path of projectors, their ranges are isomorphic, thus, in view of h2),
the problem of determining the image of $ \vfi_P $ is strictly related
to the question whether $ \ran P $ is isomorphic to its subspaces of
co-dimension $ m $ or not. As a consequence of the counterexamples of
W.~T.~Gowers and B.~Maurey in \cite{GM93}, of infinite-dimensional Banach
space not isomorphic to its hyperplanes, such $ P $ might not exits. 
Using a construction in \cite{GM97} of a space isomorphic to their hypersquares
(that is, subspace of co-dimension 2), but not hyperplanes, we can show
easily that in some case $ \text{Im}(\vfi_P) = 2\mb{Z}\subset\mb{Z} $.
Using a construction of A.~Douady in \cite{Dou65}, we show that $ \vfi_P $
is not injective even if it can be surjective. 
\vskip .2em
In chapter \ref{chap3} we study basic properties of the Cauchy problem
$ X'(t) = A(t) X(t),X(0) = 1 $. Here $ A(t) $ is a continuous and bounded path 
on $ \mathcal{L}(E) $. We define the stable and unstable spaces of $ A $,
denote by $ W_A \sp s $ and $ W_A \sp u $, 
and prove \textbf{Fact 1,2} in Theorem \ref{rec}. The proof differs from
the one that the authors of \cite{AM03b} used for Hilbert spaces, only
for the lack of a scalar product in (\ref{fin},\ref{last}) which can be fixed 
using results of continuous selection, refer Appendix \ref{app:sections} and
\cite{BG52}. Moreover, the stable manifold is well behaved with respect
to small, Theorem \ref{small_perturbations} and compact perturbations,
Theorem \ref{compact}, that is \textbf{Fact 3}. 
\vskip .2em 
In chapter \ref{chap4} we study the Fredholm properties of $ F_A $, defined
on the space of continuously differentiable functions vanishing at infinity
with their derivatives with values on continuous, vanishing at infinity. 
In Theorem \ref{facts} we prove that $ F_A $ is a semi-Fredholm operator if
and only if the pair $ (W_A \sp s, W_A \sp u ) $ is semi-Fredholm and, if
this is the case, the index is the same. The extension is made with 
slight modification of the argument of \textsc{Theorem} 5.1 of \cite{AM03b}.
\vskip .2em
In chapter \ref{chap5}, we give a definition of spectral flow for
paths in the space of essentially hyperbolic operators, denoted by
$ e\mathcal{H}(E) $. Such definition coincides with the one given by
C.~Zhu and Y.~Long in \cite{ZL99} and improves it making the
spectral flow easier to compute and to produce examples. 
Their definition, and thus ours, generalizes to Banach spaces the definition
known for Hilbert spaces (refer \cite{Phi96}).
In Theorem \ref{f=-p}, we prove that, given a projector $ P $,
the composition $ \Sf_{2P - I}\circ\Phi $, where $ \Sf_{2P - I} $ 
denotes the spectral flow defined on the fundamental group of 
$ (e\mathcal{H}(E),P) $ and $ \Phi $ is the homotopy equivalence defined
in Theorem \ref{topology}, coincides with $ - \varphi_P $. 
Hence, anything holds for the index $ \varphi_P $ is true for the spectral 
flow as well, including conditions h1) and h2). Thus, in contrast with the
behaviour in a separable, infinite-dimensional Hilbert 
space, where the spectral flow is either trivial or an isomorphism, due to 
the examples provided in Chapter \ref{chap2}, we have different behaviours. 
In the last section we prove that for a suitable class of paths in 
$ e\mathcal{H}(E) $, namely the \textsl{essentially splitting} and 
asymptotically hyperbolic ones, there holds
\[
\ind F_A = - \Sf(A).
\]
In \cite{AM03b}, A.~Abbondandolo and P.~Majer guessed that the above
equality holds in more restricted class of operators, where the
positive and negative eigenspaces are fixed
\[
E\sp + (A(t)) = E\sp +,\quad E\sp - (A(t)) = E\sp -.
\]
Our result proves that the guess is correct 
and extends to the class of essentially hyperbolic and essentially
splitting operators. We achieve this result in several steps: 
in Lemma \ref{essential_splitting}, we prove that an asymptotically 
hyperbolic path, $ A $, is essentially 
splitting if and only if the projectors of the set 
$ \set{P^+ (A(t))}{t\in\R} $ are compact perturbation of each other.
In Theorem \ref{sf_of_ess_split} we compute the spectral flow for an 
essentially splitting path
\[
\Sf(A) = - \dim (\ran P^- (A(+(\infty))), \ran P^- (A(-\infty))).
\]
For such paths we also compute the Fredholm index of $ F_A $ in Theorem 
\ref{essential_ind} 
\[
\ind F_A = \dim (\ran P^- (A(+(\infty))), \ran P^- (A(-\infty))).
\]
Thus we obtain the desired equality $ \Sf(A) = -\ind F_A $. 
\vskip .4em
I would like to thank my advisers Alberto Abbondandolo and Pietro Majer
for their aid and suggestions and my parents and my sister for always 
encouraging and helping me.

\chapter{Topology of the Grassmannian}
\label{chap1}
Given a metric space $ (X,d) $, we define the \textsl{Hausd\"orff space}, 
which is the set of bounded and closed subsets endowed with the
distance metric and denoted by $ (\scr{H}(X),d_{\scr{H}}) $.
We can define a metric on the family of closed, linear subspaces of a given 
Banach space $ E $ through the map that associates a linear space $ Y $
with the unit disc of $ Y $. We call this metric space \textsl{Grassmannian} 
and show that equivalent metrics, namely $ \delta_S $ and $ \delta_1 $, can be
defined on it. We show that the topology is well-behaved with respect
to the action of invertible operators of $ E $, to the graph and to the 
annihilator $ Y\mapsto Y\sp\perp $. 
We show that the subset of the linear, complemented subspaces is open. 
The last two sections of the chapter deal with the definition of 
\textsl{relative dimension} for pairs of closed linear subspaces, its
relation with compact perturbations of projectors and Fredholm operators.
We know of an existing definition of relative dimension for pairs of projectors
in \cite{ZL99}. Our main references are \cite{Ber63,Ost94,Kat95}.
\section{The Hausd\"orff metric}
Let $ (X,d) $ be a metric space. Given two subsets of $ A,B\subseteq X $ it is
well defined the distance
\[
\dist(a,B) = \inf_{b\in B} d(a,b).
\glsadd{labdaB}
\]
We denote by $ \scr{H}(X) $ 
\glsadd{labHX}
the family of closed, nonempty and bounded subsets of $ X $. 
It is defined a metric on $ \scr{H}(X) $ as follows: 
let $ A, B $ be two closed and bounded subsets of $ X $. Define
\begin{gather*}
\rho_{\scr{H}} (A,B) = \sup_{a\in A} \dist (a,B), 
\ \delta_{\scr{H}} (A,B) = \max\{\rho_{\scr{H}} (A,B),\rho_{\scr{H}} (B,A)\};
\end{gather*}
\glsadd{labrHAB}
\glsadd{labdHAB}
the second is called \textsl{Hausd\"orff metric}. We show that it has 
all the properties of a metric. It is clearly symmetric; if 
$ \rho_{\scr{H}} (A,B) = 0 $ 
$ A\subset B $ because $ B $ is closed.  Thus $ \delta_{\scr{H}} (A,B) = 0 $ if
and only if $ A = B $. For the triangular inequality let 
$ A,B,C\in\scr{H}(X) $ be closed and bounded subsets of $ X $. 
Given $ \var > 0 $ there exists $ a_1 \in A $ such that
\begin{gather}
\label{diseguaglianza_triangolare}
\rho_{\scr{H}} (A,C) \leq \var + \dist(a_1,C)\leq\var + d(a_1,b) + d(b,c)
\end{gather}
for any $ (b,c)\in B\times C $. Taking $ b_1 \in B $ such that
$ d(a_1,b_1)\leq\var + \dist(a_1,B) $, (\ref{diseguaglianza_triangolare}) 
becomes
\begin{gather*}
\rho_{\scr{H}} (A,C) \leq 2 \var + \dist (a_1,B) + d(b_1,c) 
\end{gather*}
for any $ c\in C $. Taking the infimum over $ C $ we find that 
$ \rho_{\scr{H}} (A,C)\leq \rho_{\scr{H}} (A,B) + \rho_{\scr{H}} (B,C) $.
Finally, suppose that $ \delta_{\scr{H}} (A,C) = \rho_{\scr{H}} (A,C) $. 
Therefore
\begin{gather*}
\delta_{\scr{H}} (A,C) = \rho_{\scr{H}} (A,C)\leq \rho_{\scr{H}} (A,B) + \rho_{\scr{H}} (B,C) \leq \delta_{\scr{H}} (A,B) + \delta_{\scr{H}} (B,C).
\end{gather*}
The following proposition states a relation between the metric spaces 
$ (X,d) $ and $ (\scr{H},\delta_{\scr{H}}) $. The proof of this can also be 
found in \cite{KC60}.
\begin{proposition}
\label{hausdorff_complete}
The application 
$ \delta_{\scr{H}}\colon \scr{H}\times \scr{H}\rightarrow\R^+ $ defines a 
complete metric in $ \scr{H}(X) $ if and only if $ (X,d) $ is complete. 
Moreover if $ \set{A_n}{n\in\mathbb{N}} $ is a converging sequence its limit
is the set of the limits of sequences $ \{a_n\} $ such that $ a_n \in A_n $.
\end{proposition}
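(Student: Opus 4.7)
The easy implication is that completeness of $(\scr{H}(X), \delta_{\scr{H}})$ forces completeness of $(X,d)$: the map $x\mapsto\{x\}$ embeds $X$ isometrically as a closed subset of $\scr{H}(X)$, so any $d$-Cauchy sequence is a $\delta_{\scr{H}}$-Cauchy sequence of singletons whose limit is necessarily a singleton. The ``moreover'' clause is also purely formal and does not use completeness: if $A_n\to B$, then $\dist(a_n,B)\leq\rho_{\scr{H}}(A_n,B)\to 0$ shows that any limit of a sequence $a_n\in A_n$ lies in $B$, while $\dist(x,A_n)\leq\rho_{\scr{H}}(B,A_n)\to 0$ lets us select, for any $x\in B$, points $a_n\in A_n$ with $a_n\to x$.

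For the main direction, assume $(X,d)$ complete, let $\{A_n\}$ be $\delta_{\scr{H}}$-Cauchy, and take
\[
A:=\{\,x\in X\mid\exists\,a_n\in A_n\text{ with }a_n\to x\,\}
\]
as candidate limit. Since a Cauchy sequence converges as soon as a subsequence does, I pass to a subsequence satisfying $\delta_{\scr{H}}(A_{n_k},A_{n_{k+1}})<2^{-k}$. The central tool is a telescoping lift: given $\epsilon>0$ and $a_0\in A_{n_{k_0}}$, inductively pick $a_j\in A_{n_{k_0+j}}$ with $d(a_j,a_{j+1})<\epsilon\cdot 2^{-j}$, using the definition of $\rho_{\scr{H}}$ at each step; the resulting Cauchy sequence converges by completeness to some $x$ with $d(a_0,x)<2\epsilon$, and extending $\{a_j\}$ to all indices by the same selection procedure shows $x\in A$.

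Applied with $\epsilon=1$ to any $a_0\in A_{n_1}$ this gives nonemptiness of $A$; applied with $\epsilon$ arbitrary to $a_0\in A_{n_k}$ it yields the crucial estimate $\rho_{\scr{H}}(A_{n_k},A)\to 0$. Closedness of $A$ comes from a diagonal argument: if $x_k\to x$ with $x_k\in A$ and approximating $b_{k,n}\in A_n$, choose increasing $N_k$ with $d(b_{k,n},x_k)<1/k$ for $n\geq N_k$ and paste $c_n:=b_{k,n}$ for $N_k\leq n<N_{k+1}$, so $c_n\to x$. Boundedness follows from fixing $n_0$ with $\delta_{\scr{H}}(A_n,A_{n_0})<1$ for $n\geq n_0$, since the lift places every $x\in A$ at uniformly bounded distance from $A_{n_0}$. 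The reverse inequality $\rho_{\scr{H}}(A,A_n)\to 0$ is easier: choose $N$ so that $\delta_{\scr{H}}(A_m,A_n)<\epsilon/2$ for $m,n\geq N$, and for $x\in A$ and $n\geq N$ pick $m\geq N$ with $d(x,a_m)<\epsilon/2$, whence $\dist(x,A_n)<\epsilon$.

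The only real obstacle is the telescoping lift; it is a routine summation of a geometric series, but it must simultaneously produce a limit that genuinely lies in $A$ (hence the extension from the indices $n_k$ to all $n$) and satisfy a uniform distance estimate. Once this single construction is packaged, nonemptiness, boundedness, and the hard direction $\rho_{\scr{H}}(A_n,A)\to 0$ all follow at once.
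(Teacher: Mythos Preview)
Your proof is correct and follows essentially the same approach as the paper: both use the isometric embedding $x\mapsto\{x\}$ for the easy direction, define the candidate limit as the set of limits of selected sequences, and verify Hausdorff convergence via a telescoping lift along a fast-converging subsequence. The differences are purely organizational---you prove the ``moreover'' clause first and make closedness and boundedness of $A$ explicit, whereas the paper defines its limit set $L$ via subsequences, leaves closedness and boundedness implicit, and derives the full-sequence characterization at the end.
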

\begin{proof}
We have proved that $ \delta_\scr{H} $ is a metric. Given $ a,b\in X $ it 
follows from the definition that $ \delta_{\scr{H}} (\{a\},\{b\}) = d(a,b) $; 
thus, for a Cauchy sequence $ \{a_n\}\subset X $, the sequence 
$ \{\{a_n\}\} $ converges to a closed and bounded subset of $ S\subset X $. 
For every element $ s\in S $ there holds
\begin{gather*}
d(s,a_n) = \dist(s,\{a_n\})\leq \delta_{\scr{H}} (S,\{a_n\})
\end{gather*}
thus $ s $ is the limit of the sequence $ \{a_n\} $. By uniqueness of the
limit $ S $ consist of a single point, thus $ (X,d) $ is complete. 
To prove the converse let $ \{A_n\} $ be a Cauchy sequence in $ \scr{H}(X) $ 
and
$ \var > 0 $; there exists $ n(\var) $ such that for every $ n\geq n(\var) $
\begin{gather*}
\delta_{\scr{H}} (A_{n(\var)},A_n) < \var/2;
\end{gather*}
given $ a\in A_{n(\var)} $ using induction we can build a sequence $ \{a_k\} $
and $ n_k \in\mathbb{N} $ such that
\begin{gather}
\label{sequence}
a_0 = a, \  a_k \in A_{n_k}, \ n_0 = n(\var), \ n_{k + 1} > n_k, 
\ d(a_{k + 1},a_k) < 2^{- (k + 2)}\var;
\end{gather}
then $ \{a_k\} $ is a Cauchy sequence in $ X $ and, since $ X $ is complete,
converges to a limit, say $ x $. Define $ L $ as the set of the elements that
are limits of sequences $ \{a_k\} $ such that $ a_k\in A_{n_k} $. The 
construction above shows that $ L $ is nonempty. 
We prove now that $ A_n $ converges to $ L $; first there exists
$ a_0 \in A_{n(\var)} $ such that
\begin{gather*}
\rho_{\scr{H}} (A_{n(\var)},L) < \var/8 + \dist(a_0,L);
\end{gather*}
let $ \{a_k\} $ be as in (\ref{sequence}) and call $ x $ its limit. Let $ k $
be such that $ d(a_k,a) < \var/8 $. We have
\begin{equation*}
\begin{split}
\rho_{\scr{H}} (A_{n(\var)},L) &< \var/8 + d(a_0,a_k) + d(a_k,x) < \var /4 + 
\sum_{j = 0} ^{k - 1} d(a_{j + 1},a_j) \\
&< \var/4 + \var\sum_{j = 2} ^{\infty} 2^{-j} < \var/2;
\end{split}
\end{equation*}
thus $ \rho_{\scr{H}} (A_n,L) \leq \rho_{\scr{H}} (A_n,A_{n(\var)}) + \rho_{\scr{H}} (A_{n(\var)},L) < \var $
for every $ n\geq n(\var) $. Similarly there exists $ x\in L $ such that
\begin{gather*}
\rho_{\scr{H}} (L,A_{n(\var)}) < \var/8 + \dist(x,A_{n(\var)});
\end{gather*}
by definition of $ L $ there exists a sequence $ a_k $ converging to $ x $ 
such that $ a_k \in A_{n_k} $. Choose $ k(\var) $ such that, for every 
$ k > k(\var) $, we have
\begin{gather*}
d(x,a_k) < \var/4, \ n_k > n(\var);
\end{gather*}
by the triangular inequality, for every $ n > n_{k(\var)}$, we have
\begin{gather*}
\rho_{\scr{H}} (L,A_{n(\var)}) < \var/4 + \dist(x,A_n) + 
\rho_{\scr{H}} (A_n, A_{n(\var)}) < \var,
\end{gather*}
thus $ \delta_{\scr{H}} (L,A_n) < \var $. This proves the completeness of 
$ \scr{H}(X) $. To conclude the proof observe that, since 
$ \rho_{\scr{H}} (L,A_n) $ is an infinitesimal sequence, given $ x\in L $ 
there exists an infinitesimal sequence $ \{\var_n\} $ and $ a_n $ such that 
\begin{gather*}
d(x,a_n) - \var_n < \dist(x,A_n) \leq \rho_{\scr{H}} (L,A_n);
\end{gather*}
taking the limit as $ n\rightarrow \infty $ we prove that 
$ \{a_n\} $ converges to $ x $.
\end{proof}
\section{Metrics on the Grassmannian}
Let $ (E,|\cdot|) $ be a Banach space. We define $ G(E) $ 
\glsadd{labGE}
as the set of the 
closed linear subspaces of $ E $, called \textsl{Grassmannian}. We 
define a complete metric on this set.
To a linear subspace $ Y\subset E $ we have the following
subsets of $ E $ associated to it:
\begin{align*}
D(Y) &= \set{y\in E}{|y|\leq 1},\\
S(Y) &= \set{y\in E}{|y| = 1}, \ (Y\neq 0) ;
\end{align*}
\glsadd{labDY}
\glsadd{labSY}
on $ G(E) $ we consider the metric induced by the inclusion of subsets
$ i\colon G(E)\hookrightarrow \scr{H}(E), \ Y\mapsto D(Y) $. We set
\begin{align*}
\rho (Y,Z) &= \rho_{\scr{H}} (D(Y),D(Z)), \\ 
\delta (Y,Z) &= \delta_{\scr{H}} (D(Y),D(Z)). 
\glsadd{labrYZ}
\glsadd{labdYZ}
\end{align*}
\begin{proposition}
The subset  $ i(G(E)) $ is closed in $ \scr{H}(E) $, hence $ \delta $ is 
complete.
\end{proposition}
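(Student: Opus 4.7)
The plan is to prove that $i(G(E))$ is closed in $\scr{H}(E)$; completeness of $\delta$ then follows immediately, since $i$ is an isometry between $(G(E),\delta)$ and the subset $i(G(E))$ of the complete metric space $(\scr{H}(E),\delta_{\scr{H}})$ (Proposition \ref{hausdorff_complete}). So I take a sequence $\{Y_n\}\subset G(E)$ with $D(Y_n)\to K$ in $\scr{H}(E)$ and must exhibit a closed linear subspace $Y\subset E$ with $D(Y)=K$.

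By the second half of Proposition \ref{hausdorff_complete}, $K$ consists precisely of the limits of sequences $\{y_n\}$ with $y_n\in D(Y_n)$. Since each $D(Y_n)$ is balanced, convex and contained in $D(E)$, these properties transfer to the limit: $K$ is closed, balanced, convex, and $K\subset D(E)$. I then propose the candidate
\[
Y := \bigcup_{\lambda\in\R} \lambda K.
\]
To see that $Y$ is a linear subspace, closure under scalar multiplication is immediate. For addition, given $\lambda_1 k_1+\lambda_2 k_2$ with $k_i\in K$, I use the balanced property to replace $k_i$ by $\sign(\lambda_i)k_i\in K$, and then use convexity to express the sum as $(|\lambda_1|+|\lambda_2|)k$ for some $k\in K$; hence $Y$ is stable under sums.

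The crux is to check $D(Y)=K$. The inclusion $K\subset D(Y)$ is immediate because $K\subset Y$ and $K\subset D(E)$. For the converse, take $y\in D(Y)$ and write $y=\lambda k$ with $k\in K$ and $|y|\leq 1$. Pick $k_n\in D(Y_n)$ with $k_n\to k$; then $\lambda k_n\in Y_n$ and $\lambda k_n\to y$, but the intermediate norms $|\lambda k_n|$ may exceed $1$, so these vectors are not yet in $D(Y_n)$. I resolve this by normalising back to the unit ball,
\[
\tilde k_n := \frac{\lambda k_n}{\max(1,|\lambda k_n|)} \in D(Y_n);
\]
since $\max(1,|\lambda k_n|)\to\max(1,|y|)=1$, the sequence $\tilde k_n$ still converges to $y$, and Proposition \ref{hausdorff_complete} yields $y\in K$. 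This renormalisation is the main subtlety of the argument.

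Finally I show $Y$ is closed. If $y_k\in Y$ with $y_k\to y$, pick $M>0$ with $|y_k|\leq M$; then $y_k/M\in Y\cap D(E)=D(Y)=K$, and closedness of $K$ gives $y/M\in K\subset Y$, hence $y\in Y$. Thus $Y\in G(E)$ with $i(Y)=K$, which proves that $i(G(E))$ is closed and completes the proof.
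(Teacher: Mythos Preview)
Your proof is correct. The overall strategy matches the paper's: pass to the limit set $K$, build a candidate subspace from it, use a normalisation trick to identify $K$ with its unit disc, and then check closedness. The one organisational difference is worth noting. The paper takes $Z$ to be the full linear span of the limit set and then shows $S(Z)\subset D$ by writing an arbitrary $z\in S(Z)$ as a finite combination $\sum t_i y_i$ of elements of $D$, approximating each $y_i$ from $D(Y_n)$, and normalising the resulting sum; star-shapedness then pushes this down to $D(Z)\subset D$. You instead first observe that $K$ inherits balancedness and convexity from the $D(Y_n)$, which lets you define $Y=\bigcup_\lambda \lambda K$ and verify linearity from pairwise sums only, avoiding general linear combinations; your renormalisation $\lambda k_n/\max(1,|\lambda k_n|)$ plays the same role as the paper's $z_k/|z_k|$. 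Your route is a little more streamlined, and it also makes the closedness of $Y$ a one-line consequence of $D(Y)=K$, whereas the paper argues closedness separately via another normalisation. (A minor remark: your use of $\sign(\lambda_i)$ tacitly assumes real scalars; in the complex case replace it by $\lambda_i/|\lambda_i|$, which is covered by balancedness just the same.)
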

\begin{proof}
Let $ Y_n $ be a sequence in $ G(E) $ such that $ D_n = D(Y_n) $ converges to 
$ D\subseteq E $, a nonempty, closed and bounded subset of $ E $. Let 
$ Z $ be the linear vector subspace generated by $ D $. We show that
$ D $ is the unit disc of the space $ Z $. In fact, we have the following 
properties:
\begin{itemize}
\item[p1)] $ 0\in D $;
\item[p2)] provided $ Z\neq\{0\} $ we have $ D\supset S(Z) $;
\item[p3)] $ D $ is \textsl{star-shaped} to $ 0 $, that is $ tx\in D $ for
every $ t\in [0,1] $ if $ x\in D $.
\end{itemize}
All these properties are consequences of Proposition \ref{hausdorff_complete}.
For instance the first follows in that $ 0\in D_k $ for every 
$ k\in\mathbb{N} $. For the second let $ z\in S(Z) $; since $ D $ generates
$ Z $ there are constants $ t_i $ such that
\begin{gather*}
z = t_1 y_1 + \dots + t_n y_n, \ \ y_i\in D.
\end{gather*}
Each of these elements are limits of a sequence $ y_{i,k}\in D_k $, hence,
for every $ k\in\mathbb{N} $, we have
\begin{align*}
z_k &= t_1 y_{1,k} + \dots + t_n y_{n,k}\in Y_k\\
z_k /|z_k| &= \hat{t}_1 y_{1,k} + \dots + \hat{t}_n y_{n,k}\in D_k;
\end{align*}
applying the Proposition \ref{hausdorff_complete} to the second sequence
we find  $ z\in D $. The proof of the third property is similar and
we omit it. From p1)--p3) it follows easily that $ D \supseteq D(Z) $:
given $ z\neq 0 $ in $ D(Z) $ the vector $ \hat{z} = z/|z|\in D $ and, 
since $ D $ is 
star-shaped, $ z\in D $. The inclusion $ D\subseteq D(Z) $ it is just the 
definition of $ Z $, hence $ D(Z) = D $.
To conclude the proof we show that $ Z $ is a closed
subspace of $ E $. Let $ \{z_n\} $ be a sequence converging to $ x\in E $;
if $ x = 0 $ clearly $ x\in Z $. If $ x\neq 0 $ for $ n $ large each term
of the sequence is nonzero. We write
\begin{gather*}
z_n = \hat{z}_n \cdot |z_n|, \ \hat{z}_n\in D;
\end{gather*}
since $ D $ is closed $ \hat{x}\in D $. Thus $ z = |x| \hat{x} $ belongs to
vector space generated by $ D $, hence $ z\in Z $. We have proved that
$ D = i(Z) $.
\end{proof}
Similarly we can consider the inclusion of spheres given by 
$ j\colon G(E)\setminus\{0\}\hookrightarrow\scr{H}(E), \ Y\mapsto S(Y) $ and 
define a metric on $ G(E)\setminus\{0\} $ as follows
\begin{align*}
\rho_S (Y,Z) &= \rho_{\scr{H}} (S(Y),S(Z)), \\
\delta_S (Y,Z) &= \delta_{\scr{H}} (S(Y),S(Z)), \text{ if } Y,Z\neq 0;
\glsadd{labrSYZ}
\glsadd{labdSYZ}
\end{align*}
we extend it to a metric on $ G(E) $ with $ \rho_S (\{0\},\{0\}) = 0 $ and
$ \rho_S (Y,\{0\}) = \rho_S (\{0\},Z) = 1 $. It is also called
\textsl{opening metric} (see \cite{Ber63}, \textsection 2). As above we have 
the following
\begin{proposition}
The subset  $ j(G(E)\setminus\{0\}) $ is closed in $ \scr{H}(E) $, hence 
$ \delta_S $ is complete.
\end{proposition}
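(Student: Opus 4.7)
The plan is to mimic the argument used in the previous proposition (for the unit disc) with a careful normalization at the end to handle the fact that spheres, unlike discs, are not star-shaped and do not contain $0$. Let $Y_n\in G(E)\setminus\{0\}$ be a sequence such that $S_n = S(Y_n)$ converges to some $S\in\scr{H}(E)$ in the Hausdorff metric. Let $Z$ be the linear subspace of $E$ generated by $S$. I would show $S = S(Z)$ and that $Z$ is a nonzero closed subspace.

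First, every $s\in S$ has $|s|=1$: by Proposition \ref{hausdorff_complete} there exist $s_n\in S_n$ with $s_n\to s$, and since $|s_n|=1$ for all $n$, the norm passes to the limit. In particular $0\notin S$, so $Z\neq\{0\}$, and the inclusion $S\subseteq S(Z)$ is immediate from $|s|=1$ and $s\in Z$. For the reverse inclusion, take $z\in S(Z)$ and write $z = t_1 y_1 + \dots + t_k y_k$ with $y_i\in S$. Using Proposition \ref{hausdorff_complete} choose $y_{i,n}\in S_n$ with $y_{i,n}\to y_i$, and set $z_n = t_1 y_{1,n} + \dots + t_k y_{k,n}\in Y_n$; then $z_n\to z$, so $|z_n|\to |z|=1$ and in particular $z_n\neq 0$ for $n$ large. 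Hence $\hat z_n = z_n/|z_n|\in S(Y_n) = S_n$ and $\hat z_n\to z$, which by Proposition \ref{hausdorff_complete} forces $z\in S$. Thus $S = S(Z)$.

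It remains to check that $Z$ is closed. Let $z_n\in Z$ with $z_n\to x\in E$. If $x=0$ there is nothing to prove. Otherwise $|z_n|$ is bounded away from $0$ for $n$ large, so $\hat z_n = z_n/|z_n|\in S(Z) = S$, and $\hat z_n\to x/|x|$. Since $S$ is closed in $\scr{H}(E)$ (being the limit of the $S_n$), and every convergent sequence in $S$ has its limit in $S$, we obtain $x/|x|\in S\subseteq Z$, hence $x\in Z$. This shows $j(Z) = S$, so $j(G(E)\setminus\{0\})$ is closed in $\scr{H}(E)$.

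Completeness of $\delta_S$ then follows: since $j$ is an isometry onto a closed subset of the complete space $(\scr{H}(E),\delta_\scr{H})$, the space $(G(E)\setminus\{0\},\delta_S)$ is complete; and the extension of $\delta_S$ to $G(E)$ makes $\{0\}$ an isolated point (because $\rho_S(Y,\{0\})=1$ for $Y\neq\{0\}$), so any Cauchy sequence in $(G(E),\delta_S)$ is eventually either constant equal to $\{0\}$ or entirely in $G(E)\setminus\{0\}$, in both cases converging. The only mildly subtle step is the closedness of $Z$, where one must pass through the normalization $z_n/|z_n|$ to land back inside the closed set $S$; this replaces the star-shaped argument used in the disc case.
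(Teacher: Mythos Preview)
Your proof is correct and follows exactly the approach the paper indicates: the paper does not spell out the argument but simply remarks that ``the proof is similar to the previous one; it just takes to prove that limits of sequences of spheres is a sphere.'' You have carried out precisely this adaptation of the disc argument, with the normalization $z_n/|z_n|$ replacing the star-shaped property. One minor wording issue: when you write ``$S$ is closed in $\scr{H}(E)$'' you mean that $S$, as an element of $\scr{H}(E)$, is by definition a closed subset of $E$; the follow-up clause makes your intent clear, but you may want to tighten that sentence.
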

The proof is similar to the previous one. It just takes to prove that limits
of sequences of spheres is a sphere.
\begin{proposition}
The metrics $ \delta_S $ and $ \delta_{\scr{H}} $ are equivalent. In particular
the inequalities
\begin{align*}
\rho_S (Y,Z)&\leq 2\rho(Y,Z) \\
\rho(Y,Z)&\leq \rho_S (Y,Z);
\end{align*}
hold.
\end{proposition}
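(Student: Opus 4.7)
My plan is to prove the two inequalities separately, in each case by taking an element of one of the Hausdorff sets (the unit disc or the unit sphere) and producing an approximating element in the other. The factor of $2$ in the first inequality reflects the cost of renormalizing a near-unit vector back onto the sphere.

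I would start with the easier bound $\rho(Y,Z)\leq\rho_S(Y,Z)$. Given $y\in D(Y)$, if $y=0$ then $0\in D(Z)$ and $\dist(y,D(Z))=0$, so there is nothing to show. Otherwise $\hat y:=y/|y|\in S(Y)$, and for each $\var>0$ I pick $\hat z\in S(Z)$ with $|\hat y-\hat z|\leq\rho_S(Y,Z)+\var$. The crucial observation is that $z:=|y|\hat z$ lies in $D(Z)$ because $|y|\leq 1$, and the scaling only shrinks the distance:
\[
|y-z|=|y|\cdot|\hat y-\hat z|\leq\rho_S(Y,Z)+\var.
\]
Taking the supremum over $y\in D(Y)$ and letting $\var\to 0$ yields the inequality. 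I would also note the degenerate cases $Y=\{0\}$ or $Z=\{0\}$: with the convention $\rho_S(Y,\{0\})=\rho_S(\{0\},Z)=1$ given just above, both cases check trivially since $\rho(\{0\},Z)=0$ and $\rho(Y,\{0\})=1$.

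Now for $\rho_S(Y,Z)\leq 2\rho(Y,Z)$. If $Z=\{0\}$ this reads $1\leq 2\cdot 1$, so assume $Z\neq\{0\}$. Given $y\in S(Y)$ and $\var>0$, pick $z'\in D(Z)$ with $|y-z'|\leq\rho(Y,Z)+\var$. If $z'\neq 0$, I normalize by setting $z:=z'/|z'|\in S(Z)$. The key calculation is
\[
|z'-z|=\bigl|\,z'-z'/|z'|\,\bigr|=\bigl||z'|-1\bigr|=\bigl||z'|-|y|\bigr|\leq|z'-y|,
\]
so by the triangle inequality $|y-z|\leq|y-z'|+|z'-z|\leq 2|y-z'|\leq 2\rho(Y,Z)+2\var$. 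If instead $z'=0$, then $1=|y|=|y-z'|\leq\rho(Y,Z)+\var$, so $\rho(Y,Z)\geq 1-\var$; picking any $z\in S(Z)$ gives $|y-z|\leq 2\leq 2\rho(Y,Z)+2\var$. Taking the supremum over $y\in S(Y)$ and sending $\var\to 0$ completes the proof.

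The only real subtlety is the possibility that the optimal approximator $z'\in D(Z)$ of a unit vector $y$ is $0$ or very small in norm, which would break the normalization $z'\mapsto z'/|z'|$; the case split handles this, exploiting the fact that whenever the obstruction occurs the inequality becomes trivial because $\rho(Y,Z)$ is already forced to be close to $1$. Everything else is a straightforward application of the triangle inequality and the scaling properties of $D(Y)$ and $S(Y)$.
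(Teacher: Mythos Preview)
Your proof is correct and follows essentially the same approach as the paper: both directions rest on the scaling identity $|y - |y|\hat z| = |y|\,|\hat y - \hat z|$ for one inequality and the normalization bound $|x - \hat z| \leq 2|x - z|$ (for $|x|=1$, $z\neq 0$) for the other. Your handling of the possible $z'=0$ case via an explicit case split is a bit cleaner than the paper's, which absorbs that case using $\rho_S\leq 1$ and a $\min$ trick; note, however, that the edge case $Y=\{0\}$, $Z\neq\{0\}$ actually makes the stated $\rho_S$--inequality fail (since $\rho_S(\{0\},Z)=1$ but $\rho(\{0\},Z)=0$), and the paper silently retreats to the symmetrized inequality $\delta_S\leq 2\delta$ there---you may want to do the same.
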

\begin{proof}
To prove the first inequality we will use this fact: for any pair of
vectors $ x\in S(E) $ and $ y\in E\setminus\{0\} $ we have
$ |x - \hat{y} |\leq 2 | x - y | $ where $ \hat{y} = y/|y| $. 
Let $ Y,Z \neq\{0\} $ and $ \var > 0 $. There exists $ y\in S(Y) $ such that, 
for every $ z\in S(Z) $ and $ 0 < r \leq 1 $ there holds 
\begin{gather*}
\rho_{\scr{H}} (S(Y),S(Z))\leq \var + | y - z | = \var + 
| y - \widehat{rz} | \leq \var + 2 | y - rz |;
\end{gather*}
taking the infimum over $ (0,1]\times S(Z) $ we find 
\begin{gather*}
\rho_{\scr{H}} (S(Y),S(Z))\leq \var + 2 \dist (y,D(Z)\setminus\{0\});
\end{gather*}
since $ \rho_{\scr{H}} (S(Y),S(Z))\leq 1 < 2 $ we can write 
\begin{gather*}
\rho_{\scr{H}} (S(Y),S(Z))\leq 
2\min\{1,\var/2 + \dist (y,D(Z)\setminus\{0\})\};
\end{gather*}
since $ |y| = 1 $ the second member of the inequality becomes 
\begin{equation*}
\begin{split}
&2\min\{1,\var/2 + \dist (y,D(Z)\setminus\{0\})\}\\ 
\leq &2 \min\{\var/2 + |y|, \var/2 + \dist (y,D(Z)\setminus\{0\})\};
\end{split}
\end{equation*}
the latter is equal to 
\begin{gather*}
2(\var/2 + \dist(y,D(Z))\leq\var + 2 \dist(y,D(Z)).
\end{gather*}
Taking the supremum over $ S(Y) $ we obtain
\begin{gather*}
\rho_{\scr{H}} (S(Y),S(Z))\leq\var + 2 \rho_{\scr{H}} (S(Y),D(Z))\leq\var + 
2 \rho_{\scr{H}} (D(Y),D(Z)).
\end{gather*}
If $ Y = \{0\} $ and $ Z\neq 0 $ we have
$ \rho_{\scr{H}} (\{0\},S(Z)) = 1 = \delta_{\scr{H}} (D(Z),\{0\}) $, thus we 
have proved that $ \delta_S (Y,Z)\leq 2 \delta (Y,Z) $. \vskip .2em
We prove the second inequality in the case $ Y, Z\neq\{0\} $ first. Suppose
$ \rho (Y,Z)\neq 0 $ and pick $ \var > 0 $ such that
$ 0 < 2\var < \rho(Y,Z) $. There exists $ y\in D(Y) $ such 
that
\begin{gather*}
\rho(Y,Z) < \var/2 + \dist(y,D(Z));
\end{gather*}
in fact this implies $ y\neq 0 $. Set $ \hat{y} = y/|y| $; there exists 
$ \nu\in S(Z) $ such that 
\begin{gather*}
d(\hat{y},\nu) < \var/2 + \dist(\hat{y},S(Z)).
\end{gather*}
Hence the second term of the first inequality is bounded by $ d(y,|y| \nu) $
which is equal to $ |y| d(\hat{y},\nu) $, thus
\begin{equation*}
\begin{split}
\rho(Y,Z) &< \var/2 + |y| d(\hat{y},\nu)\leq\var/2 + 
d(\hat{y},\nu) \\ 
&< \var + \dist(\hat{y},S(Z))\leq\var + \rho_S (Y,Z).
\end{split}
\end{equation*}
If one among $ Y $ and $ Z $ is $ \{0\} $ we have 
$ \rho(Y,\{0\}) = 1 = \rho_S (Y,\{0\}) $. 
\end{proof}
By technical reasons we also define, for two closed subspaces $ Y, Z $
\begin{align*}
\rho_1 (Y,Z) = \sup_{y\in D(Y)} \dist(y,Z), \ 
\delta_1  (Y,Z) = \max\{\rho_1 (Y,Z),\rho_1 (Z,Y)\}.
\glsadd{labr1YZ}
\glsadd{labd1YZ}
\end{align*}
The triangular inequality does not hold for $ \rho_1 $
(see \cite{Ber63}, \textsection 3 for a counterexample). However the
\textsl{weakened triangular inequality} holds, that is
\begin{gather*}
\rho_1 (X,Z)\leq \rho_1 (Y,Z) (1 + \rho_1 (X,Y)) + \rho_1 (X,Y)
\end{gather*}
for every $ X, Y, Z $ (see \cite{Kat95}, Ch. IV, \textsc{Lemma} 2.2)
\footnote{The inequality allows to consider 
$ d_1 (X,Y) = \log (1 + \delta_1 (X,Y)) $ 
which is a metric and induces the same topology as the 
\textsl{neighbourhood topology} generated by $ \delta_1 $.}.
\begin{proposition}
\label{equivalent_metrics}
The topology generated by the neighbourhoods 
\begin{gather*}
\set{U(Y,r)}{Y\in G(E),\ r > 0}, \ U(Y,r) = \set{Z}{\rho_1 (Y,Z) < r }
\end{gather*}
is equivalent to the one induced by the Hausd\"orff metric of the discs. More 
precisely for every $ Y,Z $ 
\begin{gather*}
1/2\cdot\delta(Y,Z)\leq\delta_1 (Y,Z)\leq\delta(Y,Z).
\end{gather*}
\end{proposition}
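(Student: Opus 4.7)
My plan is to prove the two-sided inequality $\frac{1}{2}\delta(Y,Z) \le \delta_1(Y,Z) \le \delta(Y,Z)$ and deduce the equivalence of topologies from it. The right inequality is essentially trivial: for any $y \in D(Y)$ and any $z \in D(Z)\subset Z$ we have $\dist(y,Z) \le |y-z|$, so $\dist(y,Z) \le \dist(y,D(Z))$, hence $\rho_1(Y,Z) \le \rho(Y,Z)$; symmetrizing gives $\delta_1 \le \delta$.

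The main content is the left inequality $\rho(Y,Z) \le 2\rho_1(Y,Z)$ (then $\delta \le 2\delta_1$ follows by symmetry). The key step is a scaling argument: fix $y \in D(Y)$ and $\varepsilon > 0$, and choose $z \in Z$ with $|y-z| < \dist(y,Z) + \varepsilon \le \rho_1(Y,Z) + \varepsilon$. If $|z| \le 1$ then $z \in D(Z)$ and we are done. Otherwise set $\hat z = z/|z| \in D(Z)$; I will estimate
\[
|y - \hat z| \le |y - z| + \bigl|z - z/|z|\bigr| = |y-z| + (|z| - 1),
\]
and use the reverse triangle inequality together with $|y| \le 1$:
\[
|z| - 1 \le |z| - |y| \le |z - y|,
\]
to obtain $|y - \hat z| \le 2|y-z| < 2\rho_1(Y,Z) + 2\varepsilon$. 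Letting $\varepsilon \to 0$ and taking the supremum over $y \in D(Y)$ yields $\rho(Y,Z) \le 2\rho_1(Y,Z)$, and symmetrizing gives $\delta(Y,Z) \le 2\delta_1(Y,Z)$.

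The scaling estimate above is the only nonroutine step; everything else is bookkeeping. For the topological equivalence, the inequalities show that each neighborhood $U(Y,r) = \{Z : \rho_1(Y,Z) < r\}$ contains the $\delta$-ball of radius $r/2$ centered at $Y$, while each $\delta$-ball of radius $r$ is contained in $U(Y,r)$. Since $(G(E),\delta)$ is already known to be a metric space generating the Hausdorff topology on discs, this proves that the $\rho_1$-neighborhood topology coincides with it, even though $\rho_1$ itself is not a metric (the triangle inequality fails, as noted in the footnote). I expect no real obstacle; the only subtle point is remembering the correct scaling trick, which I have just outlined.
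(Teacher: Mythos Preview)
Your proof of the two-sided inequality is correct and in fact more direct than the paper's. For the lower bound the paper detours through the sphere opening $\delta_S$: it shows $\delta_S(Y,Z) \le 2\delta_1(Y,Z)$ using the scaling fact $|x - \hat z| \le 2|x-z|$ for $|x|=1$, and then invokes the previously established inequality $\delta \le \delta_S$ to conclude. You instead prove $\rho(Y,Z) \le 2\rho_1(Y,Z)$ directly, applying the same normalization trick to a point $y$ of the disc rather than of the sphere (using $|y|\le 1$ in place of $|y|=1$). Both arguments rest on the identical elementary estimate; yours is self-contained, while the paper's recycles the preceding proposition.

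One small slip in your closing paragraph: both inclusions you write down say that a $\delta$-ball sits inside $U(Y,r)$, which by itself cannot give equality of topologies. Indeed, since $U(Y,r)$ is defined via the one-sided $\rho_1$, it contains every $Z\supseteq Y$ (for which $\rho_1(Y,Z)=0$) regardless of how large $\delta(Y,Z)$ is, so no $U(Y,r)$ is contained in a small $\delta$-ball about $Y$. The topological equivalence should be read as coming from the comparison of the \emph{symmetric} quantities $\delta$ and $\delta_1$ (via the metric $\log(1+\delta_1)$ in the footnote); the paper's own proof, like yours, establishes only the inequality and leaves the topological deduction implicit.
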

\begin{proof}
Given $ y\in D(Y) $, $ \dist(y,Z)\leq\dist(y,D(Z)) $, then 
$ \delta_1 (Y,Z)\leq\delta(Y,Z) $. In order to prove the lower estimate 
suppose both $ Y, Z $ are different from the null space. Let $ y\in S(Y) $; 
for every $ z\in S(Z) $ and $ r > 0 $ we have 
\[
\dist(y,S(Z))\leq | y - z | = | y - \widehat{rz} |\leq 2 | y - rz |;
\]
taking the infimum over $ \mathbb{R}^+ \times S(Z) $ we obtain
\[
\dist(y,S(Z))\leq 2 \dist(y,Z\setminus\{0\}).
\] 
Since $ \dist(y,S(Z))\leq 2 $ we can write
\begin{equation*}
\begin{split}
\dist(y,S(Z))&\leq 2\min\{1,\dist(y,Z\setminus\{0\})\} = 
2\min\{|y|,\dist(y,Z\setminus\{0\})\} \\
&=2\dist(y,Z).
\end{split}
\end{equation*}
Then $ \delta_S(Y,Z)\leq 2\delta_1 (Y,Z) $. Since 
$ \delta(Y,Z)\leq\delta_S(Y,Z) $ the proof is complete. 
\end{proof}
We remark that the quantities introduced in this section such as $ \delta $,
$ \delta_1 $ and $ \delta_S $ induce the same topology on $ G(E) $. 
\begin{TAKEOUT}
However,
in literature, there are noticeable metrics that induce different topologies on
$ G(E) $. Of high interest it is the so called \textsl{Sch\"affer metric} or
\textsl{operator opening}. It is defined as follows
\begin{align*}
r_0 (X,Y) &= \inf\{\no{T - I}; T\in GL(E), TX = Y\}, \\
r(X,Y) &= \max\{r_0 (X,Y),r_0 (Y,Z)\}.
\glsadd{labr0XY}
\glsadd{labrXY}
\end{align*}
This metric induces a different topology than $ \delta $. In order
to show this, we need a couple of definitions.
\begin{definition}
A subspace $ X\subset E $ is said \textsl{complemented} if is closed
and there exists $ Y\subset E $ closed such that $ X\oplus Y = E $.
We denote by $ G_s (E) $ the subset of $ G(E) $ of complemented subspaces.
\end{definition}
Using the open map theorem, it can be shown that a closed subspace $ X $ is 
complemented if and only if there exists a bounded operator 
$ P\in\mathcal{L}(E) $ such that $ P\sp 2 = P $ and $ \ran P = X $. 
In the following example we use an argument of V.~I.~Gurarii and A.~S.~Markus 
of \cite{GM65}.
\begin{example}
By \textsc{Theorem} 4.1 of \cite{Ber63}, $ G_s (E) $ is a closed subset
of $ G(E) $ in the Sch\"affer topology, for every Banach space $ E $.
Let $ E $ and $ F $ be two Banach spaces, $ X\subset E $ a splitting closed 
subspace and $ Y \subset F $ a closed non-splitting subspace isomorphic to 
$ X $. In the Banach space $ E \oplus F $ the subspace $ \{0\}\oplus Y $ 
does not have a topological complement. Let $ T $ be an isomorphism of $ Y $ 
onto $ X $. Consider the family of subspaces
\begin{gather*}
Y(\lambda) = \set{(\lambda Ty,y)}{y\in Y}, \ \lambda\in\mathbb{R};
\end{gather*}
since $ T $ is bounded these are closed subspaces; in fact, given a projector
$ P $ with range $ X $, the linear operator 
$ P(\lambda) (v,w) = (\lambda Pv,T^{-1} Pv) $ is a projector with range 
$ Y(\lambda) $. However $ Y(\lambda) $ converges to $ Y(0) $ as 
$ \lambda\rightarrow 0 $ in the Hausd\"orff topology, in fact given
$ y\in D(Y) $ we have
\begin{gather*}
\dist((0,y),Y(\lambda)) = |\lambda||Ty|\leq |\lambda|\no{T}
\end{gather*}
hence $ \rho_1 (Y(0),Y(\lambda))\leq |\lambda|\no{T} $. Similarly it can be 
proved that $ \rho_1 (Y(\lambda),Y(0)) $ converges to zero as 
$ \lambda\rightarrow 0 $. Hence, a sequence in $ G_s (E\oplus F) $, 
namely $ \{Y(\lambda)\} $, converges to an uncomplemented subspace 
of $ E\oplus F $, therefore  $ G_s (E\oplus F) $ is not closed in 
$ G(E\oplus F) $ in the topology induced by $ \delta_1 $. In the next section 
we will prove that $ G_s (E) $ is open in the Hausd\"orff topology.
\end{example}
\end{TAKEOUT}
Since $ \delta $, $ \delta_S $ and $ \delta_1 $ induce the same topology we 
will choose time after time the one that most fits our settings.
\section{Properties of the Hausd\"orff topology}
Given Banach spaces $ E $ and $ F $ we denote by $ \mathcal{L}(E,F) $ 
\glsadd{labLEF}
the space of linear and bounded applications and use the abbreviate notation 
$ \mathcal{L}(E) $ to denote $ \mathcal{L}(E,E) $.
We call \textsl{general linear group} the 
set of invertible bounded operators of $ E $ endowed with the
topology of the norm and denote it by $ GL(E) $. 
\glsadd{labGLE}
In this section we show that 
the choice of the Hausd\"orff metric makes continuous some natural operations 
on $ G(E) $, such as the multiplication by an invertible operator
and the annihilator subspace $ Y\sp\perp $.
\begin{proposition}
\label{continuity_of_images}
Consider the set $ GL(E)\times G(E) $ with the topology induced by the product
metric $ \no{\cdot}\times\delta $. The action of $ GL(E) $ on $ G(E) $ given by
\begin{gather*}
GL(E)\times G(E)\longrightarrow G(E),\ \ (T,Y)\longmapsto T\cdot Y
\end{gather*}
is continuous.
\end{proposition}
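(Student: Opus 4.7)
The plan is to prove continuity by estimating the Hausdorff distance $\delta$ between $T\cdot Y$ and $T_n\cdot Y_n$ whenever $(T_n,Y_n)\to(T,Y)$. Rather than work directly with $\delta$, I would use the equivalent metric $\delta_1$ from Proposition \ref{equivalent_metrics}, because it measures the distance from $y\in D(Y)$ to the subspace $Z$ itself rather than to its disc $D(Z)$; this avoids the mismatch that $T(D(Y))$ is not equal to $D(TY)$, which is the main technical nuisance.

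Fix $(T,Y)\in GL(E)\times G(E)$ and let $(T_n,Y_n)\to(T,Y)$. First I would observe that since $GL(E)$ is open in $\mathcal{L}(E)$ and inversion is norm-continuous on it, both $\no{T_n}$ and $\no{T_n^{-1}}$ remain bounded; fix $M$ dominating all of these together with $\no{T}$ and $\no{T^{-1}}$. To estimate $\rho_1(TY, T_n Y_n)$, take $v\in D(TY)$ and write $v=Ty$ with $|y|\leq M$. If $v=0$ there is nothing to prove; otherwise $\hat{y}:=y/|y|\in D(Y)$, so for any $\var>0$ I can pick $z_n\in Y_n$ with $|z_n-\hat{y}|<\rho_1(Y,Y_n)+\var$. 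The scaled vector $|y| z_n$ lies in $Y_n$, hence $T_n(|y| z_n)\in T_n Y_n$, and a short triangle inequality yields
\[
|v - T_n(|y| z_n)| \;=\; |y|\cdot|T\hat{y} - T_n z_n| \;\leq\; M\bigl(\no{T - T_n} + M\rho_1(Y,Y_n) + M\var\bigr).
\]
Letting $\var\to 0$ and passing to the infimum over $T_n Y_n$ followed by the supremum over $v$, I would obtain $\rho_1(TY, T_n Y_n)\leq M\no{T-T_n}+M^2\rho_1(Y,Y_n)$, which tends to $0$.

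The reverse estimate $\rho_1(T_n Y_n, TY)\to 0$ follows from the identical computation with the roles of $(T,Y)$ and $(T_n,Y_n)$ exchanged, crucially using the uniform bound $\no{T_n^{-1}}\leq M$ to control $|y_n|$ for $v_n=T_n y_n\in D(T_n Y_n)$. Combining the two estimates gives $\delta_1(TY, T_n Y_n)\to 0$, and hence $\delta(TY, T_n Y_n)\to 0$ by Proposition \ref{equivalent_metrics}. The only real obstacle is the gap between $T(D(Y))$ and $D(TY)$, but working with $\delta_1$, whose supremum is taken against the full subspace rather than its disc, circumvents this cleanly; everything else reduces to norm continuity of inversion on $GL(E)$ and a triangle inequality.
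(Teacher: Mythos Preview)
Your proof is correct and takes essentially the same approach as the paper: both control $\rho_1(TY,SZ)$ by writing $v\in D(TY)$ as $Ty$ with $|y|\leq\no{T^{-1}}$, then approximating in the target subspace and applying a triangle inequality together with the uniform bound on $\no{T_n^{-1}}$ coming from continuity of inversion. The only cosmetic difference is that the paper splits the estimate into two pieces (fix $T$, vary $Y$; then fix $Y$, vary $T$) and works with $\delta$ via the inequality $\dist(y',D(TZ))\leq 2\dist(y',TZ)$, whereas you estimate both variations at once and work with $\delta_1$ directly; your version is slightly more streamlined but yields the same local Lipschitz bound.
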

\begin{proof}
We will prove that this map is locally Lipschitz. Fix 
$ T\in GL(E) $ and let $ Y,Z $ be two closed subspaces in $ G(E) $. 
Set $ Ty = y' \in D(TY) $ and $ r = \no{T^{-1}} $. Hence $ |y|\leq r $. 
Thus, by Proposition \ref{equivalent_metrics}, we have
\begin{equation*} 
\begin{split}
\label{lip1}
\dist (y',D(TZ)) &\leq 2\dist(y',TZ) = 2r\dist (y'/r, TZ)\leq 
2r \no{T}\dist(y/r, Z) \\
&\leq 2\no{T^{-1}}\no{T}\rho_1 (Y,Z)\leq 2\no{T^{-1}}\no{T}\rho (Y,Z)
\end{split}
\end{equation*}
hence 
\begin{gather}
\label{continuity_of_images:1}
\rho(TY,TZ)\leq 2\no{T^{-1}}\no{T}\rho (Y,Z).
\end{gather}
Now fix $ Y\in G(E) $, $ T $ and $ S $ invertible operators and 
$ y'\in D(TY) $. As above $ |y|\leq r $ and we have
\begin{equation*}
\begin{split}
\dist(y',D(SY))\leq 2\dist(y',SY)\leq 2 \no{T - S} |y|
\leq 2\no{T - S}\no{T^{-1}};
\end{split}
\end{equation*}
taking the supremum over $ D(TY) $ and switching $ T $ and $ S $ we find
\begin{gather}
\label{continuity_of_images:2}
\delta(TY,SY)\leq 2\no{T - S}\max\{\no{T^{-1}},\no{S^{-1}}\}.
\end{gather}
Now choose a point $ (T_0, Y_0)\in GL(E)\times G(E) $ and set 
$ r_0 = \no{T_0 ^{-1}} $; given $ \alpha < 1 $ we claim that
in the neighbourhood 
\begin{gather*}
U = B(T_0,\alpha r_0 ^{-1})\times G(E)
\end{gather*}
the map is Lipschitz. It is not hard to prove that for such radius the norm
of the inverse of every operator is bounded by a constant that depends only
on $ \alpha $ and $ r_0 $. More precisely, using Von Neumann series, it is
simple to find $ r_0/(1 - \alpha) $ as bound. 
Let $ (T,Y) $ and $ (S,Z) $ be two points in $ U $. Hence
\begin{equation*}
\begin{split}
\delta(TY,SZ)&\leq \delta(TY,SY) + 
\delta(SY,SZ) \\
&\leq 2 \max\{\no{T^{-1}},\no{S^{-1}}\}\no{T - S} + 
2\no{S}\no{S} ^{-1}\delta(Y,Z)\\
&\leq \frac{2r_0}{1 - \alpha}\no{T - S} + 
2\alpha r_0 ^{-1} \cdot \frac{r_0}{1 - \alpha}\delta(Y,Z) \\
&\leq\frac{2\max\{\alpha,r_0\}}{1 - \alpha}\Big(\no{T - S} + 
\delta(Y,Z)\Big).
\end{split}
\end{equation*}
\end{proof}
\begin{proposition}
 \label{trk:2}
  If $ \rho_1 (Y,Z) < 1 $ and $ Z\subseteq Y $ then $ Z = Y $.
\end{proposition}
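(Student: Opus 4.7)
The plan is to obtain a contradiction from the assumption that $Z$ is a proper closed subspace of $Y$ by producing a vector in $D(Y)$ whose distance from $Z$ is arbitrarily close to $1$; this is exactly the content of F.~Riesz's \emph{almost-orthogonal} lemma.

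More concretely, suppose towards contradiction that $Z \subsetneq Y$. Pick any $y_0 \in Y \setminus Z$; since $Z$ is closed, $d := \dist(y_0, Z) > 0$. Given $\varepsilon > 0$ small, choose $z_0 \in Z$ with $|y_0 - z_0| \leq d/(1 - \varepsilon)$ and set $y := (y_0 - z_0)/|y_0 - z_0|$. Then $y \in S(Y) \subseteq D(Y)$, and for any $z \in Z$ the vector $z_0 + |y_0 - z_0|\, z$ lies in $Z$, so
\[
|y - z| = \frac{|y_0 - (z_0 + |y_0 - z_0|\, z)|}{|y_0 - z_0|} \geq \frac{d}{|y_0 - z_0|} \geq 1 - \varepsilon.
\]
Taking the infimum over $z \in Z$ gives $\dist(y, Z) \geq 1 - \varepsilon$, and therefore
\[
\rho_1(Y, Z) = \sup_{y' \in D(Y)} \dist(y', Z) \geq 1 - \varepsilon.
\]
Since $\varepsilon > 0$ was arbitrary, this forces $\rho_1(Y, Z) \geq 1$, contradicting the hypothesis $\rho_1(Y, Z) < 1$. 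Hence $Y \subseteq Z$, and combined with the assumption $Z \subseteq Y$ we conclude $Y = Z$.

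There is no real obstacle here beyond remembering to normalize correctly so that the competing vector lies in $S(Y)$ (and hence in $D(Y)$); the argument is just Riesz's lemma applied inside $Y$ with target subspace $Z$.
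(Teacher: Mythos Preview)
Your proof is correct and follows essentially the same contradiction strategy as the paper: assume $Z\subsetneq Y$, pick a point of $Y\setminus Z$, translate by a nearly optimal $z_0\in Z$, and normalize to produce a unit vector in $Y$ whose distance to $Z$ is at least $1-\varepsilon$. This is exactly the Riesz almost-orthogonal lemma, and your write-up is in fact cleaner than the paper's version.
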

\begin{proof}
If $ Y $ is the null space the proof is trivial. Otherwise let 
$ \rho (Y,Z) = 1 - \var_0 $ and suppose $ S(Y)\setminus Z $ is not empty and
contains an element, say $ y $. Let $ z \in Z $ be such that
\begin{gather*}
\dist(y,Z)\geq | y - z | - \var_0 /2;
\end{gather*}
define $ y_0 = z - y $. Since $ Z\subseteq Y $, $ y_0\in Y $. Thus
$ \dist(\hat{y_0},Z) \geq 1 - \var_0 /2 $, thus
\begin{equation*} 
 \begin{split}
   1 - \var_0 = \rho (Y,Z)\geq \dist (\hat{y_0},Z)\geq 1 - \var_0 /2 
 \end{split} 
 \end{equation*}
which is impossible, then $ Y\subset Z $ and $ Y = Z $.
\end{proof}
\begin{definition}
We denote by $ E^* $ the space $ \mathcal{L}(E,\R) $.
\glsadd{labEst}
It is called \textsl{topological dual} of $ E $ and its elements are
called \textsl{functionals}. 
For any subset $ S\subset E $ we define
\[
S^{\bot} = \{\xi\in E\sp *:\langle\xi,s\rangle = 0\,\forall s\in S\},
\]
\glsadd{labSperp}
and call it \textsl{annihilator} of $ S $.
\end{definition}
The annihilator is a linear, closed  subspace of $ E\sp * $, 
and it is well-behaved with respect to the topology of $ G(E) $.
\begin{proposition}
\label{orthogonal_map}
Given two closed subspaces $ Y $, $ Z $ and $ Y^{\bot} $, $ Z^{\bot} $
its annihilators, we have $ \rho_1 (Y,Z) = \rho_1 (Z^{\bot},Y^{\bot}) $.
\end{proposition}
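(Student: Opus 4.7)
The plan is to express both quantities as a common double supremum over pairs $(y,\xi)\in D(Y)\times D(Z^\perp)$ using duality, and then simply swap the order of the suprema.

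First, I would recall the two Hahn-Banach distance formulas. For any $y\in E$ and any closed subspace $Z\subset E$,
\[
\dist(y,Z)\;=\;\sup\set{|\langle\xi,y\rangle|}{\xi\in Z^\perp,\ \|\xi\|\le 1},
\]
which follows by applying Hahn-Banach to the quotient $E/Z$: a unit functional on $E/Z$ separating $[y]$ from $0$ lifts to an element of $D(Z^\perp)$. Dually, for any $\xi\in E^*$ and any closed subspace $Y\subset E$,
\[
\dist(\xi,Y^\perp)\;=\;\sup\set{|\langle\xi,y\rangle|}{y\in Y,\ \|y\|\le 1}.
\]
This is the isometric identification $(E^*)/Y^\perp\cong Y^*$: the quotient norm of $\xi+Y^\perp$ equals the norm of the restriction $\xi|_Y$, which by definition is the right-hand side.

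Granted these two formulas, the computation is immediate. By the first identity,
\[
\rho_1(Y,Z)\;=\;\sup_{y\in D(Y)}\dist(y,Z)
\;=\;\sup_{y\in D(Y)}\;\sup_{\xi\in D(Z^\perp)}|\langle\xi,y\rangle|.
\]
By the second identity applied with the roles of $Y$ and $Z^\perp$ in place of $Z$ and $Y$,
\[
\rho_1(Z^\perp,Y^\perp)\;=\;\sup_{\xi\in D(Z^\perp)}\dist(\xi,Y^\perp)
\;=\;\sup_{\xi\in D(Z^\perp)}\;\sup_{y\in D(Y)}|\langle\xi,y\rangle|.
\]
Swapping the order of the two suprema, both expressions equal $\sup\set{|\langle\xi,y\rangle|}{y\in D(Y),\ \xi\in D(Z^\perp)}$, and the equality $\rho_1(Y,Z)=\rho_1(Z^\perp,Y^\perp)$ follows.

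The only non-routine ingredient is the dual distance formula in the second step; the careful point is that $\dist(\xi,Y^\perp)$ must be realized not via taking an infimum of norms of $\xi-\eta$ for $\eta\in Y^\perp$ directly, but via the quotient norm identification $E^*/Y^\perp\cong Y^*$, so one has to invoke the fact that this map is an isometric isomorphism (a consequence of Hahn-Banach extension of functionals from $Y$ to $E$ with preserved norm). The degenerate cases $Y=\{0\}$ or $Z=\{0\}$ need a brief separate check, but they reduce to $\rho_1(\{0\},\cdot)=0=\rho_1(\cdot,E^*)$ and $\rho_1(\cdot,\{0\})=\sup_{\|y\|\le 1,y\in Y}\|y\|=\rho_1(E^*,Y^\perp)$, both of which are trivial.
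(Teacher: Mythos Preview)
Your proposal is correct and follows essentially the same approach as the paper: both establish the two Hahn--Banach duality formulas $\dist(y,Z)=\sup_{\xi\in D(Z^\perp)}|\langle\xi,y\rangle|$ and $\dist(\xi,Y^\perp)=\sup_{y\in D(Y)}|\langle\xi,y\rangle|$, then swap the order of the two suprema. The only difference is cosmetic: the paper gives explicit $\varepsilon$-arguments for the two distance formulas, whereas you invoke the standard isometric identifications $E/Z\hookrightarrow (Z^\perp)^*$ and $E^*/Y^\perp\cong Y^*$ directly.
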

\begin{proof}
We prove that, for any closed subspace $ Y $, a functional $ \xi\in E^* $ and 
$ x\in E $, the equalities
\begin{align}
\dist(\xi,Y^{\bot}) &= \sup_{D(Y)} | \bra \xi,y \ket | = 
|\res{\xi}{Y}|, \label{orthogonal_map:1}\\ 
\dist(x,Y) &= \sup_{D(Y^{\bot})} | \bra \eta,x\ket | \label{orthogonal_map:2}
\end{align}
hold. The proof of both uses Hahn-Banach theorems of extension of 
functionals, see \cite{Bre83} details. Given $ \var $ there exists 
$ \orl{y}\in D(Y) $ such that, for every $ \eta\in Y^{\bot} $, we can write
\begin{gather*}
|\res{\xi}{Y} | < \var + \bra\xi,\orl{y}\ket = 
\var + \bra\xi - \eta,\orl{y}\ket\leq\var + |\xi - \eta|; 
\end{gather*}
taking the infimum over $ D(Y^{\bot}) $ we get 
$ |\res{\xi}{Y}|\leq\dist(\xi,Y^{\bot}) $. Conversely, given a functional
$ \xi $, by Hahn-Banach, there exists an extension $ \xi_1 $ of 
$ \res{\xi}{Y} $ such that $ |\xi_1| = |\res{\xi}{Y}| $. 
Thus $ \eta = \xi - \xi_1 $ annihilates $ Y $ and we can write
\begin{gather*}
\dist(\xi,Y^{\bot})\leq | \xi - \eta | = | \xi_1 | = | \res{\xi}{Y} |.
\end{gather*}
We prove the second equality. Let $ \var > 0 $. There exists 
$ \eta_1 \in D(Y^{\bot}) $ such that, for every $ y\in Y $
\begin{gather*}
\sup_{D(Y^{\bot})} | \bra \eta,x\ket | < \var + |\bra\eta_1,x\ket| = 
\var + |\bra\eta_1,x - y\ket|\leq\var + | x - y |;
\end{gather*}
taking the infimum over $ Y $ we find
\begin{gather*}
\sup_{D(Y^{\bot})} | \bra \eta,x\ket | \leq\var + \dist(x,Y).
\end{gather*}
To prove the opposite inequality we distinguish two cases. 
If $ x\in Y $ the proof is trivial, because both terms of
(\ref{orthogonal_map:2}) are zero. Suppose $ x\not\in Y $. 
Let $ 0\leq\alpha < 1 $. There exists $ y_{\alpha}\in Y $ such that
\begin{gather*}
\alpha | x - y_{\alpha} | < \dist (x,Y)\leq | x - y_{\alpha} |;
\end{gather*}
since $ x - y_{\alpha}\not\in Y $ we can define a functional  
$ \eta_{\alpha} $ such that its restriction to $ Y $ is zero and 
$ \bra\eta_{\alpha},x - y_{\alpha}\ket = \alpha | x - y_{\alpha} | $. 
By Hahn-Banach for every $ \alpha $ there exists an extension 
$ \tilde{\eta}_{\alpha} $ of $ \eta_{\alpha} $ such that 
$ |\tilde{\eta}_{\alpha}| = |\eta_{\alpha}| $. It is clear by its definition
that $ \tilde{\eta}_{\alpha}\in Y^{\bot} $. Consider 
$ z = \lambda (x - y_{\alpha}) + y $. We have 
\begin{equation*}
\begin{split}
| z | &= |\lambda| \left| x - y_{\alpha} + \frac{y}{\lambda}\right|
\geq |\lambda |\dist(x,Y)\geq \alpha |\lambda | | x - y_{\alpha} | \\
&\geq |\lambda | |\bra\eta_{\alpha}, x - y_{\alpha}\ket| = 
|\bra\eta_{\alpha},z\ket| 
\end{split}
\end{equation*}
then $ |\eta_{\alpha}|\leq 1 $ and $ \tilde{\eta}_{\alpha}\in D(Y^{\bot}) $. 
Therefore
\begin{gather*}
\alpha\dist(x,Y)\leq \alpha| x - y_{\alpha} | = 
| \bra\tilde{\eta}_{\alpha}, x - y_{\alpha}\ket |  = 
|\bra\tilde{\eta}_{\alpha},x\ket| 
\leq\sup_{D(Y^{\bot})} | \bra\tilde{\eta},x\ket|.
\end{gather*}
The equality is proved as $ \alpha\rightarrow 1 $. Now we can prove the
equality claimed in the statement. We have
\begin{equation*}
\rho_1 (Y,Z) = \sup_{D(Y)} \dist(y,Z) = 
\sup_{D(Y)} \sup_{D(Z^{\bot})} | \bra \xi,y \ket |
\end{equation*}
by (\ref{orthogonal_map:2}). Here we switch the order of the supremums. 
By (\ref{orthogonal_map:1}) the last term of the equality is
\begin{equation*}
\sup_{D(Z^{\bot})} \sup_{D(Y)} | \bra \xi,y \ket |
= \sup_{D(Z^{\bot})} \dist (\xi, Y^{\bot}) = \rho(Z^{\bot},Y^{\bot}).
\end{equation*}
\end{proof}
\begin{corollary}
The map $ G(E)\rightarrow G(E^*) $ that associates a subspace with its
annihilator is continuous.
\end{corollary}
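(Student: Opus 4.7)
The plan is to observe that the corollary is essentially an immediate corollary of Proposition \ref{orthogonal_map}, so the sketch amounts to unpacking the symmetric statement in terms of $ \delta_1 $ rather than $ \rho_1 $, and then transferring the result to the Hausd\"orff topology via Proposition \ref{equivalent_metrics}.

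First I would apply Proposition \ref{orthogonal_map} twice: to the pair $ (Y,Z) $ and to the pair $ (Z,Y) $. This yields
\[
\rho_1 (Y,Z) = \rho_1 (Z^{\bot},Y^{\bot}), \qquad
\rho_1 (Z,Y) = \rho_1 (Y^{\bot},Z^{\bot}).
\]
Taking the maximum of the two equalities gives $ \delta_1 (Y,Z) = \delta_1 (Y^{\bot},Z^{\bot}) $, so the annihilator map $ Y\mapsto Y^{\bot} $ is in fact an isometry from $ (G(E),\delta_1) $ into $ (G(E^{*}),\delta_1) $.

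Then I would conclude continuity with respect to the Hausd\"orff metric by appealing to Proposition \ref{equivalent_metrics}, which establishes that $ \delta_1 $ and $ \delta $ induce the same topology on the Grassmannian of any Banach space (applied separately to $ E $ and to $ E^{*} $). An isometry for $ \delta_1 $ is in particular continuous for the topology it generates, hence continuous for the Hausd\"orff topology on both sides.

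There is no real obstacle here: the whole content of the corollary sits inside Proposition \ref{orthogonal_map}, and the only subtlety worth flagging is that $ \rho_1 $ itself does not satisfy the triangular inequality, which is why one needs to pass to $ \delta_1 $ (or equivalently $ d_1 = \log(1 + \delta_1) $) before claiming the map is an isometry between metric spaces. Once this is noted, continuity is automatic.
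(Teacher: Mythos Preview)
Your proposal is correct and matches the paper's intended argument: the corollary is stated without proof, as it follows immediately from Proposition \ref{orthogonal_map} in exactly the way you describe (symmetrizing $\rho_1$ to $\delta_1$ and invoking the equivalence of metrics from Proposition \ref{equivalent_metrics}).
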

\section{The complemented Grassmannian}
We define the \textsl{complemented Grassmannian}, $ G_s (E) $ as
the subset of $ G(E) $ of complemented subspaces, and the space of projectors.
We prove that the former is an open subset and the latter is homeomorphic to 
the \textsl{splitting} space $ \Splt(E) $, defined as the family of pairs 
$ (X,Y) $ which are the complement of each other. We prove that 
$ \mathcal{L}(X,Y) $ is homeomorphic to $ G_s (X\times Y) $ for every $ X,Y $ 
Banach spaces.
\begin{definition}
A closed subspace $ Y\in G(E) $ is said \textsl{complemented} or that
\textsl{splits} if there exists $ Z\in G(E) $ such that $ Y\oplus Z = E $,
called \textsl{complement} of $ Y $.
We call \textsl{projector} a bounded operator $ P\in\mathcal{L}(E) $ 
such that $ P\sp 2 = P $. We introduce the sets
\begin{align*}
G_s (E) &= \set{Y\in G(E)}{Y\text{ is complemented }};\\
\glsadd{labGsE}
\mathcal{P}(E) &= \set{P\in\mathcal{L}(E)}{P\sp 2 = P}
\glsadd{labPE}
\end{align*}
and call them \textsl{complemented Grassmannian} and 
\textsl{space of projectors}, respectively. We will also refer to these
spaces as topological subspaces of $ G(E) $ and $ \mathcal{L}(E) $,
respectively.
\end{definition}
By the open mapping theorem, $ Y\in E $ is complemented if and only
if there exists $ P\in\mathcal{P}(E) $ such that $ \ran P = Y $.
Moreover, to each complement $ Z $, corresponds a unique bounded 
$ P\in\mathcal{P}(E) $ such that 
\[
\ran P = Y,\ \ \ker P = Z.
\]
\begin{definition}
Let $ Y,Z,P $ as above. We call $ P $ 
the \textsl{projector onto $ Y $ along $ Z $} and denote it by $ P(Y,Z) $.
\glsadd{labPYZ}
\end{definition}
Unless $ E $ is an Hilbert space $ G_s (E) \subsetneq G(E) $, refer
\cite{LT71}. Our aim is to prove that $ G_s (E) $ is an open subset of 
$ G(E) $. For this purpose we need to introduce the notion of 
\textsl{minimum gap} between closed spaces 
(see also \cite{Kat95}, Ch. IV, \S 4).
We recall that, for any closed subspace $ Y\in G(E) $, the quotient
space $ E/Y $ 
\glsadd{labE/Y}
is endowed with the norm $ |x + Y | = \dist(x,Y) $ that makes
it a Banach space called \textsl{quotient space}. Moreover the projection to 
the quotient is a bounded operator between two Banach spaces.
\begin{definition}[The minimum gap]
\label{gap}
Let $ Y $ and $ Z $ be two closed subspaces. Set
\begin{gather*}
\gamma (Y,Z) = \inf_{Y\setminus Z} \frac{\mathrm{dist}(y,Z)}%
{\mathrm{dist}(y,Y\cap Z)}
\end{gather*}
if $ Y\neq 0 $, $ \gamma (Y,Z) = 1 $ otherwise. We define the \emph{gap} by
\begin{gather*}
\hat{\gamma}(Y,Z) = \min\{\gamma(Y,Z),\gamma(Z,Y)\}.
\end{gather*}
\glsadd{labgYZ}
\glsadd{labgYZh}
\end{definition}
\begin{lemma}{\rm(cf. \cite{Kat95}, \textsc{Theorem} 4.2, Ch. IV)}
\label{closed_sum}
Let $ Y $ and $ Z $ be closed subspaces of $ E $. Then $ Y + Z $ is
closed in $ E $ if and only if $ \gamma (Y,Z) > 0 $.
\end{lemma}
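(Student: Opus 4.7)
The plan is to reformulate $\gamma(Y,Z)$ in terms of a natural bounded linear map between Banach quotient spaces and then apply the open mapping theorem. The key object is
\[
\phi\colon Y/(Y\cap Z)\longrightarrow E/Z,\qquad y + (Y\cap Z)\longmapsto y + Z,
\]
which is well defined because $Y\cap Z\subseteq Z$, linear and injective, with $\no{\phi}\leq 1$ and $\ran\phi = (Y + Z)/Z$. Both $Y/(Y\cap Z)$ and $E/Z$ are Banach spaces, since $Y$, $Y\cap Z$ and $Z$ are closed.

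First I would identify $\gamma(Y,Z)$ with the reciprocal of the norm of $\phi^{-1}$ on its image. Using the standard quotient-norm formulas $|y + (Y\cap Z)| = \dist(y,Y\cap Z)$ and $|y + Z| = \dist(y,Z)$, and noting that $Y\setminus Z = Y\setminus(Y\cap Z)$, one rewrites the defining expression as
\[
\gamma(Y,Z) = \inf_{y\in Y\setminus Z}\frac{|\phi(y + (Y\cap Z))|}{|y + (Y\cap Z)|}.
\]
Consequently $\gamma(Y,Z) > 0$ is equivalent to $\phi$ being a topological embedding (a linear homeomorphism onto its image). The degenerate cases $Y = \{0\}$ or $Y\subseteq Z$ are handled separately, with the convention $\inf\emptyset = +\infty$, and in both cases $Y + Z$ is trivially closed.

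Next I would connect closedness of $Y + Z$ to closedness of $\ran\phi$. Let $\pi\colon E\rightarrow E/Z$ be the quotient projection. Since $\pi$ is continuous and $Z\subseteq Y + Z$, we have $Y + Z = \pi^{-1}((Y+Z)/Z)$, so $Y + Z$ is closed in $E$ if and only if $\ran\phi = (Y + Z)/Z$ is closed in $E/Z$.

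The two equivalences are then closed by the open mapping theorem. If $\ran\phi$ is closed in $E/Z$, it is itself a Banach space and $\phi$ is a continuous linear bijection between Banach spaces, so its inverse is bounded, giving $\gamma(Y,Z) > 0$. Conversely, if $\gamma(Y,Z) > 0$, then $\phi$ is a homeomorphism onto its image, whence $\ran\phi$ is complete and therefore closed in $E/Z$. The main obstacle I expect is the careful bookkeeping in the identification of $\gamma(Y,Z)$ with $1/\no{\phi^{-1}}$: one has to match the domain of the infimum ($y\in Y\setminus Z$) with the nonzero elements of $Y/(Y\cap Z)$ and apply the quotient-norm identities correctly. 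Once this translation is in place, the rest reduces to a routine use of continuity of quotient maps and the open mapping theorem.
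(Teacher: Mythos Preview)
Your argument is correct and is a genuinely different route from the paper's. The paper first treats the case $Y\cap Z=\{0\}$ by hand: in one direction it identifies $\gamma(Y,Z)=\no{P}^{-1}$ for the projector $P=P(Y,Z)$ on the closed sum, and in the other it runs a direct Cauchy-sequence estimate using the gap; the general case is then obtained by passing to $E/(Y\cap Z)$ and observing $\gamma(\pi(Y),\pi(Z))=\gamma(Y,Z)$. Your approach instead packages $\gamma(Y,Z)$ as the lower bound of the single injective map $\phi\colon Y/(Y\cap Z)\to E/Z$ and closes both implications at once with the open mapping theorem, which avoids the case split and the explicit sequence argument. The trade-off is that the paper's detour yields the identity $\no{P(Y,Z)}=\gamma(Y,Z)^{-1}$ (equation \eqref{gap_norm}), which is reused in the proof of Proposition~\ref{GE_is_open}; your proof does not produce that formula as a byproduct, so if you adopt this route you would need to supply it separately where it is invoked later.
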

\begin{proof}
Suppose both spaces are different from $ \{0\} $. We prove the statement
when $ Y\cap Z = \{0\} $. 
Suppose $ X = Y \oplus Z $ is closed and call $ P $ the projector onto 
$ Y $ along $ Z $. Since $ Y\neq\{0\} $ the projector is not zero. Let 
$ x = y + z $. Then
\begin{equation}
\label{gap_norm}
\begin{split}
\no{P} = \sup_{y + z\neq 0}\frac{| y |}{| y + z |} =
\sup_{y\neq 0} \frac{| y |}{\dist (y,Z)};
\end{split}
\end{equation}
taking the inverses in the equation we find then 
$ \no{P}^{-1} = \gamma(Y,Z) $. Suppose, conversely, that 
$ \gamma (Y,Z) > 0 $. Let $ \{y_n\}\subset Y $ and $ \{z_n\}\subset Z $ 
be sequences such that $ x_n = y_n + z_n \ra x\in E $. 
If the sequence $ \{x_n\} $ has a constant 
subsequence, then $ x\in Z $, since both $ \{y_n\} $ and $ \{z_n\} $ are
constants. Otherwise, up to extracting a subsequence we 
can suppose that $ x_n \neq x_m $ whenever $ n\neq m $. Then we can write
\begin{equation*}
\begin{split}
| y_n - y_m | =&\frac{| y_n - y_m |}{| x_n - x_m |} \cdot | x_n - x_m |\leq
\frac{| y_n - y_m |}{\dist(y_n - y_m,Z)} \cdot | x_n - x_m | \\
\leq&\frac{| x_n - x_m |}{\gamma(Y,Z)};
\end{split}
\end{equation*}
since the last term of the inequality is a Cauchy sequence, $ \{y_n\} $ 
(and thus $ \{z_n\} $) converges and $ x = \lim y_n + \lim z_n \in X $. 
Since both $ Y $ and $ Z $ are closed $ x\in Y + Z $.  
For the general case consider the quotient space $ E/(Y\cap Z) $ and 
call $ \pi $ the projection onto the quotient.
Let $ \tilde{Y} = \pi(Y) $ and $ \tilde{Z} = \pi(Z) $; 
these are closed subspaces of $ F $ because $ \pi $ maps closed subspaces of 
$ E $ containing $ \ker\pi $ onto closed subspaces.
Moreover $ \gamma (\tilde{Y},\tilde{Z}) = \gamma (Y,Z) $, in fact
\begin{gather*}
\dist (\tilde{y},\tilde{Z}) = \inf_{z\in Z} \dist (y - z,Y\cap Z) =
\dist (y,Z).
\end{gather*}
The proof carries on as follows: suppose $ Y + Z $ is a closed subspace.
Then $ \pi(Y + Z) = \tilde{Y} + \tilde{Z} $ is closed in the quotient space. 
The space $ \pi(Y) $ and $ \pi(Z) $ have null intersection thus, by the
first part of the proof, $ \gamma(\pi(Y),\pi(Z)) > 0 $ hence 
$ \gamma(Y,Z) > 0 $. The converse is completely similar.
\end{proof}
In the next proposition we prove that $ G_s (E) $ is an open subset of 
$ G(E) $. A proof of this is due to E.~Berkson, 
\cite{Ber63} \textsc{Theorem 5.2}, when $ G(E) $ has the topology induced by
the Sch\"affer metric. However the same proof works for the metric of
geometric opening.
\begin{proposition}
\label{GE_is_open}
Let $ X\in G_s (E) $ be a proper subspace of $ E $. Let 
$ Y\in G_s (E) $ be a complement of $ X $. Denote by $ P $ the 
projector onto $ X $ along $ Y $. If $ Z\in G(E) $ and
\begin{align}
\rho_S (X,Z) &< \gamma(X,Y), \label{GE_is_open:1} \\
\rho_S (Z,X) &< \gamma(Y,X)  \label{GE_is_open:2}
\end{align}
then $ Z\oplus Y = E $. If $ Q $ is the projector onto $ Z $ along $ Y $ the
operator $ I + Q - P $ is invertible and maps $ X $ onto $ Z $. Moreover 
\begin{gather}
\label{GE_is_open:tesi}
\no{P - Q}\leq \no{I - P}\frac{\no{P} \rho_S (X,Z)}%
{1 - \no{P}\rho_S (X,Z)} 
\end{gather}
\end{proposition}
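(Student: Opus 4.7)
The plan is to verify successively: (a) $Z\cap Y=\{0\}$ and $P|_Z$ is bounded below; (b) $P|_Z$ is surjective by a geometric-series iteration, so $E=Y\oplus Z$ and the projector $Q$ onto $Z$ along $Y$ is well defined; (c) $I+Q-P$ is a bounded bijection of $E$ sending $X$ onto $Z$; (d) the quantitative bound \eqref{GE_is_open:tesi}.

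For (a) I would use hypothesis \eqref{GE_is_open:2}. From the computation in the proof of Lemma \ref{closed_sum} one has $\no{P}^{-1}=\gamma(X,Y)$ and $\no{I-P}^{-1}=\gamma(Y,X)$, so the assumption reads $\no{I-P}\rho_S(Z,X)<1$. For every $z\in S(Z)$, pick $x\in S(X)$ with $|z-x|\le\rho_S(Z,X)+\varepsilon$; since $(I-P)x=0$, $|(I-P)z|=|(I-P)(z-x)|\le\no{I-P}(\rho_S(Z,X)+\varepsilon)$. As $\varepsilon>0$ is arbitrary, $\no{(I-P)|_Z}\le\no{I-P}\rho_S(Z,X)<1$. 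This forces $Z\cap Y=\{0\}$ (any $z$ there satisfies $(I-P)z=z$) and gives $|Pz|\ge(1-\no{(I-P)|_Z})|z|$, so $P|_Z$ is bounded below.

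For (b), set $q:=\no{P}\rho_S(X,Z)<1$ using \eqref{GE_is_open:1}. Given $x\in X$, I would construct a right inverse $T\colon X\to Z$ of $P|_Z$ by iteration: starting from $\xi_0=x$, at each step pick $\zeta_n\in S(Z)$ with $|\zeta_n-\xi_n/|\xi_n||\le\rho_S(X,Z)+\varepsilon$, set $z_n:=|\xi_n|\zeta_n\in Z$ and $\xi_{n+1}:=\xi_n-Pz_n$. Then $|\xi_{n+1}|\le(q+\varepsilon\no{P})|\xi_n|$, so for small $\varepsilon$ the partial sums of $Tx:=\sum_n z_n$ converge in the closed subspace $Z$ and $PTx=x$ by telescoping. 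Together with (a), $P|_Z\colon Z\to X$ is a Banach isomorphism, hence $E=Y\oplus Z$ and the projector $Q$ onto $Z$ along $Y$ exists. Furthermore $Q=TP$: both sides vanish on $Y$, and on $X$ they both produce the unique element of $Z$ congruent to the input modulo $Y$. In particular $QP=Q$ and $PQ=P$.

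For (c), $(I+Q-P)(x+y)=Qx+y$ for $x+y\in X\oplus Y$ identifies $I+Q-P$ with the direct-sum isomorphism $X\oplus Y\to Z\oplus Y$ acting as $Q|_X=T\colon X\to Z$ on $X$ and as the identity on $Y$; boundedness of the inverse follows from the Open Mapping Theorem. For (d), the identity $P-Q=(I-Q)P$ (immediate from $PQ=P$) reduces the estimate to bounding $\no{(I-Q)|_X}$. For $x\in S(X)$, telescoping together with $\xi_n\in X$ (so $(I-P)\xi_n=0$) gives
\[
(I-Q)x\;=\;x-Tx\;=\;-\sum_{n\ge 0}(I-P)z_n\;=\;-\sum_{n\ge 0}(I-P)(z_n-\xi_n).
\]
Using $|z_n-\xi_n|\le(\rho_S(X,Z)+\varepsilon)|\xi_n|$ and $|\xi_n|\le(q+\varepsilon\no{P})^n$, summing the geometric series and letting $\varepsilon\to 0$ yields $\no{(I-Q)|_X}\le\no{I-P}\rho_S(X,Z)/(1-q)$; multiplying by $\no{P}$ produces exactly \eqref{GE_is_open:tesi}. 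The delicate step is this last one: the identity $(I-P)z_n=(I-P)(z_n-\xi_n)$ is what supplies the factor $\rho_S(X,Z)$ at every stage of the iteration, giving the sharp prefactor $\no{I-P}$ rather than the cruder $\no{I-P}\no{P}$ that a direct bound $|(I-P)z_n|\le\no{I-P}|z_n|$ would produce.
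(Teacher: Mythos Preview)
Your proof is correct and follows essentially the same approach as the paper: the same geometric-series iteration on $X$ to produce the $Z$-component, and the same key observation that $(I-P)z_n=(I-P)(z_n-\xi_n)$ supplies the sharp factor $\rho_S(X,Z)$ in the final estimate. The only cosmetic differences are that the paper phrases step (a) as $\gamma(Z,Y)>0$ rather than ``$P|_Z$ bounded below,'' and for (c) the paper exhibits the explicit inverse $I-Q+P$ (which works because $Q$ and $P$ share the kernel $Y$, so $QP=Q$ and $PQ=P$) rather than invoking the Open Mapping Theorem.
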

\begin{proof}
First we prove that $ Z\cap Y = \{0\} $ and $ Z + Y $ is closed. In fact,
given $ y\in Z\cap Y $, $ | y | = 1 $, from (\ref{GE_is_open:2}) we can write
\begin{gather*}
\dist(y,X)\leq\dist(y,S(X))\leq\rho_S (Z,X) < \gamma(Y,X)\leq \dist(y,X)
\end{gather*}
which is absurd. To prove that $ Y + Z $ is closed it will suffice to show
that $ \gamma(Z,Y) > 0 $, by Proposition \ref{closed_sum}. Let $ z\in S(Z) $ 
and $ 1 < \alpha $; there exists $ x_{\alpha}\in S(X) $ such that 
\begin{gather*}
\alpha\dist(x_{\alpha},Z)\geq | x_{\alpha} - z |;
\end{gather*}
for any $ y\in Y $ we can write
\begin{equation*}
\begin{split}
| z - y |&\geq | x_{\alpha} - y | - | x_{\alpha} - z |\geq
\dist(x_{\alpha},Y) - \alpha\dist(x_{\alpha},Z) \\
&\geq\gamma(X,Y) - \alpha\rho_S (X,Z);
\end{split}
\end{equation*}
if $ \alpha - 1 $ is small the last term is positive. Taking the infimum over 
$ Y $ and $ S(Z) $ we get $ \gamma(Z,Y) > 0 $, hence $ Y + Z $ is closed.
We prove now that $ Z + Y = E $ by showing that $ X\subseteq Z + Y $.
Let $ x\in X $ and $ \lambda > 1 $; by induction we can build two sequences 
$ \{x_n\}\subset X $, $ \{z_n\}\subset S(Z) $,  such that
\begin{gather}
x_0 = x,\ | x_n - z_n |\leq\lambda\rho_S (X,Z) |x_n|, 
\ x_{n + 1} = P(x_n - z_n)\label{GE_is_open:3}\\
x = \sum_{k = 0} ^n (z_k + y_{k + 1}) + x_{n + 1}\label{GE_is_open:4}
\end{gather}
where $ y_{k + 1} = (I - P) (x_k - z_k) $. For every $ k\in\mathbb{N} $ we also
have, by induction
\begin{gather}
\label{GE_is_open:5}
|x_k |\leq (\lambda\no{P}\rho_S (X,Z))^k |x_0|;
\end{gather}
by (\ref{gap_norm}) $ \no{P} = \gamma(X,Y)^{-1} $ and (\ref{GE_is_open:1}) 
allows us to choose a positive $ \lambda $ such that
\[
\lambda\rho_S (X,Z)\gamma(X,Y)^{-1} < 1. 
\]
Then $ x_k \rightarrow 0 $. Taking the limit in (\ref{GE_is_open:4}) we find
$ x\in\orl{Z + Y} = Z + Y $. The operator $ I + Q - P $ 
maps $ X $ into $ Z $ and fixes $ Y $. Since $ Q $ and $ P $ project along
the same space a direct computation shows that its inverse is $ I - Q + P $,
thus $ (I + Q - P) X = Z $. Choose $ \lambda > 1 $ and $ v\in E $. We apply
the construction made above to $ x = Pv $. By (\ref{GE_is_open:3}) we have
\begin{equation*}
\begin{split}
|y_{k + 1}|&\leq\no{I - P}|x_k - y_k|\leq\lambda\no{I - P}\rho_S (X,Z)|x_k|\\
&\leq\frac{\no{I - P}}{\no{P}} (\lambda\no{P}\rho_S (X,Z))^{k + 1} |x|
\end{split}
\end{equation*}
by (\ref{GE_is_open:4}). If $ \lambda\no{P}\rho_S (X,Z) < 1 $ we have
\begin{gather*}
|(P - Q) P v |\leq \sum_{k = 0} ^{\infty} |y_{k + 1}|
\leq\no{I - P}\frac{\lambda\rho_S (X,Z)}%
{1 - \lambda\no{P}\rho_S (X,Z)} |Pv|.
\end{gather*}
Letting $ \lambda\rightarrow 1 $, since $ (P - Q) v = (P - Q) Pv $, we obtain
(\ref{GE_is_open:tesi}).
\end{proof}
\begin{corollary}
The subset $ G_s (E) $ is open in $ G(E) $ with the topology induced by the
geometric opening.
\end{corollary}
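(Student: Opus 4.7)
The plan is to apply Proposition \ref{GE_is_open} and exploit the fact, already remarked, that the metrics $\delta$, $\delta_S$, and $\delta_1$ induce the same topology on $G(E)$. Given $X \in G_s(E)$, it suffices to exhibit a neighbourhood of $X$ in one of these metrics that lies entirely in $G_s(E)$.

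Suppose first that $X$ is a proper, nonzero subspace and fix a topological complement $Y$. Because $X + Y = E$ is closed, Lemma \ref{closed_sum} (applied to $X + Y$ and to $Y + X$) gives $\gamma(X,Y) > 0$ and $\gamma(Y,X) > 0$, so
\[
r := \min\{\gamma(X,Y),\gamma(Y,X)\} > 0.
\]
For any $Z \in G(E)$ with $\delta_S(X,Z) < r$, both inequalities (\ref{GE_is_open:1}) and (\ref{GE_is_open:2}) are satisfied simultaneously, and Proposition \ref{GE_is_open} produces a decomposition $Z \oplus Y = E$. In particular $Z \in G_s(E)$, so the open $\delta_S$-ball of radius $r$ about $X$ is contained in $G_s(E)$.

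The two extreme subspaces $\{0\}$ and $E$ both belong to $G_s(E)$ and must be shown to be interior points as well. Both are in fact isolated points of $G(E)$: for $X = \{0\}$ one has $\rho_1(Z,\{0\}) = \sup_{y \in D(Z)} |y| = 1$ whenever $Z \neq \{0\}$, while for $X = E$ Proposition \ref{trk:2} forces any $Z \subseteq E$ with $\rho_1(E,Z) < 1$ to coincide with $E$. Hence the singletons $\{\{0\}\}$ and $\{E\}$ are themselves open in $G(E)$, and are trivially contained in $G_s(E)$. The entire argument is bookkeeping around Proposition \ref{GE_is_open}; the only point that could fail is the strict positivity of the radius $r$, which is precisely what Lemma \ref{closed_sum} guarantees.
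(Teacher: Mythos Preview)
Your proof is correct and is precisely the argument the paper intends: the corollary is stated without proof as an immediate consequence of Proposition~\ref{GE_is_open}, and your write-up supplies exactly those details, including the separate treatment of the degenerate cases $X=\{0\}$ and $X=E$ that fall outside the proposition's hypothesis of a proper subspace.
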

\begin{TAKEOUT}
As we showed in the preceding example, there are Banach 
spaces where the subset of splitting subspaces is not closed in the 
Grassmannian of closed subspaces. For sake of completeness we provide an 
example of Banach $ E $ (non-isomorphic to a Hilbert)
where $ G_s (E) $ is both open and closed. This is the case of
$ l^{\infty} (\mathbb{C}) $. It is known that the closed and splitting 
subspaces of $ l^{\infty} (\mathbb{C}) $ are the non-separable ones. 
If a closed subspace $ X $ is limit
of a sequence of closed and splitting subspaces in a ball centered in $ X $
of radius smaller than $ 1/2 $ there are splitting subspaces. 
We use now a result of E.~Berkson
(\textsc{Theorem} 2.2 of \cite{Ber63}): if $ \delta_S (X,Y) < 1/2 $ then 
the minimum cardinality of a dense subset of $ X $ is the same as that of 
$ Y $. Thus, if $ Y $ splits it is not separable, hence $ X $ is not even 
separable, thus $ X $ splits.
\end{TAKEOUT}
Another consequence of Proposition \ref{GE_is_open} is the following
\begin{proposition}
\label{continuity_of_graph}
Let $ X $ and $ Y $ be Banach spaces. The map 
$ \mathcal{L}(X,Y)\rightarrow G_s (X\times Y) $ that associates an operator 
with its graph is a homeomorphism with the open subset 
\[
\set{Z\in G_s (X\times Y)}{Z\oplus\{0\}\times Y = X\times Y}.
\]
\end{proposition}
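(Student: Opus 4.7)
The plan is to verify that the map $ T\mapsto\Gamma_T = \set{(x,Tx)}{x\in X} $ is a bijection onto the stated set and that both directions are continuous; openness of the target set will be a byproduct of Proposition \ref{GE_is_open}.

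First, I would check that the map is well-defined into that set: the graph $ \Gamma_T $ is closed by continuity of $ T $, and the formula $ P_T (x,y) = (x,Tx) $ defines a bounded projector in $ \mathcal{L}(X\times Y) $ with range $ \Gamma_T $ and kernel $ \{0\}\times Y $. Hence $ \Gamma_T\oplus \{0\}\times Y = X\times Y $ and $ \Gamma_T $ lies in the subset in the statement. Conversely, for $ Z $ in that subset, the restriction of the first-coordinate projection $ \pi_X $ to $ Z $ is a bounded linear bijection onto $ X $; by the open mapping theorem its inverse has the form $ x\mapsto(x,Tx) $ for a uniquely determined $ T\in\mathcal{L}(X,Y) $ with $ \Gamma_T = Z $. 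This settles bijectivity.

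Next, openness of the target subset in $ G_s (X\times Y) $ follows from Proposition \ref{GE_is_open} applied with the fixed complement $ \{0\}\times Y $: the conditions (\ref{GE_is_open:1}) and (\ref{GE_is_open:2}) guarantee that every $ Z $ close enough to a given $ Z_0 $ in $ \delta_S $ still admits $ \{0\}\times Y $ as a topological complement.

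For bicontinuity, I would endow $ X\times Y $ with the max-norm. The forward continuity is a direct calculation: for any $ (x,Tx)\in D(\Gamma_T) $ there holds $ |x|\leq 1 $, while $ (x,Sx)\in\Gamma_S $ with $ |(x,Tx) - (x,Sx)|\leq\no{T - S} $, yielding $ \rho_1 (\Gamma_T,\Gamma_S)\leq\no{T - S} $ and, symmetrically, $ \delta_1 (\Gamma_T,\Gamma_S)\leq\no{T - S} $. The reverse direction is the point where the quantitative estimate (\ref{GE_is_open:tesi}) is essential: if $ \Gamma_{T_n}\rightarrow\Gamma_{T_0} $ in $ G_s $, applying that estimate to the projectors along $ \{0\}\times Y $ gives $ \no{P_{T_n} - P_{T_0}}\rightarrow 0 $; since $ T_n x $ is the $ Y $-component of $ P_{T_n} (x,0) $, one concludes $ \no{T_n - T_0}\rightarrow 0 $. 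The main obstacle is precisely this reverse continuity, since mere openness would only give a continuous bijection; the norm estimate on projectors is what upgrades this to a homeomorphism.
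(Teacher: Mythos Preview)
Your proof is correct and follows essentially the same strategy as the paper's: both establish the bijection onto the set of complements of $\{0\}\times Y$, and both lean on Proposition~\ref{GE_is_open} for the nontrivial direction (openness of the target and continuity of the inverse via the projector along $\{0\}\times Y$).

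The one genuine difference is the forward continuity. The paper does not estimate $\delta_1(\Gamma_T,\Gamma_S)$ directly; instead it introduces the invertible operator $\check{S}(x,y)=(x,y+Sx)$, observes that $\Gamma_S=\check{S}(X\times\{0\})$, and then invokes Proposition~\ref{continuity_of_images} (continuity of the $GL$-action on the Grassmannian). Your direct computation $\delta_1(\Gamma_T,\Gamma_S)\le\no{T-S}$ is more elementary and self-contained; the paper's route is more structural and ties the result into the $GL(E)$-action machinery developed earlier. For the reverse continuity, you are actually more explicit than the paper: the paper writes down the inverse $Z\mapsto (I-P)\,(\res{P}{Z})^{-1}$ but does not spell out why it is continuous, whereas you extract this cleanly from the estimate~(\ref{GE_is_open:tesi}).
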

\begin{proof}
Since $ S $ is bounded $ \graph(S) $ is closed and it is a topological 
complement of $ \{0\}\times Y $, then it is an element of $ G_s (X\times Y) $.
Hence the map is well defined. 
For any $ S\in\mathcal{L}(X,Y) $ define $ \check{S} (x,y) = (x, y + S x) $; it is
an invertible operator. Since $ \graph(S) = \check{S}(X\times \{0\}) $,
by Proposition \ref{continuity_of_images} the map is continuous and injective.
To prove that it is also open let $ \graph(S) $ be a point in the image. We
show that there exists $ r > 0 $ such that 
$ B(\graph(S),r)\subset\im(\graph) $, with the metric induced by $ \delta_S $.
We choose
\begin{gather*}
r < \hat{\gamma}(\graph(S),\{0\}\times Y);
\end{gather*}
given $ Z\in B(\graph(S),r) $, by Proposition \ref{GE_is_open} $ Z $ is a 
topological complement of $ \{0\}\times Y $. Thus, for every 
$ x\in X\times\{0\} $ there exists a unique $ z\in Z $ such that $ Pz = x $.
Then $ P $ maps isomorphically $ Z $ onto $ X $ and
\begin{gather*}
\graph((I - P) \res{P}{Z} ^{-1}) = Z
\end{gather*}
which concludes the proof.
\end{proof}
Given $ X\in G_s (E) $ and $ Y $ such that $ X\oplus Y $ we can identify
$ X $ with $ X\times\{0\} $, the graph of the null operator. The subset
of topological complements of $ X\times\{0\} $ is open and homeomorphic
to the Banach space $ \mathcal{L}(X,Y) $ by Proposition 
\ref{continuity_of_graph}. Thus we have proved that
\begin{corollary}
$ G_s (E) $ is a topological Banach manifold.
\end{corollary}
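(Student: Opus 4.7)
The plan is to exhibit, around each point of $ G_s (E) $, an open neighbourhood homeomorphic to an open subset of a Banach space, by combining Proposition \ref{GE_is_open}, Proposition \ref{continuity_of_images} and Proposition \ref{continuity_of_graph}. This will be enough for a topological Banach manifold, since $ G_s (E) $ is already a metric (hence Hausdorff) space, and the transition maps are automatically continuous once we have open charts into Banach spaces.

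First I would fix $ X\in G_s (E) $ and choose any complement $ Y\in G(E) $, so that $ X\oplus Y = E $. Let $ T\colon X\times Y\to E $ be the bounded linear isomorphism $ T(x,y) = x + y $, whose inverse is also bounded by the open mapping theorem. By Proposition \ref{continuity_of_images}, the induced map $ G(X\times Y)\to G(E) $, $ Z\mapsto T(Z) $, is a homeomorphism that restricts to a homeomorphism $ G_s (X\times Y)\to G_s (E) $.

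Next I would apply Proposition \ref{continuity_of_graph} to obtain a homeomorphism
\[
\mathcal{L}(X,Y)\longrightarrow V,\quad S\longmapsto \graph(S),
\]
where $ V = \set{Z\in G_s (X\times Y)}{Z\oplus \{0\}\times Y = X\times Y} $ is open in $ G_s (X\times Y) $. Composing with $ T $ gives a homeomorphism $ \psi_{X,Y}\colon \mathcal{L}(X,Y)\to U_{X,Y} $, where $ U_{X,Y} = T(V) $ is an open neighbourhood of $ X $ in $ G_s (E) $, because the null operator corresponds to $ X\times\{0\} $ which is mapped to $ X $ by $ T $. To see that $ U_{X,Y} $ really is open, I would use that $ V $ is open in $ G_s (X\times Y) $, and that $ G_s (E) $ is open in $ G(E) $ by the corollary to Proposition \ref{GE_is_open}; combining with the homeomorphism induced by $ T $ yields that $ U_{X,Y} $ is open in $ G_s (E) $.

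Thus every $ X\in G_s (E) $ has a neighbourhood homeomorphic to the Banach space $ \mathcal{L}(X,Y) $, and these charts cover $ G_s (E) $, proving that $ G_s (E) $ is a topological Banach manifold. The main obstacle is essentially bookkeeping: making sure that the identification of $ E $ with $ X\times Y $ is compatible with the topology on the Grassmannian, which is precisely what Proposition \ref{continuity_of_images} guarantees. One may also remark, for naturality, that different choices of the complement $ Y $ give modelling Banach spaces $ \mathcal{L}(X,Y) $ that are mutually isomorphic (all being isomorphic to $ \mathcal{L}(X,E/X) $), so the local model depends only on $ X $; this, however, is not needed for the purely topological statement of the corollary.
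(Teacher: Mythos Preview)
Your proof is correct and follows essentially the same approach as the paper: choose a complement $Y$ of $X$, identify $E$ with $X\times Y$, and invoke Proposition \ref{continuity_of_graph} to obtain a chart modelled on $\mathcal{L}(X,Y)$. The only difference is that the paper leaves the identification $E\cong X\times Y$ implicit, whereas you make it explicit via the isomorphism $T$ and appeal to Proposition \ref{continuity_of_images} to check that it is compatible with the Grassmannian topology.
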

\begin{definition}
Define the \textsl{space of splittings} the subset 
\[
\set{(X,Y)\in G_s (E)\times G_s (E)}{X\oplus Y = E} 
\]
endowed with the product metric $ \delta_S\times\delta_S $ and denote it by 
$ \Splt(E) $.
\end{definition}
We can associate to a pair $ (X,Y)\in\Splt(E) $ the projector $ P(X,Y) $.
\begin{proposition}
\label{continuity_of_splits}
The map $ P\colon\Splt(E)\rightarrow\mathcal{P}(E), \ (X,Y)\mapsto P(X,Y) $
is a homeomorphism with its image. 
\end{proposition}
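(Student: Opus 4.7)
The map $P$ is bijective onto $\mathcal{P}(E)$: its set-theoretic inverse is
\[
\Psi\colon\mathcal{P}(E)\to\Splt(E),\quad P\mapsto (\ran P,\ker P),
\]
which is well defined because $\ker P = \ran(I - P)\in G_s (E)$ and $\ran P\oplus\ker P = E$. Hence I will prove that both $P$ and $\Psi$ are continuous.

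For continuity of $\Psi$, the estimate is completely elementary. Let $P_1,P_2\in\mathcal{P}(E)$ and pick $x\in D(\ran P_1)$, so $P_1 x = x$ and $|x|\leq 1$. Then $P_2 x\in\ran P_2$ and
\[
|x - P_2 x|=|P_1 x - P_2 x|\leq\no{P_1 - P_2}.
\]
Taking the supremum over $D(\ran P_1)$ gives $\rho_1(\ran P_1,\ran P_2)\leq\no{P_1 - P_2}$ and, by symmetry, $\delta_1(\ran P_1,\ran P_2)\leq\no{P_1 - P_2}$. Applying the same argument to $I - P_j$ handles the kernel, so $\Psi$ is $2$-Lipschitz into $G_s(E)\times G_s(E)$.

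For continuity of the map $(X,Y)\mapsto P(X,Y)$ at a base point $(X_0,Y_0)$, set $P_0 = P(X_0,Y_0)$. The plan is to break the simultaneous perturbation into two single-variable ones and apply the quantitative estimate \eqref{GE_is_open:tesi} of Proposition \ref{GE_is_open} at each step. First, for $(X,Y_0)$ with $\rho_S(X_0,X)$ and $\rho_S(X,X_0)$ smaller than $\hat{\gamma}(X_0,Y_0)$, Proposition \ref{GE_is_open} produces the projector $P_1 = P(X,Y_0)$ and gives
\[
\no{P_0 - P_1}\leq\no{I - P_0}\,\frac{\no{P_0}\,\rho_S(X_0,X)}{1 - \no{P_0}\,\rho_S(X_0,X)},
\]
so $P(X,Y_0)\to P_0$ as $X\to X_0$. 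Next I perturb the second factor: swapping the roles of range and kernel, the projector $P(X,Y)$ equals $I - P(Y,X)$, and I apply Proposition \ref{GE_is_open} at the base pair $(Y_0,X)$. This step is legitimate provided the minimum gap $\hat{\gamma}(Y_0,X)$ stays bounded away from $0$ when $X$ is near $X_0$; this follows from the first step, because by \eqref{gap_norm} the reciprocal of $\hat\gamma(Y_0,X)$ equals $\max\{\no{P(Y_0,X)},\no{P(X,Y_0)}\} = \max\{\no{I - P_1},\no{P_1}\}$, which is uniformly bounded since $P_1$ is close to $P_0$. Applying Proposition \ref{GE_is_open} again yields
\[
\no{P(X,Y) - P(X,Y_0)}\to 0\quad\text{as }\rho_S(Y,Y_0)\to 0.
\]
Combining the two estimates via the triangle inequality gives $P(X,Y)\to P_0$ as $(X,Y)\to(X_0,Y_0)$ in $\Splt(E)$.

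The main obstacle is this last verification: the quantitative estimate \eqref{GE_is_open:tesi} applies with a fixed complement, so to handle simultaneous perturbation one must control the minimum gap along an intermediate pair, which is why the two-step argument and the uniform bound on $\no{P_1}$ are needed. Once these continuities are in hand, $P$ and $\Psi$ are mutually inverse continuous maps, hence $P$ is a homeomorphism onto $\mathcal{P}(E)$.
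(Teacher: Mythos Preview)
Your argument is correct and follows essentially the same two-step strategy as the paper: first perturb the range keeping the kernel fixed, then perturb the kernel, in each case invoking the quantitative bound of Proposition~\ref{GE_is_open}. The paper packages these two steps as a single conjugating isomorphism $U=TS$ (with $T$ moving $X_0$ to $X$ and $S$ moving $Y_0$ to $T^{-1}Y$) and then estimates $\no{UP_0U^{-1}-P_0}$; you instead apply the estimate \eqref{GE_is_open:tesi} directly twice and use the triangle inequality. Your version is slightly more elementary in that it never writes down $U$, but both arguments rely on exactly the same lemma and the same uniform control of the intermediate gap $\hat\gamma(Y_0,X)$ via the boundedness of $\no{P_1}$. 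The continuity of the inverse is handled identically in both proofs by a direct Lipschitz estimate.
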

\begin{proof}
First observe that $ P $ is a bijection. Its inverse maps $ P $ to
$ (\ran P,\ker P) $. Suppose $ (X_0,Y_0) = (\ran P_0,\ker P_0) $ and 
$ \var > 0 $. We prove that there exists $ \delta > 0 $ such that 
$ P(B((X_0,Y_0),\delta))\subseteq B(P_0,\var) $. 
More precisely, in a suitable neighbourhood of $ (X_0,Y_0) $, for every 
$ (X,Y) $ we can choose continuously an invertible operator $ U $
that maps $ X_0 $ and $ Y_0 $ onto $ X $ and $ Y $ respectively and
\begin{gather}
\label{continuity_of_splits:4}
\no{U P_0 U^{-1} - P_0} < \var.
\end{gather}
This completes the proof because $ U P_0 U^{-1} $ is a projector with range 
$ X $ and kernel $ Y $. Thus $ U P_0 U ^{-1} $ is the projector onto $ X $ 
along $ Y $. We construct $ U $ and $ \delta $ as follows: as first step we
choose $ \delta_0 < \hat{\gamma}(X_0,Y_0) $. If 
$ \delta_S (X_0,X) < \hat{\gamma}(X_0,Y_0) $ the Proposition \ref{GE_is_open} 
provides us with an operator $ T = I + P(X,Y_0) - P_0 $ and a positive 
constant $ c $ such that
\begin{gather}
\label{continuity_of_splits:5}
T X_0 = X, \ T Y_0 = Y_0, \ \no{T - I} < c \delta_S(X_0,X).
\end{gather}
As second step we build another invertible operator $ S $ that maps 
$ Y_0 $ onto $ T^{-1} Y $ and fixes $ X_0 $, applying the same Proposition.
Hence $ U = T S $ fits our request. This can be done if, for instance, 
$ \delta_S (T^{-1} Y,Y_0) < \hat{\gamma}(X_0,Y_0) $. Using the estimate 
(\ref{continuity_of_images:1}) we write
\begin{gather}
\label{continuity_of_splits:6}
\delta_S (T^{-1} Y,Y_0) = \delta_S(T^{-1} Y, T^{-1} Y_0)
\leq 2\no{T}\no{T^{-1}}\delta_S(Y_0,Y);
\end{gather}
if $ c\delta_S(X,X_0) < 1 $, using Von Neumann series, we can estimate 
$ \no{T^{-1}} $ with $ 1/(1 - \no{I - T}) $. Then the 
(\ref{continuity_of_splits:6}) becomes
\begin{gather}
\label{continuity_of_splits:7}
\delta_S (T^{-1} Y,Y_0)\leq 2\frac{1 + c\hat{\gamma}(X_0,Y_0)}%
{1 - c\hat{\gamma}(X_0,Y_0)} 
\delta_S(Y_0,Y).
\end{gather}
Then, if we choose
\begin{gather}
\label{continuity_of_splits:8}
\delta_S(Y_0,Y) < \frac{1 - c\hat{\gamma}(X_0,Y_0)}%
{2(1 + c\hat{\gamma}(X_0,Y_0))} 
\end{gather}
we have $ \delta_S(T^{-1} Y,Y_0) < \hat{\gamma}(X_0,Y_0) $ and it is possible
to apply \ref{GE_is_open} and such operator $ S $ exists.
By (\ref{GE_is_open:tesi}) and (\ref{continuity_of_splits:7}) we can write
the (\ref{continuity_of_splits:8}) as
\begin{gather}
\label{continuity_of_splits:9}
\no{I - S} < k\delta_S(Y_0,Y).
\end{gather}
If we choose $ \delta_1 = \min\{\delta_0,1,1/8k,1/4c\} $, using 
(\ref{continuity_of_splits:5}) and (\ref{continuity_of_splits:9}) we can
estimate the norm of the operator $ U - I $ from above by
\begin{equation}
\label{continuity_of_splits:10}
\begin{split}
\no{T(S - I) + T - I}&\leq k(1 + c\delta_S (X_0,X)) \delta_S (Y_0,Y) + 
c\delta_S (X_0,X) \\
&\leq 2k \delta_S (Y_0,Y) + c \delta_S (X_0,X)\leq 1/2.
\end{split}
\end{equation}
We can write $ U P_0 U^{-1} - P_0 $ as 
$ (U - I) P_0 U^{-1} + P_0 (U^{-1} - I) $. By (\ref{continuity_of_splits:10})
the norm of $ I - U $ is strictly smaller than $ 1 $. Hence $ \no{U^{-1}} $
can be estimated by $ 1/(1 - \no{I - U}) $ which is smaller than $ 2 $,
still by (\ref{continuity_of_splits:10}). Then 
\begin{equation*}
\begin{split}
\no{U P_0 U^{-1} - P_0}&\leq 4\no{P_0}\no{I - U}
\leq 4\no{P_0} (2k \delta_S (Y_0,Y) + c \delta_S (X_0,X)).
\end{split}
\end{equation*}
Finally we set
\begin{gather*}
\delta = \min\left\{\delta_1,\frac{\var}{4(2k + c)\no{P_0}}\right\}.
\end{gather*}
The continuity of the inverse follows at once: given $ P,Q\in\al{P}(E) $
\begin{equation*}
\begin{split}
\delta_S\times\delta_S((\ran Q,\ker Q),(\ran P,\ker P)) =& 
\delta_S(\ran Q,\ran P) + \delta_S(\ker Q,\ker P) \\
\leq &4\no{P - Q};
\end{split}
\end{equation*}
in fact is Lipschitz.
\end{proof}
\section{Compact perturbation of subspaces}
We define a relation of 
\textsl{compact perturbation} for pairs of closed subspaces and
an integer that we call \textsl{relative dimension}. We prove that it
is well-behaved with respect to the Fredholm index of a pair of subspaces
and that kernels and images of two operators with compact difference are
compact perturbation of each other. When both spaces are complemented, the 
relation of compact perturbation is equivalent to require that for
every pair of projectors $ (P,Q) $, the operators $ (I - P)Q $ and
$ (I - Q)P $ are compact. That generalizes an existing definition
in \cite{ZL99} when $ P - Q $ is compact.\vskip .2em
We need some preliminary concepts about Fredholm operators and
compact operators. We recall some basic definitions and state some useful 
results about Fredholm operators and Fredholm pairs. For more details we 
refer to Appendix B. \vskip .2em
Given a linear operator $ T\colon E\rightarrow F $ we can always
consider the vector spaces $ \ker T $ and $ F/\ran T $. We denote the second
by $ \coker T $.
\begin{definition}
A bounded operator $ T\in\al{L}(E,F) $ is called \emph{semi-Fredholm} if and 
only if $ \ran T $ is closed and either $ \ker T $ or $ \coker T $ has finite 
dimension. We define its \emph{index} as
\[
\ind(T) = \dim\ker T - \dim\coker T.
\]
When only one between $ \ker T $ and $ \coker T $ has finite dimension we 
will write, for short, $ \ind(T) = \infty $ or $ \ind(T) = -\infty $, 
respectively.
If both spaces have finite dimension we say that $ T $ is \emph{Fredholm} and
the index is a integer.
\glsadd{labfind}
\end{definition}
\begin{definition}
A pair $ (X,Y) $ of closed and linear subspaces is said  
\textsl{semi-Fredholm} if and only if $ X + Y $ is closed and either 
$ X\cap Y $ or $ E/(X + Y) $ has finite dimension. We define its \emph{index}
as
\[
\ind(X,Y) = \dim X\cap Y - \codim X + Y.
\glsadd{labindXY}
\]
When only one between $ X\cap Y $ and $ X + Y $ has finite dimension we 
will write $ \ind(X,Y) = \infty $ or $ \ind(X,Y) = -\infty $, 
respectively.
If both $ X\cap Y $ and $ X + Y $ have finite dimension the pair is said 
\emph{Fredholm}.
\end{definition} 
There is a strict relation between (semi)Fredholm pairs and (semi)Fredholm 
operators. Precisely, given closed subspaces $ (X,Y) $ the operator
\begin{gather}
\label{pairing_operator}
F_{X,Y} \colon X\times Y\rightarrow E,\quad (x,y)\mapsto x - y 
\end{gather}
is (semi)Fredholm if and only if $ (X,Y) $ is (semi)Fredholm and 
$ \ind(X,Y) = \ind(F_{X,Y}) $. Given Banach spaces $ E, F $ we denote by 
$ \mathcal{L}_c (E,F) $ the set of compact operators.
\glsadd{labLcEF}
\begin{definition}
An operator $ T\colon E\rightarrow F $ is said \emph{essentially invertible}
if and only if there exists $ S\in\mathcal{L} (F,E) $ and compact operators
$ K\in\mathcal{L}_c (E) $, $ H\in\mathcal{L}_c (F) $ such that
\begin{align*}
S\circ T &= I_E + K \\
T\circ S &= I_F + H.
\end{align*}
\end{definition}
It is not hard to prove that an operator is Fredholm if and only if is
essentially invertible, see Proposition \ref{essential_inverse}. We end
this section with a strong result of perturbation theory.
\begin{theorem}
\label{perturbation}
{\rm(cf. \cite{Kat95}, Ch. IV, \S 5).} Let $ (X,Y) $ be a
semi-Fredholm pair. Then there exists $ \delta > 0 $ such that, 
$ \delta_S (X',X) < \delta $, $ \delta_S (Y',Y) < \delta $ implies that
$ (X',Y') $ is semi-Fredholm and $ \ind (X',Y') = \ind (X,Y) $.
\end{theorem}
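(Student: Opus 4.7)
The plan is to reduce the perturbation statement for pairs to the classical perturbation statement for semi-Fredholm operators, via the pairing map (\ref{pairing_operator}). Since $(X,Y)$ is semi-Fredholm with index $n$ if and only if $F_{X,Y}\colon X\times Y\to E$, $(x,y)\mapsto x-y$, is semi-Fredholm with index $n$, it suffices to exhibit, for $X',Y'$ sufficiently close to $X,Y$ in $\delta_S$, an identification of $F_{X',Y'}$ with a small-norm perturbation of $F_{X,Y}$.

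The key technical step is to construct bounded bijective maps $U\colon X\to X'$ and $V\colon Y\to Y'$ whose distances from the inclusions $\iota_X\colon X\hookrightarrow E$ and $\iota_Y\colon Y\hookrightarrow E$ (viewed as operators into $E$) are controlled linearly by $\delta_S(X,X')$ and $\delta_S(Y,Y')$. The model is Proposition \ref{GE_is_open}: when $X$ admits a topological complement $Y_0$, the restriction of $I+P(X',Y_0)-P(X,Y_0)$ to $X$ is exactly such a map, with norm estimate furnished by (\ref{GE_is_open:tesi}). In the non-complemented case one has to work intrinsically with the opening, constructing $U$ as the limit of an iterative scheme that replaces each $x\in X$ by progressively closer approximations drawn from $X'$; such a scheme converges as soon as $\hat\gamma(X,X')$ is bounded away from zero, so the hypothesis on $\delta_S$ must be strong enough to ensure this.

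With $U$ and $V$ in hand, set $\tilde F\colon X\times Y\to E$, $\tilde F(x,y)=U(x)-V(y)$. Then $\no{\tilde F-F_{X,Y}}\leq \no{U-\iota_X}+\no{V-\iota_Y}$, which can be made arbitrarily small by shrinking $\delta$. By the standard stability of semi-Fredholm operators under additive perturbations of small norm, $\tilde F$ is semi-Fredholm with $\ind\tilde F=\ind F_{X,Y}$. Since $U\times V\colon X\times Y\to X'\times Y'$ is a topological isomorphism (it is bounded below near $\iota_X\times\iota_Y$ and surjective by construction) and $F_{X',Y'}\circ(U\times V)=\tilde F$, we conclude that $F_{X',Y'}$ is semi-Fredholm of the same index; translating back via the pairing operator yields $\ind(X',Y')=\ind(X,Y)$.

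The hard step will be producing $U$ and $V$ in the generality required by the statement: Proposition \ref{GE_is_open} only handles complemented subspaces, whereas a general semi-Fredholm pair need not consist of complemented factors. Overcoming this requires trading the global argument (invertible operators of all of $E$) for an intrinsic construction at the level of the gap $\hat\gamma$, in the spirit of \cite{Kat95}, Ch.~IV, \S5, and verifying that the resulting $U$, $V$ still satisfy the quantitative estimates against $\delta_S(X,X')$ and $\delta_S(Y,Y')$ needed to keep $\no{\tilde F-F_{X,Y}}$ below the stability threshold of the semi-Fredholm operator $F_{X,Y}$.
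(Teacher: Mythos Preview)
The paper does not prove this theorem; it is quoted with a reference to Kato, Ch.~IV, \S5, and used as a black box thereafter. Your reduction via the pairing operator $F_{X,Y}$ is exactly Kato's starting point (his Theorem IV.4.24 proceeds through the same operator), so up to that point you are on the standard track.

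The gap is the construction of $U$ and $V$ when $X,Y$ are not complemented. An ``iterative scheme that replaces each $x\in X$ by progressively closer approximations drawn from $X'$'' will not produce a \emph{linear} map: choosing a near-point in $X'$ linearly in $x$ amounts to having a bounded linear lift against the quotient map $E\to E/X'$ over $X$, which is essentially a projection onto $X'$---precisely the structure you are assuming absent. (Incidentally, the quantity $\hat\gamma(X,X')$ you invoke tends to $0$, not away from it, as $X'\to X$: two nearby distinct lines in the plane already have $\hat\gamma\approx\delta_S$.) You defer the construction to Kato Ch.~IV, \S5, but that section does not build such isomorphisms; it proves stability of the semi-Fredholm index under small perturbations in the \emph{gap} topology on closed operators (his Theorem IV.5.17), which accommodates operators with different domains directly through their graphs. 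Kato's actual completion is: show that small $\delta_S(X,X')$, $\delta_S(Y,Y')$ force the graph of $F_{X',Y'}$ to be gap-close to that of $F_{X,Y}$ inside $(E\times E)\times E$, and then invoke gap-stability. Your norm-perturbation route would require an independent construction of linear $U,V$, which is neither in the cited reference nor evidently simpler than the gap argument it was meant to replace.
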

\begin{definition}\rm{(cf. \textsc{Definition} 1.1 of \cite{AM01}).}
Two closed subspaces $ X $ and $ Y $ of a Hilbert spaces are
compact perturbation one of each other if the orthogonal projections $ P_X $
and $ P_Y $ have compact difference. This implies that $ X\cap Y^{\bot} $
and $ X^{\bot} \cap Y $ are finite dimensional subspaces and the relative
dimension is defined as
\begin{gather*}
\dim(X,Y) = \dim(X\cap Y^{\bot}) - \dim (X^{\bot}\cap Y).
\end{gather*}
\end{definition}
Our first aim is to define the relative dimension for pairs of closed subspaces
that do not necessarily split.
\begin{definition}[commensurability]
\label{commensurability}
Let $ X, Y\in G(E) $. The pair $ (X,Y) $ is said 
\emph{commensurable} if there are $ F,G\in\mathcal{L}(E) $ such that
\begin{align}
G X&\subset Y, \ G_{|X} = (I + H)_{|X}\label{commensurability:1},\\
F Y&\subset X, \ F_{|Y} = (I + K)_{|Y}\label{commensurability:2}
\end{align}
where $ H $ and $ K $ are compact operators.
\end{definition}
Being commensurable is an equivalence relation. Symmetry and reflectivity 
are obvious. The proof of transitivity reduces to check that products of
compact perturbations of the identity is a compact perturbation of the
identity. From now on when $ X $ is commensurable to $ Y $ we will call
the pair $ (X,Y) $ commensurable.
\begin{proposition}
Let $ (X,Y) $ be a commensurable pair and $ (F,G) $ as above. 
The restrictions of $ F $ and $ G $ to $ Y $ and $ X $, denoted by $ f $
and $ g $ respectively, are the essential inverse, one of each other, hence,
by Proposition \ref{essential_inverse}, are Fredholm operators. Moreover, 
if $ (F',G') $ is another pair
\begin{gather}
\label{relative_dimension}
\ind f = \ind f', \ \ind g = \ind g'.
\end{gather}
\end{proposition}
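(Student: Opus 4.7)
The plan is to exhibit $f$ and $g$ as explicit essential inverses by direct computation, then apply Proposition~\ref{essential_inverse} to conclude Fredholmness; independence will follow from the stability of the Fredholm index under compact perturbation.

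For the first part I will compute $f\circ g$ directly on $X$. Given $x\in X$, the inclusion $GX\subset Y$ lets me apply the defining relations in turn:
\[
f(g(x)) = F(Gx) = F((I+H)x) = (I+K)((I+H)x) = x + (H + K + KH)x.
\]
Since $f\circ g$ takes values in $X$ by construction, the difference $(f\circ g)(x) - x$ lies in $X$ for every $x\in X$. Therefore the operator $(H+K+KH)\bigl|_X$, which a priori takes values in $E$, actually has image contained in $X$, and as a restriction of a compact operator it is compact as a map $X\to X$. Hence $f\circ g = I_X + R_X$ with $R_X\in\mathcal{L}_c(X)$, and the symmetric calculation starting from $FY\subset X$ yields $g\circ f = I_Y + R_Y$ with $R_Y\in\mathcal{L}_c(Y)$. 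By Proposition~\ref{essential_inverse}, both $f$ and $g$ are Fredholm operators and are essential inverses of one another.

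For the invariance statement, let $(F',G')$ be another pair satisfying (\ref{commensurability:1})--(\ref{commensurability:2}) with associated compact operators $H',K'$, and write $f' = F'|_Y$, $g' = G'|_X$. Since both $F$ and $F'$ map $Y$ into $X$, so does their difference, and on $Y$ we have
\[
(F - F')\bigl|_Y = (I+K)\bigl|_Y - (I+K')\bigl|_Y = (K - K')\bigl|_Y,
\]
which is the restriction of a compact operator on $E$ with image in $X$; hence $f - f'\colon Y\to X$ is compact. The Fredholm index is invariant under compact perturbation, so $\ind f = \ind f'$, and symmetrically $\ind g = \ind g'$.

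The only subtlety, and the main bookkeeping obstacle, is making sure the operators $H,K,H',K'$, which are defined on all of $E$, descend to honest compact operators between the subspaces $X$ and $Y$; this is handled each time by observing that the relevant combinations must land in the correct subspace because the corresponding composition does.
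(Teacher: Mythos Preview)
Your proof is correct. The first part (showing $f\circ g$ and $g\circ f$ are compact perturbations of the identity) matches the paper's argument at $t=0$ essentially verbatim, including the observation that the compact remainder must land in the right subspace.

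For the invariance of the index you take a slightly different route. The paper forms the convex combinations $F_t=(1-t)F+tF'$ and $G_t=(1-t)G+tG'$, checks that $f_tg_t=I_X+k(t)$ and $g_tf_t=I_Y+h(t)$ for continuous paths of compact operators, and then invokes the local constancy of the Fredholm index along the path $t\mapsto f_t$. You instead observe directly that $f-f'=(K-K')|_Y$ is compact and apply the stability of the index under compact perturbation. Your argument is more economical: it avoids setting up the homotopy and verifying continuity of $k(t),h(t)$, and it appeals to a single fact (index stability under compact perturbation) rather than to the stronger path-constancy statement. The paper's homotopy argument, on the other hand, makes it transparent that the whole family $(F_t,G_t)$ still witnesses commensurability, which is conceptually nice but not needed for the conclusion.
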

\begin{proof}
For every $ t\in [0,1] $ consider the convex combinations
$ F_t = (1 - t) F + t F' $, $ G_t = (1 - t)G + tG' $. It is easy to check
that
\begin{align*}
f_t g_t &= {F_t G_t} _{|X} = I_X + k(t),\\
g_t f_t &= {G_t F_t} _{|Y} = I_Y + h(t)
\end{align*}
where $ h $ and $ k $ are continuous paths of compact operators on $ Y $ and
$ X $ respectively. Thus, for every $ t $ the operators $ f_t $ and $ g_t $ 
are the essential inverse one of each other. Taking $ t = 0 $, we obtain the
first part of the statement. By ii) of Proposition \ref{index-is-constant},
continuous paths of Fredholm operators have constant index. Hence
\begin{align*}
\ind f &= \ind f_0 = \ind f_1 = \ind f',\\
\ind g &= \ind g_0 = \ind g_1 = \ind g'.
\end{align*}
\end{proof}
\begin{definition}[relative dimension]
\label{relative_dimension:1}
Let $ (X,Y) $ and $ (F,G) $ be as in the preceding definition. We define the
\emph{relative dimension} of the pair $ \ind g $ and denote it by 
$ \Dim(X,Y) $.
\glsadd{labdimXY}
\end{definition}
The proposition proved above says that this definition does not depend on the
choice of the pair of operators $ (F,G) $. Given $ X,Y,Z $ such that $ (X,Y) $
and $ (Y,Z) $ are commensurable the properties
\begin{align*}
\Dim(X,X) &= 0,\\ \ \Dim(X,Y) &= -\Dim(Y,X),\\
\Dim(X,Z) &= \Dim(X,Y) + \Dim(Y,Z)
\end{align*}
follow from the properties of composition of Fredholm operators stated in
Proposition \ref{sum-of-index}.
We give now a definition of compact perturbation for pair of splitting 
subspaces, useful for building examples.
\begin{definition}[compact perturbation]
\label{compact_perturbation}
Let $ X, Y\in G_s (E) $. We say that they are \emph{compact perturbation}
(one of the each other) if, given two projectors $ P $ and $ Q $ with ranges
$ X $ and $ Y $ respectively, the operators
\begin{gather}
\label{compact_perturbation:1}
(I - P) Q, \ (I - Q) P
\end{gather}
are compact.
\end{definition}
When $ (X,Y) $ is a pair of elements of the Grassmannian of splitting spaces
commensurability and compact perturbation are equivalent.
\begin{proposition}
\label{equivalence_of_definitions}
Let $ X $ and $ Y $ closed and complemented subspaces of $ E $. 
Then $ (X,Y) $ is a commensurable pair if and only if $ X $ is compact 
perturbation of $ Y $.
\end{proposition}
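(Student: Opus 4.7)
The plan is to show both implications by explicitly converting between the two kinds of witnesses. Fix projectors $P\in\mathcal{P}(E)$ onto $X$ and $Q\in\mathcal{P}(E)$ onto $Y$ once and for all; these exist because $X,Y\in G_s(E)$. The key observation I would use throughout is that $P|_X = I_X$ and $Q|_Y = I_Y$, so $(I-Q)P$ agrees with $(I-Q)|_X$ on $X$ and $(I-P)Q$ agrees with $(I-P)|_Y$ on $Y$.

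For the implication from compact perturbation to commensurability, I would simply take $G = Q$ and $F = P$. Clearly $GX = Q(X)\subset Y$ and $FY = P(Y)\subset X$. For $x\in X$ one has $Gx - x = Qx - x = -(I-Q)x = -(I-Q)Px$, so setting $H = -(I-Q)P$ gives $G|_X = (I+H)|_X$, and $H$ is compact by assumption. Symmetrically, $K = -(I-P)Q$ is compact and $F|_Y = (I+K)|_Y$. This produces the data required by Definition \ref{commensurability}.

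The other direction is the slightly more substantial step. Suppose $(X,Y)$ is commensurable with operators $F,G,H,K$ as in Definition \ref{commensurability}. For every $v\in E$ we have $Pv\in X$, hence
\[
G(Pv) = (I+H)(Pv) = Pv + HPv,
\]
so $GP = P + HP$ as bounded operators on $E$. Since $GP$ has range in $GX\subset Y = \ran Q$, we get $(I-Q)GP = 0$, and therefore
\[
(I-Q)P = -(I-Q)HP,
\]
which is compact because $H$ is. Running the identical argument with the roles of $X,Y$ (and of $P,Q,F,K$) interchanged yields $(I-P)Q = -(I-P)KQ$ compact. Thus condition \eqref{compact_perturbation:1} holds.

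The only mild obstacle is keeping the distinction between an operator restricted to a subspace and a bounded operator on the whole space straight: in the commensurability side one is given equalities only on $X$ and $Y$, and one must promote them to equalities of operators on $E$ by composing with the projectors $P$ and $Q$. Once this is done, no further estimate is needed. A pleasant byproduct is that the computation shows $(I-P)Q$ and $(I-Q)P$ are compact for every choice of projectors $P,Q$ onto $X,Y$, so Definition \ref{compact_perturbation} is independent of the chosen projectors.
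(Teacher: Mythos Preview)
Your proof is correct and follows essentially the same approach as the paper: both directions use $P$ and $Q$ themselves as the witnessing operators for commensurability, and for the converse both exploit that the operator mapping into a subspace is annihilated by the complementary projector, yielding $(I-Q)P = -(I-Q)HP$ and $(I-P)Q = -(I-P)KQ$. Your presentation is in fact slightly more explicit than the paper's, and your closing remark about projector-independence is a correct and worthwhile observation.
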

\begin{proof}
Suppose $ X $ is compact perturbation of $ Y $ and let $ P $ and $ Q $ be
two projectors with ranges $ X $ and $ Y $. Clearly $ Q X\subset Y $ and
$ P Y\subset X $. Moreover,
\begin{align*}
Q x &= Q x - x + x = -(I - Q)P x + x \\
P y &= P y - y + y = -(I - P)Q y + y;
\end{align*}
we obtain two restrictions of compact perturbation of the identity,
as the definition of commensurability requires. Conversely let $ F $ and $ G $
be as in Definition \ref{commensurability} and $ (P,Q) $ a pair of projectors
with ranges $ X $ and $ Y $. We check, for instance, that $ (I - P)Q $ is 
compact.
\[
(I - P)Q = (I - P)(Q - F Q) + (I - P)F Q = (I - P) K Q + 0.
\]
Similarly $ (I - Q)P $ is compact. 
\end{proof}
For sake of simplicity we will sometimes use the notation
$ \dim(P,Q) $ or $ [P - Q] $ instead of $ \dim(\ran P,\ran Q) $.
\glsadd{labPQ}%
Let $ H $ be a Hilbert 
space and $ (X,Y) $ a pair of two closed subspaces 
that are compact perturbation one of each other. Call $ P_X $ and $ P_Y $
the orthogonal projections. By (\ref{compact_perturbation:1}) 
$ P_{Y^{\bot}} P_X $ and $ P_{X^{\bot}} P_Y $ 
are compact operators. Therefore 
\begin{equation*}
\begin{split}
P_X - P_Y &= (P_Y + P_{Y^{\bot}}) P_X - P_Y (P_X + P_{X^{\bot}}) = \\
&= P_{Y^{\bot}} P_X - P_Y P_{X^{\bot}} = 
P_{Y^{\bot}} P_X - (P_{X^{\bot}} P_Y)^* \in\mathcal{L}_c (E).
\end{split}
\end{equation*}
Hence $ P_X $ and $ P_Y $ have compact difference and the Definition
\ref{compact_perturbation} coincides with the one known for Hilbert spaces.
The relative dimension can be computed as
\begin{gather*}
\Dim(X,Y) = \dim\ker\res{P_Y}{X} - \coker\res{P_Y}{X} = 
\dim (X\cap Y^{\bot}) - \dim (X^{\bot}\cap Y)
\end{gather*}
which coincides with the definition of relative dimension in Hilbert spaces. 
In the following example we compute the relative dimension in some special 
case. 
\begin{example}
\label{ex:finite-dim}
Let $ V_0 $ and $ W_0 $ be finite dimensional subspaces and $ V_1 $
and $ W_1 $ topological complements of $ V_0 $ and $ W_0 $
respectively. We prove, using the result of Proposition 
\ref{equivalence_of_definitions}, that $ (V_0,W_0) $ and $ (V_1,W_1) $ are 
commensurable pairs and compute their relative dimension. Let $ P $ and $ Q $
be two projectors onto $ V_0 $ and $ W_0 $. Denote by $ q $ the restriction of 
$ Q $ to $ V_0 $. It is a linear map between finite dimensional subspaces, 
hence
\begin{align*}
\dim V_0 = \dim\ker q + \dim\ran q = \dim\ker q + \dim W_0 - \coker q
\end{align*}
and the Fredholm index of $ q $ is the difference of the dimensions of 
$ V_0 $ and $ W_0 $. Now consider the pairs $ (V_1,E) $ and $ (E,W_1) $ and 
the pairs of projectors $ (I - P,I) $, $ (I,I - Q) $ Thus
\begin{align*}
\dim(V_1,E) &= \ind \res{I}{V_1} = -\codim V_1 \\
\dim(E,W_1) &= \ind Q = \codim W_1 
\end{align*}
hence $ \dim(V_1,W_1) = \codim W_1 - \codim V_1 $. 
\end{example}
\begin{example}
In general it is not true that topological complements of two 
commensurable subspaces are commensurable. Given two splittings of the space
\begin{gather*}
X\oplus X' = E = Y\oplus Y', \ \ P = P(X,X'), \ \ Q = P(Y,Y')
\end{gather*}
with $ X $ and $ X' $ compact perturbations of $ Y $ and $ Y' $ respectively, 
from the relations (\ref{compact_perturbation}) it follows that
\begin{gather*}
P - Q = (I - Q) P + P(I - Q)
\end{gather*}
is a compact operator. This is unlikely to happen even when $ X $ and $ Y $ 
are the same space. For instance let $ X\subset E $ be a splitting subspace 
with a topological complement $ X' $ such that 
$ \mathcal{L}_c (X',X)\subsetneq\mathcal{L}(X',X) $. For 
any $ L\in\mathcal{L}(X',X)\setminus\mathcal{L}_c (X',X) $ define
\begin{gather*}
P(L) (x,y) = (x + Ly,0);
\end{gather*}
it is easy to check that $ P(L) $ is a projector with range $ X $ and
$ P(L) - P $ is not compact. However for a given pair of two commensurable
splitting subspaces a pair of projectors with compact difference always exists
and we prove it in the next theorem. This is equivalent to find topological
commensurable complements.
\end{example}
In the next proposition we describe the relation between the relative dimension
and the Fredholm index of Fredholm pairs.
\begin{proposition}
\label{transitivity_of_dimension}
If $ X $ is compact perturbation of $ Y $ and $ (Y,Z) $ is a Fredholm pair, 
then $ (X,Z) $ is Fredholm and $ \ind(X,Z) = \dim(X,Y) + \ind(Y,Z) $.
\end{proposition}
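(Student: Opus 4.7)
The plan is to reduce the statement to a composition of Fredholm operators via the pairing operator $ F_{X,Y} $ described in (\ref{pairing_operator}). By the remark following (\ref{pairing_operator}), the pair $ (Y,Z) $ being Fredholm is equivalent to the pairing operator
\[
T_{Y,Z}\colon Y\times Z\rightarrow E,\quad (y,z)\mapsto y - z
\]
being Fredholm, with $ \ind T_{Y,Z} = \ind(Y,Z) $; similarly $ (X,Z) $ will be Fredholm and its index computed once we prove $ T_{X,Z} $ is Fredholm and find its index.

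First, I would use the commensurability of $ (X,Y) $: by Proposition \ref{equivalence_of_definitions} and Definition \ref{commensurability} there exists $ G\in\mathcal{L}(E) $ with $ GX\subset Y $ and $ G_{|X} = (I + H)_{|X} $ for some compact $ H $. Setting $ g = G_{|X}\colon X\rightarrow Y $, this is exactly the Fredholm operator appearing in Definition \ref{relative_dimension:1}, so $ \ind g = \Dim(X,Y) $. The cartesian product
\[
g\times\mathrm{id}_Z\colon X\times Z\rightarrow Y\times Z,\quad (x,z)\mapsto (gx,z)
\]
is then Fredholm with kernel $ \ker g\times\{0\} $ and cokernel $ \coker g\times\{0\} $, hence has index $ \ind g = \Dim(X,Y) $.

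The key computation is now the identity
\[
T_{Y,Z}\circ (g\times\mathrm{id}_Z)(x,z) = gx - z = (x - z) + Hx = T_{X,Z}(x,z) + H\pi_1(x,z),
\]
where $ \pi_1\colon X\times Z\rightarrow X $ is the bounded projection. Since $ H $ is compact and $ \pi_1 $ is bounded, $ H\circ \pi_1 $ is a compact operator from $ X\times Z $ to $ E $, so
\[
T_{X,Z} = T_{Y,Z}\circ (g\times\mathrm{id}_Z) - H\circ\pi_1
\]
is a compact perturbation of a composition of Fredholm operators. By the stability of Fredholm operators under compact perturbation together with the multiplicativity of the index (Proposition \ref{sum-of-index}), $ T_{X,Z} $ is Fredholm with
\[
\ind T_{X,Z} = \ind T_{Y,Z} + \ind(g\times\mathrm{id}_Z) = \ind(Y,Z) + \Dim(X,Y).
\]
Applying the remark after (\ref{pairing_operator}) in the reverse direction, this means $ (X,Z) $ is a Fredholm pair with $ \ind(X,Z) = \Dim(X,Y) + \ind(Y,Z) $, as required.

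I do not expect a serious obstacle: the only subtlety is the bookkeeping of which operators are defined on $ X\times Z $ versus $ Y\times Z $, and checking that $ H\circ\pi_1 $ is genuinely compact on the product (which follows because $ \pi_1 $ is bounded and $ H $ is compact on $ E $, hence its restriction to the closed subspace $ X $ is compact). Everything else is a direct application of results already available in Chapter \ref{chap1} and Appendix B.
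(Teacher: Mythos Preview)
Your proof is correct and follows essentially the same approach as the paper's: both express the pairing operator $F_{X,Z}$ as a compact perturbation of $F_{Y,Z}\circ(g\times\mathrm{id}_Z)$ for a Fredholm map $g\colon X\to Y$, then apply the composition formula for the index. The only cosmetic difference is that the paper takes $g$ to be the restriction $Q|_X$ of a projector onto $Y$ (so the compact correction is $(I-Q)P$), whereas you use the abstract $G$ from the commensurability definition; these are the same argument in slightly different notation.
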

\begin{proof}
Let $ P $ and $ Q $ be projectors with ranges $ X $ and $ Y $ respectively. The
restrictions $ p $ and $ q $ to $ Y $ and $ X $ are Fredholm operators; we have
\begin{equation}
\begin{split}
\label{transitivity_of_dimension:1}
F_{X,Z} (x,z) &= x - z = x - Qx + Qx - z \\
&= (I - Q)Px + Qx - z = (I - Q)Px + F_{Y,Z}(Qx,z) \\
&= ((I - Q)P,0_Z)\cdot(x,z) + F_{Y,Z}\ci (q,I)\cdot(x,z).
\end{split}
\end{equation}
Since $ F_{Y,Z} $ and $ (q,I) $ are Fredholm their composition is Fredholm; 
the first summand of the last equation is compact. Hence $ F_{X,Z} $ is a 
compact perturbation of a Fredholm operator and therefore Fredholm by 
Proposition \ref{t+k_is_fredholm} and
\begin{gather*}
\ind F_{X,Z} = \ind F_{Y,Z}\ci (q,I) = \ind F_{Y,Z} + \ind (q,I) = 
\ind (Y,Z) + \dim(X,Y)
\end{gather*}
by Proposition \ref{sum-of-index}.
\end{proof}
\begin{example}
We use Proposition \ref{transitivity_of_dimension} with in example that shows 
that for commensurable pairs there is not a result like the Theorem 
\ref{perturbation}, that is, they are not stable by small perturbation: 
consider a pair $ (X,Y) $ such that 
\begin{enumerate}
\item $ X $ is isomorphic to $ Y $,
\item $ X\oplus Y = E $ has infinite dimension;
\end{enumerate}
let $ f\colon Y\rightarrow X $ be an isomorphism and $ \graph(f) $ its graph.
For every integer $ n $ consider the sequence of subspaces
\begin{gather*}
Y_n = \graph(nf);
\end{gather*}
since $ Y_n $ is graph of a bounded operator $ X\oplus Y_n = E $. It is easy
to check that $ Y_n $ converges to $ X $. Thus there can be no open
neighbourhood of $ X $ in $ G_s (E) $ made of compact perturbations of
$ X $. In fact for $ n $ large $ Y_n $ would be contained in such
neighbourhood and $ (X,Y_n) $ would be a commensurable pair; since $ (X,Y_n) $
is a Fredholm pair also, by Proposition \ref{transitivity_of_dimension} we
would have proved that $ (X,X) $ is a Fredholm pair which happens only if
$ X\oplus Y $ has finite dimension, in contradiction with hypothesis ii).
\end{example}
The preceding Proposition suggests a definition of the relative dimension
that involves the Fredholm index. Precisely, suppose $ X $ is compact 
perturbation of $ Y $. Let $ Z $ be a topological complement of $ Y $. Then
$ (Y,Z) $ is a Fredholm pair. By Proposition \ref{transitivity_of_dimension}
$ (X,Z) $ is a Fredholm pair and
\begin{gather}
\label{alternative}
\ind(X,Z) = \ind(Y,Z) + \dim(X,Y) = \dim(X,Y).
\end{gather}
This definition, together with the Theorem \ref{perturbation} will allows us to
state in the next chapter a stability result of the relative dimension for 
closed and splitting subspaces.
\begin{theorem}
\label{suitable_projector}
Let $ X $ be a splitting subspace, compact perturbation of $ Y $. Then there
are topological complements $ X' $ and $ Y' $ that are compact perturbation 
one of each other and
\begin{gather*}
\dim(X,Y) = -\dim(X',Y')
\end{gather*}
\end{theorem}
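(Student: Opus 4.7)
The plan is to start from an arbitrary complement $X'$ of $X$ and modify it by a finite-dimensional rearrangement to produce a complement $Y'$ of $Y$. Because the modification is finite-rank, $X'$ and $Y'$ will automatically be compact perturbations as subspaces, and the relative dimension will fall out of the Fredholm index of the pair $(Y,X')$, which is controlled by Proposition \ref{transitivity_of_dimension}.

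Concretely, fix any $X' \in G_s(E)$ with $X \oplus X' = E$. Since $(X,X')$ is trivially a Fredholm pair of index zero and $Y$ is compact perturbation of $X$, Proposition \ref{transitivity_of_dimension} gives that $(Y,X')$ is Fredholm with
\[
\ind(Y,X') = \dim(Y,X) + \ind(X,X') = -\dim(X,Y).
\]
Set $V := Y \cap X'$ (finite-dimensional) and $W := Y + X'$ (of finite codimension in $E$). I would then choose a topological complement $X_1'$ of $V$ in $X'$ (possible since $V$ is finite-dimensional) and a finite-dimensional $U$ with $W \oplus U = E$, and define $Y' := X_1' \oplus U$. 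The key observation $V \subset Y$ makes the direct-sum arithmetic go through: $Y + X_1' = Y + X' = W$ and $Y \cap X_1' \subset V \cap X_1' = \{0\}$, so $Y \oplus X_1' = W$, hence $E = W \oplus U = Y \oplus Y'$, showing that $Y'$ is a complement of $Y$ in $G_s(E)$.

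To verify that $X'$ and $Y'$ are compact perturbations of each other, I would apply Definition \ref{commensurability} with operators $G := I - P_V$ and $F := I - P_U$, where $P_V, P_U \in \mathcal{L}(E)$ are projectors of $E$ onto $V$ and $U$ respectively whose kernels contain $X_1'$ (such projectors exist because $V$ and $U$ are finite-dimensional). Both $P_V$ and $P_U$ are finite-rank, hence compact; $G$ maps $X'$ into $X_1' \subset Y'$ and coincides with $I|_{X'}$ modulo the compact operator $-P_V$, and symmetrically for $F$. Proposition \ref{equivalence_of_definitions} then yields the compact-perturbation assertion. For the dimension count, the alternative formula \eqref{alternative} gives $\dim(X',Y') = \ind(X',Y)$, since $Y$ is a complement of $Y'$; by the symmetry of the Fredholm index for pairs, $\ind(X',Y) = \ind(Y,X') = -\dim(X,Y)$.

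The delicate step is the finite-dimensional surgery producing $Y'$: one must verify that simultaneously removing $V$ from $X'$ and adjoining $U$ does yield a genuine complement of $Y$. This pivots on the inclusion $V \subset Y$, which decouples the two finite-dimensional pieces and lets the direct-sum arithmetic go through cleanly; the numerical balance $\dim V - \dim U = -\dim(X,Y)$ supplied by the Fredholm index of $(Y,X')$ is precisely what makes both the construction and the final dimension formula fit.
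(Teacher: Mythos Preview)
Your proof is correct and uses essentially the same finite-dimensional surgery as the paper: both arguments take the Fredholm pair coming from Proposition~\ref{transitivity_of_dimension}, excise the finite-dimensional intersection, and adjoin a finite-dimensional complement of the sum. The only real difference is orientation. The paper fixes a projector $Q$ onto $Y$, sets $Y'=\ker Q$, and builds the complement $X'$ of $X$ by writing $E=X\oplus Z\oplus R$ with $Z$ a complement of $X\cap\ker Q$ in $\ker Q$ and $R$ a finite-dimensional complement of $X+\ker Q$; you do the mirror-image construction starting from $X'$.

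Two minor differences in execution are worth noting. First, for the dimension identity the paper uses additivity of the Fredholm index for the block operator $Q|_X\oplus(I-Q)|_{X'}$ (a compact perturbation of $I_E$), whereas you invoke the alternative formula~\eqref{alternative} together with the symmetry $\ind(X',Y)=\ind(Y,X')$; both routes are equally short. Second, the paper's construction actually yields the stronger conclusion that the projectors $P_X=P(X,X')$ and $Q=P(Y,Y')$ themselves have compact difference, and this stronger form is what gets used in Proposition~\ref{cp:2}. Your construction gives commensurability of $X'$ and $Y'$ directly, which suffices for the theorem as stated; if you later need the projector-level statement you would have to add one more line checking that $P(X,X')-P(Y,Y')$ is finite-rank in your decomposition.
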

\begin{proof}
Let $ P $ and $ Q $ be projectors with ranges $ X $ and $ Y $ respectively. 
As consequence of the Proposition \ref{transitivity_of_dimension}
the pair $ (X,\ker Q) $ is a Fredholm. Let $ Z $ be a topological complement 
of $ X\cap\ker Q $ in $ \ker Q $ and $ R\subset E $ a finite dimensional 
complement of $ X + \ker Q $ in $ E $. Then
\begin{gather*}
X\oplus Z \oplus R = E, \ \ P_X + P_Z + P_R = I_E;
\end{gather*}
we claim that $ P_X $ and $ Q $ have compact difference. We write
\begin{gather*}
P_X - Q = (I - Q) P_X + (P_X - Q) P_Z + (P_X - Q) P_R;
\end{gather*}
the first term of the right member is compact by definition of compact
perturbation, the second is $ 0 $, the third has finite rank. Hence
\begin{gather*}
Q(I - P_X), \ \ P_X (I - Q) 
\end{gather*}
are compact operators. It is not hard to prove that for all the pairs of 
projectors $ (P',Q') $ onto $ X' $ and $ Y' $ respectively, compactness of
(\ref{compact_perturbation:1}) holds, thus $ \ker P_X $ and $ \ker Q $ are 
commensurable spaces. To compute the
relative dimension we use restrictions of the operators $ Q $ and $ I - Q $.
We can write
\begin{gather*}
\dim(X,Y) + \dim(X',Y') = \ind\res{Q}{X} + \ind\res{(I - Q)}{X'} 
= \ind I_E = 0.
\end{gather*}
\end{proof}
The next Proposition follows the one known for Hilbert spaces, due to
A.~Abbondandolo and P.~Majer (refer \textsc{Proposition} 2.2 of \cite{AM01}).
\begin{proposition}\label{cp:2}
Let $ T, S\in\mathcal{L}(E,F) $ be operators with compact difference and 
closed images. If the kernels and the images split
\begin{TAKEOUT}
\footnote{Although we cannot think to any good reason why this result 
shouldn't be true for operators with compact difference, no matter if the 
kernels and images split or not, we were not able to find a proof to achieve 
this improvement.}
\end{TAKEOUT}
$ \ker T $ and $ \ran T $ are compact 
perturbation of $ \ker S $ and $ \ran T $ respectively and
the relation
\[
\dim (\ker T, \ker S) = - \dim (\ran T, \ran S).
\]
holds.
\end{proposition}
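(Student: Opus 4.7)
The plan is to settle the two compact perturbation claims by a direct computation exploiting $T - S \in \mathcal{L}_c(E,F)$, and then to identify both relative dimensions with indices of explicit operators that differ by a compact summand. I would fix closed splittings $E = \ker T \oplus V = \ker S \oplus V'$ and $F = \ran T \oplus W = \ran S \oplus W'$, write $P = P(\ker T, V)$, $Q = P(\ker S, V')$, $P_Y = P(\ran T, W)$, $P_{Y'} = P(\ran S, W')$ for the associated projectors, and set $K = T - S$. The open mapping theorem makes $T|_V : V \to \ran T$ and $S|_{V'} : V' \to \ran S$ isomorphisms.

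For the kernels, the identity $T Q = (S + K) Q = K Q$ is compact; since $(I - P) Q$ takes values in $V$, applying $(T|_V)^{-1}$ gives $(I - P) Q = (T|_V)^{-1} K Q$, compact. A symmetric argument shows $(I - Q) P$ compact, so $\ker T$ and $\ker S$ are compact perturbations of each other by Definition \ref{compact_perturbation}. For the images, the trick is to write $P_{Y'} = S R$ with $R = (S|_{V'})^{-1} P_{Y'}$, whence $(I - P_Y) P_{Y'} = (I - P_Y)(T - K) R = -(I - P_Y) K R$ is compact; exchanging $S$ and $T$ handles $(I - P_{Y'}) P_Y$.

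With these compact perturbations in hand, Definition \ref{relative_dimension:1} applied with $G = Q$ (and the compact correction $H = (Q - I) P$) yields $\Dim(\ker T, \ker S) = \ind(Q|_{\ker T})$, and analogously $\Dim(\ran T, \ran S) = \ind(P_{Y'}|_{\ran T})$. The central construction is then the composition
\[
\phi : V \xrightarrow{\ T|_V\ } \ran T \xrightarrow{\ P_{Y'}|_{\ran T}\ } \ran S \xrightarrow{\ (S|_{V'})^{-1}\ } V',
\]
whose index equals $\Dim(\ran T, \ran S)$ by multiplicativity (Proposition \ref{sum-of-index}). Substituting $T = S + K$ and using the identity $(S|_{V'})^{-1} S v = (I - Q) v$ for $v \in V$ exhibits $\phi$ as $(I - Q)|_V$ plus the compact term $(S|_{V'})^{-1} P_{Y'} K|_V$, so $\ind \phi = \ind((I - Q)|_V)$.

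The main obstacle is the final identification $\ind((I - Q)|_V) = -\Dim(\ker T, \ker S)$. A direct inspection gives $\ker((I - Q)|_V) = V \cap \ker S$ and, via the splitting $E = V' \oplus \ker S$, an isomorphism $V'/(I - Q)(V) \cong E/(V + \ker S)$, so $\ind((I - Q)|_V)$ coincides with $\ind(V, \ker S)$ as a Fredholm pair. Since $(\ker T, V)$ is trivially Fredholm of index $0$ and $\ker T$ is a compact perturbation of $\ker S$, Proposition \ref{transitivity_of_dimension} applied with $X = \ker S$, $Y = \ker T$, $Z = V$ delivers $\ind(\ker S, V) = \Dim(\ker S, \ker T) = -\Dim(\ker T, \ker S)$, and combining the equalities yields the desired relation $\Dim(\ran T, \ran S) = -\Dim(\ker T, \ker S)$.
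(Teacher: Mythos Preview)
Your argument is correct. The compact--perturbation steps for kernels and images are essentially the paper's own computations (the paper writes them with the pseudo--inverses $T'=(T|_{Y(T)})^{-1}P(\ran T,Z(T))$ and $S'$, which amounts to your $(T|_V)^{-1}$ and $(S|_{V'})^{-1}$).

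For the index identity the two proofs diverge in organization. The paper first invokes Theorem~\ref{suitable_projector} to \emph{replace} the complements so that the kernel projectors $P(T),P(S)$ (and the range projectors) have compact difference; it then compares the two maps $T(I-P(T))$ and $Q(T)S$ from $Y(S)$ to $\ran T$, observes that their difference $TT'(T-S)$ is compact, and reads off the equality of indices. Your route keeps the complements arbitrary, builds the single composition $\phi=(S|_{V'})^{-1}P_{Y'}\,T|_V:V\to V'$, recognizes it as $(I-Q)|_V$ plus a compact term, and then identifies $\ind\big((I-Q)|_V\big)$ with the Fredholm--pair index $\ind(V,\ker S)$, which Proposition~\ref{transitivity_of_dimension} (equivalently the identity~(\ref{alternative})) turns into $-\Dim(\ker T,\ker S)$. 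The trade--off: the paper's version is symmetric in the two sides but needs the complement--adjustment lemma, while yours avoids Theorem~\ref{suitable_projector} entirely at the cost of the extra kernel/cokernel computation for $(I-Q)|_V$ and the appeal to Proposition~\ref{transitivity_of_dimension}.
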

\begin{proof}
Since kernels and images split we can write 
\begin{align*}
\ker T \oplus Y(T) &= E = \ker S \oplus Y(S)\\
 Z(T)\oplus\ran T  &= F = Z(S)\oplus\ran S
\end{align*}
Since $ T $ and $ S $ are isomorphism of $ Y(T) $ with $ \ran T $ and 
$ Y(S) $ with $ \ran S $ respectively, we can define
operators $ T' $ and $ S' $ on $ F $ with values in $ E $ such that
\begin{align*}
T' T &= P(Y(T),\ker T),\ \ S' S = P(Y(S),\ker S) \\
T T' &= P(\ran T,Z(T)), \ \ S S' = P(\ran S, Z(S));
\end{align*}
set $ P(T) = P(\ker T,Y(T)) $, $ P(S) = P(\ker S,Y(S)) $ and $ K = T - S $.
Then 
\begin{gather*}
(I - P(S)) P(T) = S' S P(T) = S' (S - T) P(T) + S' T P(T) = S' K P(T) + 0
\end{gather*}
is a compact operator. Set $ Q(T) = P(\ran T,Z(T)) $, 
$ Q(S) = P(\ran S,Z(S)) $. Then
\begin{equation*}
\begin{split}
(I - Q(S)) Q(T) &= (I - Q(S)) T T' = (I - Q(S)) (T - S) T' + 
(I - P(S)) S T' \\ 
&= 0 + (I - P(S)) K T'
\end{split}
\end{equation*}
is compact. By Theorem \ref{suitable_projector}, up to changing the 
topological complements of $ \ker T $ and $ \ran T $, we can suppose that 
our projectors have compact difference. Hence
\begin{align*}
&\dim (\ker T,\ker S) = -\dim(Y(S),Y(T)) \\ 
=& -\ind\res{(I - P(T))}{Y(S)} ^{Y(T)} = 
-\ind\res{T (I - P(T))}{Y(S)} ^{\ran T} \\
&\dim (\ran T,\ran S) = \ind\res{Q(T)}{\ran S} ^{\ran T} = 
\ind\res{(Q(T)S)}{Y(S)} ^{\ran T};
\end{align*}
observe that the operator
\begin{gather*}
K_1 = T(I - P(T)) - Q(T) S = TT'T - T T'S = TT' (T - S)
\end{gather*}
is compact. Therefore
\[
\begin{split}
\dim (\ker T,\ker S) &= -\ind\res{(Q(T) S + K_1)}{Y(S)} ^{\ran T} \\
&= -\ind\res{(Q(T)S)}{Y(S)} ^{\ran T} = 
-\dim (\ran T,\ran S).
\end{split}
\]
When $ P - Q $ is compact, we know of an existing definition of relative
dimension in \cite{BDF73} for Hilbert spaces and in \cite{ZL99}, for Banach 
spaces and projectors $ P,Q $ with compact difference. In the latter, given 
two projectors, they denote the relative dimension by $ [P - Q] $. We will 
also use this notation in \S \ref{chap5}.
\glsadd{labPQ}
\end{proof}
\subsubsection*{The non-complemented case}
In the technique used in the proposition above requires that the kernels
and images split. We think this restriction can be removed. We also guess
that whenever $ X $ is complemented and $ Y $ is commensurable to $ Y $,
then $ Y $ also splits.
\chapter{Homotopy type of Grassmannians}
\label{chap2}
We define the
\textsl{essentially hyperbolic} operators on a Banach space $ E $, that we 
will denote by $ e\mathcal{H}(E) $, and prove the existence of a group 
homomorphism
\[
\pi_1 (e\mathcal{H}(E),2P - I)\rightarrow\mathbb{Z}
\]
where $ P $ is a projector of $ E $. The construction of such homomorphism
is carried out as follows: as first step, in section \S 2.1, we define the 
\textsl{Calkin algebra}, $ \mathcal{C}(E) $, as the quotient of 
the algebra of bounded operators $ \mathcal{L}(E) $ with the closed ideal of 
compact operators $ \mathcal{L}_c (E) $. Then we prove that 
$ e\mathcal{H}(E) $ is homotopically equivalent to 
$ \mathcal{P}(\mathcal{C}(E)) $, the space of 
\textsl{idempotent} elements of the Calkin algebra. In section \S 2.4 we
prove that the map
\[
\prc\colon\mathcal{P}(E)\rightarrow\mathcal{P}(\mathcal{C}),
\ \ \prc(P) = P + \mathcal{L}_c (E)
\]
is surjective and induces a locally trivial fiber bundle. Using 
the \textsl{Leray-Schauder degree} we prove in section \S 2.6 that the typical
fiber of such bundle has infinite numerable connected components. Hence,
for every projector $ P $, we can complete the exact homotopy sequence of the 
fiber bundle as follows
\[
\xymatrix{
\pi_1 (\mathcal{P}(E),P) \ar[r]^{\prc_*} &
\pi_1 (\mathcal{P}(\mathcal{C}),\prc(P)) \ar[r]^-{\vfi_P} &
\mb{Z};}
\]
we call $ \vfi_P $ \textsl{index} of fiber bundle 
$ (\mathcal{P}(E),\mathcal{P}(\mathcal{C}),\prc) $ with respect to $ P $
or, simply \textsl{index} when no ambiguity occurs. Thus, $ \vfi_P\circ\Psi_* $
is well-defined on $ e\mathcal{H}(E) $, where $ \Psi $ is a homotopy
equivalence with $ \mathcal{P}(\mathcal{C}) $.
All these facts are proved without making assumptions on the Banach space 
$ E $. Given a projector $ P $ the two conditions
\begin{itemize}
\item[h1)] $ P $ is connected to a projector $ Q $ such that 
$ Q - P\in\mathcal{L}_c (E) $ and $ \dim(Q,P) = m $,
\item[h2)] the connected component of $ P $ in $ \mathcal{P}(E) $ is 
simply-connected,
\end{itemize}
are sufficient to ensure that $ m\in\text{Im}(\vfi_P) $ and $ \vfi_P $ is
injective. When $ m = 1 $, we have an isomorphism.
These hypotheses are verified by every projection of a Hilbert space with 
infinite dimensional range and kernel. In the most common Banach spaces such 
as $ L^p $ spaces and spaces of sequences, we can find such projectors.

In the last section, we give exhibit examples where the homomorphism 
$ \varphi $ is an isomorphism. This happens, for instance, when
$ E $ is an infinite-dimensional Hilbert space or $ L\sp p $ for $ p\geq 1 $
or $ L\sp\infty $ and spaces of sequences $ \ell\sp p,m,c_0 $.
\section{The space of essentially hyperbolic operators}
Given a Banach algebra $ \al{B} $ with unit $ 1 $, we denote by 
$ G(\mathcal{B}) $ the set of invertible elements. 
If $ x\in\mathcal{B} $ the \textsl{spectrum} of $ x $ is defined as the set 
$ \set{\lambda\in\mathbb{C}}{x - \lambda\cdot 1\not\in G(\mathcal{B})} $ and 
denoted it by $ \sigma_{\mathcal{B}} (x) $ or simply $ \sigma(x) $. Consider 
the following subsets endowed with the topology of the norm
\begin{gather*}
\al{P}(\al{B}) = \set{p\in\al{B}}{p^2 = p},\ \ 
\al{Q}(\al{B}) = \set{q\in\al{B}}{q^2 = 1},\\
\al{H}(\al{B}) = \set{x\in\al{B}}{\sigma(x)\cap \img\mathbb{R} = \emptyset};
\end{gather*}
\glsadd{labPB}
\glsadd{labQB}
\glsadd{labHB}
\glsadd{labsx}
We call the elements of these spaces \textsl{projectors} 
(or \textsl{idempotents}), \textsl{square roots of the unit} and 
\textsl{hyperbolic} respectively. In literature, hyperbolic operators are 
sometimes defined as those whose spectrum does not intersect the unit circle; 
in this case \textsl{infinitesimally hyperbolic} would be more appropriate for
the elements of $ \mathcal{H}(\mathcal{B}) $.
The spaces $ \mathcal{P}(\mathcal{B}) $ and $ \mathcal{Q} (\mathcal{B}) $ are 
analytic, closed, embedded sub-manifolds of 
$ \al{B} $, see \cite{AM03a}, \textsc{Lemma} 1.4 for a proof;
$ \mathcal{H}(\mathcal{B}) $ is an open subset of $ \mathcal{B} $. An 
analytical diffeomorphism between $ \mathcal{P}(\mathcal{B}) $ and 
$ \mathcal{Q}(\mathcal{B}) $ exists, given by 
\[
\al{P}(\al{B})\ni p\mapsto 2p - 1\in\al{Q}(\al{B}).
\]
We prove that these three spaces have the same homotopy type.
Since $ \mathcal{P} $ and $ \mathcal{Q} $ are diffeomorphic they have the 
same homotopy type; in the next proposition we define a homotopy equivalence 
between $ \mathcal{Q} $ and $ \mathcal{H} $. In order to do so, 
we need some preliminary notations and facts.
Let $ x $ be an element of the algebra $ \mathcal{B} $ and $ \{A_i\} $ a
finite open cover of the spectrum of pairwise disjoint sets.
There are projectors $ p_i $, called \textsl{spectral projectors}, such that 
\[
p_1 + \dots + p_n = 1,\quad p_i p_j = \delta_{ij} p_j,\quad
\sigma_{\mathcal{B}_i} (p_i x p_i) = A_i 
\]
where $ \mathcal{B}_i \subset\mathcal{B} $ is the sub-algebra of the elements 
$ p_i x p_i $ with $ x\in\mathcal{B} $. We denote $ p_i $ also by $ p(x;A_i) $.
These projectors can be obtained as integrals
\glsadd{labpAx}
\begin{align*}
p(x;A_i) = \frac{1}{2\pi i}\int_{\gamma_i} (\lambda - x)^{-1} d\lambda
\end{align*}
where $ \gamma_i $ are closed paths such that each $ \gamma_i $ 
\textsl{surrounds} $ A_i \cap\sigma(x) $ in 
$ \mathbb{C}\setminus\cup_{j\neq i} A_j $
in the sense of Definition \ref{defn:surround} of Appendix C.
\begin{proposition}
\label{retratto}
The space of roots of the unit is a deformation retract of the space of
hyperbolic elements.
\end{proposition}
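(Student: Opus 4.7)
The plan is to build an explicit strong deformation retraction $r\colon\al{H}(\al{B})\to\al{Q}(\al{B})$ from the spectral projectors associated with the partition of $\sigma(x)$ by the imaginary axis. For $x\in\al{H}(\al{B})$ I set $p_+(x):=p(x;\mb{H}^+)$ and $p_-(x):=p(x;\mb{H}^-)=1-p_+(x)$, so $p_+(x)p_-(x)=0$, and define
\[
r(x) := p_+(x) - p_-(x) = 2p_+(x) - 1.
\]
The identity $r(x)^2 = p_+^2 + p_-^2 - 2p_+p_- = p_+ + p_- = 1$ shows $r$ lands in $\al{Q}(\al{B})$, and if $q^2=1$ then $\sigma(q)\subseteq\{-1,+1\}$ forces $p_+(q)=(1+q)/2$, so $r(q)=q$; hence $r$ fixes $\al{Q}(\al{B})$. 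Continuity (indeed holomorphy) of $p_+$ on $\al{H}(\al{B})$ comes from the Riesz formula $p_+(x)=\tfrac{1}{2\pi\img}\int_\gamma(\lambda-x)^{-1}\,d\lambda$: near any $x_0$, upper semicontinuity of the spectrum lets one fixed contour $\gamma\subset\mb{H}^+$ surrounding $\sigma(x_0)\cap\mb{H}^+$ also work for every $x$ in a neighborhood of $x_0$.

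For the deformation I take the straight-line homotopy
\[
H(t,x) := (1-t)\,x + t\,r(x), \quad (t,x)\in[0,1]\times\al{H}(\al{B}),
\]
which is continuous, satisfies $H(0,x)=x$, $H(1,x)=r(x)\in\al{Q}(\al{B})$, and $H(t,q)=q$ for every $q\in\al{Q}(\al{B})$. Everything then reduces to checking that $H(t,x)\in\al{H}(\al{B})$ for all $t\in[0,1]$. Since $p_\pm(x)$ commute with $x$, one has the block form $x = p_+xp_+ + p_-xp_-$, and consequently
\[
H(t,x) = \bigl((1-t)p_+xp_+ + tp_+\bigr) + \bigl((1-t)p_-xp_- - tp_-\bigr),
\]
a sum of two pieces living in the corner algebras $p_+\al{B}p_+$ and $p_-\al{B}p_-$ (whose units are $p_+$ and $p_-$). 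A standard argument using $p_+p_-=0$ shows that $H(t,x)-\lambda\cdot 1$ is invertible in $\al{B}$ iff each block is invertible in its own corner algebra, so $\sigma_{\al{B}}(H(t,x))$ is the union of the two corner-spectra. In $p_+\al{B}p_+$ the first block has spectrum $(1-t)\,\sigma(p_+xp_+)+t\subset\mb{H}^+$, because for $\lambda\in\sigma(p_+xp_+)\subset\mb{H}^+$ one has $(1-t)\re\lambda + t > 0$ whenever $t\in[0,1]$; the analogous statement for the second block gives spectrum in $\mb{H}^-$. Hence $\sigma(H(t,x))\cap\img\R=\emptyset$, which is exactly what is required.

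The main obstacle is the spectral step in the third paragraph: one needs the corner-algebra spectra to recombine into $\sigma_{\al{B}}(H(t,x))$, and one needs the Riesz projectors $p_\pm$ to be holomorphic in $x$ so that both $r$ and $H$ are genuinely continuous on $\al{H}(\al{B})$. Both points are standard outputs of the holomorphic functional calculus, but they carry all of the non-formal content of the proof; everything else (the computation $(p_+ - p_-)^2=1$, the identification $r|_{\al{Q}(\al{B})}=\mathrm{id}$, and the three defining properties of a strong deformation retract) is then immediate.
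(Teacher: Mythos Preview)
Your proof is correct and essentially identical to the paper's. Your straight-line homotopy $H(t,x)=(1-t)x+tr(x)$ is, after using $x=p_+xp_++p_-xp_-$, exactly the paper's $F(t,x)=(1-t)p^+xp^++tp^++(1-t)p^-xp^--tp^-$; the paper then invokes the same corner-algebra spectral identities (its Appendix~C, items ii) and iii)) that you spell out to see that each block stays in its half-plane.
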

\begin{proof}
If $ q $ is a square root of the unit its spectrum is contained in 
$ \{-1,+1\} $, hence $ q $ is hyperbolic. Call $ i $ the inclusion of
the space of idempotents in the space of hyperbolic elements. We define
a retraction map as follows: let $ x $ be a hyperbolic element of the
algebra; since,
\[
\sigma(x) = (\sigma(x)\cap\{\re z > 0\})\cup(\sigma(x)\cap\{\re z < 0\})
\]
the spectrum has an open cover of disjoint subsets. Denote by 
$ p^+ (x) $ and 
$ p^- (x) $ the spectral projectors $ p(x;(\sigma(x)\cap\{\re z > 0\})) $ and 
$ p(x;(\sigma(x)\cap\{\re z < 0\})) $ respectively. We define the map
\[
r\colon\mathcal{H}\ra\mathcal{B},\quad r(x) = p^+ (x) - p^- (x);
\]
$ r $ is continuous by Theorem \ref{decomposition_of_spectrum} and $ r(x) $
is a square root of unit. We prove that $ r $ is a left inverse of
the inclusion $ i $. Let $ q $ be a square root and 
$ z\in\mathbb{C}\setminus\sigma(q) $, then
\[
(z - q)^{-1} = \frac{z}{z^2 - 1} + \frac{q}{z^2 - 1} =
\frac{1}{2}\left(\frac{1}{z + 1} + \frac{1}{z - 1}\right) + 
\frac{1}{2}\left( \frac{1}{z - 1} - \frac{1}{z + 1}\right) q;
\]
let $ \gamma_+ $ and $ \gamma_{-} $ be paths that surrounds $ 1 $ and $ -1 $
in  $ \mathbb{C}\setminus\{1\} $ and $ \mathbb{C}\setminus\{-1\} $
respectively. By integrating both sides of the above equality 
around $ \gamma_+ $ and $ \gamma_{-} $ and dividing it by $ 2\pi i $,
we obtain
\[
p^+ (q) = (1 + q)/2,\quad p^- (q) = (1 - q)/2,\quad
r(q) = p^+ (q) - p^{-} (q) = q;
\]
this proves that $ \mathcal{Q} $ is a retraction of $ \mathcal{H} $.
Now, define the continuous map
\[
F\colon [0,1]\times\mathcal{H}\ra\mathcal{B},
\quad
(t,x) \mapsto (1 - t) p^+ x p^+ + tp^+ + (1 - t) p^- x p^- - tp^-.
\]
By Property ii) and iii) of Appendix C, $ F(t,x) $ is hyperbolic 
for every $ (t,x) $. We also have $ F(0,x) = x $, 
$ F(x,1) = i\ci r(x) $.  Thus $ i\ci r $ is homotopically
equivalent to $ id_{\al{H}} $.
\end{proof}
\begin{definition}
Given an operator $ T\in\mathcal{L}(E) $ we call \emph{essential spectrum},
and denote it by $ \sigma_e (T) $, the set 
$ \{\lambda\in\C:T - \lambda I:\mbox{\rm is not Fredholm}\} $.
\glsadd{labseT}
\end{definition}
\begin{definition}
A bounded operator $ T $ is called \emph{essentially hyperbolic} if and only 
if $ \sigma_e (T)\cap \img\mathbb{R} = \emptyset $. We denote by 
$ e\mathcal{H}(E) $ the set of essentially hyperbolic operators endowed with 
the norm topology.
\glsadd{labeHE}
\end{definition}
The set of compact operators on a Banach space $ E $ is a closed ideal of 
the algebra of bounded operators. Thus the quotient has a structure of
Banach algebra that makes the projection
\[
p\colon\mathcal{L}(E)\rightarrow \mathcal{L}(E)/\mathcal{L}_c (E),\quad
T\mapsto T + \mathcal{L}_c (E)
\]
an algebra homomorphism. The quotient space is called \textsl{Calkin algebra}
and we denote it by $ \mathcal{C}(E) $ or just $ \mathcal{C} $.
\glsadd{labCE}
We characterize the essential spectrum in terms of the Calkin algebra: given
$ T\in\mathcal{L}(E) $ there holds
\begin{gather}
\label{characterization_essential}
\sigma_e (T) = \sigma(p(T)).
\end{gather}
To prove the equality suppose $ \lambda\not\in \sigma_e (T) $, hence 
$ T - \lambda $ is Fredholm. By Proposition \ref{essential_inverse} there
exists an essential inverse $ S $ such that
\[
(T - \lambda)S - I,\quad S(T - \lambda) - I
\]
are compact operators. Hence $ p(T - \lambda) $ is invertible in the Calkin
algebra, $ p(S) $ being is its inverse, thus 
$ \lambda\not\in\sigma(p(T)) $. The prove of the other inclusion also follows
from Proposition \ref{essential_inverse}.
\begin{theorem}
\label{topology}
The space $ e\al{H}(E) $ has the homotopy type of 
$ \mathcal{P}(\mathcal{C}) $.
\end{theorem}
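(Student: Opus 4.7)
The plan is to factor the desired equivalence through $\al{H}(\al{C})$: I would show that the Calkin projection restricts to a homotopy equivalence $\tilde p\colon e\al{H}(E)\to\al{H}(\al{C})$, and then compose with the chain $\al{H}(\al{C})\simeq\al{Q}(\al{C})\cong\al{P}(\al{C})$ produced by Proposition \ref{retratto} applied to the Banach algebra $\al{C}$, together with the analytic diffeomorphism $q\mapsto(1+q)/2$. The resulting composite $A\mapsto \tfrac{1}{2}(1 + r(p(A)))$ is precisely the map $\Psi$ anticipated in the introduction.

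By the spectral identification (\ref{characterization_essential}), an operator $T$ belongs to $e\al{H}(E)$ if and only if $p(T)$ belongs to $\al{H}(\al{C})$; hence $p^{-1}(\al{H}(\al{C})) = e\al{H}(E)$, and $\tilde p$ is a continuous surjection between two open subsets of $\al{L}(E)$ and $\al{C}$ respectively. To invert $\tilde p$ up to homotopy, I would apply the Bartle-Graves theorem (see \cite{BG52} and Appendix \ref{app:sections}) to the bounded linear quotient $p\colon\al{L}(E)\to\al{C}$, whose kernel $\al{L}_c(E)$ is closed; this yields a continuous, generally non-linear, section $s\colon\al{C}\to\al{L}(E)$. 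Its restriction $i\colon\al{H}(\al{C})\to e\al{H}(E)$ is well-defined because $p(s(y))=y$, and $\tilde p\ci i=\mathrm{id}$ is immediate.

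To see that $i\ci\tilde p\simeq\mathrm{id}_{e\al{H}(E)}$, I would use the straight-line homotopy
\[
H(t,T) = (1-t)T + t\,i(p(T)),\qquad (t,T)\in[0,1]\times e\al{H}(E).
\]
Its crucial feature is that $p(H(t,T)) = p(T)$ for every $t$, so $H(t,T)$ stays inside the affine coset $T+\al{L}_c(E)$, which is entirely contained in $e\al{H}(E)$ since essential hyperbolicity depends only on the Calkin image. Thus $H$ is a homotopy in $e\al{H}(E)$ from the identity at $t=0$ to $i\ci\tilde p$ at $t=1$, proving that $\tilde p$ is a homotopy equivalence; concatenating with the equivalences of Proposition \ref{retratto} finishes the argument.

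The main obstacle is the existence of the continuous section $s$: no continuous linear section is available in general for an arbitrary Banach quotient, so the non-linear Bartle-Graves selection is essential. Should one prefer to avoid a global section, the same idea can be carried out with local sections glued together, exhibiting $\tilde p$ as a locally trivial fibre bundle with contractible fibre $\al{L}_c(E)$ over the ANR base $\al{H}(\al{C})$, whose projection is then a homotopy equivalence by standard fibre-bundle theory.
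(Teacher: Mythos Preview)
Your proof is correct and follows essentially the same route as the paper: Bartle--Graves gives a continuous section $s$ of $p$, the restriction $\tilde p\colon e\al{H}(E)\to\al{H}(\al{C})$ is a homotopy equivalence with inverse $s$, and Proposition~\ref{retratto} plus the diffeomorphism $\al{Q}(\al{C})\cong\al{P}(\al{C})$ finishes. The only presentational difference is that the paper packages the equivalence $\tilde p$ by writing down the explicit homeomorphism $e\al{H}(E)\cong\al{H}(\al{C})\times\al{L}_c(E)$, $(x,K)\mapsto s(x)+K$, and then contracting the second factor, whereas you write the straight-line homotopy directly; these are the same argument.
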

\begin{proof}
First we prove that $ e\al{H}(E) $ is homotopically equivalent 
to $ \al{H}(C) $. By classical results of continuous selections there
exists a continuous right inverse of $ p $, call it $ s $. It is a 
consequence of Theorem \ref{bartle_graves} when the topological space $ T $ 
consists of a point. Using the characterization 
(\ref{characterization_essential}) it is easy to check that
$ e\mathcal{H}(E) = p^{-1}(\mathcal{H}(\mathcal{C})) $. Moreover,
the two continuous maps
\begin{align*}
&\mathcal{H}(\mathcal{C})\times\ker p\rightarrow e\mathcal{H}(E), 
\quad (x,K)\mapsto s(x) + K\\
&e\mathcal{H}(E)\ra\mathcal{H}(\mathcal{C})\times\ker p,
\quad A\mapsto \big(p(A),A - s(p(A))\big)
\end{align*}
are the inverses of each other, hence 
$ \mathcal{H}(\mathcal{C})\times\ker p $ is homemorphic to 
$ e\mathcal{H}(E) $.
Since $ \ker p = \mathcal{L}_c (E) $ is a linear subspace of 
$ \mathcal{L}(E) $, is contractible, thus the two maps are homotopically
equivalent to the maps
\begin{align*}
s\colon &\mathcal{H}(\mathcal{C})\ra e\mathcal{H}(E)\\
p\colon &e\mathcal{H}(E)\ra \mathcal{H}(\mathcal{C}).
\end{align*}
Now, by Proposition \ref{retratto}, $ \al{H}(\al{C}) $ has the 
same homotopy type of $ \al{Q}(\al{C}) $ which is homeomorphic to 
$ \al{P}(\al{C}) $. Taking the composition of all the functions we
referred to, we can write explicitly an homotopy equivalence between 
$ e\mathcal{H}(E) $ and $ \mathcal{P}(\mathcal{C}) $ and its homotopic inverse:
\begin{align*}
\Psi&\colon e\mathcal{H}(E)\rightarrow\mathcal{P}(\mathcal{C}),\quad 
A\mapsto p\sp + (p(A))\\
\Phi&\colon \mathcal{P}(\mathcal{C})\ra e\mathcal{H}(E),\quad
p\mapsto s(2p - 1).
\end{align*}
\glsadd{labPs}
\glsadd{labPh}
\end{proof}
\section{The fiber bundle $ (G(\mathcal{B}),\mathcal{P}(\mathcal{B})) $}
In this section we define the fiber bundle with total space $ G(\mathcal{B}) $
and base space $ \mathcal{P}(\mathcal{B}) $. The exact homotopy sequence
associated to the fiber space provides us with some relations between
the homotopy groups of the base space and the total space. 
\begin{definition}
We say that two projectors $ p,q $ are \textsl{conjugated} if there exists an 
invertible element $ g\in G(\mathcal{B}) $ such that $ gp = qg $.  
\end{definition}
The projector $ \orl{p} = 1 - p $ is naturally associated
to $ p $.
\glsadd{labpp}
\begin{proposition}{\rm (cf. \cite{PR87}, \textsc{Proposition} 4.2).}
\label{pairs_of_projectors}
In the space of idempotents the following facts hold:
\begin{enumerate}
\item if $ \no{p - q} < 1 $, there exists an invertible element 
$ g\in G_0 (\mathcal{B}) $ such that $ gp = qg $; thus, the space of
idempotents is locally arcwise connected;
\item two idempotents are connected by a continuous path, if and only
if there exists $ g\in G_0 (\mathcal{B}) $ such that $ gp = qg $.
\end{enumerate}
\glsadd{labGB}
\end{proposition}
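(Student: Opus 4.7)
\emph{Plan.} For part (i), the natural explicit intertwiner is
\[
g := qp + (1-q)(1-p) \;=\; 1 - p - q + 2qp,
\]
and a direct multiplication shows $gp = qp = qg$. The invertibility of $g$ hinges on the algebraic identity $g - 1 = (2q-1)(p-q)$, which, after left multiplication by $(2q-1)$ and use of $(2q-1)^2 = 1$, yields the factored form $g = (2q-1)(p+q-1)$. Since $2q-1$ is its own inverse, the invertibility of $g$ reduces to that of $p+q-1$, whose square satisfies $(p+q-1)^2 = 1 - (p-q)^2$ and therefore lies in $G(\mathcal{B})$ by Neumann series whenever $\no{p-q} < 1$.

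To see that $g \in G_0(\mathcal{B})$, I would join $1$ to $g$ by the straight segment $g_t := (1-t) + tg$, $t \in [0,1]$, and prove that every $g_t$ is invertible. Setting $z_t := (2q-1)g_t = (2q-1) + t(p-q)$ and expanding, the cross terms simplify via the identity $(2q-1)(p-q) + (p-q)(2q-1) = -2(p-q)^2$ (itself a consequence of $(p-q)^2 = p + q - pq - qp$), giving
\[
z_t^2 = 1 - t(2-t)(p-q)^2.
\]
Since $t(2-t) \in [0,1]$ for $t \in [0,1]$ and $\no{(p-q)^2} \le \no{p-q}^2 < 1$, the element $z_t^2$ is invertible; hence so is $z_t$, and therefore $g_t = (2q-1)z_t$ traces a continuous path in $G(\mathcal{B})$ from $1$ to $g$. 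Local arcwise connectedness of $\mathcal{P}(\mathcal{B})$ follows at once: conjugating along this path, $q_t := g_t p g_t^{-1}$ is a continuous path of projectors from $p$ to $q$, so the open ball of radius $1$ about $p$ is contained in the path-component of $p$.

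For part (ii), the implication $(\Leftarrow)$ is immediate: given $g \in G_0(\mathcal{B})$ with $gp = qg$ and any continuous path $h_t$ from $1$ to $g$ in $G(\mathcal{B})$, the map $t \mapsto h_t p h_t^{-1}$ is a continuous path of projectors from $p$ to $q$. For $(\Rightarrow)$, let $\gamma : [0,1] \to \mathcal{P}(\mathcal{B})$ be a continuous path with $\gamma(0) = p$ and $\gamma(1) = q$; by uniform continuity choose a partition $0 = t_0 < t_1 < \cdots < t_n = 1$ so that $\no{\gamma(t_{i+1}) - \gamma(t_i)} < 1$ for each $i$. Part (i) then furnishes $g_i \in G_0(\mathcal{B})$ with $g_i \gamma(t_i) g_i^{-1} = \gamma(t_{i+1})$, and the product $g := g_n \cdots g_1 \in G_0(\mathcal{B})$ satisfies $gpg^{-1} = q$.

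The central difficulty is confirming that the straight-line segment $g_t$ never leaves $G(\mathcal{B})$; the clean identity $z_t^2 = 1 - t(2-t)(p-q)^2$ is what packages this into a single Neumann-series estimate holding uniformly in $t$, and without it one would have to control $\no{g_t - 1}$ directly, which fails in general because $\no{2q-1}$ is not bounded a priori.
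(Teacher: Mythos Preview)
Your proof is correct and follows essentially the same route as the paper: both take the intertwiner $g = qp + (1-q)(1-p)$ (the paper's $L(q,p)$) and establish invertibility along the straight segment via the same identity $1 - t(2-t)(p-q)^2$, the only cosmetic difference being that you reach it by squaring $z_t = (2q-1)g_t$ whereas the paper obtains it as the product $(1-t+tL(p,q))(1-t+tL(q,p))$ and then adjoins a square root $R$ of $(1-(p-q)^2)^{-1}$ that you do not need. Part~(ii) is handled identically in both via a partition of the path and composition of the local conjugators.
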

\begin{proof}
i). Given $ p,q $ we define $ L(p,q) = pq + (1 - p)(1 - q) $. As $ t $
varies in $ [0,1] $, we have
\begin{equation}
\label{eq:lpq}
\begin{array}{c}
(1 - t + tL(p,q))(1 - t + tL(q,p)) = 1 - t(2 - t)(p - q)^2\\
L(p,q)L(q,p) = 1 - (p - q)\sp 2.
\end{array}
\end{equation}
The right term is an invertible operator because $ \no{p - q} < 1 $
and $ t(2 - t)\leq 1 $. From the second equality it follows that $ L(p,q) $ 
and $ L(q,p) $ commute, hence they are invertible too. Moreover, each
of them is joint to the unit by the path. From Example 
\ref{ex:square-root} there exists $ R $ such that
\[
R(p,q)\in G_0 (\mathcal{B}),\ 
R(p,q)\sp 2 = \left(1 - (p - q)\sp 2\right)\sp{-1}
\]
Thus $ L(p,q) R $ and $ L(q,p) R $ are the inverse of each other. By 
multiplying the second of (\ref{eq:lpq}) by $ R $ on both sides, we obtain
\begin{equation}
\label{conjugation}
L(q,p) Rp = q L(p,q) R.
\end{equation}
We define $ g(p,q) = L(p,q) R $.\vskip .2em
\glsadd{labgpq}
ii). Let $ \alpha $ be a continuous path such that $ \alpha(0) = p $
and $ \alpha(1) = q $. Let $ \{t_i:0\leq i\leq n\} $ be a 
partition of the unit interval such that 
$ \no{\alpha(t_i) - \alpha(t_{i + 1})} < 1 $ for every $ i $. Let
\[
g = \prod_{i = 0} \sp{n - 1} g(\alpha(t_{n - i}),\alpha(t_{n - i - 1}))
\]
since $ g $ is a product of elements of $ G_0 (\mathcal{B}) $, it also 
belongs to $ G_0 (\mathcal{B}) $. 
By applying (\ref{conjugation}) $ n $ times, we 
obtain $ gp = qg $. Conversely, if there exists an element 
$ g\in G_0 (\mathcal{B}) $ such that $ gp = qg $, then the path
$ g(t)pg(t)\sp{-1} $ joins $ q $ to $ p $.
\end{proof}
\noindent Given a projector $ p $, we denote by 
$ \mathcal{P}_p (\mathcal{B}) $ the connected component of $ p $ and
define the following subgroups of $ G(\mathcal{B}) $:
\glsadd{labPBp}
\begin{gather*}
G_p (\mathcal{B}) = 
\{g\in G(\mathcal{B}):gpg\sp {-1}\text{ is connected to }p\},\\
F_p = \set{g\in G(\mathcal{B})}{gp = pg}.
\glsadd{labGpB}
\end{gather*}
Clearly $ F_p \subset G_p $. We define the map
\[
\pi_p\colon G_p\ra\mathcal{P}_p,\quad g\mapsto gpg\sp{-1}.
\glsadd{laborlp}
\]
By ii) of Proposition \ref{pairs_of_projectors}, $ \pi_p $ is surjective.
\begin{theorem}{\rm (cf. \cite{PR87}, \S 7).}
\label{th:gl-p}
The triple $ (G_p,\pi_p,\mathcal{P}_p) $ induces a principal bundle with 
group $ F_p $ acting on itself by multiplication on the left.
\end{theorem}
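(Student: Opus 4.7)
The plan is to build explicit local trivialisations of $\pi_p$ using the canonical conjugation element $g(p,q)$ supplied by Proposition~\ref{pairs_of_projectors}. As preliminary set-up I note that item~i) of that proposition shows $\mathcal{P}(\mathcal{B})$ is locally arcwise connected, so the connected component $\mathcal{P}_p$ is open in $\mathcal{P}(\mathcal{B})$; since conjugation is continuous, $G_p = \pi_p^{-1}(\mathcal{P}_p)$ is open in $G(\mathcal{B})$. The fibres of $\pi_p$ are exactly the left cosets $gF_p$, because $\pi_p(g_1) = \pi_p(g_2) = q$ forces $g_2^{-1}g_1\,p = p\,g_2^{-1}g_1$, i.e.\ $g_2^{-1}g_1 \in F_p$. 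In particular the right action of $F_p$ on $G_p$ by multiplication is free, fibre-preserving and transitive on fibres.

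Next I would build continuous local sections. Fix $q_0 \in \mathcal{P}_p$ and, using the surjectivity of $\pi_p$ granted by item~ii) of Proposition~\ref{pairs_of_projectors}, choose $h_0 \in \pi_p^{-1}(q_0)$; let $U_{q_0} = \{q \in \mathcal{P}_p : \|q - q_0\| < 1\}$. For $q \in U_{q_0}$ set
\[
s_{q_0}(q) \;=\; g(q_0,q)\,h_0,
\]
with $g(q_0,q) = L(q_0,q)R(q_0,q)$ as constructed in the proof of Proposition~\ref{pairs_of_projectors}. Since $g(q_0,q)\,q_0 = q\,g(q_0,q)$, we get $\pi_p(s_{q_0}(q)) = q$, so $s_{q_0}$ is a local section. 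Continuity in $q$ follows from the manifest continuity of $L(q_0,q) = q_0 q + (1-q_0)(1-q)$ together with the continuous dependence of the square-root factor $R$ on its argument $(1 - (q_0 - q)^2)^{-1}$, which is the content of the construction referred to just before formula~(\ref{conjugation}).

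With the sections at hand I would define the local trivialisations
\[
\Phi_{q_0}\colon U_{q_0} \times F_p \longrightarrow \pi_p^{-1}(U_{q_0}), \qquad (q,f) \longmapsto s_{q_0}(q)\,f.
\]
The candidate inverse $g \mapsto (\pi_p(g),\,s_{q_0}(\pi_p(g))^{-1} g)$ is well defined because $g$ and $s_{q_0}(\pi_p(g))$ lie in the same $\pi_p$-fibre, so their quotient lies in $F_p$; both maps are continuous, so $\Phi_{q_0}$ is a homeomorphism intertwining the projections and equivariant for the right $F_p$-action. Over an overlap $U_{q_0} \cap U_{q_1}$ the transition map is $q \mapsto t_{q_0 q_1}(q) = s_{q_1}(q)^{-1} s_{q_0}(q)$, which is continuous and takes values in $F_p$; it acts on the typical fibre $F_p$ by left multiplication, giving exactly the data of a principal $F_p$-bundle.

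The main obstacle is the continuous dependence of $g(p,q)$, and especially of the square-root $R(p,q)$, on the pair. This is handled by a holomorphic functional calculus argument: when $\|p - q\| < 1$ the element $1 - (p - q)^2$ stays in an open set on which a jointly continuous principal branch of the square root is available (obtained, for instance, via a Cauchy integral along a fixed contour encircling the spectrum uniformly in a neighbourhood of the diagonal). Once $s_{q_0}$ is known to be continuous, the remaining verifications — that $\Phi_{q_0}$ is a homeomorphism and that the transition cocycle is $F_p$-valued — amount to direct substitution.
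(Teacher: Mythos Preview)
Your approach is correct and essentially identical to the paper's: pick a base point $q_0$, lift it once to some $h_0\in\pi_p^{-1}(q_0)$, and use the canonical conjugator of Proposition~\ref{pairs_of_projectors} to produce a continuous local section $q\mapsto g(\,\cdot\,,\,\cdot\,)\,h_0$, from which the trivialisation and the left-multiplication form of the transition cocycle follow exactly as you wrote. The only slip is the order of the arguments: with the paper's convention $g(p,q)=L(p,q)R$ one has $L(p,q)q=pL(p,q)$, hence $g(p,q)\,q\,g(p,q)^{-1}=p$, so your section should be $s_{q_0}(q)=g(q,q_0)\,h_0$ (which is precisely the paper's $s_q(r)=g(r,q)\,g$); with that correction your proof and the paper's coincide.
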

\begin{proof}
We prove that there exists an open cover of coordinate neighbourhoods.
Fix $ q\in\mathcal{P}_p $ and let $ g $ as in ii) of Proposition 
\ref{pairs_of_projectors}. On the ball $ B(q,1) $ we define a section of the 
projection map $ \pi_p $
\begin{equation}
\label{loc_sect_alg}
s_q \colon B(q,1)\rightarrow G_p,\quad r\mapsto g(r,q) g.
\end{equation}
Clearly $ \pi_p (s_q (r)) = r $. We define 
coordinate neighbourhoods
\[
\phi\colon B(q,1)\times F_p\ra\pi_p \sp{-1} (B(q,1)),\quad
(x,y) \mapsto s_q (x) \cdot y.
\]
It is an homeomorphism and $ \pi_p (\phi(x,y)) = x $. If two coordinate 
neighbourhoods, $ B(q_1,1) $ and $ B(q_2,1) $ intersect, the transition
maps are 
\[
\phi_{2,x}^{-1}\phi_{1,x}\colon F_p\rightarrow F_p, \ y\mapsto 
s_2 (x)^{-1} s_1 (x) y;
\]
where $ s_i $ are the sections defined on $ B(q_1,1) $ and $ B(q_2,1) $,
respectively. Since $ s_2 (x)^{-1} s_1 (x)\in F_p $ we have 
defined a \textsl{principal bundle} according to \cite{Ste51}, \S 8. 
\end{proof}
For principal bundles we can write the \textsl{exact homotopy sequence},
see \cite{Ste51}, \S 17. The sequence
\begin{gather}
\label{exact_homotopy_sequence:1}
\xymatrix{
\pi_k (F_p, 1) \ar[r]^{i_*} & \pi_k (G_p,1) \ar[r]^{\pi_{p,*}} 
& \pi_k (\mathcal{P}_p,p) \ar[r]^{\partial} & \pi_{k - 1} (F_p,1)
}
\end{gather}
is exact for every $ k\geq 1 $.
\section{The Grassmannian algebra}
Given $ p,q $ idempotents of an algebra $ \mathcal{B} $ we define
the following equivalence relation
\begin{gather}
\label{range_relation}
p\sim q\iff pq = q, \ qp = p.
\end{gather}
Symmetry is obvious. If $ (p,q) $ and $ (q,r) $ are
equivalent pairs then $ pr = p(qr) = (pq)r = qr = r $, similarly $ rp = p $,
then reflectivity follows.
\begin{definition}
We denote by $ Gr(\mathcal{B}) $ the set of equivalence classes endowed with
the quotient topology.
\glsadd{labGrB}
\end{definition}
H. Porta and L. Recht proved in \cite{PR87} that the 
Grassmannian algebra is a metric space, the quotient projection 
$ \pi\colon\mathcal{P}(\mathcal{B})\rightarrow Gr(\mathcal{B}) $ 
is an open map and there exists a global continuous section
of $ \pi $ on $ Gr(\mathcal{B}) $. In fact any global continuous section is 
a homotopy inverse of $ \pi $ (see \cite{PR87}, \S 3). 
\vskip .2em
When $ \mathcal{B} $ is the algebra of the bounded operators on a Banach space
$ E $ two projectors are equivalent if and only if they have the same images.
In fact the identity $ P Q = Q $ means that $ \ran Q\subseteq\ran P $.
Then we have a well defined bijection
\[
Gr(\mathcal{L}(E))\rightarrow G_s (E), \ \ \pi(P)\mapsto\ran P.
\]
\begin{lemma}[refer \cite{Geb68}]
\label{projectors_Grassmannian}
There exists a continuous section of the map that associates a projector
with its range. Every section is in fact a homotopy equivalence.
\end{lemma}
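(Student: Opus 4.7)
The plan is to construct a continuous global section of $r\colon\mathcal{P}(E)\to G_s(E)$ by glueing local sections via a partition of unity, exploiting the following observation: the fibres of $r$ are convex. Indeed, if $P_1,P_2\in\mathcal{P}(E)$ both have range $X$, then each acts as the identity on $X$ and maps $E$ into $X$, so for $t\in[0,1]$ the operator $tP_1+(1-t)P_2$ still maps $E$ into $X$ and restricts to the identity on $X$; in particular it is idempotent with range exactly $X$. This convexity is the engine of both parts of the lemma.

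For the first part, for each $X_0\in G_s(E)$ I fix a complement $Y_0$ and set $P_0=P(X_0,Y_0)$. By Proposition \ref{GE_is_open} there is an open neighbourhood $U_{X_0}$ of $X_0$ (of radius smaller than $\hat{\gamma}(X_0,Y_0)$) such that every $X\in U_{X_0}$ satisfies $X\oplus Y_0=E$, and by Proposition \ref{continuity_of_splits} the assignment $X\mapsto P(X,Y_0)$ is a continuous local section of $r$ on $U_{X_0}$. Since $G_s(E)$ is metrizable it is paracompact. Let $\{U_\alpha\}$ be a locally finite open refinement of the cover $\{U_{X_0}\}$, with associated local sections $s_\alpha$, and let $\{\psi_\alpha\}$ be a subordinate partition of unity. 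I define
\[
s(X)=\sum_\alpha \psi_\alpha(X)\,s_\alpha(X).
\]
For any fixed $X$ only finitely many terms are nonzero, and whenever $\psi_\alpha(X)\neq 0$ the projector $s_\alpha(X)$ has range exactly $X$. By the convexity remark above, $s(X)\in\mathcal{P}(E)$ with $\ran s(X)=X$, so $s$ is a continuous global section of $r$.

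For the second part, let $s$ be any continuous section of $r$; by definition $r\circ s=\mathrm{id}_{G_s(E)}$. To exhibit a homotopy between $s\circ r$ and $\mathrm{id}_{\mathcal{P}(E)}$ I define
\[
H\colon[0,1]\times\mathcal{P}(E)\to\mathcal{P}(E),\qquad H(t,P)=(1-t)P+t\,s(r(P)).
\]
For each $P$, both $P$ and $s(r(P))$ are projectors with range $\ran P$, so by convexity of the fibres $H(t,P)$ is a projector with range $\ran P$ for every $t\in[0,1]$. Continuity of $H$ is clear, $H(0,\cdot)=\mathrm{id}_{\mathcal{P}(E)}$ and $H(1,\cdot)=s\circ r$, so $s$ is a homotopy equivalence.

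The only delicate step is the glueing: linear convex combinations of projectors with \emph{different} ranges are generally not projectors, so a partition-of-unity argument would fail for a generic continuous surjection with non-convex fibres. The crucial input making the present construction work is precisely the convexity of the fibres of $r$, which in turn rests on the elementary fact that every projector restricts to the identity on its own range. Once this point is noticed, the rest of the argument is formal.
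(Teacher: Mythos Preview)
Your proof is correct and follows essentially the same approach as the paper: construct local sections $X\mapsto P(X,Y_0)$ via Proposition~\ref{GE_is_open}, glue them with a partition of unity using the convexity of the fibres of $r$, and use that same convexity to build the straight-line homotopy between $s\circ r$ and the identity. The paper verifies idempotence of the glued section by the explicit computation $s_i(X)s_j(X)=s_j(X)$ rather than invoking fibre convexity directly, but this is the same observation phrased differently.
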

\begin{proof}
Call $ r $ the map $ \al{P}(E)\ni P\mapsto\ran P $. This is continuous
with the opening metric defined in \S 1.
In fact, given $ P,Q\in\al{P}(E) $, it can be easily
checked that
\glsadd{labrP}
\[
\delta_S (r(P),r(Q))\leq 2\no{P - Q}.
\]
We can build now a continuous section of $ r $ using the construction of
\cite{Geb68} whose idea is the following: fix $ X $ a splitting subspace
and choose $ Y $ a topological complement. By Proposition \ref{GE_is_open}, 
for every $ X'\in B(X,\hat{\gamma}(X,Y)) $, we have $ X'\oplus Y = E $. We 
define
\[
s\colon B(X,\hat{\gamma}(X,Y))\rightarrow \al{P}(E), \ X'\mapsto P(X',Y);
\]
by Proposition \ref{GE_is_open} this is a continuous local section of the map 
$ r $. Since $ G_s (E) $ is metric, thus paracompact, there exists 
a locally finite refinement of the open covering 
$ \{B(X,\hat{\gamma}(X,Y))\} $, say $ \mathcal{U} = \set{U_i}{i\in I} $. 
Let $ \{\vfi_i\} $ be a partition of unit subordinate to $ \mathcal{U} $. 
Thus for every $ X $ in $ G_s (E) $ define
\[
s(X) = \sum_{i\in I} \vfi_i (X) s_i (X), \ s\in C(G_s (E),\mathcal{L}(E)).
\]
To prove that $ s(X) $ is a projector observe that if $ X\in U_i\cap U_j $,
then
\[
\ran P(X,Y_i) = \ran P(X,Y_j) = X.
\]
This is equivalent to
\[
s_i (X) s_j (X) = s_j (X), \ s_j (X) s_i (X) = s_i (X);
\]
keeping in mind these relations it is easy to prove that 
$ s(X) $ is a projector with range $ X $. In fact
\begin{equation*}
\begin{split}
s(X)^2 &= \sum_i \vfi_i s_i (X) \left(\sum_j \vfi_j s_j (X)\right) = 
\sum_i \vfi_i \left(\sum_j \vfi_j (X) s_i (X) s_j (X)\right) \\ 
&= \sum_i \vfi_i s(X) = s(X).
\end{split}
\end{equation*}
This also proves that $ r^{-1} (\{X\}) $ is a convex, actually affine, subspace
of $ \mathcal{P}(E) $. By construction $ r\ci s = id $. For every 
projector $ P $ we have  
\[
r (s \ci r (P)) = r(P), \ \ tP + (1 - t) s\ci r (P)\in\al{P}(E)
\]
for every $ t\in [0,1] $. This defines a homotopy between $ s\circ r $ and 
the identity map.
\end{proof}
As application of the preceding Lemma we state a result of stability of
the relative dimension defined on Chapter I. 
\begin{theorem}
\label{stability_dimension}
Let $ X $ and $ Y $ be continuous functions defined on a
topological space $ M $ such that $ X(t) $ and $ Y(t) $ are closed and 
splitting subspaces and $ X(t) $ is compact perturbation of $ Y(t) $ for 
every $ t $ in $ M $. Hence $ \dim(X(t),Y(t)) $ is locally constant.
\end{theorem}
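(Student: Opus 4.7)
The plan is to reduce the statement to the stability of the Fredholm index for pairs of closed subspaces under the opening metric, i.e.\ to Theorem \ref{perturbation}, by exhibiting a continuous path of topological complements of $ Y(t) $. Fix $ t_0\in M $; I want to show that $ \dim(X(t),Y(t)) $ is constant in a neighbourhood of $ t_0 $.

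First I would use Lemma \ref{projectors_Grassmannian} to pick a global continuous section $ s\colon G_s (E)\rightarrow\al{P}(E) $ of the range map. Setting $ Q(t) = s(Y(t)) $ produces a continuous path of projectors with $ \ran Q(t) = Y(t) $; then $ Z(t) = \ker Q(t) = \ran (I - Q(t)) $ is a continuous path in $ G_s (E) $ (by composing with $ r $ as in Lemma \ref{projectors_Grassmannian}), and $ Y(t)\oplus Z(t) = E $ for every $ t $. Since $ X(t) $ is a compact perturbation of $ Y(t) $ and $ (Y(t),Z(t)) $ is a Fredholm pair with index $ 0 $, Proposition \ref{transitivity_of_dimension} guarantees that $ (X(t),Z(t)) $ is Fredholm and formula (\ref{alternative}) gives
\begin{gather*}
\dim(X(t),Y(t)) = \ind (X(t),Z(t)).
\end{gather*}

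At this point the problem is reduced to proving that $ t\mapsto \ind(X(t),Z(t)) $ is locally constant. Since $ X $ and $ Z $ are continuous into $ G_s (E)\subset G(E) $ with the topology induced by $ \delta_S $, and $ (X(t_0),Z(t_0)) $ is semi-Fredholm with integer index, Theorem \ref{perturbation} provides $ \delta > 0 $ such that any pair $ (X',Z') $ with $ \delta_S (X',X(t_0)) < \delta $ and $ \delta_S (Z',Z(t_0)) < \delta $ is semi-Fredholm with the same index. By continuity of $ X $ and $ Z $ there is an open neighbourhood $ U $ of $ t_0 $ in $ M $ on which both $ \delta_S (X(t),X(t_0)) < \delta $ and $ \delta_S (Z(t),Z(t_0)) < \delta $ hold; on $ U $ we then have $ \ind(X(t),Z(t)) = \ind(X(t_0),Z(t_0)) $, and therefore $ \dim(X(t),Y(t)) = \dim(X(t_0),Y(t_0)) $.

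The only nontrivial point is the construction of the continuous path of complements $ Z(t) $, for which the global section provided by Lemma \ref{projectors_Grassmannian} is essential; without it one would only get local sections of $ r $ and would have to glue them, which is exactly the content of that lemma. Once the complements are in place, the combination of (\ref{alternative}) and Theorem \ref{perturbation} gives the result with no further work.
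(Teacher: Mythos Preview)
Your proof is correct and follows essentially the same route as the paper: both arguments use the global section $s$ of Lemma \ref{projectors_Grassmannian} to produce a continuous family of complements $Z(t)=\ker s(Y(t))$ (the paper writes this as $\nu(Y(t))$), invoke (\ref{alternative}) to rewrite $\dim(X(t),Y(t))$ as the Fredholm index $\ind(X(t),Z(t))$, and then apply Theorem \ref{perturbation} to conclude local constancy. Your write-up is slightly more explicit about the continuity of $Z(t)$ and the role of Proposition \ref{transitivity_of_dimension}, but the underlying argument is identical.
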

\begin{proof}
Let $ s $ be a continuous section on $ G_s (E) $ of the map $ r $ defined in
the Lemma \ref{projectors_Grassmannian}. Then it is defined a continuous map
\[
\nu\colon G_s (E)\rightarrow G_s (E), \ \ X\mapsto \ker s(X).
\]
By the identity (\ref{alternative}) the relative dimension of the pair
$ (X(t),Y(t)) $ is the Fredholm index of the pair $ (X(t),\nu(Y(t))) $. 
Fix $ t_0 $ in $ M $; by Theorem \ref{perturbation} there exists a open 
neighbourhood of $ t_0 $, say $ U $, such that 
\[
\ind(X(t),\nu(Y(t))) = \ind(X(t_0),\nu(Y(t_0)))
\]
for every $ t\in U $. Therefore we conclude with (\ref{alternative}).
\end{proof}
\begin{theorem}
If $ \al{B} $ is the algebra of bounded operators on $ E $ then 
$ Gr (\al{B}) $ with the quotient topology is homeomorphic to $ G_s (E) $
with the topology induced by the metric $ \delta_S $.
\end{theorem}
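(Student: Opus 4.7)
The plan is to show that the natural bijection
\[
\bar{r}\colon Gr(\mathcal{B})\longrightarrow G_s (E), \quad \pi(P)\longmapsto \ran P
\]
is a homeomorphism. That $ \bar{r} $ is well defined and injective follows from the observation made just before Lemma \ref{projectors_Grassmannian}: the relation $ P \sim Q $, i.e.\ $ PQ = Q $ and $ QP = P $, translates exactly into $ \ran P = \ran Q $. Surjectivity is immediate from the definition of $ G_s (E) $, since every splitting subspace $ X $ is the range of some projector.

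For continuity of $ \bar{r} $, I would use the universal property of the quotient topology: it suffices to show that the composition $ r = \bar{r}\circ\pi\colon\mathcal{P}(E)\rightarrow G_s (E) $, $ P\mapsto\ran P $, is continuous. This was established in the proof of Lemma \ref{projectors_Grassmannian} via the bound
\[
\delta_S (r(P),r(Q)) \leq 2\no{P - Q},
\]
and since $ \pi $ is the quotient projection, continuity of $ \bar{r} $ follows.

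For continuity of $ \bar{r}\sp{-1} $, I would invoke the continuous global section $ s\colon G_s (E)\rightarrow\mathcal{P}(E) $ of $ r $ constructed in Lemma \ref{projectors_Grassmannian}, so that $ r\circ s = \mathrm{id} $. Composing with the quotient projection produces a continuous map
\[
\pi\circ s\colon G_s (E)\longrightarrow Gr(\mathcal{B}),
\]
and I claim it coincides with $ \bar{r}\sp{-1} $. On one hand, $ \bar{r}(\pi(s(X))) = \ran s(X) = X $. On the other hand, for any projector $ P $ with $ \ran P = X $, the projector $ s(X) $ also has range $ X $, whence $ s(X)\sim P $ and $ \pi(s(X)) = \pi(P) $; this gives $ (\pi\circ s)\circ\bar{r} = \mathrm{id} $.

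The proof is essentially an assembly: the substantive work, namely the Lipschitz estimate for the range map and the existence of a continuous section, has already been carried out in Lemma \ref{projectors_Grassmannian}. The only delicate point is applying the universal property of the quotient topology correctly, and there is no serious obstacle beyond this bookkeeping.
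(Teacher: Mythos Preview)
Your proof is correct and close in spirit to the paper's, but there is one genuine difference worth noting. For the continuity of $\bar{r}^{-1}$, both you and the paper exhibit it as the composition $\pi\circ s$, using the section $s$ of Lemma~\ref{projectors_Grassmannian}. For the continuity of $\bar{r}$ itself, however, the paper does \emph{not} use the universal property of the quotient topology; instead it invokes the existence of a continuous global section $\gamma\colon Gr(\mathcal{B})\to\mathcal{P}(\mathcal{B})$ (quoted from Porta--Recht) and writes $\bar{r}=r\circ\gamma$, again a composition of continuous maps. Your route is slightly more economical: it dispenses with the Porta--Recht section entirely and uses only the quotient-map property of $\pi$, which is immediate. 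The paper's route has the mild advantage of being symmetric --- both directions are realized as ``section composed with projection'' --- but yours is arguably cleaner.
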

\begin{proof}
Let $ s $ and $ \gamma $ be sections on $ G_s (E) $ and $ Gr(\mathcal{B}) $ 
respectively. We prove that the maps $ \pi\ci s $ and 
$ r\ci \gamma $ are inverse one of each other. Let $ X $ be a closed 
splitting subspace. Then
\[
\gamma((\pi\ci s)(X))\sim s(X)
\]
then $ r(s(X)) = X $. Thus $ (r\ci \gamma)\circ(\pi\ci s) = id $. 
Similarly, we have $ (\pi\ci s)\circ (r\circ \gamma) = id $.
\end{proof}
\section{Fibrations of spaces of idempotents}
Set $ \al{B} = \mathcal{L}(E) $; we recall that the Calkin algebra is defined as the
quotient algebra $ \mathcal{C} = \mathcal{L}(E)/\mathcal{L}_c (E) $ where
$ \mathcal{L}_c (E) $ is the ideal of compact operators on $ E $. 
It is a Banach algebra with unit. The projection to the quotient
$ p\colon\mathcal{B}\rightarrow\mathcal{C} $ is a surjective homomorphism.
Consider the restrictions
\begin{align*}
\prc\colon &\mathcal{P}(E)\rightarrow\mathcal{P}(\mathcal{C}) \\
p_r\colon &\mathcal{Q}(E)\rightarrow\mathcal{Q}(\mathcal{C}).
\end{align*} 
The purpose of this section is to prove that these maps induce locally 
trivial bundle, with non-constant fiber. First we need the following
\begin{proposition}{\rm{(cf. also \cite{AM03a}, \textsc{Proposition 6.1})}}.
\label{loc_sect_Calkin}
The maps $ \prc $ and $ p_r $ are surjective.
\end{proposition}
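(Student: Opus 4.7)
The plan is to lift an arbitrary idempotent $ x\in\mathcal{P}(\mathcal{C}) $ to a genuine projector $ P\in\mathcal{P}(E) $ via the Riesz spectral calculus; the statement for $ p_r $ will then follow for free, since the quotient map $ p\colon\mathcal{L}(E)\to\mathcal{C} $ is a unital algebra homomorphism and so commutes both with the diffeomorphism $ P\mapsto 2P - 1 $ and with its analogue on $ \mathcal{C} $. Hence surjectivity of $ \prc $ immediately transfers to surjectivity of $ p_r $.

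First I would pick any lift $ A\in\mathcal{L}(E) $ of $ x $. Since $ x $ is idempotent, the characterization (\ref{characterization_essential}) gives $ \sigma_e(A) = \sigma_{\mathcal{C}}(x)\subseteq\{0,1\} $. By standard Fredholm/Atkinson theory the points of $ \sigma(A)\setminus\sigma_e(A) $ are isolated eigenvalues of finite multiplicity, whose only possible accumulation points lie in $ \sigma_e(A) $. Consequently, $ \sigma(A)\cap\{0<|\lambda-1|\le 1/2\} $ is at most countable and can cluster only at $ 1 $, so only countably many radii $ r\in(0,1/2) $ are forbidden, in the sense that the circle $ |\lambda - 1| = r $ meets $ \sigma(A) $. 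Fix any admissible $ r $ and set
\[
P \;=\; \frac{1}{2\pi i}\int_{|\lambda-1|=r}(\lambda - A)^{-1}\,d\lambda,
\]
which, by the spectral-projector properties recalled in Appendix C (and already employed in Proposition \ref{retratto}), is a bounded projector on $ E $.

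To verify $ \prc(P) = x $, I would use that $ p $ is bounded and linear, hence commutes with the Cauchy integral, and that it is a unital algebra homomorphism, hence preserves resolvents wherever they are defined, so
\[
\prc(P) \;=\; \frac{1}{2\pi i}\int_{|\lambda-1|=r}(\lambda - x)^{-1}\,d\lambda.
\]
The explicit identity $ (\lambda - x)^{-1} = \lambda^{-1}(1 - x) + (\lambda - 1)^{-1}x $, valid for any idempotent $ x $, then integrates to $ x $ whenever the contour surrounds $ 1 $ but not $ 0 $, and trivially to $ 0 = x $ when $ x=0 $. The main obstacle is the preceding step: nothing prevents the isolated eigenvalues of the lift $ A $ from piling up on $ 1 $, so one cannot prescribe a Riesz contour a priori; the countability argument, exploiting that accumulation can occur only on $ \sigma_e(A) $, is the pivotal tool that unlocks the entire construction.
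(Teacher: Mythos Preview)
Your proof is correct and takes a genuinely different route from the paper's. The paper argues on the $\mathcal{Q}$ side: it lifts a square root $q$ to some $Q$ with $Q^2=I+K$, uses the elementary fact that $\sigma(I+K)$ accumulates only at $1$ to split off a finite-rank piece, and then corrects the remaining part $Q_1$ to a genuine square root by applying the holomorphic function $f(z)=(1+z)^{-1/2}$ to $K_1$. Your argument is structurally leaner: you stay on the $\mathcal{P}$ side, lift $x$ to an arbitrary $A$, and take a single Riesz projector of $A$ around $1$, pushing it down through $p$ via the explicit resolvent identity for idempotents. What your approach buys is directness---no finite-dimensional surgery, no square-root correction---at the price of needing to know that $\sigma(A)\setminus\sigma_e(A)$ is discrete near $1$. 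You invoke ``standard Fredholm/Atkinson theory'' for this; that is legitimate here because $\mathbb{C}\setminus\sigma_e(A)\supseteq\mathbb{C}\setminus\{0,1\}$ is connected and meets the resolvent set, but it is worth noting that the same conclusion follows more elementarily from the factorization $(A-\lambda)(A-(1-\lambda))=\lambda(1-\lambda)I+K$, which reduces everything to the spectrum of the compact operator $K=A^2-A$, exactly the object the paper's proof manipulates. So the two proofs are cousins: both ultimately rest on the spectral sparsity forced by a compact defect, but yours packages it as one Riesz integral while the paper's unpacks it into a projection plus a functional-calculus correction.
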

\begin{proof}
It is enough to prove it for $ p_r $, because the homeomorphism 
between $ \mathcal{P} $ and $ \mathcal{Q} $ commutes with the quotient
projections. Let $ q $ be a square root of identity in the Calkin algebra and 
let $ Q $ be an operator such that $ p(Q) = q $. There exists a compact 
operator $ K $ such that $ Q\sp 2 = I + K $. The spectrum of $ I + K $ is a 
countable subset of $ \C $ with at most $ 1 $ as limit point. Let $ U $ be a 
neighbourhood of $ 1 $ such that
\[
\p U\cap \sigma(I + K) = \emptyset,\quad 
U\cap\sigma(I + K)\subset B(1,1).
\]
Let $ P $ be the spectral projector relative to $ U $. Clearly $ I - P $
has finite dimensional range. Let $ Q_1 $ and $ K_1 $ be the restrictions
of $ Q $ to the range of $ P $. We have
\[
Q_1 \sp 2 = I + K_1
\]
where $ K_1 $ is compact and $ \sigma(K_1)\subset B(0,1) $. Thus, $ Q_1 $
is invertible. We seek $ H $ compact such that 
\[
(Q_1 - HQ_1)\sp 2 = I,\ [Q_1,H] = 0.
\] 
The first becomes $ (I + K_1)(I - H)\sp 2 = I $. A solution of this
equation is given by 
\[
I - \widehat{f}(K_1),\quad f(z) = \frac{1}{\sqrt{1 + z}}
\glsadd{labwf}
\]
where $ \widehat{f}(K_1) $ is defined according to Theorem 
\ref{thm:functional-calculus}.
Since the first coefficient of $ f $ in the power series expansion,
in a neighbourhood of the origin, is $ 1 $,
the operator above is compact. Thus, 
$ (I - P)\oplus Q_1 (I - \widehat{f}(K_1)) $ is a square root of unit and a 
compact perturbation of $ Q $.
\end{proof}
\begin{theorem}
\label{Calkin_bundle}
The map $ \prc\colon\mathcal{P}(E)\rightarrow
\mathcal{P}(\mathcal{C}) $ induces a locally trivial fiber bundle.
\end{theorem}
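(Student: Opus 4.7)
The plan is to establish local triviality by constructing, for every $q_0\in\mathcal{P}(\mathcal{C})$, an open neighborhood $U$ of $q_0$ and a homeomorphism $\phi\colon\prc^{-1}(U)\to U\times\prc^{-1}(q_0)$ that commutes with the projection to $U$. The key idea is to transport the fiber over $q_0$ to nearby fibers by conjugation, using the algebraic formulas of Proposition \ref{pairs_of_projectors} applied in $\mathcal{L}(E)$ rather than in $\mathcal{C}$.

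First I would invoke Proposition \ref{loc_sect_Calkin} to choose a projector $P_0\in\mathcal{P}(E)$ with $\prc(P_0)=q_0$. Then, using Bartle--Graves (Theorem \ref{bartle_graves}, already exploited in the proof of Theorem \ref{topology}), fix a continuous section $\tau\colon\mathcal{C}\to\mathcal{L}(E)$ of $p$, which after subtracting the constant $\tau(q_0)-P_0$ we may assume satisfies $\tau(q_0)=P_0$. On a neighborhood $U$ of $q_0$ small enough that $\no{\tau(q)-P_0}<1$, define in $\mathcal{L}(E)$
\[
\tilde{G}(q)=\bigl(\tau(q)P_0+(I-\tau(q))(I-P_0)\bigr)\bigl(I-(\tau(q)-P_0)^2\bigr)^{-1/2},
\]
where the square root is the one produced in Example \ref{ex:square-root} by the convergent power series. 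This is exactly the $\mathcal{L}(E)$-valued analog of the formula $g(q,q_0)=L(q,q_0)R(q,q_0)$ from Proposition \ref{pairs_of_projectors}. Because $\tilde{G}(q_0)=I$ and $GL(E)$ is open, after shrinking $U$ we have $\tilde{G}(q)\in GL(E)$ for all $q\in U$. Applying $p$ and using the identity (\ref{conjugation}) in $\mathcal{C}$, we get $p(\tilde{G}(q))\cdot q_0=q\cdot p(\tilde{G}(q))$, that is, $\prc(\tilde{G}(q)P_0\tilde{G}(q)^{-1})=q$; in particular, the map $s\colon U\to\mathcal{P}(E)$ defined by $s(q)=\tilde{G}(q)P_0\tilde{G}(q)^{-1}$ is a continuous local section of $\prc$.

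Now define the trivialization $\phi\colon\prc^{-1}(U)\to U\times\prc^{-1}(q_0)$ by
\[
\phi(P)=\bigl(\prc(P),\,\tilde{G}(\prc(P))^{-1}P\,\tilde{G}(\prc(P))\bigr).
\]
Well-definedness of the second component reduces again to the identity $p(\tilde{G}(q))\,q_0=q\,p(\tilde{G}(q))$: it forces $\prc(\tilde{G}(q)^{-1}P\tilde{G}(q))=q_0$ whenever $\prc(P)=q\in U$. Continuity of $\phi$ follows from continuity of $\prc$, $\tilde{G}$, inversion in $GL(E)$, and multiplication in $\mathcal{L}(E)$; its inverse $(q,P)\mapsto\tilde{G}(q)P\tilde{G}(q)^{-1}$ is continuous by the same reasons, and the projection to $U$ is obviously $\prc$. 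Hence $\phi$ is the required local trivialization and $\prc$ is a locally trivial fiber bundle.

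The main obstacle I anticipate is producing the invertible lift $\tilde{G}(q)$ that is simultaneously continuous in $q$ and projects to $g(q,q_0)$ in the Calkin algebra. The neat trick is that the analytic formulas for $L$ and $R$ are purely algebraic, hence descend through any continuous set-theoretic lift of $q$; the Bartle--Graves section $\tau$ (normalized so that $\tau(q_0)=P_0$) supplies exactly such a lift, and invertibility then comes for free from continuity and openness of $GL(E)$.
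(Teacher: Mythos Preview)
Your proof is correct and the core mechanism --- conjugation by a lift of $g(q,q_0)$ to $GL(E)$, with Bartle--Graves supplying the needed continuous lift --- is the same as the paper's. The packaging differs slightly and is worth noting.

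The paper proceeds in two stages: first it invokes the principal bundle $(G_p,\pi_p,\mathcal{P}_p)$ of Theorem~\ref{th:gl-p} to obtain the local section $x\mapsto g(x,x_0)$ with values in $G(\mathcal{C})$, and then it lifts that section to $GL(E)$ using the local sections of $p\colon GL(E)\to p(GL(E))$ provided by Appendix~D (Proposition~\ref{alg_hom}); it also carries out an extra step, conjugating by a fixed lift $G\in GL(E)$ of an element $g\in G_0(\mathcal{C})$, so that every chart over the connected component of $x_0$ has the same model fiber $\prc^{-1}(x_0)$. Your argument collapses these stages: you lift $q$ itself (not $g(q,q_0)$) via a Bartle--Graves section $\tau$ of $p\colon\mathcal{L}(E)\to\mathcal{C}$, then form the $g$-formula \emph{upstairs} in $\mathcal{L}(E)$ with the non-idempotent input $\tau(q)$, observing that the conjugation identity (\ref{conjugation}) is only needed downstairs in $\mathcal{C}$ where $q,q_0$ are honest idempotents, while invertibility upstairs follows from $\tilde G(q_0)=I$ and openness of $GL(E)$. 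This is a neat shortcut that bypasses both the appeal to Theorem~\ref{th:gl-p} and the separate lifting step, at the cost of producing charts whose model fibers $\prc^{-1}(q_0)$ vary with the base point; since any two such fibers in the same component are homeomorphic by conjugation, this is harmless for local triviality and for all later uses of the theorem.
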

\begin{proof}
Let $ x_0\in \mathcal{P}(\mathcal{C}) $ and $ D $ be its connected component.
The tuple $ (p\sp{-1} (D),D,\prc) $ is a 
locally trivial bundle with fiber homeomorphic to $ \prc\sp{-1} (\{x_0\}) $. 
By Theorem \ref{th:gl-p} and Appendix D, there exists a map on a
neighbourhood $ U_{x_0} $ of $ x_0 $
\[
T\colon U_{x_0}\ra GL(E),\quad (\pi_{x_0} \circ p)\circ T(x) = x.
\]
Thus, we can define a coordinate neighbourhood on $ U_{x_0} $, with its
inverse, as follows
\[
\phi\colon U_{x_0}\times p\sp{-1}(x_0)\ra p\sp{-1} (U_{x_0}),
\quad (x,y)\mapsto T(x)y T(x)\sp{-1}.
\]
There holds $ p\circ \phi (x,y) = x $ and is invertible. Given a point 
$ z\in D $, let $ g\in G(\mathcal{C}) $ and 
$ G\in GL(E) $ such that
\[
p(G) = g,\quad g x_0 g^{-1} = z.
\]
Such $ g $ is provided by ii) of Proposition \ref{pairs_of_projectors}.
The existence of $ G $ follows from the surjectivity of 
$ p\colon GL(E)\ra G(\mathcal{C}) $ (refer Appendix D). We define
a trivialization of the neighbourhood $ U_z = g\sp{-1} U_{x_0} g $ as
\[
\phi\colon U_z \times p\sp{-1} (x_0)\ra p\sp{-1} (U_z),\quad
(x,y)\mapsto G T(g^{-1} x g) y T(g^{-1} x g)^{-1} G^{-1}.
\]
The left composition with $ p $ is the projection onto the first factor of
the product $ U_z \times p\sp{-1} (x_0) $. In fact,
\[
\begin{split}
\prc \circ\phi(x,y) &= 
p(G) p\big(T(g^{-1} x g) y T(g ^{-1} x g)^{-1}\big) p(G)^{-1} \\
&= g p\big(T(g^{-1} x g)\big) x_0 p\big(T(g^{-1} x g)^{-1}\big) g^{-1}\\
&= g g^{-1} x g g\sp{-1} = x.
\end{split}
\]
\end{proof}
\section{The essential Grassmannian}
In $ \mathcal{P}(E) $ and $ G_s (E) $ we consider the relation of compact 
perturbation. We write $ X\sim_c Y $ if and only if $ X $ is compact
perturbation of $ Y $ in the sense of Definition \ref{compact_perturbation}
and $ P\sim_c Q $ if and only if they have compact difference.
Given $ X\in G_s (E) $ and $ P\in\mathcal{P}(E) $ we define
\begin{align*}
\mathcal{P}_c (P;E) &= \set{Q\in\mathcal{P}(E)}{P\sim_c Q} \\
G_c (X;E) &= \set{Y\in G_s (E)}{X\sim_c Y}
\glsadd{labPcE}
\glsadd{labGcE}
\end{align*}
endowed with the topology of subspace. We denote by $ \mathcal{P}_e (E) $ and
$ G_e (E) $ the quotient spaces, endowed with the quotient topology.%
\glsadd{labPeE}
\glsadd{labGeE}
In literature the latter is called \textsl{essential Grassmannian}, 
check, for instance, \cite{AM03a}, \S 6.
Let $ \Pi_e $ and $ \pi_e $ denote be the projections onto the quotient 
spaces of $ \mathcal{P}(E) $ onto $ \mathcal{P}_e (E) $ and
$ G_s (E) $ onto $ G_e (E) $, respectively. By Theorem \ref{Calkin_bundle}, 
the map
\[
\prc\colon\mathcal{P}(E)\rightarrow\mathcal{P}(\mathcal{C})
\]
has local sections, hence is open. Moreover, two projectors belong to the same 
class of compact perturbation if and only if their difference is compact, 
hence the map induced to the quotient
\[
\prc_e\colon\mathcal{P}_e (E)\rightarrow\mathcal{P}(\mathcal{C})
\]
\glsadd{labprc}
is a homeomorphism. If $ \Pi_e (P) = \Pi_e (Q) $ the operator $ P - Q $
is compact. Thus, $ \ran P\sim_c \ran Q $ and we have a well defined
map
\[
r_e\colon\mathcal{P}_e (E)\ra G_e (E),\quad \Pi_e (P)\mapsto \pi_e (\ran P).
\glsadd{labre}
\]
It is quotient map, because obtained as composition of
quotient maps.
\begin{proposition}
There is a homeomorphism between $ G_e (E) $ and $ Gr(\mathcal{C}) $ such
that the diagram
\[
\xymatrix{
\mathcal{P}_e (E) \ar[d]^{r_e} \ar[r]^-{\prc_e} & 
\mathcal{P}(\mathcal{C}) \ar[d]^{\pi} \\
G_e (E) \ar[r]^{} & Gr(\mathcal{C})}
\]
commutes.
\end{proposition}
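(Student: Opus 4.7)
The plan is to exploit the universal property of both quotients. The commutativity requirement forces the map: any $\Phi\colon G_e(E)\to Gr(\mathcal{C})$ making the diagram commute must satisfy $\Phi(r_e(\Pi_e(P))) = \pi(\prc_e(\Pi_e(P))) = \pi(p(P))$. So I would define $\Phi$ by showing that the continuous map $\pi\circ \prc_e\colon \mathcal{P}_e(E)\to Gr(\mathcal{C})$ factors through the quotient map $r_e$, i.e.\ is constant on its fibers.

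Step 1 (well-definedness of $\Phi$). Suppose $r_e(\Pi_e(P))=r_e(\Pi_e(Q))$, i.e.\ $\ran P\sim_c\ran Q$ in $G_s(E)$. By Definition \ref{compact_perturbation}, the operators $(I-P)Q$ and $(I-Q)P$ are compact. Applying $p$ gives $p(P)p(Q)=p(Q)$ and $p(Q)p(P)=p(P)$ in $\mathcal{C}$, which is precisely the relation (\ref{range_relation}). Hence $\pi(p(P))=\pi(p(Q))$. Since $r_e$ is a quotient map, the universal property delivers a continuous $\Phi\colon G_e(E)\to Gr(\mathcal{C})$ with $\Phi\circ r_e = \pi\circ\prc_e$.

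Step 2 (construction of the inverse). I would dualize the argument. Since $\prc_e$ is a homeomorphism, the composition $r_e\circ\prc_e^{-1}\colon\mathcal{P}(\mathcal{C})\to G_e(E)$ is continuous. If $\pi(p)=\pi(q)$ in $Gr(\mathcal{C})$, then $pq=q$ and $qp=p$; lifting to projectors $P,Q\in\mathcal{P}(E)$ with $p(P)=p$, $p(Q)=q$ (surjectivity of $\prc$ from Proposition \ref{loc_sect_Calkin}), the operators $PQ-Q=-(I-P)Q$ and $QP-P=-(I-Q)P$ are compact, so $\ran P\sim_c\ran Q$ by Definition \ref{compact_perturbation}. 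Hence $r_e(\prc_e^{-1}(p))=r_e(\prc_e^{-1}(q))$. Since $\pi$ is a quotient map (open, by the Porta--Recht result cited before Lemma \ref{projectors_Grassmannian}), there is a unique continuous $\Psi\colon Gr(\mathcal{C})\to G_e(E)$ with $\Psi\circ\pi=r_e\circ\prc_e^{-1}$.

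Step 3 (inverse check). A short diagram chase finishes the argument:
\[
\Psi\circ\Phi\circ r_e \;=\; \Psi\circ\pi\circ\prc_e \;=\; r_e\circ\prc_e^{-1}\circ\prc_e \;=\; r_e,
\]
and since $r_e$ is surjective, $\Psi\circ\Phi=\mathrm{id}_{G_e(E)}$. Symmetrically,
\[
\Phi\circ\Psi\circ\pi \;=\; \Phi\circ r_e\circ\prc_e^{-1} \;=\; \pi\circ\prc_e\circ\prc_e^{-1} \;=\; \pi,
\]
and surjectivity of $\pi$ gives $\Phi\circ\Psi=\mathrm{id}_{Gr(\mathcal{C})}$. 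Thus $\Phi$ is a homeomorphism making the diagram commute.

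The only delicate point is the passage, in both directions, between the equivalence relation $\sim_c$ on subspaces (phrased via $(I-P)Q$ and $(I-Q)P$ compact) and the Grassmannian relation (\ref{range_relation}) in $\mathcal{C}$; the cleanest tool for this is simply rewriting $PQ-Q$ and $QP-P$, so there is no real obstacle once Definition \ref{compact_perturbation} and Proposition \ref{loc_sect_Calkin} are in hand.
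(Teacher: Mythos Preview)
Your proof is correct and follows essentially the same approach as the paper: both hinge on the observation that $\ran P\sim_c\ran Q$ (i.e.\ $(I-P)Q$ and $(I-Q)P$ compact) is equivalent to $p(P)\sim p(Q)$ in the sense of (\ref{range_relation}), and both obtain continuity in each direction from the universal property of the quotient maps $r_e$ and $\pi$. The only cosmetic difference is that the paper packages bijectivity as a single ``iff'' before treating continuity, whereas you construct $\Phi$ and $\Psi$ separately and then verify they are mutual inverses.
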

\begin{proof}
Let $ P $ and $ Q $ be projectors such that 
$ \pi_e (\Pi_e (P)) = \pi_e (\Pi_e (P)) $. Hence 
$ \Pi_e (P)\sim_c \Pi_e (Q) $, that is, $ PQ - Q $ and $ QP - P $
are compact operators, thus 
\[
p(P)p(Q) = p(Q),\quad p(Q)p(P) = p(P),
\]
hence, $ \pi(p(P)) = \pi(p(Q)) $. By following each of the steps above
in the opposite order, it is easy to check that, if $ \pi(p(P)) = \pi(p(Q)) $,
then $ \pi_e (\Pi_e (P)) = \pi_e (\Pi_e (P)) $. Thus, given $ X\in G_s (E) $
and $ P $ such that $ \ran P = X $, we have a well defined and injective
map
\[
g_e \colon G_e (E)\ra Gr(\mathcal{C}),\quad g_e (\pi_e (X)) = \pi(p(P)).
\]
Since $ \pi $ and $ p_e $ are surjective, $ g_e $ is also surjective.
By definition, $ \pi\circ p_e = g_e \circ r_e $. We prove that
$ g_e $ is continuous. Given $ U\subset Gr(\mathcal{C}) $, then
\[
g_e \sp{-1} (U)\text{ is open iff. } r_e \sp{-1} (g_e \sp{-1} (U))
\]
is open, because the quotient topology is the finest making $ r_e $.
The latter is $ (\pi\circ p_e)\sp{-1} (U) $, which is open. 
Since $ \pi $ is also a quotient map, the continuity of the inverse 
follows.
\end{proof}
Since, by \cite{PR87}, \S 3, $ \pi $ is a homotopy equivalence, 
$ r_e $ is also a homotopy equivalence. A homotopy inverse of $ r_e $ is 
$ \prc_e ^{-1} s g_e $ where $ s $ is a right inverse
of $ \pi $. We conclude this section by showing that the spaces $ G_c $
and $ \mathcal{P}_c $ have the same homotopy type. 
\begin{proposition}
\label{relative_section}
Let $ X\in G_s (E) $ be a closed complemented subspace and $ P $ a 
projector with range $ X $. The restriction of $ r $ to 
$ \mathcal{P}_c (P;E) $ takes values in $ G_c (X;E) $ and is a homotopy 
equivalence.
\end{proposition}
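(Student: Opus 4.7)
\textbf{Proof plan for Proposition \ref{relative_section}.}

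The plan is to first check that $r$ does restrict to a map $\mathcal{P}_c(P;E)\to G_c(X;E)$, and then build a continuous section $\tilde s\colon G_c(X;E)\to\mathcal{P}_c(P;E)$ of this restriction together with a straight-line homotopy from $\tilde s\circ r$ to the identity, mimicking the global section construction of Lemma \ref{projectors_Grassmannian} but keeping control of the compact perturbation class.

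For well-definedness, suppose $Q\in\mathcal{P}_c(P;E)$, i.e.\ $P-Q$ is compact. Then
\[
(I-P)Q=(I-P)(Q-P)+(I-P)P=(I-P)(Q-P),\qquad (I-Q)P=(I-Q)(P-Q)
\]
are both compact, so by Proposition \ref{equivalence_of_definitions} we have $\ran Q\sim_c X$, hence $r(Q)\in G_c(X;E)$.

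For the section, fix $Y_0\in G_c(X;E)$. By Theorem \ref{suitable_projector} there exists $P_0\in\mathcal{P}(E)$ with $\ran P_0=Y_0$ and $P_0-P$ compact. I claim that on the ball $B(Y_0,\hat\gamma(Y_0,\ker P_0))\cap G_c(X;E)$ the assignment $Y\mapsto P(Y,\ker P_0)$ is a continuous local section landing in $\mathcal{P}_c(P;E)$. Continuity and well-definedness (i.e.\ $Y\oplus\ker P_0=E$) follow from Proposition \ref{GE_is_open}. The key point is the compact perturbation: writing $Q=P(Y,\ker P_0)$ one has $\ker Q=\ker P_0$, so $(I-Q)$ has range in $\ker P_0$ and therefore $P_0(I-Q)=0$; splitting $P_0-Q=P_0(I-Q)-(I-P_0)Q=-(I-P_0)Q$ reduces the question to the compactness of $(I-P_0)Q$. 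Since $Y\in G_c(X;E)$ and $Y_0\sim_c X$, transitivity gives $Y\sim_c Y_0$ and by Proposition \ref{equivalence_of_definitions} the operator $(I-P_0)Q$ is compact. Thus $P_0-Q$ is compact, and combined with $P_0-P\in\mathcal{L}_c(E)$ we conclude $P(Y,\ker P_0)\in\mathcal{P}_c(P;E)$. Now one patches these local sections by a partition of unity subordinated to a locally finite refinement of this open covering of the paracompact space $G_c(X;E)$: convex combinations of projectors with the same range again define a projector with that range (as in the proof of Lemma \ref{projectors_Grassmannian}), and convex combinations of compact perturbations of $P$ are still compact perturbations of $P$. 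This yields a continuous section $\tilde s\colon G_c(X;E)\to\mathcal{P}_c(P;E)$.

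Finally, for any $Q\in\mathcal{P}_c(P;E)$ the projectors $Q$ and $\tilde s(r(Q))$ have the same range and both differ from $P$ by a compact operator, so the straight line
\[
H(t,Q)=tQ+(1-t)\tilde s(r(Q)),\qquad t\in[0,1],
\]
takes values in $\mathcal{P}_c(P;E)$ for all $t$ (the fiber $r^{-1}(\ran Q)$ is affine, and $\mathcal{L}_c(E)$ is linear). This is a homotopy from $\tilde s\circ r$ to $\mathrm{id}_{\mathcal{P}_c(P;E)}$, while $r\circ\tilde s=\mathrm{id}_{G_c(X;E)}$ by construction; hence $r$ restricted to $\mathcal{P}_c(P;E)$ is a homotopy equivalence onto $G_c(X;E)$. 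The only delicate step is verifying the compact-perturbation property of the local sections, which is handled by the identity $P_0-P(Y,\ker P_0)=-(I-P_0)P(Y,\ker P_0)$ above; the rest is a direct transposition of the construction used for Lemma \ref{projectors_Grassmannian}.
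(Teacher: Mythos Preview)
Your proof is correct and follows essentially the same approach as the paper: use Theorem \ref{suitable_projector} to choose, at each $Y_0\in G_c(X;E)$, a projector $P_0$ with $P_0-P$ compact, define the local section $Y\mapsto P(Y,\ker P_0)$, verify via the shared-kernel identity that these sections land in $\mathcal{P}_c(P;E)$, patch by a partition of unity, and use the convex homotopy. The paper uses the identity $P_0-s(Y)=(I-s(Y))P_0$ (from $s(Y)(I-P_0)=0$) rather than your $P_0-Q=-(I-P_0)Q$ (from $P_0(I-Q)=0$), but these are equivalent rewritings exploiting $\ker Q=\ker P_0$, and your explicit well-definedness check for $r$ is a welcome addition that the paper leaves implicit.
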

\begin{proof}
Let $ r_c $ be the restriction of $ r $. %
\glsadd{labrc}
To achieve this result we follow the same steps of Lemma 
\ref{projectors_Grassmannian}. Fix $ X_0 $ compact 
perturbation of $ X $. By Theorem \ref{suitable_projector} there exists 
a projector $ P_0 $ with range $ X_0 $ such that $ P_0 - P $ is compact. Call
$ Y_0 $ its kernel and define the local section
\[
s_0\colon B(X_0,\hat{\gamma}(X_0,Y_0))\rightarrow P(E), \ 
X'\mapsto P(X',Y_0).
\]
This is continuous by Proposition \ref{GE_is_open}. Since 
$ r(s_0 (X')) = X' $, by Proposition \ref{equivalence_of_definitions} the
operators $ (I - s_0 (X')) P_0 $ and $ (I - P_0) s_0 (X') $ are compact. The 
relation $ \ker s_0 (X') = \ker P_0 $ implies $ s(X') (I - P_0) = 0 $, 
therefore
\[
P_0 - s(X') = (I - s(X')) P_0 + (P_0 - s(X'))(I - P_0) = 
(I - s(X')) P_0
\]
which is compact. Then $ s(X') - P $ is compact. Let 
$ \mathcal{U} = \set{U_i}{i\in I} $ be a locally finite refinement of 
$ \set{B(X_0,\hat{\gamma}(X_0,Y_0))}{X_0\in G_c (X;E)} $ and
$ \set{\vfi_i}{i\in I} $ a partition of unit subordinate to $ \mathcal{U} $. 
Then, for any $ Y\in G_c (X;E) $
\[
s(Y) - P = \sum_{i\in I} \vfi_i (Y) (s_i (Y) - P)
\]
is a finite sum of compact operators. The convex combination of $ s\circ r_c $ 
and $ id $ is a homotopy map.
\end{proof}
\section{The Fredholm group}
\label{fredholm_group}
We call \textsl{Fredholm group} the set of invertible operators on a Banach
space that can be written as sum of the identity and a compact operator.
It is a normal subgroup of $ GL(E) $. The Fredholm group is endowed with the
norm topology; we denote it by $ GL_c (E) $. 
\glsadd{labGLcE}
\begin{theorem}
If $ E $ is an infinite dimensional Banach space over a field 
$ \mathbb{F} $, that is $ \mathbb{R} $ or $ \mathbb{C} $,  the Fredholm group 
has the homotopy type of $ \Lim GL(n,\mathbb{F}) $.
\end{theorem}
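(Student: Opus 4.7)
The plan is to construct a natural continuous map $\iota:\Lim GL(n,\mathbb{F})\to GL_c(E)$ and show that it is a weak (and hence genuine) homotopy equivalence. First, since $E$ is infinite dimensional, I would choose a strictly increasing chain of finite-dimensional subspaces $V_1\subset V_2\subset\cdots$ with $\dim V_n = n$, together with topological complements $W_n$ of $V_n$ (each $V_n$ splits because it is finite dimensional), arranged so that $W_1\supset W_2\supset\cdots$ and $V_n\oplus W_n = E$. For each $n$ define
\[
\iota_n:GL(n,\mathbb{F})\cong GL(V_n)\longrightarrow GL_c(E),\qquad S\longmapsto S\oplus \mathrm{id}_{W_n}.
\]
Note $\iota_n(S) = I + (S-\mathrm{id}_{V_n})\oplus 0$ is a finite-rank (hence compact) perturbation of $I$, so it genuinely lies in $GL_c(E)$. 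These $\iota_n$ are compatible with the stabilization embeddings $GL(n,\mathbb{F})\hookrightarrow GL(n+1,\mathbb{F})$, so they assemble into a continuous map $\iota$ from the direct limit.

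The main task is to show that $\iota$ induces isomorphisms on $\pi_k$ for every $k\geq 0$. For surjectivity, given $f:S^k\to GL_c(E)$, the set $\{f(x)-I:x\in S^k\}$ is a compact subset of $\mathcal{L}_c (E)$. One approximates it uniformly in $x$ by finite-rank operators with ranges all contained in some $V_m$; since invertibility is an open condition in $\mathcal{L}(E)$, for $\var$ small the approximation $\tilde f$ is still $GL_c(E)$-valued and takes values in $\iota_m(GL(m,\mathbb{F}))$, and the straight-line homotopy between $f$ and $\tilde f$ lies in $GL_c(E)$. Injectivity is analogous: a nullhomotopy $F:D^{k+1}\to GL_c(E)$ of a map with image in $\iota_m(GL(m,\mathbb{F}))$ can be approximated by one taking values in $\iota_M(GL(M,\mathbb{F}))$ for some $M\geq m$, giving a nullhomotopy inside the direct limit.

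To conclude, $\Lim GL(n,\mathbb{F})$ is a CW complex by construction, and $GL_c(E)$ is homotopy equivalent to a CW complex because it is an open subset of the affine Banach space $I+\mathcal{L}_c(E)$. The weak equivalence $\iota$ therefore upgrades to an actual homotopy equivalence by Whitehead's theorem.

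The main obstacle is the uniform approximation of a compact family of compact operators by finite-rank operators with ranges contained in a fixed $V_m$. This step is clean when $E$ has the approximation property (for example, a Schauder basis), which holds for all the common Banach spaces mentioned earlier in the paper such as $L\sp p$, $\ell\sp p$, $c_0$, and Hilbert spaces, where the partial-sum projections converge strongly to $I$. In the general setting some further care is required, typically by working cell by cell along the CW structure of $\Lim GL(n,\mathbb{F})$: compactness of each cell reduces matters to a local approximation problem that can be handled by finite-rank truncation of the compact tails of $f(x)-I$ on a carefully chosen exhaustion.
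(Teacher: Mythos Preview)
The paper does not give its own proof here; it simply refers the reader to G\polhk{e}ba \cite{Geb68}. Your outline --- stabilize finite-rank perturbations of the identity into a map $\iota\colon\Lim GL(n,\mathbb{F})\to GL_c(E)$, prove weak equivalence by uniform finite-rank approximation of compact families, then apply Whitehead --- is exactly the classical strategy in that literature, so at the level of method you are aligned with the cited source rather than diverging from it.

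That said, you have correctly located the one genuine obstacle, and your suggested patch does not remove it. The step ``approximate $\{f(x)-I:x\in S^k\}$ uniformly by finite-rank operators with ranges in some fixed $V_m$'' presupposes that compact operators on $E$ are norm-limits of finite-rank operators, i.e.\ that $E$ has the approximation property; since Enflo this is known to fail for some infinite-dimensional Banach spaces. ``Working cell by cell'' changes nothing: on each cell you still need the same uniform finite-rank approximation of a compact family of compact operators, and the obstruction is identical. As written, your argument is complete only under the additional hypothesis that $E$ has the approximation property. If one really wants the statement for arbitrary $E$ as the paper asserts, the reduction to finite rank has to go through a different mechanism --- typically via the spectral structure of compact perturbations of the identity (isolated eigenvalues of finite multiplicity away from $1$, with finite-rank Riesz projections), which lets one deform $I+K$ into a finite-rank perturbation without ever appealing to norm-density of finite-rank operators in $\mathcal{L}_c(E)$.
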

For the proof see, for instance, \cite{Geb68}. The homotopy groups of the 
Fredholm group are, in the real and complex case, respectively
\begin{align}
\label{fredholm_rgroup}
\pi_i (GL(\infty,\mathbb{R}))&\cong\left\{
\begin{array}{ll}
\mathbb{Z}_2 & i\equiv 0,1 \ \mod 8 \\
0 & i\equiv 2,4,5,6 \mod 8\\
\mathbb{Z} & i\equiv 3,7 \mod 8
\end{array}
\right.\\
\label{fredholm_cgroup}
\pi_i (GL(\infty,\mathbb{C}))&\cong\left\{
\begin{array}{ll}
0 & i\equiv 0 \mod 2 \\
\mathbb{Z} & i\equiv 1 \mod 2
\end{array}
\right.
\end{align}
see \textsc{Theorem} II of \cite{Bot59}. The spectrum of $ T\in GL_c (E) $ is 
countable, and $ \sigma(T)\setminus\{1\} $ is made of eigenvalues of finite 
multiplicity. When $ E $ is a real Banach space it is defined the 
\textsl{Leray-Schauder} degree as
\[
\deg(T) = (-1)^{\beta (T)}
\glsadd{labdT}
\]
where $ \beta(T) $ is the sum of the algebraic multiplicities of the 
eigenvalues of $ T $ such that $ \re z > 1 $
eigenvalues. It is well defined on the connected components of $ GL_c (E) $
and defines a group isomorphism
\[
\deg\colon\pi_0 (GL_c (E))\rightarrow\{-1,+1\}\cong\mathbb{Z}_2.
\]
See \cite{Llo78} for details. The L.S. degree will help us to determine the 
connected components of $ G_c (X;E) $ when $ E $ is a real or complex Banach
space. We will prove that $ G_c (X;E) $ consists of infinitely numerable
components; these are
\begin{gather}
\label{conn_comp}
G_k (X;E) = \set{Y\in G_c (X;E)}{\dim(X,Y) = k}, \ k\in\mathbb{Z}.
\glsadd{labGkX}
\end{gather}
\begin{lemma}
\label{trans_act}
The Fredholm group acts transitively on each $ G_k (X;E) $ by the left
multiplication. Moreover, there are local sections of the action.
\end{lemma}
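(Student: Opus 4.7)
The plan is to first verify that the action $GL_c(E) \times G_k(X;E) \to G_s(E)$, $(T, Y) \mapsto TY$, is well-defined and preserves each stratum $G_k(X;E)$. For $T = I + K \in GL_c(E)$ and any projector $P$ with $\ran P = Y$, the conjugate $TPT^{-1}$ projects onto $TY$, and a direct computation gives $TPT^{-1} - P = (KP - PK)T^{-1}$, which is compact, so $TY$ is a compact perturbation of $Y$. Moreover the restriction $T|_Y \colon Y \to TY$ is of the form $(I + K)|_Y$ and is an isomorphism, so by Definition \ref{relative_dimension:1} it has index zero, yielding $\Dim(TY, Y) = 0$ and hence $\Dim(TY, X) = \Dim(Y, X) = k$. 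This shows $T \cdot G_k(X;E) \subseteq G_k(X;E)$.

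For transitivity, I would take $Y_1, Y_2 \in G_k(X;E)$ and note by additivity of relative dimension that $\Dim(Y_1, Y_2) = 0$. Applying Theorem \ref{suitable_projector} to the commensurable pair $(Y_1, Y_2)$, I would choose projectors $P_1, P_2$ with $\ran P_i = Y_i$ and $P_1 - P_2$ compact. Then I would form
\[
\Psi = P_2 P_1 + (I - P_2)(I - P_1),
\]
which, by expanding $P_2 P_1 = P_1 + (P_2 - P_1) P_1$, is a compact perturbation of the identity. On the splitting $E = Y_1 \oplus \ker P_1$, the map $\Psi$ decomposes as $P_2|_{Y_1} \colon Y_1 \to Y_2$ and $(I - P_2)|_{\ker P_1} \colon \ker P_1 \to \ker P_2$; both are Fredholm, the first of index $\Dim(Y_1, Y_2) = 0$ by Definition \ref{relative_dimension:1}, and the second of index $0$ by Proposition \ref{cp:2}. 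If either restriction fails to be invertible, I would correct by adding a finite-rank operator supported on a finite-dimensional complement of its kernel and landing in a complement of its range; this is possible precisely because the two have equal finite dimension thanks to the index-zero condition. The resulting $T := \Psi + \text{finite rank}$ is an invertible compact perturbation of the identity with $T Y_1 = Y_2$.

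For local sections at a point $Y_0 \in G_k(X;E)$, I would fix a projector $P_0$ with $\ran P_0 = Y_0$. By Proposition \ref{relative_section}, the restriction $r_c \colon \mathcal{P}_c(P_0;E) \to G_c(Y_0;E)$ admits a continuous local section $s$ with $s(Y_0) = P_0$ defined on a neighborhood $V$ of $Y_0$; for $Y \in V$ the difference $s(Y) - P_0$ is compact and depends continuously on $Y$. I would then set
\[
\sigma(Y) := s(Y) P_0 + (I - s(Y))(I - P_0),
\]
which is continuous in $Y$, equals the identity at $Y_0$, and is a compact perturbation of the identity by the same expansion used above. For $Y$ in a sufficiently small sub-neighborhood, the norm $\| s(Y) - P_0 \|$ is small enough that $\sigma(Y)$ is invertible, hence $\sigma(Y) \in GL_c(E)$. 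Moreover, for $y \in Y_0$ one has $\sigma(Y) y = s(Y) y$, and $s(Y)|_{Y_0} = I + (s(Y) - P_0)|_{Y_0}$ is a small perturbation of the identity and hence a bijection onto $Y$; thus $\sigma(Y) Y_0 = Y$, giving the desired local section.

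The main obstacle lies in the transitivity step: the candidate $\Psi$ is only Fredholm of index zero a priori, and one must ensure that a finite-rank correction restoring invertibility can be chosen so that the restrictions to $Y_1$ and $\ker P_1$ still land in $Y_2$ and $\ker P_2$ respectively. This is forced by matching dimensions of kernel and range-complement within each of the two summands, which is exactly the content of the index-zero condition together with Proposition \ref{cp:2}. The local-section construction then reduces to a routine Neumann-series perturbation estimate around the identity and presents no significant difficulty.
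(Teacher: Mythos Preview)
Your proof is correct and follows essentially the same strategy as the paper's. The only differences are cosmetic: for the invariance of $G_k$ the paper applies Proposition~\ref{cp:2} to the pair $(i,T|_Y)$ whereas you compute $TPT^{-1}-P$ directly; for transitivity the paper obtains the two projectors with compact difference from the section $s$ of Proposition~\ref{relative_section} rather than from Theorem~\ref{suitable_projector}, but then builds exactly your operator $\Psi=P_2P_1+(I-P_2)(I-P_1)$ and corrects it by finite rank in the same way; for the local section the paper uses the conjugating element $g(s(Y),s(Y_0))$ of Proposition~\ref{pairs_of_projectors} (which is your $\sigma(Y)$ times a square-root factor), while you use $\sigma(Y)=L(s(Y),P_0)$ directly and invoke a Neumann-series argument for invertibility near $Y_0$.
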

The carrying out of the proof follows the same steps of the Hilbert case
outlined in \cite{AM03a}, \S 5. 
\begin{proof}
Let $ Y\in G_k $ and $ T\in GL_c (E) $. Let 
$ t $ be the restriction of $ T $ to $ Y $ and $ i\colon Y\hookrightarrow E $
the inclusion. Both $ t,i\in\mathcal{L}(Y,E) $ are injective and $ t - i $ 
is compact. Hence, by Proposition \ref{cp:2} $ \ran t $ and $ \ran i $ are 
compact perturbation of each other and
\[
\dim(Y,T Y) = \dim(\ran i,\ran t) = \dim(\ker t,\ker i) = 0.
\]
Hence $ TY\in G_k $. Let $ Y,Z\in G_k (X;E) $, hence $ \dim(Y,Z) = 0 $. Let 
$ s $ be a continuous right inverse of $ r_c $ as in Proposition 
\ref{relative_section}. The operator $ s(Z) - s(Y) $ is compact, call it
$ K $. Observe that the restriction of $ s(Z) $ to $ Y $, considered as
operator with values in $ Z $, is Fredholm. Similarly we can consider the 
restriction of $ I - s(Z) $ to $ Y' := \ker s(Y) $ with values in 
$ \ker s(Z) $. For every $ y $ in $ Y $ and $ y'\in Y' $ we can write
\begin{align*}
&s(Z)y = s(Y)y + Ky = (I + K) y \\
&(I - s(Z)) y' = (I - s(Y)) y' - K y' = (I - K)y'.
\end{align*}
The Fredholm index of these operators is $ 0 $ by definition of relative
dimension. Fredholm applications of index $ 0 $ have a nice property: they are
perturbation of an isomorphism by a finite rank operator.
Then we can choose $ R_1 $ in $ \mathcal{L}(Y,Z) $ and $ R_2 $ in 
$ \mathcal{L}(Y',\ker s(Z)) $ suitable finite rank operators. 
Call $ T $ the operator obtained as direct sum of the two isomorphisms 
$ \res{s(Z)}{Y} + R_1 $ and $ \res{(I - s(Z))}{Y'} + R_2 $. It is 
invertible, maps $ Y $ onto $ Z $ and can be written as
\[
I + (K + R_1)s(Y) - (K - R_2)(I - s(Y)) 
\]
hence belongs to the Fredholm group. This proves that the action is transitive.
\vskip .2em
Given $ Y\in G_k $ we build a local section around $ Y $ as follows: let $ s $
be a continuous section as in Proposition \ref{relative_section}. There exists 
$ \var > 0 $ such that, for any $ Z\in B(Y,\var) $ the operator 
$ g(s(Z),s(Y)) $ is invertible. By (\ref{eq:lpq}) and (\ref{conjugation}),
\[
g(s(Z),s(Y))\in I + \mathcal{L}_c (E),
\]
thus, $ g(s(Z),s(Y))\in GL_c (E) $ and $ g(s(Z),s(Y)) Y = Z $. Then a local 
section of the action is defined as
\begin{gather}
\label{trans_act:1}
B(Y,\var)\rightarrow GL_c (E)\times G_k, \ Z\mapsto (g(s(Z),s(Y)),Y).
\end{gather}
\end{proof}
\begin{theorem}
\label{components}
The connected components of $ G_c (X;E) $ are $ G_k (X;E) $ with $ k $ in 
$ \mathbb{Z} $. 
\end{theorem}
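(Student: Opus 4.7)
The plan is to first show that the $G_k(X;E)$ form a clopen partition of $G_c(X;E)$, and then that each nonempty $G_k(X;E)$ is path-connected. By the very definition of relative dimension, $G_c(X;E)$ is the disjoint union of the $G_k(X;E)$ as $k$ ranges over $\mathbb{Z}$. Moreover, Theorem \ref{stability_dimension} applied to the constant family with value $X$ and the continuous inclusion $G_c(X;E) \hookrightarrow G_s(E)$ shows that $Y \mapsto \dim(X,Y)$ is locally constant on $G_c(X;E)$, so each $G_k(X;E)$ is clopen. Hence each $G_k(X;E)$ is a union of connected components, and it suffices to prove that each nonempty $G_k(X;E)$ is path-connected.

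Fix a basepoint $Y_0 \in G_k(X;E)$. By Lemma \ref{trans_act}, the Fredholm group $GL_c(E)$ acts transitively on $G_k(X;E)$ via $(T,Y) \mapsto TY$, and the orbit map $\phi \colon GL_c(E) \to G_k(X;E)$, $T \mapsto TY_0$, is continuous by Proposition \ref{continuity_of_images}. Consequently, any path in $GL_c(E)$ joining $I$ to a point $T$ projects under $\phi$ to a continuous path in $G_k(X;E)$ joining $Y_0$ to $TY_0$. In the complex case, $\pi_0(GL_c(E)) = 0$ by (\ref{fredholm_cgroup}), so $GL_c(E)$ is path-connected and transitivity immediately yields path-connectedness of $G_k(X;E)$.

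In the real case $\pi_0(GL_c(E)) \cong \mathbb{Z}_2$ via the Leray--Schauder degree, and the strategy must be refined: it is enough to exhibit an element of the stabilizer $\{T \in GL_c(E) : TY_0 = Y_0\}$ lying in the nontrivial component. Assuming $Y_0 \neq \{0\}$ (otherwise $G_k(X;E)$ is already a single point, which occurs only when $X$ is finite-dimensional and $k = \dim X$), pick any rank-one projector $P$ on $E$ with $\ran P \subset Y_0$ and set $S = I - 2P$. Then $S \in I + \mathcal{L}_c(E)$ is an involution with a one-dimensional $(-1)$-eigenspace, so $\deg S = -1$, and $SY_0 = Y_0$ since $P(Y_0) \subset \ran P \subset Y_0$ combined with $S^2 = I$. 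Given any $Y \in G_k(X;E)$, choose $T \in GL_c(E)$ with $TY_0 = Y$ (Lemma \ref{trans_act}); if $\deg T = -1$, replace $T$ by $TS$, which still maps $Y_0$ to $Y$ but now has degree $+1$, hence lies in the identity component of $GL_c(E)$. A path from $I$ to $TS$ (or to $T$ when $\deg T = +1$) then pushes forward via $\phi$ to a path from $Y_0$ to $Y$ in $G_k(X;E)$. The delicate step is producing the stabilizer element $S$ in the non-identity component of the real Fredholm group; once this is secured, the argument goes through cleanly in both the real and complex settings.
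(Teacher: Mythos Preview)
Your proof is correct and follows essentially the same route as the paper: show that the relative dimension is constant on path components, then use the transitive $GL_c(E)$-action from Lemma~\ref{trans_act} together with the connectedness properties of the Fredholm group to conclude that each $G_k(X;E)$ is path-connected. Your treatment is in fact slightly tidier in two places: you invoke Theorem~\ref{stability_dimension} directly to get local constancy of $Y\mapsto\dim(X,Y)$ (the paper instead lifts an arc to $\mathcal P_c(P;E)$ and produces a conjugating $g\in GL_c(E)$), and in the real case you write down the explicit stabilizing reflection $S=I-2P$ with $\mathrm{rank}\,P=1$ and $\mathrm{ran}\,P\subset Y_0$, whereas the paper just asserts the existence of a suitable $S\in GL(Y)\times GL(Y')$ of the required degree.
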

\begin{proof}
Let $ Y,Z\in G_c (X;E) $, connected by an arc, $ k = \dim(X,Y) $. By
Proposition \ref{relative_section},  there exists a path 
$ \alpha $ in $ \mathcal{P}_c (P;E) $ that connects $ s(Y) $ to $ s(Z) $. 
Let $ g $ be as in ii) of Proposition \ref{pairs_of_projectors}, thus
$ g\in GL_c (E) $. By Lemma \ref{trans_act}, $ g(Y)\in G_k $.
Conversely, consider $ Y,Z\in G_k $. Hence $ \dim (Y,Z) = 0 $ and, by Lemma 
\ref{trans_act}, there exists $ T\in GL_c (E) $ such that $ T Y = Z $. 
If $ E $ is a complex Banach space, the Fredholm group is arcwise connected. 
Given a path $ \alpha $ that 
connects $ I $ to $ T $ the path $ \alpha(t) Y $ connects 
$ Y $ to $ Z $. If $ E $ is a real Banach space, let 
$ S\in GL(Y)\times GL(Y') $, where $ Y\oplus Y' = E $, and 
$ \deg(S) = -\deg(T) $. Then $ TS $ maps $ Y $ onto $ Z $ and is connected to 
the identity operator and we conclude as in the complex case.
\end{proof}
\section{The Stiefel space}
In this section we introduce the \textsl{Stiefel spaces} and for some
$ X\in G_s (E) $, we compute its homotopy type. That will help us to
determine the homotopy groups of $ G_c (X;E) $.
\begin{definition}
Let $ X\in G_s (E) $. We define the \emph{Stiefel space}, and denote it by 
$ St(X;E) $, the set of the bounded operators $ f\in\mathcal{L}(X,E) $
such that 
\begin{enumerate}
\item $ f(X) $ is complemented in $ E $;
\item $ f $ is injective;
\item $ f - i $ is compact,
\end{enumerate}
where $ i\colon X\hookrightarrow E $ is the inclusion. On it we consider
the topology of subspace.
\glsadd{labStX}
\end{definition}
The Stiefel space is an analytical manifold because is an open subset of
the affine space $ I + \mathcal{L}_c (X,E) $. We recall some results on the
homotopy type of $ St(X;E) $.
\begin{theorem}{\rm (refer \cite{DD63})}
If $ X $ is a finite-dimensional subspace of $ E $ $ St(X;E) $ is contractible.
\end{theorem}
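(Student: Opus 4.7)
When $X$ is finite-dimensional, the conditions defining $St(X;E)$ simplify substantially. Since $\dim X<\infty$, every bounded operator $f\colon X\to E$ is automatically compact, so condition (iii) ($f-i$ compact) is vacuous. Moreover, every finite-dimensional subspace of $E$ is complemented, so condition (i) is automatic for any injective $f$. Hence $St(X;E)$ coincides with the open subset of $\mathcal{L}(X,E)$ consisting of injective linear operators. Fixing a basis of $X$, I would identify $St(X;E)$ homeomorphically with the open set
\[
V_n(E)=\set{(v_1,\dots,v_n)\in E^n}{v_1,\dots,v_n\text{ linearly independent}},
\]
where $n=\dim X$. The problem reduces to showing $V_n(E)$ is contractible, which is the classical Dold–Dowker statement.

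The approach I would take is to construct a deformation retraction of $V_n(E)$ onto the constant frame $(e_1,\dots,e_n)$ corresponding to the inclusion $i$, carried out in two stages. Fix a closed complement $Y$ of $X$ in $E$, with projectors $P_X,P_Y$. In \emph{Stage 1} (separation), I would deform an arbitrary frame $(v_1,\dots,v_n)$ to a frame $(\tilde v_1,\dots,\tilde v_n)$ whose span meets $X$ trivially. In \emph{Stage 2} (linear interpolation), I would apply the straight-line homotopy $t\mapsto\bigl((1-t)\tilde v_i+te_i\bigr)$. The latter preserves linear independence: if $\sum c_i\bigl((1-t)\tilde v_i+te_i\bigr)=0$, then the $X$- and $Y$-components of the sum vanish separately, yielding $(1-t)\sum c_i\tilde v_i=0$ and $t\sum c_ie_i=0$, so all $c_i=0$ by independence of $\{\tilde v_i\}$ (for $t<1$) or of $\{e_i\}$ (for $t=1$).

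The main obstacle is Stage 1: producing the deformation continuously in the parameter $(v_1,\dots,v_n)$. Because $E$ is infinite-dimensional, the complement of $X+\mathrm{span}(v_1,\dots,v_n)$ is nonempty, and one can continuously select auxiliary vectors in this complement on a neighbourhood of any given frame. By paracompactness of $V_n(E)$ one glues these local selections via a partition of unity, exactly in the spirit of the construction carried out in Lemma \ref{projectors_Grassmannian}. Concretely, for each frame one continuously chooses a perturbation $\sum s_j(v)w_j(v)$ with $w_j(v)$ transverse to the relevant finite-dimensional subspaces, and combines it with $(v_1,\dots,v_n)$ so that at the end of Stage 1 the resulting frame lies in $Y$.

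Once these continuous selections are in place, the composition of Stages 1 and 2 provides a homotopy between the identity on $V_n(E)$ and the constant map to $(e_1,\dots,e_n)$, proving contractibility. Alternatively, as indicated in the statement, one may simply invoke \cite{DD63} directly.
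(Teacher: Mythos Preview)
The paper does not prove this statement; it simply cites \cite{DD63}, so there is no argument to compare against beyond the reference you also invoke in your last sentence. Your reduction of $St(X;E)$ to the space $V_n(E)$ of linearly independent $n$-tuples is correct, and your Stage~2 works once you have reached a frame whose span meets $X$ trivially.

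The genuine gap is Stage~1. The analogy with Lemma~\ref{projectors_Grassmannian} fails exactly where you invoke it: in that lemma the partition-of-unity gluing succeeds because the set of projectors with a prescribed range is \emph{convex}, so a convex combination of local sections is again admissible. Here the set of auxiliary tuples $(w_1,\dots,w_n)\in Y^n$ that, together with a given frame $(v_1,\dots,v_n)$, form a linearly independent $2n$-tuple is open but not convex; a convex combination $\sum_\alpha\rho_\alpha(v)\,w^\alpha$ can easily fall back into $X+\mathrm{span}(v_1,\dots,v_n)$. Thus the gluing step does not deliver the global continuous selection you need, and Stage~1 as written is incomplete. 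A route that avoids this difficulty is inductive: the forgetting map $V_n(E)\to V_{n-1}(E)$ is a locally trivial fibration whose fibre $E\setminus\mathrm{span}(v_1,\dots,v_{n-1})$ is the complement of a finite-dimensional subspace in an infinite-dimensional Banach space, hence contractible; since $V_1(E)=E\setminus\{0\}$ is contractible, the claim follows by induction on $n$.
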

Using the techniques of \cite{Geb68} it is possible to prove that
when $ X $ has infinite dimension and infinite co-dimension $ St(X;E) $ is 
contractible. Then, if $ X $ has infinite co-dimension $ St(X;E) $ is 
always contractible. The next result is known for Hilbert spaces, see for
example \cite{AM03a} \S 5. The generalization to Banach spaces requires, as
Lemma \ref{trans_act} does, the Proposition \ref{relative_section}.
\begin{theorem}
\label{stiefel}
Let $ r_{St}\colon St(X;E)\rightarrow G_0 (X;E) $ be the continuous map
defined as $ r_{St} f = f(X) $. Then $ (St(X;E),r_{St},G_0 (X;E),GL_c (X)) $ 
is a principal fiber bundle. The action of $ GL_c (X) $ onto itself is the left
multiplication.
\glsadd{labrSt}
\end{theorem}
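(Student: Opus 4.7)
The plan is to reduce the theorem to the construction of continuous local sections of $ r_{St} $, after which the principal bundle structure (in the sense of Steenrod, \cite{Ste51}, \S 8) follows by a standard argument. First I would define an action of $ GL_c(X) $ on $ St(X;E) $ by right composition, $ f\cdot g := f\ci g $: writing $ g = I_X + K $ with $ K $ compact on $ X $, we get $ f\ci g - i = (f - i) + f\ci K $, which is compact, so $ f\ci g\in St(X;E) $; the action is visibly free and leaves $ r_{St} $ invariant. Transitivity on each fibre will require showing that if $ f_1,f_2 $ share the same image $ Y $, the element $ g := \widetilde{f_2}^{-1}\ci \widetilde{f_1} $ defined through the corestrictions $ \widetilde{f_i}\colon X\to Y $ (Banach isomorphisms by the open mapping theorem) lies in $ GL_c(X) $; this reduces to noting that $ g - I_X = \widetilde{f_2}^{-1}(f_1 - f_2) $, and $ f_1 - f_2 $ takes values in $ Y $ and is compact. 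Surjectivity of $ r_{St} $ onto $ G_0(X;E) $ follows at once from Lemma \ref{trans_act}: for $ Y\in G_0(X;E) $, any $ T\in GL_c(E) $ with $ T X = Y $ restricts to $ T|_X\in St(X;E) $.

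For local triviality, fix $ Y_0\in G_0(X;E) $; I plan to combine the section $ s\colon G_c(X;E)\to \mathcal{P}_c(P;E) $ from Proposition \ref{relative_section} with the conjugation construction $ g(\cdot,\cdot) $ of Proposition \ref{pairs_of_projectors}. On a neighbourhood $ U $ of $ Y_0 $ where $ \no{s(Y) - s(Y_0)} < 1 $, the element $ \tau(Y) := g(s(Y),s(Y_0)) $ is a compact perturbation of the identity (by the formulas in and after (\ref{eq:lpq})), hence lies in $ GL_c(E) $, depends continuously on $ Y $, and satisfies $ \tau(Y)Y_0 = Y $. Picking any $ f_0\in r_{St}^{-1}(Y_0) $ from the surjectivity step, I set $ \sigma(Y) := \tau(Y)\ci f_0 $; this is a continuous local section of $ r_{St} $ with values in $ St(X;E) $, and the local trivialization is
\[
\phi\colon U\times GL_c(X)\to r_{St}^{-1}(U),\quad \phi(Y,g) := \sigma(Y)\ci g,
\]
which is continuous, right-equivariant in $ g $, and satisfies $ r_{St}\ci\phi(Y,g) = Y $.

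The inverse map sends $ f\in r_{St}^{-1}(U) $ to $ \bigl(f(X),\,\widetilde{\sigma(f(X))}^{-1}\ci\widetilde{f}\bigr) $. The main obstacle will be verifying that $ g := \widetilde{\sigma(Y)}^{-1}\ci\widetilde{f} $ really lies in $ GL_c(X) $; the crucial observation is that $ f $ and $ \sigma(Y) $ share image $ Y $, so $ f - \sigma(Y) = (f - i) - (\sigma(Y) - i) $ is a compact operator from $ X $ \emph{into} $ Y $, and post-composing with the bounded map $ \widetilde{\sigma(Y)}^{-1}\colon Y\to X $ yields the compactness of $ g - I_X = \widetilde{\sigma(Y)}^{-1}(f - \sigma(Y)) $. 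Continuity of $ \phi^{-1} $ then reduces to the continuity of $ Y\mapsto\widetilde{\sigma(Y)}^{-1} $ after composition with a continuous projector onto $ Y $, itself provided by $ s $.

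Finally, the transition map between two such trivializations $ \phi_0,\phi_1 $ over $ U_0\cap U_1 $ will take the form $ g\mapsto \bigl(\widetilde{\sigma_1(Y)}^{-1}\ci\widetilde{\sigma_0(Y)}\bigr)\ci g $ — left multiplication by a continuous $ GL_c(X) $-valued function on $ U_0\cap U_1 $ — by the very same computation applied with $ \sigma_0(Y) $ in place of $ f $. This exhibits $ (St(X;E),r_{St},G_0(X;E)) $ as a principal bundle with structure group $ GL_c(X) $ acting on itself by left multiplication, as asserted.
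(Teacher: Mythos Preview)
Your proposal is correct and follows essentially the same route as the paper: both build local sections of $r_{St}$ by composing the section $s$ of Proposition~\ref{relative_section} with the conjugating element $g(\cdot,\cdot)$ of Proposition~\ref{pairs_of_projectors}, then check that the resulting trivializations have $GL_c(X)$-valued transition functions acting by left multiplication. The only structural difference is that the paper first constructs the local section $\gamma_0$ around the basepoint $X$ and then translates it to a neighbourhood of an arbitrary $Y_0$ via some $T\in GL_c(E)$ with $TX=Y_0$ (from Lemma~\ref{trans_act}), whereas you build the section directly at $Y_0$ by choosing $f_0\in r_{St}^{-1}(Y_0)$ and setting $\sigma(Y)=g(s(Y),s(Y_0))\circ f_0$; this is a cosmetic reorganisation of the same argument, and your more explicit verification that the fibre coordinate lands in $GL_c(X)$ is a welcome addition.
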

\begin{proof}
As first step we build a local section around $ X $. Consider a continuous
map as in Proposition \ref{relative_section}. Let $ U $ be an open
neighbourhood of $ X $ where (\ref{trans_act:1}) is defined. Define
\[
\gamma_0 \colon U\rightarrow St(X;E), \ Y\mapsto g(s(Y),s(X))_{|X}.
\]
This suffices to build an open cover of coordinate neighbourhoods of $ G_0 $.
Given $ Y\in G_0 $, by Lemma \ref{trans_act} there exists $ T\in GL_c (E) $
such that $ T X = Y $. Then a trivialization of $ T (U) $ and its inverse
are given by
\begin{gather*}
\phi\colon T(U)\times GL_c (X)\rightarrow r_{St} ^{-1} (T (U)),\\
(Y',g)\mapsto T \gamma_0 (T ^{-1} Y') g;\\
\phi\sp{-1} \colon r_{St} ^{-1} (T (U))\rightarrow T(U)\times GL_c (X),\\
f\mapsto\big(T f(X),g(s(X),s(f(X)))\circ T\sp{-1} f\big).
\end{gather*}
We have to check that whenever two coordinate neighbourhoods $ U_i, U_j $
intersect, for every $ Z\in U_i \cap U_j $ the transitions maps are
left translations of $ GL_c (X) $ onto itself. In fact, given 
$ T_i $, $ T_j $ such that $ T_i X = T_j X = Z $ the transition map is
\[
\phi_{j,Z} ^{-1} \phi_{i,Z} (g) = 
g(s(X),s(T_j Z)) T_j \sp {-1} T_i \gamma_0 (T_i \sp{-1} Z) \cdot g
\]
is the left multiplication by an element of $ GL_c (X) $. Then
we have $ GL_c (X) $ compatibility.
\end{proof}
When $ X \subset E $ has infinite co-dimension and infinite dimension the 
exact sequence of the principal bundle 
$ (St(X;E),r_{St},G_0 (X;E),GL_c (X)) $ gives isomorphisms
\begin{equation}
\label{eq:stiefel}
\pi_i (G_0 (X;E),X)\cong \pi_{i - 1} (GL_c (X))\cong\pi_{i - 1} 
(GL(\mathbb{F},\infty)), \ i\geq 1 
\end{equation}
where $ \mathbb{F} $ is the real or complex field.
\section{The index of the exact sequence}
Using exact sequence of the fiber bundle 
$ (\mathcal{P}(E),\mathcal{P}(\mathcal{C}),\prc) $ we show how to
associate an integer to a closed loop in the space of idempotents of 
$ \mathcal{C}(E) $. In fact we define a group homomorphism on 
$ \pi_1 (\mathcal{P}(\mathcal{C})) $ denoted by $ \varphi $.
Since $ \mathcal{P}(\mathcal{C}) $ is homotopically equivalent to the 
space of essentially hyperbolic operators on $ E $, we definitely have
a group homomorphism on $ \pi_1 (e\mathcal{H}(E)) $ obtained as the
composition of $ \varphi $ with $ \Psi $, defined in \S 2.1.\par
Let $ P $ be any projector. By Theorem \ref{Calkin_bundle} the triple 
$ (\al{P}(E),\prc,\al{P}(\al{C})) $ is a locally 
trivial bundle. The typical fiber of $ p(P) $ is 
$ \al{P}_c (P;E) $. Then we have an exact sequence 
\[
\label{sequence_3}
\xymatrix{
\pi_1 (\al{P}_c (P;E),P) \ar[r]^-{i_*} &
\pi_1 (\al{P}(E),P) \ar[r]^-{\prc_*} &
\pi_1 (\al{P}(\al{C}),p(P)) 
}
\]
\begin{theorem}
\label{index_homomorphism}
There exists a group homomorphism 
$ \vfi_P\colon\pi_1 (\al{P}(\al{C}),p(P))\rightarrow\mathbb{Z} $ such that
the sequence of homomorphisms 
\[
\xymatrix{
\pi_1 (\al{P}(E),P) \ar[r]^-{\prc_*} &
\pi_1 (\al{P}(\al{C}),p(P)) \ar[r]^-{\vfi_P} &
\mb{Z}
}
\]
is exact. 
\end{theorem}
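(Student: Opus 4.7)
The plan is to identify $\varphi_P$ with the connecting map $\partial$ from the exact homotopy sequence of the fiber bundle $(\mathcal{P}(E),\mathcal{P}(\mathcal{C}),p)$ of Theorem \ref{Calkin_bundle}, post-composed with an identification of $\pi_0$ of the fiber with $\mb{Z}$. Since the fiber over $p(P)$ is exactly $\mathcal{P}_c(P;E)$ (two projectors go to the same class iff their difference is compact), the exact homotopy sequence of the fibration yields
\[
\pi_1(\mathcal{P}_c(P;E),P)\xrightarrow{i_*}\pi_1(\mathcal{P}(E),P)\xrightarrow{\prc_*}\pi_1(\mathcal{P}(\mathcal{C}),p(P))\xrightarrow{\partial}\pi_0(\mathcal{P}_c(P;E)),
\]
which is exact at the middle two terms.

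By Proposition \ref{relative_section} the restriction $r_c\colon\mathcal{P}_c(P;E)\to G_c(\ran P;E)$ is a homotopy equivalence, and by Theorem \ref{components} the connected components of $G_c(\ran P;E)$ are indexed by $\mb{Z}$ via the relative dimension, i.e.\ the map $[Q]\mapsto\Dim(\ran P,\ran Q)$ is a bijection $\pi_0(\mathcal{P}_c(P;E))\to\mb{Z}$ sending the basepoint to $0$. Composing, define
\[
\varphi_P\colon\pi_1(\mathcal{P}(\mathcal{C}),p(P))\xrightarrow{\partial}\pi_0(\mathcal{P}_c(P;E))\xrightarrow{\sim}\mb{Z},
\qquad [\gamma]\mapsto\Dim\bigl(\ran P,\ran\tilde\gamma(1)\bigr),
\]
for any lift $\tilde\gamma$ of $\gamma$ starting at $P$. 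Exactness at $\pi_1(\mathcal{P}(\mathcal{C}),p(P))$ is then immediate from exactness of the homotopy sequence, because the identification with $\mb{Z}$ is injective and sends the basepoint to $0$, so $\ker\varphi_P=\ker\partial=\Im\prc_*$.

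The main obstacle is to verify that $\varphi_P$ is a \emph{group} homomorphism — the connecting map $\partial$ of a Serre fibration is a priori only a pointed map into $\pi_0$ of the fiber. Given two loops $\alpha,\beta$ at $p(P)$, lift the concatenation $\alpha*\beta$ starting at $P$ by first lifting $\alpha$ to a path $\tilde\alpha$ from $P$ to some $Q$, then lifting $\beta$ to a path $\tilde\beta$ from $Q$ to some $Q'$. By transitivity of the relative dimension (proved after Definition \ref{relative_dimension:1}),
\[
\varphi_P([\alpha*\beta])=\Dim(\ran P,\ran Q')=\Dim(\ran P,\ran Q)+\Dim(\ran Q,\ran Q'),
\]
and the first summand is $\varphi_P([\alpha])$ by construction.

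The remaining step is to identify $\Dim(\ran Q,\ran Q')$ with $\varphi_P([\beta])$. Since $p(Q)=p(P)$, the loop $\beta$ may equally be regarded as based at $p(Q)$, and $\tilde\beta$ is a lift of $\beta$ starting at $Q$; thus $\Dim(\ran Q,\ran Q')=\varphi_Q([\beta])$, and it suffices to show $\varphi_Q([\beta])=\varphi_P([\beta])$. This is where one uses the explicit local structure of the bundle from the proof of Theorem \ref{Calkin_bundle}: its trivializations are given by conjugation $(x,y)\mapsto T(x)yT(x)^{-1}$ with $T(x)\in GL(E)$, and conjugation by an invertible operator preserves the relative dimension (it sends a commensurable pair $(\ran P,\ran Q)$ to the commensurable pair $(g\ran P,g\ran Q)$ with the same index of $g(\cdot)|_{\ran P}$). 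Patching local lifts of $\beta$ starting at $Q$ and at $P$ by these trivializations, the successive $GL(E)$-conjugations cancel in the computation of relative dimensions of consecutive endpoints, yielding $\varphi_Q([\beta])=\varphi_P([\beta])$ and hence the desired additivity.
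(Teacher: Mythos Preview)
Your overall strategy is the paper's: use the connecting map of the fibration $(\mathcal{P}(E),\mathcal{P}(\mathcal{C}),\prc)$ and identify $\pi_0(\mathcal{P}_c(P;E))$ with $\mb{Z}$ via relative dimension (Theorem~\ref{components}, Proposition~\ref{relative_section}). Exactness then comes for free. The only substantive difference is in how you handle the homomorphism property, and there your last paragraph takes an unnecessary detour that is also not rigorously carried out.

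You define $\varphi_P$ using lifts that start at $P$, and are then forced to compare $\varphi_P$ with $\varphi_Q$ for $Q=\tilde\alpha(1)$. Your proposed comparison via the explicit $GL(E)$-conjugation trivializations is plausible but the sentence ``the successive $GL(E)$-conjugations cancel'' is not a proof; to make it one you would have to track how the local horizontal lifts glue across a partition of $[0,1]$ and show the telescoping explicitly. The paper sidesteps all of this with a one-line observation: the quantity $\Dim(\beta(1),\beta(0))$ is independent of the choice of lift $\beta$ of a given path, not merely of lifts starting at $P$. Indeed, if $\beta$ and $\beta'$ are two lifts of the same path in $\mathcal{P}(\mathcal{C})$, then $\beta(t)-\beta'(t)$ is compact for every $t$, so by Theorem~\ref{stability_dimension} the integer $\Dim(\beta(t),\beta'(t))$ is constant in $t$; transitivity of relative dimension then gives
\[
\Dim(\beta(1),\beta(0))=\Dim(\beta(1),\beta'(1))+\Dim(\beta'(1),\beta'(0))+\Dim(\beta'(0),\beta(0))=\Dim(\beta'(1),\beta'(0)).
\]
Applying this with $\beta$ a lift of $b$ starting at $P$ and $\beta'$ a lift of $b$ starting at $Q=\tilde\alpha(1)$ yields exactly your missing identity $\Dim(Q,Q')=\varphi_P([b])$, with no reference to bundle charts or conjugation. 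Replace your final paragraph by this argument and the proof is complete (up to the harmless sign convention: the paper uses $\Dim(\beta(1),\beta(0))$ rather than $\Dim(\beta(0),\beta(1))$).
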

\begin{proof}
The homomorphism is defined as follows: given a loop 
$ a\in\mathcal{P}(\mathcal{C}) $, there exists a path 
$ \beta\in\mathcal{P}(E) $ such that $ p\circ\beta = a $. Thus
\[
p(\beta(0)) = p(\beta(1)),\ \beta(0) - \beta(1)\text{ is cpt. }
\]
We define $ \varphi(a) = \dim(\beta(1),\beta(0)) $. First, we observe that
the definition does not depend on the choice of the lifting path. In fact,
given $ \beta' $ as above, $ \dim(\beta(t),\beta'(t)) $ is constant. 
This follows from the Theorem
\ref{stability_dimension}. Hence,
\begin{equation}
\label{eq:well-defined}
\dim(\beta'(1),\beta'(0)) = \dim(\beta(1),\beta(0))
\end{equation}
We prove that $ \vfi_P $ is a group homomorphism. Let
$ a,b $ be two closed paths at the base point 
$ p(P) $. There are two lifting paths $ \alpha,\beta $ such that
\begin{align*}
\alpha(0) &= P, \ \prc\circ\alpha = a, \\ 
\beta(0) &= P, \ \prc\circ\beta = b.
\end{align*}
There also exists $ \beta' $ such that $ \beta'(0) = \alpha (1) $ and
$ p\circ\beta' = b $. Define
\[
\gamma = \alpha * \beta', \ \gamma(0) = \alpha(1)
\]
which is a lifting path for $ a * b $. Since $ \beta $ and $ \beta' $ are 
lifts of the same path $ b $, equality (\ref{eq:well-defined}) holds.
We have
\[
\begin{split}
\vfi_P(a * b) &= \dim(\beta' (1), \alpha(0)) =
\dim(\beta'(1),\beta'(0)) + \dim(\alpha(1),\alpha(0)) \\
&= \dim(\beta(1),\beta(0)) + \dim(\alpha(1),\alpha(0)) = \vfi_P(a) + \vfi_P(b).
\end{split}
\]
Finally, we prove that the sequence above is exact. 

\noindent $ \ker\vfi_P\subseteq\text{Im}(p_*) $. Suppose $ \vfi_P(a) = 0 $.
Hence, $ \dim(\beta(1),P) = 0 $. By Theorem \ref{components}, there exists a 
path $ \gamma\in\mathcal{P}_c (E;P) $ such that
\[
\gamma(0) = \beta(1),\ \gamma(1) = P,\ p_* (\beta *\gamma) = a.
\]
\noindent $ \text{Im}(p_*)\subseteq\ker\vfi_P $. Given a loop in 
$ \alpha\in\mathcal{P}_c (E;P) $, we have 
$ \vfi_P(p_* (a)) = \dim(\alpha(1),P) = 0 $.
\end{proof}
\begin{corollary}
Given $ P\in\mathcal{P}(E) $, we have the following properties of
the kernel and image of $ \vfi_P $:
\begin{itemize}
\item[h1)] $ m\in\text{Im}(\vfi_P) $ if and only
if there exists a projector $ Q $ in the same connected component of
$ P $ such that $ Q - P $ is compact and $ \dim(Q,P) = m $;
\item[h2)] the connected component of $ P $ in $ \mathcal{P}(E) $ is 
simply-connected.
\end{itemize}
\end{corollary}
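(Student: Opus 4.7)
My plan is to extract both statements from the long exact sequence of the fiber bundle $(\mathcal{P}(E),\mathcal{P}(\mathcal{C}),\prc)$ established in Theorem \ref{Calkin_bundle} and from the identification of $\pi_0(\mathcal{P}_c(P;E))$ with $\mb{Z}$ coming from Theorem \ref{components}. The construction of $\vfi_P$ in Theorem \ref{index_homomorphism} is precisely the boundary map of this sequence, so h1) and h2) should be read off of it directly.

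For h1), I would unwind the definition of $\vfi_P$ given in Theorem \ref{index_homomorphism}. If $m\in\text{Im}(\vfi_P)$, choose a loop $a$ at $p(P)$ with $\vfi_P(a)=m$ and a lift $\beta$ in $\mathcal{P}(E)$ with $\beta(0)=P$. The path $\beta$ itself shows that $Q:=\beta(1)$ lies in the connected component of $P$; the identity $p(\beta(0))=p(\beta(1))$ forces $Q-P\in\mathcal{L}_c(E)$; and $\dim(Q,P)=m$ by the very definition of $\vfi_P(a)$. Conversely, given $Q$ in the component of $P$ with $Q-P$ compact and $\dim(Q,P)=m$, pick any path $\gamma$ inside the component from $P$ to $Q$; then $a:=\prc\circ\gamma$ is a loop at $p(P)$, and $\vfi_P(a)=\dim(\gamma(1),\gamma(0))=m$.

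For h2), I would restrict the fiber bundle of Theorem \ref{Calkin_bundle} to the connected component $C_P$ of $P$ in $\mathcal{P}(E)$, obtaining a fiber bundle over the component of $p(P)$ in $\mathcal{P}(\mathcal{C})$ whose fiber over $p(P)$ is the connected component of $P$ in $\mathcal{P}_c(P;E)$. By Proposition \ref{relative_section} this fiber is homotopy equivalent to $G_0(\ran P;E)$, and by Theorem \ref{stiefel} together with (\ref{eq:stiefel}) its $\pi_1$ is controlled by $\pi_0(GL_c(\ran P))$. The associated portion of the long exact sequence
\[
\pi_1(\mathcal{P}_c(P;E),P)\xrightarrow{i_*}\pi_1(C_P,P)\xrightarrow{\prc_*}\pi_1(\mathcal{P}(\mathcal{C}),p(P))\xrightarrow{\vfi_P}\pi_0(\mathcal{P}_c(P;E))
\]
combined with the computation of the fiber's fundamental group and the image of the connecting homomorphism reduces the simply-connectedness of $C_P$ to the vanishing of these two boundary contributions.

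The main obstacle is precisely this last reduction: I must show that in the setting of the corollary the composition $i_*$ is trivial and that every loop in $C_P$ projects to a loop in $\mathcal{P}(\mathcal{C})$ which already lies in $\ker\vfi_P$ and is contractible there. Controlling $\pi_1$ of the fiber component $G_0(\ran P;E)$ via the Stiefel bundle of Theorem \ref{stiefel}, and then lifting a null-homotopy from the base back into $C_P$ using the local trivializations constructed in the proof of Theorem \ref{Calkin_bundle}, is the technically delicate step.
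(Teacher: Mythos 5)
Your treatment of h1) is correct and is essentially the paper's own argument: you unwind the definition of $\vfi_P$ from Theorem \ref{index_homomorphism}, using a lift of the loop (Theorem \ref{Calkin_bundle}) in one direction and projecting a path $\gamma$ from $P$ to $Q$ by $\prc$ in the other, noting that $p(Q)=p(P)$ since $Q-P\in\mathcal{L}_c(E)$ and that $\vfi_P(\prc\circ\gamma)=\dim(\gamma(1),\gamma(0))=m$ by the independence of the lift.

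For h2) there is a genuine gap, and it originates in a misreading of the statement. The item h2) is not the unconditional claim that the connected component $C_P$ of $P$ in $\mathcal{P}(E)$ is simply connected; read that way it is false, and the paper itself refutes it later in this very chapter: in the Douady-space discussion it is shown that if $\ran P$ is isomorphic to a complement of $\ran P$ and $GL(\ran P)$ is not connected, then the component of $P$ in $\mathcal{P}(E)$ is \emph{not} simply connected (for instance $E=(\ell\sp p\oplus\ell\sp 2)\oplus(\ell\sp p\oplus\ell\sp 2)$ with $P$ the projector onto the first factor). The intended content of h2), consistent with the framing at the beginning of the chapter (``the two conditions h1), h2) are sufficient to ensure that $m\in\text{Im}(\vfi_P)$ and $\vfi_P$ is injective''), is that simple connectivity of the component of $P$ is a \emph{hypothesis} under which $\ker\vfi_P$ is trivial. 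That statement is a one-line consequence of the exactness proved in Theorem \ref{index_homomorphism}: since $\pi_1(\mathcal{P}(E),P)=\pi_1(C_P,P)=0$, one has $\ker\vfi_P=\text{Im}(\prc_*)=0$, so $\vfi_P$ is injective.

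Consequently the machinery you propose for h2) -- restricting the bundle to $C_P$, computing $\pi_1$ of the fiber via Proposition \ref{relative_section}, Theorem \ref{stiefel} and (\ref{eq:stiefel}), and lifting a null-homotopy through the local trivializations -- is both unnecessary for the intended statement and impossible for the literal one: the step you yourself flag as the ``technically delicate'' reduction (showing every loop in $C_P$ is contractible) cannot be carried out for a general projector $P$ in a general Banach space, precisely because of the counterexample above. The correct move is simply to invoke exactness, as the paper does.
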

Property h2) follows straightforwardly from the exactness of the sequence.
Let $ m\in\text{Im}(\vfi_P) $. Hence, there exists a path $ \beta $ of
projectors such that $ \dim(\beta(1),P) = m $. Thus, we choose 
$ Q = \beta(1) $. Conversely, let $ Q $ be a projector in the same connected
component of $ P $ such that $ Q - P $ is compact. If $ \beta $ joins
$ P $ to $ Q $, $ p\circ\beta $ is a loop and $ \vfi_P (p\circ\beta) = m $.
\vskip .4em
\subsubsection*{When the index is trivial}
When $ P $ is a projector whose image has finite dimension or
finite co-dimension its component in $ \mathcal{P}(\mathcal{C}) $ consists
of a single point, hence $ \vfi_P $ is the null homomorphism.
There are infinite-dimensional spaces, 
called \textsl{undecomposable}, where the only complemented subspaces have 
finite dimension or finite-co-dimension; an example of such space was 
described by W.~T.~Gowers and B.~Maurey in \cite{GM93}. In that case 
$ \mathcal{P}(\mathcal{C}) $ consists of two points. We observe that in h2),
$ \ran Q \cong\ran P $, by ii) of Proposition \ref{pairs_of_projectors}.
In \cite{GM93} W.~T.~Gowers and B.~Maurey showed that there are 
infinite-dimensional Banach spaces which are not isomorphic to any of their
proper subspaces. In this, case $ \vfi_P $ is the null homomorphism. 
\vskip .4em
The next lemma is needed in order to exhibit a wide class of examples where 
h1) conditions holds. By sake of completeness we exhibit a proof of it.
Such proof follows also from \cite{PR87}.
\begin{lemma}
\label{lem:X+Y}
Let $ E $ be a Banach space, and $ X,Y\subset E $ closed subspaces such
that $ X\cong Y $ and $ X\oplus Y = E $. Two projectors
$ P_X,P_Y $ with ranges $ X $ and $ Y $, are connected by a continuous
path on $ \mathcal{P}(E) $.
\end{lemma}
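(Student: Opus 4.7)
The plan is to reduce to the case of the canonical projectors associated to the decomposition $ E = X \oplus Y $, and then construct a rotation-type path connecting them via conjugation by a path of invertible operators.

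First, I would observe that any projector $ P $ with $ \ran P = X $ is connected to the projector $ P(X,Y) $ by the straight segment $ t\mapsto (1 - t) P + t P(X,Y) $. This segment indeed lies in $ \mathcal{P}(E) $: both projectors fix $ X $ pointwise and have range contained in $ X $, hence $ P\cdot P(X,Y) = P(X,Y) $ and $ P(X,Y)\cdot P = P $, which makes $ ((1 - t)P + tP(X,Y))^2 = (1 - t)P + tP(X,Y) $. This is just the convexity of the fiber $ r^{-1}(\{X\}) $ noted in the proof of Lemma \ref{projectors_Grassmannian}. The same argument connects $ P_Y $ to $ P(Y,X) $. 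So it suffices to join $ P(X,Y) $ to $ P(Y,X) $.

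Next, I would fix an isomorphism $ T\colon X\to Y $ and, writing operators on $ E = X\oplus Y $ in block form, define
\[
U(t) = \begin{pmatrix} \cos(t)\, I_X & -\sin(t)\, T^{-1} \\ \sin(t)\, T & \cos(t)\, I_Y \end{pmatrix},\qquad t\in[0,\pi/2].
\]
A direct block-matrix computation using $ T T^{-1} = I_Y $ and $ T^{-1} T = I_X $ shows $ U(t) U(-t) = I_E $, so $ U(t)\in GL(E) $ for every $ t $, and $ t\mapsto U(t) $ is a continuous path from $ I $ to the swap $ U(\pi/2) $ that exchanges $ X $ and $ Y $. The path of projectors $ t\mapsto U(t)\, P(X,Y)\, U(t)^{-1} $ is then continuous in $ \mathcal{P}(E) $, starts at $ P(X,Y) $, and ends at $ P(Y,X) $, as a short calculation with the block form of $ P(X,Y) = \mathrm{diag}(I_X,0) $ confirms.

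Concatenating the two linear segments from Step 1 with this rotation path yields a continuous path in $ \mathcal{P}(E) $ from $ P_X $ to $ P_Y $. The main (and essentially only) thing to verify is the endpoint equality $ U(\pi/2) P(X,Y) U(\pi/2)^{-1} = P(Y,X) $, which reduces to a routine $ 2\times 2 $ block multiplication; everything else is either the convexity of fibers of $ r $ or the invertibility of $ U(t) $.
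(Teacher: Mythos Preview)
Your proof is correct and follows essentially the same approach as the paper: reduce to the canonical projectors $P(X,Y)$ and $P(Y,X)$ using convexity of the fiber $r^{-1}(\{X\})$, then conjugate by a rotation-type path of invertibles built from an isomorphism between $X$ and $Y$. The only cosmetic differences are the direction of the chosen isomorphism and the use of block-matrix notation versus the paper's coordinate form $G_{\sigma,\theta}(x+y)$.
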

\begin{proof}
It is enough to prove it when $ P_X $ is the projector onto $ X $ along $ Y $ 
and $ P_Y $ is $ I - P_X $, because the set of projectors having a fixed
range is a convex subset of the space of projectors. Check, for instance
Lemma \ref{projectors_Grassmannian}.

Let $ \sigma $ be an isomorphism of $ Y $ with $ X $. We define the 
path
\[
G_{\sigma,\theta} (x + y) = (\cos\theta\, x + \sin\theta\,\sigma y) + 
(-\sin\theta\,\sigma^{-1} x + \cos\theta\, y)
\glsadd{labGsg}
\]
of invertible operators of $ E $. Direct computations show that
$ G_{\sigma} (-\theta) $ is its inverse. Moreover $ G_{\sigma} (0) $ is
the identity and $ G_{\sigma} (\pi/2) $ conjugates the projector $ P $ to 
$ P_Y $. Then the path
\[
P_{\theta} = G_{\sigma,\theta} P G_{\sigma,-\theta},
\ \ 0\leq\theta\leq\pi/2
\]
has the required properties.
\end{proof}
Here is a concrete example where the first condition hold.
\begin{proposition}
\label{surjective}
Let $ E = X\oplus Y $ be a Banach space and $ X $ a closed complemented 
subspace isomorphic to its closed subspaces of co-dimension $ m $.
Let $ P $ be the projector onto $ X $ along $ Y $. Then $ P $ satisfies the 
condition {\rm h1)} with the integer $ m $.
\end{proposition}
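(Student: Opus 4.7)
The plan is to exhibit explicitly a projector $Q$ in the same connected component of $P$ in $\mathcal{P}(E)$ with $Q-P\in\mathcal{L}_c(E)$ and $\dim(Q,P)=-m$; since $\vfi_P$ is a group homomorphism into $\mathbb{Z}$ by Theorem \ref{index_homomorphism}, producing $-m\in\text{Im}(\vfi_P)$ automatically yields $m\in\text{Im}(\vfi_P)$, which is condition h1).

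By hypothesis I choose a closed subspace $X_m\subset X$ of codimension $m$ and an isomorphism $\sigma\colon X\to X_m$, together with a finite-dimensional complement $V$ of $X_m$ in $X$, so that $X=X_m\oplus V$, $\dim V=m$ and $E=X_m\oplus(V\oplus Y)$. Set $Q=P(X_m,V\oplus Y)$. The difference $P-Q$ vanishes on $X_m$ and on $Y$ and is the identity on $V$, so it has finite-dimensional range $V$ and is in particular compact. To evaluate $\dim(\ran Q,\ran P)=\dim(X_m,X)$ I apply the definition of commensurability with $G=I$: the restriction to $X_m$ is just the inclusion $X_m\hookrightarrow X$, which is a Fredholm operator with trivial kernel and cokernel of dimension $m$, hence $\dim(X_m,X)=-m$.

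The main step is to connect $Q$ to $P$ by a continuous path in $\mathcal{P}(E)$. By (ii) of Proposition \ref{pairs_of_projectors} it suffices to produce $U\in G_0(\mathcal{L}(E))$ such that $UP=QU$, i.e.\ $U(X)=X_m$ and $U(Y)=V\oplus Y$. Reading the hypothesis in the form stated in the Introduction, $Y$ contains a complemented copy $\hat X\cong X$; inside $\hat X$ I pick a codimension-$m$ subspace with an $m$-dimensional complement $W\subset\hat X$ and an isomorphism $\eta\colon W\to V$. The rotation $G_{\eta,\theta}$ of Lemma \ref{lem:X+Y}, extended by the identity outside $V\oplus W$, is a continuous path in $GL(E)$ from the identity to an invertible operator sending $V\subset X$ onto $W\subset Y$. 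A second $G_{\sigma,\theta}$-rotation built analogously from the isomorphism $\sigma\colon X\to X_m$ and its counterpart on $\hat X$ then yields, upon concatenation, the required $U\in G_0(\mathcal{L}(E))$ with $U(X)=X_m$ and $U(Y)=V\oplus Y$; the family $t\mapsto U(t)PU(t)^{-1}$ is then the sought path of projectors from $P$ to $Q$.

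The delicate point is precisely the construction of $U$ and the verification that it lies in $G_0(\mathcal{L}(E))$: here the symmetric form of the hypothesis (the existence of a complemented subspace of $E$ which is the direct sum of two isomorphic copies of $X$, as stated in the Introduction) is essential, since it is what enables the $G_{\sigma,\theta}$-rotations of Lemma \ref{lem:X+Y} to swap the finite-dimensional piece $V\subset X$ with a matching piece of $Y$. Once this has been carried out, the computations of the preceding paragraphs together with the homomorphism property of $\vfi_P$ give $m\in\text{Im}(\vfi_P)$, completing the proof.
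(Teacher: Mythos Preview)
Your choice of $Q=P(X_m,V\oplus Y)$, the finite-rank verification for $P-Q$, and the computation $\dim(Q,P)=-m$ are all correct and coincide with the paper's (it writes $X^m,R_m$ for your $X_m,V$). You are also right that the proposition as stated is missing the hypothesis $X\cong Y$ implicit in the Introduction; the paper's own proof uses it. The entire content of the proof is the path from $P$ to $Q$ in $\mathcal{P}(E)$, and this is where your argument has a gap.

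Your first rotation $G_{\eta,\theta}$ on $V\oplus W$, extended by the identity, sends at $\theta=\pi/2$ the subspace $X=X_m\oplus V$ to $X_m\oplus W$, not to $X_m$: you have swapped one $m$-dimensional piece for another, not removed it. Your second rotation is described only as ``built analogously from $\sigma\colon X\to X_m$ and its counterpart on $\hat X$'', but Lemma~\ref{lem:X+Y} produces a rotation between two \emph{complementary} subspaces, whereas $X$ and $X_m$ are nested. You never specify which complementary pair you rotate, and it is not clear how any such rotation composed with the first one could send $X$ onto $X_m$ and $Y$ onto $V\oplus Y$. An invertible $U$ with $U(X)=X_m$ and $U(Y)=V\oplus Y$ certainly exists once one knows $Y\cong V\oplus Y$, but showing that a particular such $U$ lies in $G_0(\mathcal{L}(E))$ is exactly the issue, and your two rotations do not settle it.

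The paper sidesteps this by routing through $P_Y$ instead of building $U$ directly. Since $X\cong Y$, Lemma~\ref{lem:X+Y} applied to $E=X\oplus Y$ connects $P_X$ to $P_Y$. Since $X_m\cong X\cong Y$, the same lemma applied inside the subspace $X_m\oplus Y$ (extending each projector in the resulting path by zero on the $m$-dimensional piece $V$) connects $Q=P(X_m,V\oplus Y)$ to $P(Y,X)=P_Y$. Concatenating the two paths joins $P$ to $Q$ in $\mathcal{P}(E)$ in two clean strokes, with no need to exhibit a single conjugating operator in $G_0$.
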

\begin{proof}
Since $ X $ is isomorphic to its hyperplanes, we can choose subspaces 
$ X\sp m,R_m\subset X $, where $ R $ has dimension $ m $, $ X\sp m $ is closed 
and $ X\sp m\cong X $. We have the decomposition and isomorphism
\[
E = R_m \oplus X\sp m\oplus Y,\ X\sp m\cong Y.
\]
By applying Lemma \ref{lem:X+Y} with $ E = X\sp m \oplus Y $, we obtain that 
$ P_{X\sp m} $ is connected to $ P_Y $. By applying it a second time to $ E $ 
and subspaces $ X = R_m\oplus X\sp m $ and $ Y $, we obtain that $ P_X $ is 
connected to $ P_Y $. Hence, $ P_X $ is connected to 
$ P_{X\sp m} $.
\end{proof}
The argument used to connect the two projectors $ P $ and $ P_X $ is a
modification of the one used for Hilbert spaces by J.~Phillips in 
\textsc{Proposition} 6 of \cite{Phi96} when $ m = 1 $: 
given the decomposition
\[
E = X\sp 1\oplus R_1\oplus Y 
\]
a shift operator $ s $ maps $ X\sp 1 $ and $ R_1\oplus Y $ isomorphically onto 
$ X\sp 1\oplus R_1 $ and $ Y $ respectively. Since the general linear group of
a Hilbert space is contractible the projectors are connected. The isomorphism 
$ G_{\sigma} $ used in the proof is connected to the identity 
regardless of whether $ GL(E) $ is connected or not.
\begin{example}
Given a Banach space $ X $ which is isomorphic to its subspaces of 
co-dimension two, but not to the subspaces of co-dimension one, let $ P $
be the projector onto the first factor of $ E = X\oplus X $. Then, 
by Proposition \ref{surjective}, $ 2\in\text{Im}(\vfi_P) $. However,
$ 1\not\in\text{Im}(\vfi_P) $, because condition h2) with $ m = 1 $
implies the existence of an isomorphism of $ X $ with a hyperplane.
Thus,
\[
\text{Im}(\vfi_P) = 2\mb{Z}\subset\mb{Z}.
\]
An example of such space $ X $ was showed by W.~T.~Gowers and B.~Maurey
in \cite{GM97}. Thus, there are projectors such that the homomorphism
is not surjective, but not trivial.
\end{example}
\vskip .4em
\subsubsection*{When the index is an isomorphism}
When $ E $ is one of the following Banach spaces, an infinite-dimensional
Hilbert space, spaces $ L\sp p (\Omega,\mu) $ for $ p\geq 1 $ and 
$ L\sp\infty (\Omega,\mu) $, or spaces of sequences $ \ell\sp p,m,c_0 $, 
the following three conditions hold:
\begin{enumerate}
\item $ E\cong E\times E $;
\item $ E $ is isomorphic to its hyperplanes;
\item $ GL(E) $ is contractible to a point.
\end{enumerate}
From (i), we can write $ E = X\oplus Y $ where $ X\cong Y $. By (ii),
$ X $ is isomorphic to a hyperplane, thus $ 1\in\text{Im}(\vfi_{P(X,Y)}) $
by Proposition \ref{surjective}. By (\ref{exact_homotopy_sequence:1}) 
with $ k = 1 $, we obtain
the condition h1), because $ F_P $ and $ G_P $ are contractible, by
(iii). Hence, $ \vfi_{P(X,Y)} $ is a group isomorphism.
\subsection*{The Douady space}
\label{non-injective}
We exhibit an example of Banach space $ E $ with a projector
$ P $ of infinite dimensional range and kernel and a loop $ a $ in 
$ \mathcal{P}(\mathcal{C}) $ with base point $ p(P) $ such that
$ \varphi(a) = 0 $ but not homotopically equivalent to the constant path.
\begin{proposition}
Let $ X\subset E $ be a complemented subspace isomorphic to its 
complement and $ P $ a projector such that $ P(E) = X $.
If $ GL(X) $ is not connected, the component of $ P $ in $ \mathcal{P}(E) $ 
is not simply connected.
\end{proposition}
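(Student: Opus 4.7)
The plan is to use the exact homotopy sequence of the principal bundle $(G_P,\pi_P,\mathcal{P}_P)$ with structure group $F_P$ provided by Theorem \ref{th:gl-p}, and to exhibit a non-trivial element in the image of the connecting map $\partial\colon\pi_1(\mathcal{P}_P,P)\to\pi_0(F_P)$. Since $\mathcal{P}_P$ is by definition a connected component, exactness of
\[
\pi_1(\mathcal{P}_P,P)\xrightarrow{\partial}\pi_0(F_P)\to\pi_0(G_P)\to 0
\]
identifies $\mathrm{Im}(\partial)$ with the kernel of $\pi_0(F_P)\to\pi_0(G_P)$, so the task reduces to producing an element of $F_P$ that lies in a non-identity component of $F_P$ but in the identity component of $G_P$.

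Let $Y=\ker P$, so that $E=X\oplus Y$ with $Y\cong E/X\cong X$ by hypothesis. Under this splitting, the stabilizer $F_P$ consists precisely of the block-diagonal invertible operators and is canonically identified with $GL(X)\times GL(Y)$. Since $GL(X)$ is not connected, pick $g\in GL(X)$ in a non-identity component, fix an isomorphism $\sigma\colon Y\to X$, and set $h=\sigma^{-1}g\sigma\in GL(Y)$.

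The key construction is the path
\[
\mu(\theta)=G_{\sigma,\theta}\,(g\oplus I_Y)\,G_{\sigma,-\theta}\,(g^{-1}\oplus I_Y),\qquad\theta\in[0,\pi/2],
\]
where $G_{\sigma,\theta}$ is the rotation defined in Lemma \ref{lem:X+Y}. A direct computation based on $G_{\sigma,\pi/2}(x+y)=\sigma y-\sigma^{-1}x$ yields $G_{\sigma,\pi/2}(g\oplus I_Y)G_{\sigma,-\pi/2}=I_X\oplus h$, so $\mu(0)=I$ and $\mu(\pi/2)=g^{-1}\oplus h\in F_P$. Because $\mu$ is continuous and starts at $I$, the continuous map $\theta\mapsto\mu(\theta)P\mu(\theta)^{-1}$ is a path in $\mathcal{P}(E)$ starting at $P$, hence remains inside $\mathcal{P}_P$; this shows $\mu(\theta)\in G_P$ for every $\theta$, so $g^{-1}\oplus h$ lies in the identity component of $G_P$.

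Finally, under the identification $\pi_0(F_P)\cong\pi_0(GL(X))\times\pi_0(GL(Y))$, the class of $g^{-1}\oplus h$ has first coordinate $[g]^{-1}\neq[I]$ and is therefore non-trivial, while the path $\mu$ witnesses that its image in $\pi_0(G_P)$ is trivial. Hence $\partial$ is non-zero and $\pi_1(\mathcal{P}_P,P)\neq 0$. The one point requiring careful verification is the block-matrix computation $G_{\sigma,\pi/2}(g\oplus I_Y)G_{\sigma,-\pi/2}=I_X\oplus h$; the remaining observations—that a continuous path in $GL(E)$ starting at $I$ automatically stays inside $G_P$, and that $F_P$ is naturally $GL(X)\times GL(Y)$—are immediate.
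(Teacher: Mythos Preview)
Your proof is correct and follows essentially the same route as the paper: both use the rotations $G_{\sigma,\theta}$ of Lemma~\ref{lem:X+Y} to show that a block-diagonal element of $F_P\cong GL(X)\times GL(Y)$ with non-trivial $GL(X)$-component lies in the identity component of $G_P$, and then invoke the exact homotopy sequence of the bundle $(G_P,\pi_P,\mathcal{P}_P)$ from Theorem~\ref{th:gl-p}. The only cosmetic difference is that the paper writes down the loop $\alpha=SPS^{-1}$ explicitly and applies the connecting map $\Delta$, whereas you phrase the same step as exhibiting a non-trivial element of $\ker\bigl(\pi_0(F_P)\to\pi_0(G_P)\bigr)=\mathrm{Im}\,\partial$.
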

\begin{proof}
Choose a topological complement $ Y $ and let $ T\in GL(X) $ be such that 
there exists no path joining $ T $ to the identity. Let $ \sigma $ be 
an isomorphism of $ Y $ with $ X $. Hence the invertible operator
\[
T_1 = 
\begin{pmatrix}
T & 0 \\
0 & \sigma T^{-1} \sigma^{-1}
\end{pmatrix}
\]
lies in the connected component of $ GL(E) $ of the identity. A path can be 
defined as $ G_{\sigma,\theta} T_1 G_{\sigma,-\theta} $ where
$ G_{\sigma,\theta} $ is the operator defined Lemma \ref{lem:X+Y}.
Call $ S $ such path and define $ \alpha = S P S^{-1} $. Since $ T_1 $ 
commutes with $ P $ the path $ \alpha $ is a loop with base point $ P $. 
The group homomorphism
\[
\Delta\colon\pi_1 (\mathcal{P}(E),P) \rightarrow 
\pi_0 (GL(X))\times\pi_0 (GL(Y))
\]
induced by the fiber bundle $ (GL(E),\pi_P,\mathcal{P}(E)) $ 
maps $ \alpha $ to $ T_1 $. Thus $ \Delta\alpha\neq 0 $, hence 
$ \alpha\neq 0 $. 
\end{proof}
In order to find non-contractible loops with vanishing index we need some 
projector $ P $ such that the inclusion
\[
j_* \colon\pi_1 (\mathcal{P}_c (P;E))\rightarrow\pi_1 (\mathcal{P}(E),P)
\]
is not surjective. We will prove that for some spaces the second group
contains infinitely many distinct elements, while the first is a finite
group, according to (\ref{eq:stiefel}).
Let $ F $ and $ G $ be such that
\begin{enumerate}
\item every bounded map $ G\rightarrow F $ is compact,
\item both $ F $ and $ G $ are isomorphic to their hyperplanes;
\end{enumerate}
let $ X = F\oplus G $. Let $ T\in GL(X) $. We can write it block-wise using 
the projectors on $ F $ and $ G $
\[
T =
\begin{pmatrix}
A & B\\
C & D
\end{pmatrix}
\]
Since $ C $ is compact, it is possible to prove that $ A $ and $ B $
are Fredholm operators such that $ \ind(A) + \ind(D) = 0 $ 
(refer \cite{Mit70}). We define $ f(T) = \ind(A) $. 
We have the following
\begin{lemma}{\rm (refer \cite{Dou65}).}
The map $ f\colon GL(X)\rightarrow\mathbb{Z} $ is continuous and surjective.
\end{lemma}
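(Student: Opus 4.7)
My plan is to establish continuity and surjectivity separately; continuity is essentially automatic, while surjectivity requires an explicit construction of invertible block matrices with prescribed index.

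For continuity, I would first note that $T\mapsto A$ is the restriction to $GL(X)$ of the bounded linear projection $\mathcal{L}(X)\to\mathcal{L}(F)$ onto the top-left block, hence norm-continuous. The preceding discussion shows $A$ is Fredholm for every $T\in GL(X)$: writing $T\sp{-1}=\bigl(\begin{smallmatrix}A'&B'\\C'&D'\end{smallmatrix}\bigr)$ and using that both $B$ and $B'$ are compact (each being a bounded map $G\to F$), the block identities $AA'=I_F-BC'$ and $A'A=I_F-B'C$ exhibit $A$ as essentially invertible, hence Fredholm by Proposition \ref{essential_inverse}. Since the Fredholm index is locally constant on the open set of Fredholm operators in $\mathcal{L}(F)$, the composition $T\mapsto\ind(A)$ is locally constant and therefore continuous into $\mathbb{Z}$ with the discrete topology.

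For surjectivity, fix $n\geq 0$. Because $F$ and $G$ are both isomorphic to their hyperplanes, iteration produces decompositions $F=F'\oplus V$ and $G=G'\oplus W$ with $\dim V=\dim W=n$, together with isomorphisms $\theta\colon F\xrightarrow{\sim}F'$ and $\eta\colon G\xrightarrow{\sim}G'$. I would set $A\colon F\to F$ equal to $\theta\sp{-1}$ on $F'$ and zero on $V$, so that $\ker A=V$, $\ran A=F$, and $\ind A=n$; set $D\colon G\to G$ equal to the injective composition $\iota_{G'}\circ\eta$, of index $-n$; and set $C\colon F\to G$ equal to the finite-rank composition $F\twoheadrightarrow V\xrightarrow{\sim}W\hookrightarrow G$. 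Define
\[
T=\begin{pmatrix}A&0\\C&D\end{pmatrix}\colon F\oplus G\to F\oplus G.
\]
Solving $T(f,g)=(f',g')$ reduces to first solving $Af=f'$ (which determines $f$ modulo $V$) and then $Cf+Dg=g'$; because $\ran D=G'$, the $W$-component of $g'-Cf$ must vanish, and this condition uses the free parameter in $V$ via the isomorphism $C|_V\colon V\xrightarrow{\sim}W$ to fix $f$ uniquely, after which $g$ is determined by injectivity of $D$. Thus $T$ is bijective, hence invertible by the open mapping theorem, and $f(T)=\ind A=n$.

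For $n<0$ the symmetric construction with the triangles transposed works: take $A=\iota_{F'}\circ\theta$ (injective, cokernel $V$ of dimension $|n|$, index $n$), take $D$ surjective with kernel $W$ of dimension $|n|$, and take $B\colon G\to F$ factoring as $G\twoheadrightarrow W\xrightarrow{\sim}V\hookrightarrow F$; the matrix $T=\bigl(\begin{smallmatrix}A&B\\0&D\end{smallmatrix}\bigr)$ is then invertible by the analogous argument, with $f(T)=n$. The only real obstacle is the bookkeeping confirming invertibility of the block matrix, which is settled by the matched-dimension pairing between $V$ and $W$ carried out by the off-diagonal block; everything else is either a direct consequence of local constancy of the Fredholm index or a restatement of the hyperplane-isomorphism hypothesis.
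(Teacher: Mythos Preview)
The paper does not actually prove this lemma; it is stated with the parenthetical ``refer \cite{Dou65}'' and immediately followed by the next sentence of the argument, so there is no in-text proof to compare against. Your proposal therefore has to be judged on its own, and it is correct.

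Your continuity argument is the standard one and is fine: the block map $T\mapsto A$ is the restriction of a bounded linear projection, and you correctly verify that $A$ is Fredholm by reading off $AA'=I_F-BC'$ and $A'A=I_F-B'C$ from $TT^{-1}=T^{-1}T=I$, using that $B,B'\colon G\to F$ are compact by hypothesis~(i). (Note that the paper's surrounding text contains a typo, writing ``$C$ is compact'' and ``$A$ and $B$ are Fredholm'' where it should read $B$ and $A,D$; your version is the correct one.) Local constancy of the index then gives continuity.

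Your surjectivity construction is also correct. For $n\geq 0$ you build a lower-triangular $T=\bigl(\begin{smallmatrix}A&0\\C&D\end{smallmatrix}\bigr)$ with $A$ surjective of kernel~$V$, $D$ injective of cokernel~$W$, and the finite-rank $C$ carrying $V$ isomorphically onto $W$; the bookkeeping you sketch does show $T$ is bijective. For $n<0$ the transposed construction works, and your $B\colon G\to F$ is finite-rank, hence compact, so hypothesis~(i) is respected automatically. This explicit block-triangular construction with a finite-rank off-diagonal ``correction'' is essentially how Douady's original argument proceeds, so while the paper gives you nothing to compare with directly, your approach is in the spirit of the cited source.
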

We define $ E = X\oplus X $. 
By the lemma, we have a surjective homomorphism, obtained by composition
\[
\xymatrix{
(f\times 0)\circ\Delta\colon\pi_1 (\mathcal{P}(E),P)\rightarrow\mathbb{Z}.
}
\]
Hence, given a loop $ \alpha\not\in j_* (\pi_1 (\mathcal{P}_c (P;E))) $, 
the element $ a = \prc_* (\alpha) $ is non trivial and, since the sequence in 
Theorem (\ref{index_homomorphism}) is exact, $ \vfi_P (a) = 0 $.
\vskip .2em
A pair of spaces with the properties i) and ii) is given by 
$ (\ell\sp p,\ell\sp q) $
with $ p > q > 1 $; refer \textsc{Theorem} 4.23 of \cite{Rya02} for
property i). Isomorphisms with hyperplanes can be defined using the
operators $ (sx)_1 = 0 $, $ (sx)_i = x_{i - 1},i\geq 2 $ for every 
$ x\in\ell\sp p $. Thus, in the space
\[
E = (\ell\sp p\oplus\ell\sp 2)\oplus(\ell\sp p \oplus\ell\sp 2),
\]
if we call $ P $ the projector onto the first factor, then $ \vfi_P $ is
not injective. Finally, we observe that the image of $ P $ is isomorphic
to a hyperplane. Thus, by Proposition \ref{surjective}, 
$ \text{Im}(\vfi_P) = \mb{Z} $.
\chapter{Linear equations in Banach spaces}
\label{chap3}
We state and prove some general results about differential equations on
a Banach algebra with unit, usually denoted by $ 1 $. We are mainly concerned 
of the Cauchy problem
\begin{gather}
\label{init}
u'(t) = A(t) u(t), \ u(0) = 1
\end{gather}
where $ A $ is a continuous path in a Banach algebra $ \mathcal{B} $. Local 
existence and uniqueness hold. In fact these solutions admit a prolongation 
to the whole real line $ \mathbb{R} $. Denote by $ X_A $ the solution of 
(\ref{init}).
\glsadd{labXA}
Using local uniqueness we prove some properties of the solution $ X_A $. When 
$ \mathcal{B} $ is the algebra of bounded operators on a Banach space $ E $ 
two linear subspaces, the \textsl{stable} and \textsl{unstable space}, are 
defined
\begin{align*}
W_A ^s &= \set{x\in E}{\lim_{t\ra +\infty} X_A (t) x = 0}\\
W_A ^u &= \set{x\in E}{\lim_{t\ra -\infty} X_A (t) x = 0}.
\glsadd{labWAs}
\glsadd{labWAu}
\end{align*}
If $ A $ is a bounded and asymptotically hyperbolic these are closed linear 
subspaces, admit a topological complement, and have the asymptotic behaviour
\begin{align*}
&\lim_{t\rightarrow +\infty} X_A (t) W_A ^s = E^- (A_0 (+\infty)), \\
&\lim_{t\rightarrow +\infty} X_A (t) Y = E^+ (A_0 (+\infty))
\end{align*}
where $ W_A ^s \oplus Y = E $. The limits are taken in the topology of 
$ G(E) $. In the last section we look at the effects of
perturbation of an asymptotically hyperbolic path on its stable space.
Precisely the stable space varies continuously in the topology of $ G_s (E) $.
If $ A - B $ is a path of compact operators then $ W^s _A $ and $ W^s _B $
are compact perturbation one of each other.
\section{The Cauchy problem}
Let $ \mathcal{B} $ be a Banach algebra and $ A $ a continuous 
path defined on the real line. Given $ u,v\in\mathcal{B} $ we can always 
consider two 
\textsl{Cauchy problems}
\begin{align}
{X_{A,u}}^{\prime} (t) &= A(t) X_{A,u} (t), \ X_{A,u} (0) = u \label{cauchyl}\\
{X^{A,v}}^{\prime} (t) &= X^{A,v} (t) A(t), \ X^{A,v} (0) = v \label{cauchyr}.
\end{align}
By Theorem \ref{Cau} unique local solutions always exist and the maximal
solutions can extended, by Proposition \ref{whole}, to $ \mathbb{R} $.
\begin{proposition}
\label{inv}
Let $ u, v\in\mathcal{B} $. We have
\begin{align*}
X^{-A,v} (t) \cdot X_{A,u} (t) &= vu, \\ 
X_{A,u} (t) \cdot X^{- A,v} (t) &= X_{A,1} (t) \cdot uv \cdot X^{-A,1} (t)
\end{align*}
for every $ t\in\R $. Moreover $ X_{A,1} $ is invertible and its inverse is
$ X^{-A,1} $.
\end{proposition}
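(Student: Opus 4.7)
The plan is to reduce everything to the case $ u = v = 1 $ via a uniqueness argument, then differentiate a product. First I would observe that, by local uniqueness of the Cauchy problem (Theorem \ref{Cau}), the maps $ t\mapsto X_{A,1}(t)\cdot u $ and $ t\mapsto v\cdot X^{-A,1}(t) $ solve the initial value problems (\ref{cauchyl}) and (\ref{cauchyr}) (with $ A $ replaced by $ -A $ in the second), so
\[
X_{A,u}(t) = X_{A,1}(t)\,u,\qquad X^{-A,v}(t) = v\,X^{-A,1}(t).
\]
This immediately collapses the second claimed identity to a rewriting of the first.

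Next I would establish the first identity by differentiating the product. Setting $ Z(t) = X^{-A,v}(t)\cdot X_{A,u}(t) $, the Leibniz rule together with the two defining ODEs gives
\[
Z'(t) = -X^{-A,v}(t)A(t)X_{A,u}(t) + X^{-A,v}(t)A(t)X_{A,u}(t) = 0,
\]
so $ Z $ is constant and equals $ Z(0) = vu $. This yields the first identity, and specializing to $ u = v = 1 $ gives $ X^{-A,1}(t)\cdot X_{A,1}(t) = 1 $, i.e.\ $ X^{-A,1} $ is a left inverse of $ X_{A,1} $.

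To obtain the matching right-inverse relation (and hence the invertibility assertion) I would use a second uniqueness argument, this time on the Banach algebra $ \mathcal{B} $ itself. Define $ Y(t) = X_{A,1}(t)\cdot X^{-A,1}(t) $; a direct computation yields
\[
Y'(t) = A(t)Y(t) - Y(t)A(t),\qquad Y(0) = 1.
\]
This is a linear ODE on $ \mathcal{B} $ whose right-hand side is the bounded operator $ L_t\colon Y\mapsto A(t)Y - YA(t) $, depending continuously on $ t $, so Theorem \ref{Cau} applies and gives uniqueness. Since the constant path $ Y\equiv 1 $ also solves this Cauchy problem, uniqueness forces $ X_{A,1}(t)\cdot X^{-A,1}(t) = 1 $ for all $ t\in\R $, completing the proof.

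The only delicate point is the last step: one must check that the linear ODE for the commutator really falls under the existence and uniqueness framework of Theorem \ref{Cau}, i.e.\ that $ t\mapsto L_t $ is a continuous path of bounded operators on $ \mathcal{B} $ (which follows at once from the estimate $ \|L_t\|\leq 2\|A(t)\| $ and continuity of $ A $). Once this is noted the rest is a routine computation with the Leibniz rule.
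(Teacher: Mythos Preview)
Your proof is correct and shares the paper's core ideas: differentiate the product $X^{-A,v}X_{A,u}$ to show it is constant, and handle the right-inverse relation by observing that $Y(t)=X_{A,1}(t)X^{-A,1}(t)$ solves the commutator Cauchy problem $Y'=[A,Y]$, $Y(0)=1$, hence coincides with the constant solution $1$.

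The one genuine difference is how you treat the second identity. You first establish the factorizations $X_{A,u}=X_{A,1}\,u$ and $X^{-A,v}=v\,X^{-A,1}$ by a uniqueness argument, which makes the second identity an immediate consequence: $X_{A,u}X^{-A,v}=X_{A,1}\,uv\,X^{-A,1}$ is then literally a rewriting, no further work needed. The paper instead proves the second identity by a separate ODE argument, computing that both $X_{A,u}X^{-A,v}$ and $X_{A,1}\,uv\,X^{-A,1}$ solve the same Cauchy problem $X'=[A,X]$ with $X(0)=uv$. Your route is more economical and the factorization lemma is a useful standalone fact; the paper's route has the virtue of proving the second identity independently of the first.
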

\begin{proof}
To prove the first equality consider the $ C^1 $ path 
$ X^{-A,v} \cdot X_{A,u} $. By hypothesis the path is $ vu $ at $ t = 0 $ and
its derivative is
\begin{equation*} 
 \begin{split}
     {X^{-A,v}}' X_{A,u} + X^{-A,v} X_{A,u} ' = 
   - X^{-A,v} A X_{A,u} + X^{-A,v} A X_{A,u} = 0;
 \end{split}
 \end{equation*}
then $ X^{-A,v} X_{A,u} (t) = vu $ for every $ t $. To prove the second
we argue similarly. The path $ X_{A,v} (t) \cdot X^{-A,u} (t) $ has derivative 
\begin{equation*}
{ X_{A,v} }' X^{-A, u} + X_{A,v} { X^{-A,u} }' = [A,X_{A,v}\cdot X^{- A,u}]
\end{equation*}
and is therefore solution of the Cauchy problem
$ X' = [A,X] $ with starting point at $ uv $. By direct computation 
$ X_{A,1} \cdot uv\cdot X^{-A,1} $ solves the same equation. By uniqueness 
the second equality holds. The first equality applied to $ u = v = 1 $ gives
$ X^{-A,1} \cdot X_{A,1} = 1 $. Since $ X_{A,1}\cdot X^{-A,1} $ and the 
constant path $ 1 $ solve the same equation, $ X_{A,1} $ is invertible.
\end{proof}
\begin{definition}
An element $ u\in\mathcal{B} $ is a \emph{left inverse} if there exists 
$ v $, called \emph{right inverse} for $ u $, such that $ uv = 1 $. We denote 
the subsets of left and right inverses by $ \mathcal{B}_l $ and 
$ \mathcal{B}_r $ respectively.
\glsadd{labBl}
\glsadd{labBr}
\end{definition}
\begin{proposition}
\label{lr}
If $ u\in\al{B}_r $ (resp. $ \mathcal{B}_l $)  then 
$ X_{A,u} \subset\mathcal{B}_r $ (resp. $ \mathcal{B}_l $). If $ u $ is
invertible then $ X_{A,u} (t) ^{-1} = X^{-A,u^{-1}} (t) $.
\end{proposition}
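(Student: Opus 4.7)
The plan is to derive the whole proposition as a direct corollary of Proposition \ref{inv}, exploiting the two identities already available there, together with the invertibility of $X_{A,1}$ with inverse $X^{-A,1}$. The point is that the candidate inverses are already visible from those formulas: for a right inverse $v$ of $u$ I expect $X^{-A,v}(t)$ to be a right inverse of $X_{A,u}(t)$, while for a left inverse $v$ of $u$ the same element $X^{-A,v}(t)$ will play the role of a left inverse. Here the notation $X_{A,u}\subset\mathcal{B}_r$ is read as ``for every $t\in\mathbb{R}$, $X_{A,u}(t)\in\mathcal{B}_r$''.

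First I would treat the left-inverse case. Suppose $u\in\mathcal{B}_l$, so there is $v\in\mathcal{B}$ with $vu=1$. Applying the first identity of Proposition \ref{inv} with the pair $(u,v)$ gives
\[
X^{-A,v}(t)\cdot X_{A,u}(t)=vu=1
\]
for every $t\in\mathbb{R}$, which exhibits $X^{-A,v}(t)$ as a left inverse of $X_{A,u}(t)$.

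Next, for the right-inverse case, suppose $u\in\mathcal{B}_r$ with $uv=1$. Now I invoke the second identity of Proposition \ref{inv}, which yields
\[
X_{A,u}(t)\cdot X^{-A,v}(t)=X_{A,1}(t)\cdot uv\cdot X^{-A,1}(t)=X_{A,1}(t)\cdot X^{-A,1}(t).
\]
By the last assertion of Proposition \ref{inv}, $X_{A,1}$ is invertible with inverse $X^{-A,1}$, so the right-hand side equals $1$, and $X^{-A,v}(t)$ is a right inverse of $X_{A,u}(t)$.

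Finally, if $u$ is invertible with inverse $u^{-1}$, taking $v=u^{-1}$ gives simultaneously $vu=uv=1$, so both computations above apply with the same choice of $v$, and together they give
\[
X^{-A,u^{-1}}(t)\cdot X_{A,u}(t)=1=X_{A,u}(t)\cdot X^{-A,u^{-1}}(t),
\]
i.e.\ $X_{A,u}(t)^{-1}=X^{-A,u^{-1}}(t)$. There is no real obstacle here; the only care needed is in matching the order of multiplication in Proposition \ref{inv} to the side (left or right) on which the inverse is being constructed.
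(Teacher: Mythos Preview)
Your argument is mathematically correct and follows exactly the same route as the paper---both identities of Proposition \ref{inv} plus the invertibility of $X_{A,1}$---but you have the labels $\mathcal{B}_l$ and $\mathcal{B}_r$ swapped relative to the paper's definitions. In the paper, $u\in\mathcal{B}_l$ means $u$ is a \emph{left inverse}, i.e.\ $uv=1$ for some $v$, while $u\in\mathcal{B}_r$ means $vu=1$; your two paragraphs use the opposite convention. Since you treat both cases, nothing is actually missing: what you call the ``$\mathcal{B}_l$ case'' is the paper's $\mathcal{B}_r$ case (and is in fact the only case the paper writes out explicitly), and your ``$\mathcal{B}_r$ case'' supplies the $\mathcal{B}_l$ argument via the second identity, which the paper leaves implicit. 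Just relabel the two paragraphs accordingly.
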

\begin{proof}
Let $ u\in\mathcal{B}_r $ and $ v $ be such that $ vu = 1 $. By the first 
equality of Proposition \ref{inv} $ X_{A,u}\subset\mathcal{B}_r $. If 
$ u\in G(\mathcal{B}) $ let $ v $ be its inverse. The first and the second of 
\ref{inv} give $ X^{-A,v}\cdot X_{A,u} = X_{A,u}\cdot X^{-A,v} = 1 $.
\end{proof}
We will abbreviate the notation for the rest of this section: for curves
with starting point $ 1 $ we write $ X_A $ instead of $ X_{A,1} $.
\begin{proposition}
\label{op}
$ \al{B}_r $ and $ \al{B}_l $ are open subsets of $ \al{B} $.
\end{proposition}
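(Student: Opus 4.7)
The plan is to use a standard Neumann-series perturbation argument. Fix $ u_0\in\mathcal{B}_l $; by definition there is some $ v_0\in\mathcal{B} $ with $ u_0 v_0 = 1 $, so in particular $ v_0\in\mathcal{B}_r $. For any $ u\in\mathcal{B} $ I would write
\[
u v_0 = u_0 v_0 + (u - u_0) v_0 = 1 + (u - u_0) v_0,
\]
so that whenever $ \no{u - u_0} < 1/\no{v_0} $ the right-hand side has norm distance from $ 1 $ strictly less than $ 1 $, hence is invertible in $ G(\mathcal{B}) $ via the Neumann series $ \sum_{k\geq 0}(-1)\sp k ((u - u_0)v_0)\sp k $. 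Setting $ w = (uv_0)\sp{-1}\in G(\mathcal{B}) $, the identity $ u\cdot(v_0 w) = 1 $ exhibits $ v_0 w $ as a right inverse for $ u $, whence $ u\in\mathcal{B}_l $. This shows the open ball $ B(u_0,1/\no{v_0}) $ is contained in $ \mathcal{B}_l $.

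The argument for $ \mathcal{B}_r $ is completely symmetric: given $ u_0\in\mathcal{B}_r $ with $ v_0 u_0 = 1 $, I would write $ v_0 u = 1 + v_0 (u - u_0) $ and invoke the same Neumann-series reasoning on the right to find, for $ u $ close to $ u_0 $, an element $ w'\in G(\mathcal{B}) $ with $ (w' v_0) u = 1 $, proving $ u\in\mathcal{B}_r $.

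There is no real obstacle here; the only point worth noting is that the perturbation radius depends on the chosen one-sided inverse $ v_0 $, but this is harmless since we only need existence of \emph{some} neighborhood of $ u_0 $ inside the respective set. The proof is a direct perturbative application of the invertibility criterion $ \no{1 - x}<1\Rightarrow x\in G(\mathcal{B}) $ used earlier in the paper.
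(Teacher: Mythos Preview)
Your proof is correct and essentially identical to the paper's: both use the Neumann-series perturbation $1 + (\text{small})$ to show that a one-sided inverse persists in a ball of radius $1/\no{v_0}$. The paper's only addition is to record the resulting one-sided inverse as an explicit smooth map $u' \mapsto [1 + v_0(u'-u_0)]^{-1}v_0$, which it uses in the subsequent proposition.
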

\begin{proof}
We will prove that $ \al{B}_r $ is open. Let $ u\in \al{B}_r $ and $ v $ 
be such that $ v\cdot u = 1 $. Let $ r_0 = 1/\no{v} $
and $ h\in\al{B} $. Then
\[
v (u + h) = v u + v h = 1 + vh.
\]
If $ h\in B(u,r_0) $, by the Von Neumann series, $ 1 + vh $ is 
invertible. Then $ (1 + vh)^{-1} v $ is a left inverse of $ u + h $. 
Actually, in a neighbourhood of $ u $, we have defined a smooth function
\begin{gather}
\label{local_left_inverse}
B(u,r_0)\rightarrow\mathcal{B}_l, \ u'\mapsto L_{u,r_0} (u') = 
[1 + v(u' - u)]^{-1} v\in \al{B}_l.
\end{gather}
such that $ L_{u,r_0} (u^{\prime}) \cdot u = 1 $. The same conclusions hold 
for $ \al{B}_l $.
\end{proof}
\begin{proposition}
\label{incon}
Let $ X\in C^1 (\R, \al{B}_r) $. There exists $ A\in C(\R,\al{B}) $ such that
$ X_{A,X(0)} = X $.
\end{proposition}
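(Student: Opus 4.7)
The plan is to reduce the statement to constructing a continuous $A$ with $A(t)X(t)=X'(t)$ for every $t\in\R$. Once we have such an $A$, the path $X$ solves the Cauchy problem $U'=AU,\ U(0)=X(0)$, and so does $X_{A,X(0)}$; uniqueness of solutions (Theorem Cau, used to justify (\ref{cauchyl})) yields $X_{A,X(0)}=X$.

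First I would construct the desired $A$ locally, reusing the explicit formula of Proposition \ref{op}. Fix $t_0\in\R$; since $X(t_0)\in\mathcal{B}_r$, pick $v_0$ with $v_0 X(t_0)=1$ and set $r_0=1/\no{v_0}$. On the open interval
\[
I_{t_0}:=\set{t\in\R}{\no{X(t)-X(t_0)}<r_0}
\]
the map $L_{t_0}(t):=[1+v_0(X(t)-X(t_0))]^{-1}v_0$ is continuous, and a direct verification (identical to the one in the proof of Proposition \ref{op}) gives $L_{t_0}(t)X(t)=1$. Hence
\[
A_{t_0}(t):=X'(t)L_{t_0}(t)\in C(I_{t_0},\mathcal{B}),\qquad A_{t_0}(t)X(t)=X'(t)\ \text{on }I_{t_0}.
\]

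The main obstacle is globalization. Two local choices $L_{t_0}(t)$ and $L_{t_1}(t)$ need not agree on $I_{t_0}\cap I_{t_1}$, and left inverses are not stable under convex combinations, so a naive averaging of the $L_{t_\alpha}$ fails. The saving observation is that, for each fixed $t$, the set
\[
\mathcal{A}_t:=\set{A\in\mathcal{B}}{AX(t)=X'(t)}
\]
is an affine subspace of $\mathcal{B}$: any convex (in fact affine) combination of elements of $\mathcal{A}_t$ again belongs to $\mathcal{A}_t$.

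Exploiting this, I would extract from $\{I_{t_0}\}_{t_0\in\R}$ a countable locally finite open cover $\{I_\alpha\}$ of $\R$ together with a smooth partition of unity $\{\varphi_\alpha\}$ subordinate to it, and define
\[
A(t):=\sum_\alpha \varphi_\alpha(t)\, A_{\alpha}(t),
\]
the sum being locally finite. Then $A\in C(\R,\mathcal{B})$, and for every $t\in\R$,
\[
A(t)X(t)=\sum_\alpha \varphi_\alpha(t)A_\alpha(t)X(t)=\sum_\alpha \varphi_\alpha(t)X'(t)=X'(t),
\]
so the reduction at the beginning of the proof finishes the argument. The existence of such a countable, locally finite cover and a subordinate partition of unity is standard on the paracompact space $\R$, so no further technicality is required.
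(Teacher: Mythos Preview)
Your proof is correct and matches the paper's approach almost verbatim: build local left inverses via the formula of Proposition~\ref{op}, glue them with a partition of unity on the paracompact line $\R$, set $A=X'Y$, and conclude by uniqueness. One small remark: your worry that ``left inverses are not stable under convex combinations'' is misplaced here, since at each fixed $t$ all the $L_{\alpha}(t)$ are left inverses of the \emph{same} element $X(t)$, so their convex combination is again a left inverse --- the paper in fact averages the $L_\alpha$ directly and then multiplies by $X'$, which is exactly your formula with $X'$ factored out.
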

\begin{proof}
As first step we prove that there exists a path $ Y $ with values in 
$ \mathcal{B}_l $ such that $ YX \equiv 1 $. Let $ t_0\in\R $. Since 
$ X(t_0)\in \al{B}_r $ there exists $ Y(t_0) $ such that 
$ Y(t_0) X(t_0) = 1 $ and the (\ref{local_left_inverse}) provides us with a 
differentiable map defined in a neighbourhood $ B(t_0,\var(t_0)) $, namely 
$ L_{X(t_0),\var(t_0)} $. By paracompactness of $\R $ we can extract a 
locally finite sub-covering of $ \set{B(t,\var(t))}{t\in\R} $, say 
$ \al{U} = \set{U_i}{i\in I} $. Let $ \sigma\colon I\rightarrow\R $ be a 
choice function and $ \set{\vfi_i}{\supp\vfi_i \subseteq U_i} $ a partition 
of unity subordinate to $ \al{U} $. Then set
\begin{gather*}
  Y = \sum_i \vfi_i Y_{\sigma(i)}.
\end{gather*}
Actually $ Y $ is infinitely differentiable. Its image lies in $ \al{B}_l $, 
in fact
\begin{gather*}
  Y(t)X (t) = \sum_i \vfi_i Y_{t_i} (t) X(t) = \sum_i \vfi_i (t) 1 = 1.
\end{gather*}
Now, in the chain of equalities $ X'= X'\cdot 1 = X'\cdot YX = (X'Y) X $
set $ A = X'Y $ and obtain $ X' = AX $. By uniqueness, $ X = X_{A,X(0)} $
 q.e.d.
\end{proof} 
This proposition gives us a characterization of the solutions of 
$ X' = AX $ when the starting point lies in $ \al{B}_r $ (resp. $ \al{B}_l $). 
They are just $ C^1 $ curves on $ \al{B}_r $ (resp. $ \al{B}_l $). 
\begin{proposition}
\label{top}
$  G(\mathcal{B}) $ is union of connected components of $ \al{B}_r $.
\end{proposition}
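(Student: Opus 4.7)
The plan is to establish that $G(\mathcal{B})$ is both open and closed as a subset of $\mathcal{B}_r$. Since $\mathcal{B}_r$ is open in the Banach space $\mathcal{B}$ by Proposition \ref{op}, it is locally path-connected, so its connected components are open; any clopen subset of $\mathcal{B}_r$ is therefore a union of connected components.

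Openness of $G(\mathcal{B})$ in $\mathcal{B}_r$ is inherited from the classical fact that $G(\mathcal{B})$ is open in $\mathcal{B}$, which follows from the same Von Neumann series argument already used in Proposition \ref{op}. The substantive step is closedness in $\mathcal{B}_r$: suppose $\{u_n\}\subset G(\mathcal{B})$ converges to some $u\in\mathcal{B}_r$, and I show $u\in G(\mathcal{B})$. By definition of $\mathcal{B}_r$, pick $v\in\mathcal{B}$ with $vu = 1$. Then $vu_n = 1 + v(u_n - u)$, and since $\|v(u_n - u)\|\to 0$, the element $vu_n$ is invertible for $n$ large, with $(vu_n)^{-1}\to 1$ by continuity of inversion near $1$. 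Because each $u_n$ is two-sided invertible, the identity $u_n^{-1} = (vu_n)^{-1}v$ holds, so $u_n^{-1}\to v$. Passing to the limit in $u_n u_n^{-1} = 1$ (joint continuity of multiplication) yields $uv = 1$, and combined with $vu = 1$ this gives $u\in G(\mathcal{B})$.

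An alternative route, more in the spirit of the dynamical machinery built up in this section, goes as follows: given a continuous path $\alpha\colon[0,1]\to\mathcal{B}_r$ with $\alpha(0)\in G(\mathcal{B})$, smooth $\alpha$ by mollification (staying inside $\mathcal{B}_r$ because the latter is open) and extend it to a $C^1$ path on $\mathbb{R}$ via a bump-function construction that makes it constant outside a compact interval. Proposition \ref{incon} then produces $A\in C(\mathbb{R},\mathcal{B})$ with $X_{A,\alpha(0)} = \tilde\alpha$, and Proposition \ref{lr} forces $\tilde\alpha(t)\in G(\mathcal{B})$ for every $t$, whence $\alpha(1)\in G(\mathcal{B})$.

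There is no serious obstacle. The only nuance in the first approach is remembering that the left inverse of $u$ supplied by membership in $\mathcal{B}_r$ is enough to bootstrap to two-sided invertibility via $u_n^{-1} = (vu_n)^{-1}v$; in the alternative approach the only technicality is the smoothing of a continuous path inside an open set, which is routine.
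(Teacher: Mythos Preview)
Your proposal is correct. Your second route is essentially the paper's own proof: take a connected component $\mathcal{B}_r'$ of $\mathcal{B}_r$ that meets $G(\mathcal{B})$, pick a $C^1$ path $\Gamma$ in $\mathcal{B}_r'$ from an invertible element to an arbitrary point, invoke Proposition~\ref{incon} to write $\Gamma = X_{A,\Gamma(0)}$, and then use Proposition~\ref{lr} (the paper cites~\ref{inv}) to conclude $\Gamma(t)\in G(\mathcal{B})$ for all $t$. The paper simply asserts that a $C^1$ path exists because $\mathcal{B}_r'$ is open; you are a bit more explicit about the smoothing, but the logic is identical.

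Your first route, by contrast, is genuinely different and more elementary: you bypass the ODE machinery entirely and argue directly that $G(\mathcal{B})$ is clopen in $\mathcal{B}_r$ via the identity $u_n^{-1} = (vu_n)^{-1}v$. This is a clean, self-contained argument that does not depend on Propositions~\ref{inv}, \ref{lr}, or~\ref{incon}. What the paper's approach buys is a conceptual point consistent with the surrounding section---invertibility propagates along solutions of $X' = AX$---whereas what your clopen argument buys is independence from that machinery and a proof that would work verbatim in any topological algebra where Neumann series converge.
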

\begin{proof}
Let $ \al{B}_r ' $ be a connected component of $ \al{B}_r $ such that 
$  G(\mathcal{B}) \cap \al{B}^{\prime} _r \neq\emptyset $. 
Let $ x\in \al{B}_r ' $. Since $ \al{B}_r ' $ is an
open set we may  choose a path $ \Gamma\in C^1 ([0,1],\al{B}_r ') $ such that 
$ \Gamma (0) = g\in G(\mathcal{B}) $ and $ \Gamma (1) = x $. Then, 
by Proposition \ref{incon}, $ \Gamma = X_{A,\Gamma(0)} $
for some $ A\in C([0,1],\al{B}) $. Since $ \Gamma (0) $ is an
invertible element of $ \al{B} $ Proposition \ref{inv} states that
$ \Gamma (t)\in G(\mathcal{B}) $ for any $ t\in [0,1] $. In particular 
$ \Gamma(1)\in G(\mathcal{B}) $ thus 
$ \mathcal{B}' _r \subset G(\mathcal{B}) $.
\end{proof}
The proofs of the following equalities are consequence of the uniqueness 
of the solutions of Cauchy problems. Given a path 
$ A\in C(\mathbb{R},\mathcal{B}) $, $ \tau\in\mathbb{R} $ we denote by 
$ A_{\tau} $ the path $ A(\cdot+\tau) = A(t + \tau) $.
\glsadd{labAtau}
\begin{proposition}
\label{ca:ba}
  Let $ A $ and $ B $ be two continuous paths. Then
\begin{align*}
&X_{A + B} = X_A\cdot X_{{X_A}^{-1} B X_A} \\
&X_{A(\cdot +s)} (t) X_A (s) = X_A (t + s) \\
\end{align*}
for any $ t,s\in\R $.
\end{proposition}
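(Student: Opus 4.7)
The plan is to prove both identities by the standard uniqueness argument: for each claimed equation, I would exhibit both sides as solutions of a common linear Cauchy problem in $\mathcal{B}$, so that uniqueness (from Theorem \ref{Cau}/Proposition \ref{whole} referenced above) forces them to coincide.

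For the first identity, the candidate solution is $Y(t) = X_A(t)\cdot X_C(t)$, where $C(t) = X_A(t)^{-1} B(t) X_A(t)$. This makes sense because Proposition \ref{inv} tells us that $X_A = X_{A,1}$ is invertible for every $t$ with inverse $X^{-A,1}$, so $C$ is a well-defined continuous path in $\mathcal{B}$ and $X_C$ exists. Then $Y(0) = 1\cdot 1 = 1$, and by the product rule together with the defining equations $X_A' = AX_A$ and $X_C' = CX_C$,
\begin{equation*}
Y'(t) = AX_A X_C + X_A\bigl(X_A^{-1} B X_A\bigr)X_C = AY + BY = (A+B)Y.
\end{equation*}
Thus $Y$ solves $Y' = (A+B)Y$ with $Y(0)=1$, and by uniqueness $Y = X_{A+B}$, giving the first formula.

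For the second identity, set $Z(t) = X_{A_s}(t)\, X_A(s)$ and $W(t) = X_A(t+s)$. Both are continuously differentiable paths in $\mathcal{B}$ and both satisfy $Z(0) = W(0) = X_A(s)$. A direct computation gives
\begin{equation*}
Z'(t) = A(t+s)\, X_{A_s}(t)\, X_A(s) = A(t+s)\, Z(t),
\end{equation*}
while $W'(t) = X_A'(t+s) = A(t+s) X_A(t+s) = A(t+s) W(t)$. Hence $Z$ and $W$ solve the same Cauchy problem $U' = A(\cdot+s) U$ with starting value $X_A(s)$, and uniqueness yields $Z\equiv W$.

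I do not expect a real obstacle in this proof: it is a routine verification via the product rule once one has the right ansatz. The only mild subtlety is the first identity, where one must notice that the conjugated path $X_A^{-1} B X_A$ is exactly what is needed to convert the extra term $X_A\cdot X_C'$ into $BY$; this requires the invertibility of $X_A$ provided by Proposition \ref{inv}, and in particular the continuity of $t\mapsto X_A(t)^{-1}$.
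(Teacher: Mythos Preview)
Your proposal is correct and follows essentially the same approach as the paper: both identities are established by differentiating the right-hand side, verifying it satisfies the same linear Cauchy problem as the left-hand side, and invoking uniqueness. Your write-up is in fact slightly more careful than the paper's, explicitly noting that Proposition~\ref{inv} guarantees the invertibility (and hence continuity of the inverse) of $X_A$ needed to make the conjugated path $X_A^{-1} B X_A$ well-defined.
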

\begin{proof}
Let $ X = X_A X_{{X_A}^{-1} B X_A} $. Differentiating
\begin{equation*} 
  \begin{split} 
   X' &= X_A ' \cdot X_{{X_A}^{-1} B X_A} +
   X_A \cdot X_{{X_A}^{-1} B X_A} ' \\
   &= (A + B) X_A\cdot X_{{X_A}^{-1} B X_A} = (A + B) X
  \end{split} 
\end{equation*}
hence $ X = X_{A + B} $. To prove the second equality let 
$ Y = X_{A(\cdot +s)} (t) X_A (s) $. Differentiating we find that 
$ Y'(t) = A(t + s) Y(t), \ Y(0) = X_A (s) $. Since the same holds for 
$ Z(t) = X_A (t + s) $ the second equality is proved. 
\end{proof}
\vskip .2em
\begin{proposition}
\label{b-a}
 Let $ A,B\in C(\R,\al{B}) $
\begin{gather}
\label{int}
X_B (t) = X_A (t) + \int_0 ^t X_A (t) X_A (\tau) ^{-1} (B - A) X_B (\tau) 
d\tau
\end{gather}
\end{proposition}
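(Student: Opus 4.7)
The plan is to reduce this to a variation-of-constants argument by factoring $X_B$ through $X_A$. Since $X_A$ takes values in $G(\mathcal{B})$ by Proposition \ref{inv} (applied with $u=1$), the curve $Y(t) = X_A(t)^{-1} X_B(t)$ is well defined and of class $C^1$, with $Y(0) = 1$. The idea is that $Y$ measures the discrepancy between the two fundamental solutions, and proving the stated formula amounts to computing $Y$ explicitly.

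First I would differentiate the identity $X_A(t) X_A(t)^{-1} = 1$ to obtain $(X_A^{-1})'(t) = -X_A(t)^{-1} A(t)$; alternatively, one can read this directly off Proposition \ref{inv}, which identifies $X_A^{-1}$ with $X^{-A,1}$ and hence satisfies $(X^{-A,1})' = -X^{-A,1} A$. Using this together with $X_B' = B X_B$ and the product rule, I would compute
\[
Y'(t) = (X_A^{-1})'(t) X_B(t) + X_A(t)^{-1} X_B'(t) = X_A(t)^{-1} \bigl(B(t) - A(t)\bigr) X_B(t).
\]
Integrating from $0$ to $t$ and using $Y(0) = 1$ then gives
\[
Y(t) = 1 + \int_0^t X_A(\tau)^{-1} (B-A)(\tau)\, X_B(\tau)\, d\tau.
\]

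Finally I would multiply on the left by $X_A(t)$. Since $X_A(t)$ is constant with respect to the integration variable $\tau$, it passes inside the integral, yielding
\[
X_B(t) = X_A(t) Y(t) = X_A(t) + \int_0^t X_A(t) X_A(\tau)^{-1} (B-A)(\tau)\, X_B(\tau)\, d\tau,
\]
which is the desired identity. There is no real obstacle here: the only subtle point is the pointwise invertibility of $X_A(t)$, which is already guaranteed by Proposition \ref{inv}, and the justification of differentiating $X_A^{-1}$ which is standard in any Banach algebra with unit (and formally matches the relation $X_{A,1}^{-1} = X^{-A,1}$ already recorded above).
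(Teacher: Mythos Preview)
Your argument is correct. You take the variation-of-constants route: set $Y = X_A^{-1} X_B$, differentiate using $(X_A^{-1})' = -X_A^{-1}A$ (which is exactly the relation $X_A^{-1} = X^{-A,1}$ from Proposition \ref{inv}), integrate, and then multiply back by $X_A(t)$. All steps are justified in the Banach-algebra setting, and the invertibility of $X_A(t)$ is indeed provided by Proposition \ref{inv}.

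The paper proceeds slightly differently: rather than deriving the formula, it verifies it by checking that both sides solve the same nonhomogeneous linear Cauchy problem $u' = Au + (B-A)X_B$, $u(0)=1$, and then invokes uniqueness. Your approach is a derivation, the paper's is a verification; they are two faces of the same coin. Your route has the minor advantage of explaining where the formula comes from, while the paper's avoids differentiating $X_A^{-1}$ and works entirely with first-order equations already on the table. Neither approach requires anything the other does not.
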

\begin{proof}
Call $ X $ and $ Y $ respectively the left and right members of (\ref{int}). 
We have $ X(0) = Y(0) = 1 $ at $ t = 0 $. We prove that both solve the Cauchy
problem $ u' = Au + (B - A) X_B $ with starting point $ 1 $. In fact
\begin{equation*}
\begin{split}
X' &= B X_B = A X_B + (B - A) X_B = AX + (B - A) X_B \\
Y' &= A X_A  + A \int_0 ^t X_A (t) X_A (\tau) ^{-1} (B - A) X_B (\tau) +
(B - A) X_B \\ 
&= A Y + (B - A) X_B.
\end{split}
\end{equation*}
\end{proof}
When $ \mathcal{B} $ is the algebra of bounded operators on a Banach space
$ E $, given a path $ A $ in $ \mathcal{L}(E) $ we can always consider the
adjoint $ A^*\in C(\mathbb{R},\mathcal{L}(E^*)) $. The relation
\begin{gather}
\label{dual}
{(X_A^{-1})} ^*  = X_{-A^*}
\end{gather}
holds. In fact the derivative of the left member is 
\begin{gather*}
(-(X_A)^{-1} A X_A X_A ^{-1})^* = - A^* (X_A ^{-1}) ^* = X_{-A^*} '.
\end{gather*}
\section{Exponential estimate of $ X_A $}
In this section we denote by $ C_b (\mathbb{R},\mathcal{B}) $ the space of
bounded functions in $ \mathcal{B} $. This space is endowed with the norm
$ \no{A}_{\infty} = \sup_{t\in\mathbb{R}} \no{A(t)} $ that makes it a Banach
space.
\begin{proposition}
\label{blambda}
If $ A $ is bounded $ X_A (t) $ satisfies the exponential 
estimate
\begin{gather}
\label{blambda:1}
\| X_A (t) X_A (s) ^{-1}\|\leq c e^{\lambda |t - s|}
\end{gather}
for some $ c > 0 $, $ \lambda\in\R $ and any $ t,s\in\R $.
\end{proposition}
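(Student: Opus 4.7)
The plan is to reduce the problem to bounding a single evolution $X_B(\tau)$ uniformly for $B$ in a bounded set of $C_b(\mathbb{R},\mathcal{B})$, and then to apply Gronwall's inequality to the integral form of the Cauchy problem.

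First, I would invoke the translation identity from Proposition \ref{ca:ba}: since $X_{A(\cdot+s)}(t-s)\cdot X_A(s)=X_A(t)$, and since $X_A(s)$ is invertible by Proposition \ref{inv} (with inverse $X^{-A,1}(s)$), we obtain
\[
X_A(t)X_A(s)^{-1}=X_{A(\cdot+s)}(t-s).
\]
The translated path $A(\cdot+s)$ has the same sup-norm as $A$, so it suffices to prove a bound of the form $\|X_B(\tau)\|\leq c\,e^{\lambda|\tau|}$ depending only on $\|B\|_\infty$ and valid for every continuous bounded path $B$.

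Next, I would rewrite the Cauchy problem $X_B'=BX_B$, $X_B(0)=1$ in integral form as $X_B(\tau)=1+\int_0^\tau B(\sigma)X_B(\sigma)\,d\sigma$. Taking norms, for $\tau\geq 0$,
\[
\|X_B(\tau)\|\leq 1+\|B\|_\infty\int_0^\tau \|X_B(\sigma)\|\,d\sigma,
\]
and Gronwall's lemma immediately yields $\|X_B(\tau)\|\leq e^{\|B\|_\infty\,\tau}$. For $\tau<0$, the same argument applied after the change of variable $\sigma\mapsto -\sigma$ (equivalently, replacing $B$ by $-B(-\cdot)$, whose flow is $X_B(-\tau)^{-1}$) produces the symmetric estimate $\|X_B(\tau)\|\leq e^{\|B\|_\infty|\tau|}$.

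Combining the two steps, with $\tau=t-s$ and $B=A(\cdot+s)$, gives
\[
\|X_A(t)X_A(s)^{-1}\|=\|X_{A(\cdot+s)}(t-s)\|\leq e^{\|A\|_\infty|t-s|},
\]
so that $c=1$ and $\lambda=\|A\|_\infty$ work. There is no real obstacle here: the only thing to handle carefully is the case $t<s$, which is covered either by the symmetric Gronwall argument above or, alternatively, by noting that $(X_A(t)X_A(s)^{-1})^{-1}=X_A(s)X_A(t)^{-1}$ and using the previously proved inequality with the roles of $t$ and $s$ exchanged.
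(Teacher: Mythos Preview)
Your argument is correct and follows the same route as the paper's: reduce via the translation identity $X_A(t)X_A(s)^{-1}=X_{A(\cdot+s)}(t-s)$, then apply Gronwall to the integral form of the Cauchy problem; the paper invokes its Lemma~\ref{gron} and extracts a constant $c>1$, while your use of the standard Gronwall inequality yields the cleaner $c=1$, $\lambda=\|A\|_\infty$. One small correction: the alternative you suggest at the end does not work as stated, since a bound on $\|X_A(s)X_A(t)^{-1}\|$ gives no control on $\|X_A(t)X_A(s)^{-1}\|=\|(X_A(s)X_A(t)^{-1})^{-1}\|$ in a general Banach algebra; your primary change-of-variable argument for $\tau<0$ is the right one (and incidentally the flow of $-B(-\cdot)$ is $X_B(-\tau)$, not its inverse).
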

\begin{proof}
Let $ r = t - s $. By the Proposition \ref{ca:ba} it is enough to prove that
\[ 
\no{X_{A(\cdot+s)} (r)}\leq c e^{\lambda |r|}
\]
for every $ r\in\mathbb{R} $. To achieve this inequality we apply the 
Gronwall's lemma to the function $ \alpha(r) = \no{X_{A(\cdot+s)}(r)} $.
In fact since
\[
\alpha(r) \leq 1 + \int_0 ^r \no{A_{(\cdot+s)} (\tau)} \alpha(\tau) d\tau
\]
by the Gronwall's lemma (see Lemma \ref{gron})
\begin{gather*}
\alpha(r) \leq 1 + \int_0 ^r e^{\no{A}_{\infty} (r - \tau)} d\tau.
\end{gather*}
Easy computations show that $ c = 2\max\{1, 1 - 1/\no{A}_{\infty}\} $ and
$ \lambda = \no{A}_{\infty} $ fit our request. Repeating the same argument
for $ t < 0 $ we complete the proof.
\end{proof}
\begin{proposition}
\label{op:1}
Let $ A, H\in C_b (\R,\al{B}) $.
If $ \no{X_A (t) X_A (s)^{-1}}\leq c e^{\lambda (t - s)} $ for any 
$ t\geq s\geq 0 $ we have 
$ \no{X_{A + H} (t) X_{A + H} (s) ^{-1}}\leq c e^{\mu (t - s)} $ where 
$ \mu = \lambda + c \no{H}_{\infty} $.
\end{proposition}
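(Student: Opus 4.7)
The plan is to reduce the bound on $X_{A+H}(t)X_{A+H}(s)^{-1}$ to a scalar Gronwall-type inequality by writing a variation-of-constants integral representation in which the propagator for $A$ appears explicitly, and then using the assumed exponential bound.

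First I would pass to the two-parameter evolution $\Phi_{A+H}(t,s)=X_{A+H}(t)X_{A+H}(s)^{-1}$, which is the unique solution of $Y'(t)=(A(t)+H(t))Y(t)$ with $Y(s)=1$. Up to the translation identity of Proposition \ref{ca:ba} one may assume $s=0$, or alternatively one can repeat the derivation of Proposition \ref{b-a} with starting point $s$ in place of $0$; either way one obtains
\[
\Phi_{A+H}(t,s)=\Phi_A(t,s)+\int_s^t \Phi_A(t,\tau)\,H(\tau)\,\Phi_{A+H}(\tau,s)\,d\tau
\]
for $t\geq s\geq 0$, where $\Phi_A(t,s)=X_A(t)X_A(s)^{-1}$.

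Taking norms and using the hypothesis $\|\Phi_A(t,\tau)\|\leq c\,e^{\lambda(t-\tau)}$ (valid for $t\geq\tau\geq 0$) gives
\[
\|\Phi_{A+H}(t,s)\|\leq c\,e^{\lambda(t-s)}+\int_s^t c\,e^{\lambda(t-\tau)}\,\|H\|_\infty\,\|\Phi_{A+H}(\tau,s)\|\,d\tau.
\]
Multiplying both sides by $e^{-\lambda(t-s)}$ and setting $\phi(t)=e^{-\lambda(t-s)}\|\Phi_{A+H}(t,s)\|$ turns this into
\[
\phi(t)\leq c+\int_s^t c\,\|H\|_\infty\,\phi(\tau)\,d\tau,
\]
so Gronwall's lemma (Lemma \ref{gron}, already invoked in the preceding Proposition \ref{blambda}) yields $\phi(t)\leq c\exp\!\bigl(c\|H\|_\infty(t-s)\bigr)$, i.e.\ $\|\Phi_{A+H}(t,s)\|\leq c\,e^{\mu(t-s)}$ with $\mu=\lambda+c\|H\|_\infty$.

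The only non-routine point is making sure the variation-of-constants formula of Proposition \ref{b-a} is applied in the correct form for the pair $(A,A+H)$ rather than $(A,B)$ with arbitrary $B$, and that the lower limit of integration can be taken to be $s$ rather than $0$ (so that the factor $X_A(\tau)^{-1}$ only appears paired with $X_A(t)$ as the admissible propagator $\Phi_A(t,\tau)$ with $\tau\geq s\geq 0$ in the range where the hypothesis applies). Once this bookkeeping is in place, the rest is a direct Gronwall argument, exactly parallel to the one used in Proposition \ref{blambda}.
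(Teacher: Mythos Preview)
Your argument is correct: the variation-of-constants formula combined with the substitution $\phi(t)=e^{-\lambda(t-s)}\|\Phi_{A+H}(t,s)\|$ and Gronwall yields exactly the stated bound, and the bookkeeping about the lower limit $s$ is handled by the shift identity of Proposition~\ref{ca:ba} just as you say.

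The paper takes a different route. It first shifts the problem by replacing $A$ with $B=A+\mu$, so that the hypothesis becomes $\|X_B(t)X_B(s)^{-1}\|\leq c\,e^{(\lambda-\mu)(t-s)}$ and the goal is simply $\|X_{B+H}(t)X_{B+H}(s)^{-1}\|\leq c$. It then rewrites this via the integral equation as a fixed-point problem for the affine map $fY=X_{B_s}+L Y$ on $C_b([0,t],\mathcal{B})$, checks that $L$ has norm less than $1$ (so $f$ is a contraction), and verifies that the closed ball $\overline B(0,c)$ is $f$-invariant; since the unique fixed point $X_{B_s+H_s}$ must lie in every closed invariant set, the bound follows. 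Your Gronwall argument is shorter and more transparent for this statement; the paper's contraction-on-a-ball technique, on the other hand, is the same mechanism that reappears in the proofs of Propositions~\ref{tow} and~\ref{from}, so it doubles as a warm-up for those later arguments.
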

\begin{proof}
Applying the first equality of Proposition \ref{ca:ba} to $ A $ and $ \mu $ it 
easy to check that $ X_A $ satisfies the exponential estimate for any 
$ t\geq s\geq 0 $ with constants $ (c,\lambda) $ 
if and only if $ A + \mu $ does the same with $ (c,\lambda - \mu ) $. 
In fact
\begin{equation*}
\begin{split}
X_{A + \mu I} (t) = X_A \cdot X_{X_A \cdot \mu I \cdot X_A ^{-1}} (t) 
= X_A \cdot X_{\mu I} (t) = e^{\mu t} X_A (t).
\end{split}
\end{equation*}
Set $ B = A + \mu I $. Hence we just have to prove that if 
$ (c,\lambda - \mu ) $ works with $ X_B $ then $ (c,0) $ works with 
$ X_{B + H} $. Now fix $ s\geq 0 $. By the second of Proposition \ref{ca:ba}
$ X_B (t) X_B (s) ^{-1} = X_B (s + t - s) X_B (s) ^{-1} = 
X_{B(\cdot + s)} (t - s) $ and the statement reduces to prove that
\begin{equation}\label{redu}
\begin{split}
X_{B_s} (r)\leq c e^{(\lambda - \mu) r} \rba
X_{B_s + H_s} (r)\leq c, \ r > 0,
\end{split}
\end{equation}
where $ B_s = B_{(\cdot + s)} $, $ H = H_{(\cdot +s)} $. To prove 
(\ref{redu}) fix $ t\in\R^{+} $ and consider the following map of 
$ C_b ([0,t],\mathcal{B}) $ into itself
\begin{equation*}
\begin{split}
X\mapsto (fX)(r) = X_{B_s} (r) \left[ 1 + \int_0 ^{r} 
X_{B_s} (\tau) ^{-1} H_s (\tau) Y(\tau) d\tau \right].
\end{split}
\end{equation*}
By (\ref{int}) $ X_{B_s + H_s} $ is a fixed 
point of $ f $. We will prove that $ f $ is a contraction and 
that $ \orl{B} (0,c) $ is invariant for $ f $. Since every nonempty
closed invariant subset for a contraction contains its fixed point this
will conclude the proof. It is enough to prove that the linear application
$ L = f - X_{B_s} $ is bounded and $ \no{L} < 1 $. This will suffice to prove
that $ L $ is a contraction, hence the affine map $ L + X_B $ is also a 
contraction. Let $ X $ in $ C([0,t],\al{B}) $
\begin{equation*}
\begin{split}
\no{LX}_{\infty}\leq\frac{c\no{H}_{\infty}}{ \mu - \lambda }
\Big( 1 - e^{ - (\mu - \lambda) t } \Big) \no{X}_{\infty},
\end{split}
\end{equation*}
hence $ f $ is a contraction. To prove that $ \orl{B} (0,c) $ is invariant
for $ f $ let $ X\in \orl{B} (0,c) $ thus
\begin{equation*}
\begin{split}
\no{(fX)(t)} = &\left\| X_{B_s} (t) 
\left[ 1 + \int_0 ^t X_{B_s} (\tau) ^{-1} H_s (\tau) Y(\tau) d\tau \right ]
\right\| \\
&\leq c e^{ (\lambda - \mu) t } + c^2 \no{H}_{\infty} \int_0 ^t 
\| X_B (t) X_B (\tau) ^{-1} \| d\tau \\
&\leq c e^{(\lambda - \mu) t} 
\left ( 1 - \frac{c \no{H}_{\infty}}{ \mu - \lambda } \right) + 
\frac{c^2 \no{H}_{\infty}}{ \mu - \lambda } = c.
\end{split}
\end{equation*}
Then $ \no{fX}_{\infty} \leq c $ and the proof is complete.
\end{proof}
\section{Asymptotically hyperbolic paths}
For the remainder of this chapter we restrict our attention to the algebra
of bounded operators on a Banach space $ E $. Given a continuous path $ A $ 
in the space of bounded operators, defined on $ \mathbb{R}^+ $ we define
the \textsl{stable space} as
\begin{gather*}
W_A ^s = \left\{\hskip .1em x\in X \hskip .1em\Big|\hskip .1em
\lim_{t\ra +\infty} X_A (t) x = 0\hskip .1em\right\}.
\end{gather*}
Similarly, if $ A $ is a path defined on $ \mathbb{R}^- $ we define the
unstable space 
\begin{gather*}
W_A ^u = \left\{\hskip .1em x\in X\hskip .1em\Big|\hskip .1em 
\lim_{t\ra -\infty} X_A (t) x = 0\hskip .1em\right\}.
\end{gather*}
Using the equalities of Proposition \ref{ca:ba}, for every $ t\geq 0 $ and
$ s\leq 0 $ we have
\begin{gather}
\label{stab}
X_A (t) W_A ^s = W_{A(\cdot + t)} ^s, \ 
X_A (s) W_A ^u = W_{A(\cdot + s)} ^u .
\end{gather}
We denote by $ \mathbb{H}^{+} $ and $ \mathbb{H}^- $ the semi-planes of %
\glsadd{labH+}
\glsadd{labH-}
$ \mathbb{C} $ with positive and negative real part, respectively. Let 
$ A_0 $ be a hyperbolic operator, that is 
$ \sigma (A_0) \cap \img\mathbb{R} = \emptyset $. Thus we have a decomposition
of the spectrum
\begin{gather*}
\sigma(A_0) = \sigma^+ (A_0) \cup\sigma^- (A_0) 
\end{gather*}
where $ \sigma^{\pm} (A_0) = \sigma(A_0)\cap\mathbb{H}^{\pm} $. Let $ P^+ $, 
$ P^- $ be the spectral projectors of the decomposition, $ E^+ $ and $ E^- $ 
their range respectively. It is clear that the stable and unstable spaces of
the constant path $ A_0 $ are $ E^- $ and $ E^+ $. In the following
theorem we prove that if $ A = A_0 + H $ is a small perturbation of $ A_0 $
the stable and unstable spaces of $ A $ are closed and admit a topological
complement.
\begin{proposition}\rm{(cf. \cite{AM03b}, \textsc{Proposition} 1.2).} 
\label{tow}
Let $ A_0 $ be a hyperbolic operator, with
$ \sigma^{-} (A_0) $ and $ \sigma^+ (A_0) $ nonempty, and a pair
$ (c,\lambda) $, $ \lambda > 0 $ such that, for any $ t \geq 0 $
\begin{equation}
\label{hyp}
\begin{split}
\| e^{tA_0} |_{E^{-}} \|\leq c e^{-\lambda t},\ 
\| e^{-tA_0} |_{E^{+}} \|\leq c e^{-\lambda t}.
\end{split}
\end{equation}
Let $ M := \max\{\no{P^+},\no{P^-}\} $. 
There are positive constants $ h,\nu,b $ depending only on $ c $ and 
$ \lambda $ such that if 
\begin{gather*}
 \no{H}_{\infty}\leq\frac{\lambda}{Mc(1 + \sqrt{c})}
\end{gather*}
the following facts hold:
\begin{enumerate}
  \item for every $ t\geq 0 $, $ X_A (t) W_A ^s $ is the graph of a
bounded operator $ S(t)\in \mathcal{L}(E^- , E^+) $,
  \item $ \| S(t) \| \leq c^2 \displaystyle
\int_t ^\infty e^{-\nu (\tau - t) } \|H (\tau)\| d\tau $,
  \item the function $ S $ has much differentiability as $ X_A $,
  \item for every $ u_0\in W_A ^s $ and every $ t\geq s\geq 0 $ 
        there holds
    \begin{equation*}
      \begin{split}
        | X_A (t) u_0 |\leq be^{ -\nu (t - s) } | X_A (s) u_0 |.
      \end{split}
    \end{equation*}
\end{enumerate}
\end{proposition}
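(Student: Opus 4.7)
The plan is to reduce to $t=0$ via translation, then set up a variation-of-parameters fixed point in a weighted space of exponentially decaying curves. Using the shift identity $X_A (t) W_A ^s = W_{A(\cdot + t)} ^s$ established in (\ref{stab}), it suffices to prove (i), (ii), (iv) at $t = 0$ for an arbitrary translate $A(\cdot + t_0)$, since the hypotheses on $A_0$ and on $\no{H}_{\infty}$ are translation-invariant.

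First I would recast the condition $u_0 \in W_A ^s$ as an integral equation. Splitting $u = P^+ u + P^- u$ and applying variation of parameters to each component separately, a curve $u \in C^1 (\R^+,E)$ is a solution of $u' = (A_0 + H) u$ with $P^- u(0) = u_0 ^-$ and $u(t) \to 0$ if and only if
\[
u(t) = e^{t A_0} u_0 ^- + \int_0 ^t e^{(t - s) A_0} P^- H(s) u(s)\, ds - \int_t ^{\infty} e^{(t - s) A_0} P^+ H(s) u(s)\, ds,
\]
where the second integral converges thanks to the backward decay of $e^{(t-s) A_0} P^+$ furnished by (\ref{hyp}). I would then work in the Banach space $X_\nu$ of continuous $u\colon\R^+\to E$ with $\no{u}_\nu := \sup_{t\geq 0} e^{\nu t} | u(t) | < \infty$ for a suitable $\nu\in(0,\lambda)$, and show the map $u \mapsto$ RHS is a strict contraction on $X_\nu$ whose unique fixed point $u[u_0 ^-]$ depends linearly on $u_0 ^-$. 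The operator norm of the linear part is estimated via (\ref{hyp}) and the bounds $\no{P^+},\no{P^-}\leq M$ by integrating the kernel against $e^{-\nu s}$; the hypothesis $\no{H}_\infty \leq \lambda/(Mc(1 + \sqrt{c}))$ is calibrated so that this quantity is strictly smaller than $1$ for some $\nu$, and the optimal $\nu$ fixes the constants $h$ and $b$ of the statement.

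Next, from the fixed point $u[u_0 ^-]$ define $S(0) u_0 ^- := P^+ u[u_0 ^-](0) = -\int_0 ^\infty e^{-s A_0} P^+ H(s) u[u_0 ^-](s)\, ds$. Linearity and boundedness of $S(0)$ follow from the linear dependence of the fixed point on the data, while the integral representation combined with the $X_\nu$-norm of $u[u_0 ^-]$ yields the estimate (ii). To obtain the equality $W_A ^s = \graph S(0)$, the inclusion $\graph S(0)\subseteq W_A ^s$ is immediate from $u\in X_\nu$; for the converse, given $x\in W_A ^s$, the curve $X_A (\cdot) x$ is bounded and vanishes at infinity, so a bootstrap via the same integral equation forces it into $X_\nu$, hence it equals $u[P^- x]$ by uniqueness of the contraction fixed point, yielding $P^+ x = S(0) P^- x$. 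Then (iv) is exactly the $X_\nu$-estimate applied to the translated path, and (iii) follows from smooth parameter dependence of the fixed point.

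The main obstacle is the bookkeeping in the contraction step: making the contraction constant genuinely less than $1$ requires optimizing the rate $\nu$ against the splitting constants $(c,\lambda,M)$ and matching the resulting threshold with the prescribed form $\lambda/(Mc(1 + \sqrt{c}))$. A secondary subtlety is the bootstrap in the identification of $W_A ^s$ with the graph, since the definition of $W_A ^s$ only demands $X_A (t) x \to 0$, not exponential decay; one must feed the merely bounded solution back into the integral equation and use the forward/backward dichotomy of $e^{t A_0} P^{\pm}$ to upgrade it to membership in $X_\nu$.
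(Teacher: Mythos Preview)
Your approach is correct and is a genuine alternative to the paper's. The paper does \emph{not} work in a weighted space: it sets up the contraction $\varphi_{A,x_0}$ on the unweighted space $C_b(\mathbb R^+,E^-\oplus E^+)$, obtains the fixed point there, and then proves the exponential decay (iv) by a separate argument. That argument requires, at the step where one passes from the scalar inequality $|y(s)|\leq (c^2\|H_{\pm}\|/\mu_+)\,|x(s)|$ to an operator inequality, a continuous path $U_\varepsilon(t)\in\mathcal L(E^-,E^+)$ with $U_\varepsilon(t)x(t)=y(t)$ and controlled norm; in Hilbert space this is the one-line projection onto $x(t)$, but in a Banach space the paper invokes a Bartle--Graves type selection (Corollary~\ref{cont_sel_of_op}) and then applies Proposition~\ref{op:1} to $A_-+A_\mp U_\varepsilon$. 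Your weighted-norm contraction bypasses this machinery entirely: the decay rate $\nu$ is baked into the function space, so (iv) is literally the $X_\nu$-estimate on the fixed point. The price is in the constants: the paper's two-component bookkeeping yields the sharp inequality $\|x\|_{\infty,[s,\infty)}\leq c|x(s)|$, which is what produces the clean factor $c^2$ in (ii); your route gives $\|u\|_\nu\leq \tfrac{c}{1-\kappa}|u_0^-|$ with $\kappa$ the contraction constant, hence a bound of the same shape but with $c^2$ replaced by $c^2 M/(1-\kappa)$. Your bootstrap for the inclusion $W_A^s\subseteq\graph S(0)$ is fine once you observe that the same integral operator is also a contraction on $C_b$ (the contraction constant there is the $\nu\to 0$ limit of the $X_\nu$ one), so the bounded solution $X_A(\cdot)x$ coincides with the unique $X_\nu$ fixed point by uniqueness in $C_b$. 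Finally, the stated threshold $\lambda/(Mc(1+\sqrt c))$ is stricter than the $\lambda/(2Mc)$ your $X_\nu$ argument needs (since $c\geq 1$), so the hypothesis is comfortably sufficient for your contraction.
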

\begin{proof}
First we check what kind of differential equation satisfies 
$ u = X_A \cdot u_0 $, for any $ u_0 \in E^{-} \oplus E^{+} $, in terms of
the projectors $ P^{\pm} $. Let $ u = x + y $. 
Differentiating both sides we find that
\begin{gather}
\label{diff}
\left\{
\begin{array}{l}
x' = A_{-} x + A_{\mp} y  \\
y' = A_{\pm} x + A_{+} y
\end{array}
\right.
\end{gather}
where $ A_{\pm} = P^{+} A P^{-} $, $ A_{-} = P^{-} A P^{-} $ and so 
on. For every $ r\geq t\geq s $ the system above can be rewritten as 
\begin{equation}
\label{diff2}
  \begin{split}
    x(t) &= X_{A_{-}} (t) X_{A_{-}} (s) ^{-1} x(s) +
    \int_s ^t X_{A_{-}} (t) X_{A_{-}} (\tau) ^{-1} A_{\mp} (\tau) y(\tau) 
    d\tau\\
    y(t) &= X_{A_{+}} (t) X_{A_{+}} (r)^{-1} y(r) 
    - \int_t ^r X_{A_{+}} (t) X_{A_{+}} (\tau) ^{-1} A_{\pm} (\tau) x(\tau) 
    d\tau .
  \end{split}
\end{equation}
By hypothesis $ A_{0,-} $ fulfills the exponential 
estimate (\ref{blambda:1}) with constants $ (c, -\lambda) $. 
Thus $ A_{-} $, by Proposition \ref{op:1}, also does it with constants $ c $ 
and $ -\mu_{-} = -\lambda + c\no{H_{-}} $. Similarly, by (\ref{hyp}) 
$ - A^* _+ $ fulfills the estimate (\ref{blambda:1}) with constants $ c $ and 
$ -\mu_+ = -\lambda + c\no{H^* _+} = -\lambda + c\no{H_+} $. By the
equality (\ref{dual}) we have
\begin{equation}
\begin{split}
&\| X_{A_+} (t) X_{A_+} (r) ^{-1} \| = 
\| (X_{A_+} (t) X_{A_+} (r) ^{-1})^* \| \\
=& \| {X_{A_+} (r)^{-1}}^* X_{A_+} (t)^* \| = 
\| X_{-A_+ ^*}(r) X_{-A_+ ^*} (t) ^{-1} \|
\end{split}
\end{equation}
for $ r\geq t\geq 0 $. The first equation of (\ref{diff2}) gives
inequalities
\begin{equation}
\begin{split}
\label{1}
    \left|\int_s ^t X_{A_{-}} (t) X_{A_{-}} (\tau) ^{-1} 
    H_{\mp} (\tau) y(\tau) d\tau \right| &\leq c\int_s ^t 
    e^{ -\mu_{-} (t - s)} \no{H_{\mp} (\tau)} | y(\tau) | d\tau \\
    &\leq \frac{c\no{H_{\mp}}}{\mu_{-}}
 \left(1 - e^{ -\mu_{-} (t - s) }\right) \no{y}_{\infty, [s,t]}\\
\end{split}
\end{equation}
and the second gives
\begin{equation}
\begin{split}
  \label{2}
    \left|\int_t ^r X_{A_{+}} (t) X_{A_{+}} (\tau) ^{-1} 
    H_{\pm} (\tau) x(\tau)d\tau\right| &
\leq c\int_t ^r e^{-\mu_{+} (\tau - t)} \no{H_{\pm} (\tau)} dt
   \hskip .1em \no{x}_{\infty,[t,r]}\\
 &\leq\frac{c\no{H_{\pm}}}{\mu_{+}} 
\left(1 - e^{-\mu_{+} (r - t)}\right) \no{x}_{\infty,[t,r]}.
\end{split}
\end{equation}
Since $ \mu_+ $ and $ \mu_- $ are positive,
in the second of (\ref{diff2}) we can take the limit as 
$ r\rightarrow +\infty $. Set $ s = 0 $ in the first of (\ref{diff2}).
Therefore the equations (\ref{1}) and (\ref{2}) permit to define
a continuous map on the Banach space $ C_b (\mathbb{R}^+,E^- \oplus E^+) $ 
\begin{gather}
  \label{contr}
\vfi_{A,x_0}\cdot 
  \begin{pmatrix}
    x \\
    y
  \end{pmatrix}
  = L_A 
\begin{pmatrix}
    x \\
    y
  \end{pmatrix}
+ 
\begin{pmatrix}
    X_{A_{-}} (\cdot) x_0 \\
    0
  \end{pmatrix}
\end{gather}
where
\[
\label{linear_part}
L_A
\begin{pmatrix}
x \\
y
\end{pmatrix}
(t) =
\begin{pmatrix}
\displaystyle
\int_0 ^t X_{A_{-}} (t) X_{A_{-}} (\tau) ^{-1} 
A_{\mp} (\tau) y(\tau) d\tau\\
- \displaystyle\int_t ^\infty X_{A_{+}} (t) X_{A_{+}} (\tau) ^{-1} A_{\pm} 
(\tau) x(\tau) d\tau
\end{pmatrix}
\]
\glsadd{labphAx0}
\glsadd{labLA}
By (\ref{1}) and (\ref{2}), the operator $ L_A $ is bounded. A bounded
solution $ u $ of (\ref{diff}), with $ P^- u(0) = x_0 $ is a fixed point of 
$ \vfi_{A,x_0} $. The estimate of $ \no{H}_{\infty} $ in the hypothesis gives
\begin{gather}
\label{choice2}
(2c^3)^{1/2} \no{H_{\mp}} < \mu_{-}, \ (2c^3)^{1/2}\no{H_{\pm}} < \mu_+
\end{gather}
hence $ \varphi_{A,x_0} $ is a contraction. Clearly if $ u_0\in W_A ^s $ 
the curve $ X_A (t) u_0 $ is a fixed point of $ \vfi_{A,x_0} $. Using 
(\ref{1}) and (\ref{2}) we prove that if $ u $ is fixed point then 
$ u(0)\in W_A ^s $, hence $ u $ is not just bounded, but infinitesimal also. 
If $ u(0) = 0 $ it is clear. Suppose $ u(0)\neq 0 $. For any $ t\geq s $
\begin{equation}
\label{x}
  \begin{split}
    | x(t) | &\leq c e^{-\mu_- (t - s)} | x(s) | + 
\frac{c\no{H_{\mp}}}{\mu_{-}}
 \Big(1 - e^{ -\mu_- (t - s)}\Big) \| y \|_{\infty, [s,t)} \leq \\
        &\leq \max \{ c|x(s)|, \frac{c\no{H_{\mp}}}{\mu_{-}} 
\no{y}_{\infty,[s,\infty)} \},
  \end{split}
\end{equation}
the supremum on the real axis is allowed since we know that both $ x $ and 
$ y $ are bounded. From (\ref{2})
\begin{gather}
\label{3}
| y(s) |\leq \frac{c\no{H_{\pm}}}{\mu_{+}} \no{x}_{\infty, [s	,\infty)} 
\end{gather}
and, taking the sup on $ [s,\infty) $
\begin{gather}
\no{y}_{\infty, [s,\infty)} \leq \frac{c\no{H_{\pm}}}{\mu_{+}}
 \no{x}_{\infty, [s,\infty)}
\end{gather}
and we get
\begin{equation}
\label{andweget}
  \begin{split}
    \| x \|_{\infty, [s,\infty)} \leq 
    \max \{ c |x(s)| ,\hskip .1em \frac{c^2 \no{H_{\pm}}\no{H_{\mp}}}
{\mu_{-}\mu_{+}}
\| x \|_{\infty, [s,\infty)} \};
  \end{split}
\end{equation}
the estimate of $ \no{H} $ also implies that 
$ c^2 \no{H_{\pm}}\no{H_{\mp}} < \mu_- \mu_+ $, therefore (\ref{andweget})
allows to write
\begin{gather}
\label{crucial}
\no{x}_{\infty, [s,\infty) } \leq c | x (s) |,
\end{gather}
and, by (\ref{3}) we get the final estimate
\begin{gather}
\label{fin}
| y(s) | \leq \frac{c^2\no{H_{\pm}}}{\mu_{+}} | x(s) |. 
\end{gather}
It is easy to check that $ x $ does not vanish at any point of 
$ \mathbb{R}^+ $ for, if such $ t\in\R^+ $ exists (\ref{fin}) implies 
$ y(t) = 0 $, thus $ 0 = x(t) + y(t) = u(t)=  X_A (t) u_0 $. 
Since $ X_A (t) $ is invertible we had $ u_0 = 0 $ in contradiction with the
hypothesis. If $ E $ is a Hilbert space it is easy to build a continuous
path $ U (t) $ of operators in $ \mathcal{L}(E^-,E^+) $ that maps $ x(t) $
to $ y(t) $ and $ \no{U(t)} = |y(t)|/|x(t)| $. Just define
\begin{gather*}
U(t) z = \frac{(x(t),z)}{|x(t)|^2} y(t)
\end{gather*}
where $ (\cdot,\cdot) $ denotes the scalar product of the Hilbert space.
For Banach spaces we need some results of continuous selection such as
\textsc{Theorem} 4 of \cite{BK73}. By Corollary \ref{cont_sel_of_op} of
Appendix D there exists a path $ U_{\var} $ continuous and bounded in
$ \mathcal{L}(E^-,E^+)) $ such that 
\begin{gather*}
U_{\var} (t) x(t) = y(t), \ \no{U_{\var}(t)}\leq (1 + \var)
\frac{c^2\no{H_{\pm}}}{\mu_{+}} + \var
\end{gather*}
for every $ \var > 0 $. Then we can write the first of (\ref{diff}) as 
\begin{equation*}
  \begin{split}
    x' = [ A_{-} (t) + A_{\mp} (t) U_{\var} (t) ] x
  \end{split}
\end{equation*}
Since $ A_{\mp} (t) U_{\var} (t) $ is a bounded operator in $ \mathcal{L}(E^{-}) $ 
we can apply the Proposition \ref{op:1}: in fact $ A_{-} $ satisfies an 
exponential estimate with constants $ (c, - \mu_{-}) $, then the path 
$ A_{-} (t) + A_{\mp} (t) U_{\var} (t) $ does it with constants 
$ (c, -\nu_{\var}) $ 
where
\begin{equation*}
-\nu_{\var} = -\mu_{-} + c \no{H_{\mp} U_{\var}}\leq - \mu_{-} + c
\no{H_{\mp}}\cdot(1 + \var)\frac{c^2 \no{H_{\pm}}}{\mu_+} + c\var\no{H_{\mp}}.
\end{equation*}
Let $ \nu = \nu_0 $. We have $ -\mu_+ \nu = -\mu_{-}\mu_{+} + 
c^3\no{H_{\pm}}\no{H_{\mp}} $. By (\ref{choice2}) $ -\mu_+ \nu < 0 $, hence 
$ -\nu < 0 $. Then, if we choose $ \var $ small enough $ -\nu_{\var} < 0 $ 
and
\begin{gather*}
|x(t)|\leq e^{-\nu_{\var} (t - s)} |x(s)|, \ \ t\geq s\geq 0.
\end{gather*}
Taking the limit as $ \var\rightarrow 0 $ we obtain
\begin{equation}
 \begin{split}
   \label{van}
   | x(t) | &\leq c e^{-\nu (t - s)} | x(s) | \\ 
   | y(t) | &\leq\frac{c^3 \no{H_{\pm}}}{\mu_+} e^{-\nu (t - s)} | x(s) | 
 \end{split} 
 \end{equation}
and $ x $ and $ y $ vanish at infinity. Thus the fixed point $ u $ of 
$ \varphi_{A,x_0} $ can be characterized as a curve that solves (\ref{diff}) 
such that
\begin{gather}
\label{fixed_point_phi}
u(+\infty) = 0, \ \ P^- u(0) = x_0.
\end{gather}
An application $ S $ in $ \mathcal{L}(E^-,E^+) $ whose graph is $ W_A ^s $ is 
defined as follows: given $ x_0 $ in $ E^- $ there exists 
a unique fixed point of $ \varphi_{A,x_0} $, call it $ u $. Thus 
$ u(0)\in W_A ^s $. We define $ Sx_0 = P^+ u(0) $ and we have
\begin{gather*}
u(0) = P^- u(0) + P^+ u(0) = x_0 + Sx_0
\end{gather*}
hence $ \graph(S)\subseteq W_A ^s $. Conversely, given $ u_0\in W_A ^s $ the
curve $ v(t) = X_A (t)\cdot u_0 $, by the characterization in 
(\ref{fixed_point_phi}), is the fixed 
point of $ \vfi_{A,P^- u_0} $, hence
$ P^+ u_0 = S P^- u_0 $. Then $ \graph(S) = W_A ^s $. We can write explicitly 
$ S $ 
\begin{gather}
\label{graph_stab}
S = P^{+} \ci ev_0 \ci (I - L_A) ^{-1} \cdot 
  \begin{pmatrix}
    X_{A_{-}} (\cdot) x_0 \\
    0
  \end{pmatrix},
\end{gather}
where $ ev_0 $ is defined on $ C_b $ as the evaluation at $ t = 0 $.
\glsadd{labev0}%
Then $ S $ is bounded and $ W_A ^s $ is closed and $ E = E^+ \oplus W_A ^s $.
Since $ A_t = A_0 + H(\cdot + t) $ the same constants work to show 
that $ W_{A_t} ^s = X_A (t) W_A ^s $ is graph of an unique bounded 
operator, say $ S(t) $ and i) is proved. By direct computation 
\begin{gather}
\label{asmu}
S(t) = P^+ X_A (t) (I_{E_{-}} + S) \cdot [ P^- X_A (t)(I_{E_{-}} + S) ]^{-1}.
\end{gather}
Hence $ S\in C(\mathbb{R},\mathcal{L}(E^-,E^+)) $ inherits the regularity of 
$ X_A $ and ii) follows. 
\vskip .2em
Taking the limit as $ r\ra +\infty $ and $ t = 0 $ in (\ref{2})
\begin{equation*} 
 \begin{split}
 | S x_0 | = |y_0| &\leq 
c\left( \int_0 ^{\infty} e^{-\mu_+ \tau} \| H_{\pm} (\tau) \| 
d \tau \right) 
\| x \|_{\infty} \\
&\leq c^2 \left( \int_0 ^{\infty}
e^{-\nu \tau } \| H (\tau) \| d \tau \right) |x_0|
 \end{split} 
 \end{equation*}
since $ \nu < \mu_+ $. For the general case consider the shifted path 
$ A (\cdot + t) $. Then 
\begin{equation*}
\begin{split}
| S(t)x_0 |&\leq c^2 \left(\int_0 ^{\infty} e^{-\mu_+\tau'} 
\| {H_{(\cdot+t)}}_{\pm} (\tau')\| d\tau'\right) |x_0| \\
&= c^2 \left(\int_t ^{\infty} e^{-\mu(\tau - t)} \| H_{\pm} (\tau) \|
d\tau\right) |x_0|
\end{split}
\end{equation*}
where $ \tau = t + \tau' $. This proves iii). Finally let $ u_0\in W_A ^s $.
By (\ref{van}) and (\ref{choice2}) we can write
\begin{equation*}
 \begin{split}
   | X_A (t) u_0 | &= | x(t) + y(t) | \leq | x(t) | + | y(t) | 
   \leq c e^{-\nu (t - s)} 
   \left(1 + \frac{c^2 \no{H_{\pm}}}{\mu_+}\right)| x(s) | \\
   & \leq (c + c^2)\no{P^-} e^{-\nu (t - s)} |u(s)| \leq
   b e^{-\nu (t - s)} | X_A (s) u_0 |.
 \end{split} 
 \end{equation*}
where $ b = c(1 + c) \no{P^{-}}\no{P^+} $. The proof is complete.
\end{proof}
\begin{proposition}\rm{(cf. \cite{AM03b}, \textsc{Proposition} 1.2)}.
\label{from}
  With the same hypotheses of the preceding statement we have
  \begin{enumerate}
  \item for every $ t\geq 0 $, $ X_A (t) E^+ $ is the graph of an operator 
    $ T(t)\in \mathcal{L}(E^{+}, E^{-}) $,
  \item $ \| T (t) \| \leq c^2 \displaystyle
    \int_0 ^t e^{ -\nu (t - \tau) }
    \| H (\tau) \| \hskip .1em  d\tau $,
  \item $ T $ is as much differentiable as $ X_A $,
    \item for every $ y_0 \in E^{+} $, $ t\geq s\geq 0 $ the inequality
      \begin{equation*} 
        \begin{split} 
          | X_A (t) y_0 |\geq b^{-1} e^{ \nu (t - s) } | X_A (s) y_0 |
        \end{split} 
      \end{equation*}
        holds.
    \end{enumerate}
\end{proposition}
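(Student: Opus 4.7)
The plan is to construct $ T(t)\in\mathcal{L}(E^+,E^-) $ such that $ X_A(t)E^+=\graph(T(t)) $ by solving a Riccati equation, mirroring the fixed-point argument of Proposition \ref{tow} with data specified at $ t=0 $ rather than at infinity. Writing $ u(t)=X_A(t)y_0=x(t)+y(t) $ with $ y_0\in E^+ $, the Ansatz $ x(t)=T(t)y(t) $ combined with (\ref{diff}) forces $ T $ to satisfy the Riccati equation
\begin{equation*}
T'(t)=A_\mp(t)+A_-(t)T(t)-T(t)A_+(t)-T(t)A_\pm(t)T(t),\quad T(0)=0,
\end{equation*}
which integrates along the flows of $ A_- $ and $ A_+ $ to
\begin{equation*}
T(t)=\int_0^t X_{A_-}(t)X_{A_-}(\tau)^{-1}\left[A_\mp(\tau)-T(\tau)A_\pm(\tau)T(\tau)\right]X_{A_+}(\tau)X_{A_+}(t)^{-1}\,d\tau.
\end{equation*}

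I would solve this integral equation by contraction on the closed ball of radius $ N=O(\|H\|_\infty) $ in $ C_b(\mathbb{R}^+,\mathcal{L}(E^+,E^-)) $. The crucial estimates are $ \|X_{A_-}(t)X_{A_-}(\tau)^{-1}\|\leq ce^{-\mu_-(t-\tau)} $, already established in the proof of Proposition \ref{tow}, and the dual estimate $ \|X_{A_+}(\tau)X_{A_+}(t)^{-1}\|\leq ce^{-\mu_+(t-\tau)} $ for $ 0\leq\tau\leq t $, which follows by passing to adjoints via (\ref{dual}) from the estimate on $ X_{-A_+^*} $ used in Proposition \ref{tow}. The same smallness condition on $ \|H\|_\infty $ then controls both the linear and the quadratic contributions, yielding a unique global fixed point $ T $ with the bound (ii) for the same constant $ \nu $ as in Proposition \ref{tow}.

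Having obtained $ T $, I would verify (i) by defining $ y(t)=X_{A_++A_\pm T}(t)y_0 $ and checking by direct substitution that $ \widetilde u(t):=T(t)y(t)+y(t) $ solves (\ref{diff}) with $ \widetilde u(0)=y_0 $, so that $ \widetilde u(t)=X_A(t)y_0 $ by uniqueness and $ X_A(t)E^+\subseteq\graph(T(t)) $; the reverse inclusion holds because $ X_{A_++A_\pm T}(t) $ is invertible by Proposition \ref{inv}. Statement (iii) then follows from the formula $ T(t)=P^-X_A(t)(P^+X_A(t)|_{E^+})^{-1} $, whose invertibility was just established, so $ T $ inherits the regularity of $ X_A $. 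For (iv), Proposition \ref{op:1} applied to the bounded perturbation $ A_++A_\pm T $ of $ A_+ $ yields the expansion $ |y(t)|\geq c^{-1}e^{\nu(t-s)}|y(s)| $; combining with the elementary bounds $ |X_A(t)y_0|\geq\|P^+\|^{-1}|y(t)| $ and $ |y(s)|\geq(1+\|T(s)\|)^{-1}|X_A(s)y_0| $ absorbs the remaining constants into the $ b $ of Proposition \ref{tow}.

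The main obstacle is the Riccati nonlinearity $ TA_\pm T $: the contraction must be established globally on $ t\geq 0 $, whereas a naive estimate on a finite interval $ [0,t_0] $ degenerates as $ t_0\to\infty $ because of the expanding behavior of $ X_{A_+} $. The exponential decay of the composed kernel $ \|X_{A_-}(t)X_{A_-}(\tau)^{-1}\|\cdot\|X_{A_+}(\tau)X_{A_+}(t)^{-1}\|\leq c^2 e^{-(\mu_-+\mu_+)(t-\tau)} $ is what closes the argument.
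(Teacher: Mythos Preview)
Your Riccati approach is correct and genuinely different from the paper's. The paper fixes $\overline{t}\geq 0$ and runs a \emph{linear} contraction $\psi_{A,\overline{y}}$ on $C([0,\overline{t}],E^-\oplus E^+)$, parametrized by the terminal datum $\overline{y}=P^+u(\overline{t})$; the fixed point is the curve $u$ with $P^-u(0)=0$, and one then sets $T(\overline{t})\overline{y}:=P^-u(\overline{t})$. To obtain the growth estimate (iv) the paper must invoke the continuous-selection result of Corollary~\ref{cont_sel_of_op} to manufacture an operator $V_\varepsilon(t)$ with $V_\varepsilon(t)y(t)=x(t)$, so that $y'=(A_++H_\pm V_\varepsilon)y$ can be fed into Proposition~\ref{op:1}. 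Your Riccati solution $T$ is precisely such a $V$ with no $\varepsilon$-loss, so you bypass the selection lemma entirely --- a genuine simplification, and one that works uniformly in Banach spaces without the Hilbert shortcut the paper mentions. The price you pay is the quadratic term $TA_\pm T$: you must restrict the contraction to a small ball and verify invariance, whereas the paper's map $R_A$ is linear and its contraction constant is read off directly from (\ref{1})--(\ref{2}). Both routes rest on the same exponential estimates for $X_{A_\pm}$, and the combined kernel decay $c^2e^{-(\mu_-+\mu_+)(t-\tau)}$ you single out is indeed what makes your global fixed point close; note that the paper's formula $T(t)=P^-X_A(t)\,[P^+X_A(t)|_{E^+}]^{-1}$ for (iii) coincides with yours.
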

\begin{proof}
Let $ \orl{t}\in\R^+ $ and $ \orl{y} \in E^{+} $. In (\ref{diff2}) let 
$ r = \orl{t} $ and $ s = 0 $. Then we have a continuous map, 
on $ C([0,\orl{t}],E^{-}\oplus E^+) $ into itself
\begin{gather}
\label{contr2}
\psi_{A,\orl{y}}\cdot 
\begin{pmatrix}
x \\
y
\end{pmatrix}
= R_A\cdot 
\begin{pmatrix}
x \\
y
\end{pmatrix}
+ 
\begin{pmatrix}
0 \\
X_{A^{+}} (\cdot) X_{A^{+}} (\orl{t}) ^{-1} \orl{y}.
\end{pmatrix}
\end{gather}
where $ R_A $ is a bounded operator defined as
\begin{gather*}
R_A\cdot
\begin{pmatrix}
x \\
y
\end{pmatrix} = 
\begin{pmatrix}
\displaystyle
\int_0 ^t X_{A_{-}} (t) X_{A_{-}} (\tau)^{-1} A_{\mp} (\tau) y(\tau) d\tau\\
\displaystyle   
-\int_t ^{\orl{t}} X_{A_{+}} (t) X_{A_{+}} (\tau)^{-1} A_{\pm}(\tau) x(\tau)
d\tau
\end{pmatrix}
\end{gather*}
\glsadd{labpsAy}
\glsadd{labRA}
The map is continuous because $ R_A $ is bounded. If $ \no{H}_{\infty} $ is
estimated by the same constant of the preceding proposition $ \no{R_A} < 1 $,
hence $ \psi_{A,\overline{y}} $ is a contraction. The fixed point $ v $ is
a solution of (\ref{diff}) characterized by the property 
\begin{gather}
\label{fixed_point_psi}
P^- v (0) = 0, \ \ P^+ v(\overline{t}) = \overline{y}
\end{gather}
Let $ \overline{y}\in E^+ $ and let $ u $ be the fixed  point of 
(\ref{contr2}). We define $ T(\orl{t})\cdot\orl{y} = P^- u(\orl{t}) $.
By (\ref{contr2}) $ u(0)\in E^{+} $ and $ P^+ u(\orl{t}) = \orl{y} $, hence
\begin{gather*}
 \orl{y} + T(\orl{t})\orl{y} = P^+ u(\orl{t}) + P^- u(\orl{t}) = 
u(\orl{t}) = X_A (\orl{t}) u(0) 
\end{gather*}
thus $ \graph(T(\orl{t}))\subset X_A (\orl{t}) E^{+} $. Conversely,
let $ z\in X_A (\orl{t}) E^+ $ and $ y \in E^+ $ be such that
$ z = X_A (\orl{t}) y $. The curve $ u = X_A (\cdot) y $ has the property 
(\ref{fixed_point_psi}), thus coincides with the fixed point of 
$ \psi_{A,P^+ z} $. Hence
\begin{gather*}
z = P^+ z + P^- z = P^+ z + P^- u(\orl{t}) = P^+ z + P^- v(\orl{t}) = 
P^+ z + T(\orl{t}) P^+ z.
\end{gather*}
Hence $ X_A (\orl{t}) E^+ = \graph(T(\orl{t})) $. The map can also be
written as
\begin{gather*}
T(\orl{t}) y = P^- \circ ev_0 (I - R_A)^{-1} \cdot 
\begin{pmatrix}
0 \\
X_{A^{+}} (\cdot) X_{A^{+}} (\orl{t}) ^{-1} \orl{y}.
\end{pmatrix}
\end{gather*}
and i) is proved. For every $ t\geq 0 $ 
\begin{gather}
T(t) = P_{-} ({P_{+}|} _{X_A (t) E^{+}}) ^{-1} = 
P_{-} X_A (t) [ P_{+} X_A (t) ]^{-1}
\end{gather}
and iii) follows. Now let $ 0\leq t\leq r\leq\orl{t} $. If $ (x, y) $ is the 
fixed point of $ \psi_A $ we find
\begin{gather}
\label{x21}
|x(t)|\leq \frac{c\no{H_{\mp}}}{\mu_{-}} \|y\|_{\infty,[0,t]}
\end{gather}
still from (\ref{1}) and (\ref{2}) we can write
\begin{equation}
\label{y21}
  \begin{split}
    | y(t) | &\leq c e^{-\mu_+ (r - t)} | y(r) | + 
    \frac{c\no{H_{\pm}}}{\mu_+}
 \Big(1 - e^{ -\mu_+ (r - t)}\Big) \no{x}_{\infty, [0,r)} \leq \\
        &\leq \max \{ c|y(r)|, \frac{c\no{H_{\pm}}}{\mu_+} 
   \no{x}_{\infty,[0,r]} \};
  \end{split}
\end{equation}
from (\ref{x21}) we write
\begin{gather}
\label{x22}
\no{x}_{\infty,[0,t]}\leq\frac{c\no{H_{\mp}}}{\mu_{-}} \no{y}_{\infty,[0,t]}
\end{gather}
for any $ 0\leq t\leq r $. By (\ref{y21}) and (\ref{x22})
\begin{gather}
\label{y22}
 \no{y}_{\infty,[0,r]}\leq \max \{ c |y(r)|, 
\frac{c^2 \no{H_{\pm}}\no{H_{\mp}}}{\mu_{-}\mu_{+}} 
\no{y}_{\infty,[0,r]} \}.
\end{gather}
Since $ c^2 \no{H_{\pm}}\no{H_{\mp}} < \mu_{-} \mu_+ $ we have
\begin{gather}
\label{y24}
\no{y}_{\infty,[0,r]} \leq c |y(r)|.
\end{gather}
Setting $ t = r $ in (\ref{x21}) from (\ref{y24}) it follows that
\begin{gather}
\label{last}
|x(r)|\leq\frac{c^2\no{H_{\mp}}}{\mu_{-}} |y(r)|
\end{gather}
As we have done for the preceding Proposition for every $ \var > 0 $ the 
Corollary \ref{cont_sel_of_op} provides us with 
$ V_{\var}\in C([0,\orl{t}],\bx{E^{+}}{E^{-}}) $ such that
\begin{gather*}
V_{\var} (r) y(r) = x(r), \ \no{V_{\var}}
\leq (1 + \var)\frac{c^2\no{H_{\mp}}}{\mu_{-}} + \var
\end{gather*}
hence $ y' = (A_{+} + H_{\pm} V_{\var}) y $. Applying the Proposition 
\ref{op:1} to $ -A_+ ^* $ for every $ \var > 0 $ and $ r\geq t\geq 0 $ there 
holds $ | y(r) |\geq c^{-1} e^{\nu_{\var} (r - t)} | y(t) | $. Taking the 
limit as $ \var\rightarrow 0 $ 
\begin{gather}
\label{y25}
| y(r) |\geq c^{-1} e^{\nu (r - t)} | y(t) |.
\end{gather}
for every $ r\geq t \geq 0 $. By (\ref{last})
\begin{equation}
\label{y26}
\begin{split}
| y(s) | &= \frac{1}{1 + c} \Big(c |y(s)| + |y(s)|\Big)
\geq\frac{1}{1 + c}\Big(|x(s)| + |y(s)|\Big) \geq\frac{1}{1 + c} |u(s)|.
\end{split}
\end{equation}
Given $ u_0\in E^{+} $, using (\ref{y26}) and the fact that the norm of
a projector is at least $ 1 $ we can write
\begin{equation}
\begin{split}
| X_A (r) u_0 | &\geq |y(r)| \no{P^+} ^{-1}
\geq (c\no{P^+}) ^{-1} e^{\nu (r - t)} |y(s)| \\
&\geq \frac{ e^{\nu (r - t)} |u(t)|}{c(1 + c)\no{P^+}} \geq
b^{-1} e^{\nu (r - t)} | X_A (t) u_0 |.
\end{split}
\end{equation}
and (iv) follows. Finally
\begin{equation*}
\begin{split}
|T(\orl{t}) \orl{y}| = |x(\orl{t})|&\leq c \left(\int_0 ^{\orl{t}} 
e^{-\mu_+ (\orl{t} - \tau)}\| H_{\mp}\|\right) \| y \|_{\infty,[0,\orl{t}]}\\
&\leq c^2 \left(\int_0 ^{\orl{t}} 
e^{-\nu(\orl{t} - \tau)}\| H_{\mp}\|\right) | \orl{y} |;
\end{split}
\end{equation*}
the last estimate follows from (\ref{y24}) with $ r = \orl{t} $ and (ii) 
is proved.
\end{proof}
\begin{remark}
\rm{The statements and the proofs of the two theorems regard only the stable 
space. To obtain the same conclusions for the unstable space defined on the 
negative real line just set $ \check{A}(t) = A(-t) $. %
Using argument of 
uniqueness of Cauchy problems we obtain
\begin{gather*}
X_A (-t) = X_{-\check{A}}(t), \ W_A ^u = W_{-\check{A}} ^s, \ 
-\check{A}(+\infty) = - A(-\infty).
\end{gather*}
Thus we can apply Propositions \ref{tow} and \ref{from} to $ -\check{A} $ 
on the positive real line.}
\end{remark}
\section{Properties of $ W_A ^s $ and $ W_A ^u $}
In the preceding section it has been proved that $ W_A ^s $ 
(as $ W_A ^u $ ) is a splitting space if $ A $ is close, in the uniform
topology, to a constant hyperbolic path $ A_0 $. We prove that
it is true for any asymptotically hyperbolic path. Conversely we provide, 
for any pair $ (X,Y) $ in $ G_s (E) $, a path $ A $ such that
$ (W_A ^s, W_A ^u) = (X,Y) $.
\begin{theorem}\rm{(cf. \cite{AM03b}, \textsc{Theorem} 2.1).}
\label{rec}
  Let A be an asymptotically hyperbolic path of operators defined on 
  $ \al{\R} ^+ $. Let $ A_0 = A(+\infty) $, $ E^+ \oplus E^- $ the 
  spectral decomposition. Then $ W_A ^s $ is a splits
  \begin{enumerate}
  \item $ W_A ^s $ is the only closed subspace $ W $ such that 
    $ X_A (t) W \ra E^{-} $,
  \item $ \| \res{X_A (t)}{W_A ^s} \| \leq 
    c e^{-\lambda (t - s)} \| \res{X_A (s)}{W_A ^s} \| $ for suitable 
    $ c, \lambda > 0 $ and every $ t\geq s\geq 0 $,
  \item for every $ V\in G_s (E) $ such that $ V\oplus W_A ^s = E $
  $ \rho(X_A (t) V, E^{+})\ra 0 $,
  \item $ \displaystyle\inf_{\substack{ v\in V \\ |v| = 1 }} | X_A (t) v | $ 
        grows at exponential rate,
  \item $ W_{-A ^*} ^s = (W_A ^s)^{\bot} $. 
\end{enumerate}
\end{theorem}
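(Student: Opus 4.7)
The plan is to reduce the asymptotically hyperbolic path on $\R^+$ to the small-perturbation case already treated in Propositions \ref{tow} and \ref{from}. Writing $A = A_0 + H$ with $\no{H(t)}\to 0$, choose $T > 0$ large enough that the shifted path $\tilde A(s) := A(s+T)$ satisfies the smallness hypothesis of Proposition \ref{tow}. Then $W_{\tilde A}^s$ is the graph of a bounded $S(0)\colon E^- \to E^+$ (hence closed, with $E^+$ as topological complement) and $X_{\tilde A}(s) E^+$ is the graph of $T(s)\colon E^+ \to E^-$; since $\no{H(\tau)}\to 0$, dominated convergence applied to the integral estimates ii) of both propositions gives $\no{S(s)},\no{T(s)}\to 0$, so $W_{A_{T+s}}^s \to E^-$ and $X_{\tilde A}(s) E^+ \to E^+$ in $G(E)$. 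The cocycle identity of Proposition \ref{ca:ba} yields $X_A(t)W_A^s = W_{A_t}^s$ and $W_A^s = X_A(T)^{-1} W_{\tilde A}^s$, so $W_A^s$ inherits closedness and a topological complement. Estimate (ii) follows by combining iv) of Proposition \ref{tow} applied on $[T,\infty)$ with the two-sided exponential bound of Proposition \ref{blambda} on $[0,T]$, the initial segment contributing only a multiplicative constant.

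For (iii) and (iv), given a complement $V$ of $W_A^s$, set $V_T := X_A(T)V$, a complement of $W_{\tilde A}^s$. Relative to the splitting $E = W_{\tilde A}^s \oplus E^+$, write $V_T$ as the graph of a bounded operator $K\colon E^+ \to W_{\tilde A}^s$, so every $v\in V_T$ decomposes as $v = u + Ku$ with $u\in E^+$. Applying iv) of Propositions \ref{tow} and \ref{from},
\[
\no{X_{\tilde A}(s) Ku} \leq b e^{-\nu s}\no{K}\,|u|,\qquad \no{X_{\tilde A}(s) u} \geq b^{-1} e^{\nu s}|u|,
\]
so $X_{\tilde A}(s) V_T$ is the graph of an operator from $X_{\tilde A}(s)E^+$ to $W_{A_{T+s}}^s$ of norm at most $b^2\no{K}e^{-2\nu s}$. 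Combining with $X_{\tilde A}(s)E^+\to E^+$ and $W_{A_{T+s}}^s\to E^-$, and using Propositions \ref{GE_is_open} and \ref{continuity_of_graph} to transfer graph convergence to Hausd\"orff convergence of subspaces, one obtains $X_A(t)V\to E^+$ in $G(E)$, proving (iii); (iv) is immediate from the lower bound above, extended to $[0,T]$ by Proposition \ref{blambda}. For uniqueness in (i), suppose $W'$ is closed with $X_A(t)W'\to E^-$; for each $w\in W'$ decompose $X_A(T)w = w_1 + w_2 \in W_{\tilde A}^s\oplus E^+$. Any nonzero vector in $X_{\tilde A}(s)E^+$ has $P^+$-projection bounded below by a fixed positive fraction of its own norm (since $X_{\tilde A}(s)E^+$ is the graph of $T(s)$ of bounded norm), while $X_{\tilde A}(s)w_1\in W_{A_{T+s}}^s$ decays exponentially, so $\no{P^+ X_A(T+s)w}/\no{X_A(T+s)w}$ stays bounded below by a positive constant when $w_2\neq 0$, contradicting the fact that convergence $X_A(T+s)W'\to E^-$ forces this ratio to tend to $0$ on unit vectors. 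Hence $w_2 = 0$ and $W'\subset W_A^s$; the reverse inclusion then follows by the analogous analysis showing that a proper closed subspace of $W_A^s$ cannot satisfy the same convergence hypothesis, exploiting the bi-Lipschitz isomorphism $X_A(T)\colon W_A^s\to W_{\tilde A}^s$ and the graph representation of $W_{A_{T+s}}^s$.

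For (v), equation (\ref{dual}) reads $X_{-A^*}(t) = (X_A(t)^{-1})^*$; the path $-A^*$ is asymptotically hyperbolic with limit $-A_0^*$, whose spectral subspaces for positive and negative real parts are $(E^+)^\perp$ and $(E^-)^\perp$ respectively, by duality. A direct annihilator computation gives $X_{-A^*}(t)(W_A^s)^\perp = (X_A(t)W_A^s)^\perp = (W_{A_t}^s)^\perp$, which converges in $G(E^*)$ to $(E^-)^\perp$ by the continuity of the annihilator map from Proposition \ref{orthogonal_map}; applying uniqueness from (i) to $-A^*$ identifies $(W_A^s)^\perp$ with $W_{-A^*}^s$. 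The hardest step will be (iii): translating the uniform exponential growth-decay dichotomy into Hausd\"orff convergence of subspaces without a scalar product relies on the continuous-selection arguments already invoked to make (\ref{fin}) and (\ref{last}) work in Proposition \ref{tow}, together with the openness of complemented pairs (Proposition \ref{GE_is_open}) to represent $X_A(t)V$ as a small graph relative to a varying splitting.
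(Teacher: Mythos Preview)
Your strategy matches the paper's: shift time by $T$ to land in the small-perturbation regime of Propositions \ref{tow} and \ref{from}, then pull back along the invertible $X_A(T)$. Parts (ii), (iii), (iv), (v) and the ``existence'' half of (i) are essentially handled as in the paper; your reduction of (iii) to a graph over a moving splitting $X_{\tilde A}(s)E^+\oplus W_{A_{T+s}}^s$ works, though the paper does the equivalent computation directly with norms rather than invoking Propositions \ref{GE_is_open}/\ref{continuity_of_graph}.

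The one genuine gap is the uniqueness in (i). Your argument for $W'\subset W_A^s$ via the $P^+$-ratio is correct. But the reverse inclusion --- ``a proper closed subspace of $W_A^s$ cannot satisfy the same convergence hypothesis'' --- is asserted, not proved. It is true: projecting by $P^-$ carries $X_A(t)W'$ to a proper closed subspace $F_t\subsetneq E^-$, and Riesz's lemma gives $\rho_1(E^-,F_t)=1$, whence $\rho_1(E^-,X_A(t)W')\geq 1/\|P^-\|$ for all $t$. However, ``the analogous analysis\dots exploiting the graph representation'' does not supply this step.

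The paper's route here is cleaner and avoids Riesz entirely. Having already proved (iii), it observes that for \emph{every} topological complement $V$ of $W_A^s$ one has $V\cap W'=\{0\}$ (a nonzero vector in the intersection would have its normalized $X_A(t)$-image simultaneously approaching $S(E^+)$ and $S(E^-)$). Since any $w\notin W_A^s$ lies in some complement of $W_A^s$, this forces $W'\subset W_A^s$. For the reverse inclusion the paper invokes Proposition \ref{trk:2}: both $X_A(t_0)W'$ and $X_A(t_0)W_A^s$ are close to $E^-$ for large $t_0$, so $\rho_1(X_A(t_0)W_A^s,X_A(t_0)W')<1$; together with the inclusion $X_A(t_0)W'\subset X_A(t_0)W_A^s$, Proposition \ref{trk:2} yields equality immediately.
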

\begin{proof}
Let $ A(+\infty) = A_0 $. Since $ A_0 $ is a hyperbolic operator there 
exist $ c $ and $ \lambda $ such that the condition (\ref{hyp}) holds.
Let $ H = A_0 - A $. If $ \tau $ is large enough 
$ \no{H_{(\cdot+\tau)}} $ is smaller than the constant of 
Proposition \ref{tow} then 
\begin{gather*}
W_{A(\cdot+\tau)} ^s = X_A (\tau) W_A ^s
\end{gather*}
is a topological complement of $ E^+ $ and, since $ X_A (\tau) $ is invertible,
$ W_A ^s $ is closed too and
\begin{gather*}
X_A (\tau) W_A ^s \oplus E^{+} = 
E = W_A ^s \oplus X_A (\tau)^{-1} E^{+}.
\end{gather*}
Now for $ t \geq\tau $ the Proposition \ref{tow} says that 
$ X_{A(\cdot+\tau)} (t) W_{A(\cdot+\tau)} ^s $ is the graph of a
bounded linear map $ S(t)\colon E^- \rightarrow E^+ $ and
\begin{equation*} 
 \begin{split} 
   \| S (t) \| \leq 
   c^2 \int_t ^\infty e^{-\nu (t - \tau') } 
   \| H(\tau + \tau') \| d\tau' .
 \end{split} 
 \end{equation*}
This implies that $ S(t) $ converges to the null operator as 
$ t\rightarrow +\infty $. By Proposition \ref{continuity_of_images}, 
$ \graph(S(t)) $ converges to $ \graph(0) = E^- $, hence 
$ X_A (t + \tau) W_A ^s = 
X_{A(\cdot+\tau)} (t) W_{A(\cdot+\tau)} ^s \ra E^{-} $.
\vskip .2em
The ii) follows from iv) of Proposition \ref{tow} taking the 
supremum over the unit sphere of $ W_A ^s $ on both sides of the inequality.
\vskip .2em
Let $ V $ be a closed subspace of $ E $. Up to a time shift we can suppose
that $ V $ is graph of a bounded operator 
$ L\in\mathcal{L}(E^{+},W_{A} ^s ) $. First we prove that 
$ \rho (X_A (t)E^{+}, X_A (t)V) $ converges to $ 0 $. 
Let $ v\in X_A (t) V $ and $ y\in E^+ $ be such that 
$ v = X_A (t) \cdot (y + L y) $. Set $ u = X_A (t) y $. Then
\begin{equation*} 
 \begin{split} 
   | v - u | = | X_A (t) Ly | &\leq b e^{-\nu t} 
   \no{L} | y | 
   \leq b^2 e^{-2\nu t} \no{L} | X_A (t) y | \\
   &= b^2 e^{-2\nu t} \no{L} | u | 
   \leq b^2 e^{-2\nu t} \no{L} ( | v | + | v - u | )
 \end{split} 
 \end{equation*}
since $ \alpha (t) := b^2 e^{-2\nu t} \| L \| $ is an infinitesimal 
sequence, for $ t\geq \orl{t} $ we have $ \alpha (t) < 1 $ and the above 
inequality becomes
\begin{equation*} 
 \begin{split} 
   | v - u | \leq \alpha(t) ( | v | + | v - u |) \rba
   | v - u | \leq \frac{\alpha(t)}{1 - \alpha(t)} | v |
 \end{split} 
 \end{equation*}
and we conclude that $ \rho (X_A (t) Y,X_A (t) E^+) \ra 0 $ as 
$ t \ra +\infty $. On other hand
\begin{equation*} 
 \begin{split} 
   | u - v | = | X_A (t) Ly | &\leq b e^{-\nu t} \|L \| | y | 
   \leq b^2 e^{-2\nu t} \| L \| | X_A (t) y | \\
   &= b^2 e^{-2\nu t} \| L \| | u | = \alpha(t) | u |
 \end{split} 
\end{equation*} 
and $ \rho (X_A (t) E^+, X_A (t) V) \leq \alpha (t) $. 
The proof is complete using the fact that 
$ \rho (X_A (t) E^+, E^{+}) \ra 0 $ which follows from i) and ii) of 
Theorem \ref{from}.
\vskip .2em
To prove the converse of i) let $ W\subseteq E $ be a closed subspace 
such that $ X_A (t) W \ra E^{-} $. By iii) for every topological complement 
of $ W_A ^s $, say $ V $, we have $ V\cap W = \{0\} $, hence 
$ W\subset W_A ^s $. There exists $ t_0 > 0 $ such that, 
$ \rho (X_A (t_0) W, X_A (t_0) W_A ^s ) < 1 $ and, by Proposition
\ref{trk:2}, $ X_A (t_0) W = X_A (t_0) W_A ^s $ hence
$ W = W_A ^s $ and i) is proved.
\vskip .2em
In order to prove the iv) we can suppose, up to a time shift, that
$ V\oplus W_A ^s = E = W_A ^s \oplus E^+ $. Again $ V = \graph(L) $, 
$ L\in\mathcal{L}(E^+,W_A ^s) $. Then
\begin{equation*} 
 \begin{split} 
| X_A (t) v | &= | X_A (t) y + X_A (t) Ly | 
\geq | X_A (t) y | - | X_A (t) L y | \\
&\geq b^{-1} e^{\nu t} | y | - b e^{ -\nu t } \| L \| | y | 
= ( b^{-1} e^{\nu t} - b e^{ -\nu t } \| L \| ) | y | \\
&\geq 1/(1 + \| L \|)\hskip 1pt ( b^{-1} e^{\nu t} - b e^{ -\nu t } 
\| L \| ) | v |
 \end{split} 
 \end{equation*}
and iv) follows by taking the infimum over $ S(V) $.
By (\ref{dual}) we have the chain of equalities
\begin{gather}
\label{w1}
X_{-A^*} (t) (W_A ^s)^{\bot} = (X_A (t) ^{-1})^* (W_A ^s)^{\bot} = 
(X_A (t) W_A ^s )^{\bot}.
\end{gather}
Since $ X_A (t) W_A ^s $ converges to $ E^{-} $ and $ E^{-} $ splits 
the Proposition \ref{orthogonal_map} allows us to take the limit in 
(\ref{w1}) which is $ {(E^{-})}^{\bot} $. Since 
$ (E^-) ^{\bot} = E^- (- A^*) $, by i) 
\begin{gather*}
X_{-A^*} (t) (W_A ^s )^{\bot}\rightarrow E^- (- A^*)
\end{gather*}
implies $ (W_A ^s )^{\bot} = W_{-A ^*} ^s $.
\end{proof} 
Analogous statements hold for the unstable space $ W_A ^u $ by
considering the path $ -\check{A} $. 
\begin{lemma}
\label{invariance}
  Let $ A $ be an asymptotically hyperbolic path of on   $ \R ^+ $. 
Then $ X_A (t) W_A ^s = E^{-} $ for every
  $ t\geq 0 $ if and only if $ A(t) E^{-} \subseteq E^{-} $
\end{lemma}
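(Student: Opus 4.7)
The plan is to prove the two implications separately, drawing on (a) uniqueness of solutions of Cauchy problems, in the guise of Proposition \ref{inv}, and (b) the uniqueness clause (i) of Theorem \ref{rec}, which characterizes $W_A^s$ as the only closed subspace whose orbit under $X_A$ converges to $E^-$. These two ingredients will do essentially all the work.

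For the direction $\Leftarrow$, suppose $A(t)E^-\subseteq E^-$. I would first use this invariance to define the restricted path $A_-(t):=A(t)|_{E^-}$ in $\mathcal{L}(E^-)$ and its fundamental solution $Y(t):=X_{A_-}(t)\in\mathcal{L}(E^-)$. For any $x_0\in E^-$, the curve $s\mapsto Y(s)x_0$ takes values in $E^-$ and satisfies $u'=A(s)u$ with $u(0)=x_0$, because $A(s)$ agrees with $A_-(s)$ on $E^-$. Local uniqueness of the Cauchy problem in $E$ then gives $X_A(s)x_0=Y(s)x_0\in E^-$, so $X_A(s)E^-\subseteq E^-$. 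Since $Y(s)$ is invertible in $\mathcal{L}(E^-)$ by Proposition \ref{inv}, $Y(s)E^-=E^-$ and hence $X_A(s)E^-=E^-$ for all $s\geq 0$. This family, being constant in $s$, trivially converges to $E^-$ as $s\to+\infty$, so Theorem \ref{rec}(i) forces $W_A^s=E^-$; therefore $X_A(t)W_A^s=E^-$ for every $t\geq 0$.

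For the direction $\Rightarrow$, setting $t=0$ in the hypothesis yields $W_A^s=E^-$, and hence $X_A(t)E^-=E^-$ for every $t\geq 0$. Fix such a $t$ and any $x_0\in E^-$. The curve $s\mapsto X_A(s)x_0$ remains in the closed subspace $E^-$ for all $s\geq 0$, so by closedness its derivative $A(s)X_A(s)x_0$ also lies in $E^-$. Since $X_A(t)$ is globally invertible on $E$ by Proposition \ref{lr} and its restriction to $E^-$ has image $E^-$, that restriction is a bijection of $E^-$ onto itself; consequently every $y\in E^-$ can be written as $y=X_A(t)x_0$ with $x_0\in E^-$, which gives $A(t)y\in E^-$, i.e.\ $A(t)E^-\subseteq E^-$.

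The only real subtlety is the step in $\Leftarrow$ where one has to switch between solving the Cauchy problem in $\mathcal{L}(E^-)$ and in $\mathcal{L}(E)$; this is routine by uniqueness. The key expository point is that invoking Theorem \ref{rec}(i) bypasses what would otherwise be the main obstacle: ruling out that $W_A^s$ strictly contains $E^-$ in the invariant case. Without (i), one would have to analyze the reduced path $A_+(t)=P^+A(t)P^+$ on $E^+$ and use the exponential growth estimate of Proposition \ref{from} to argue that any component of an element of $W_A^s$ in $E^+$ must vanish; this works, but is strictly longer than the one-line application of the uniqueness statement in Theorem \ref{rec}(i).
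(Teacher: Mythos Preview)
Your proof is correct and the $\Rightarrow$ direction is essentially the paper's argument verbatim. In the $\Leftarrow$ direction you take a slightly different, and in fact cleaner, route. The paper first shows $X_A(t)E^-\subseteq E^-$ by looking at the $P^+$-component of the split system (\ref{diff}) and observing that $A_\pm=0$ forces $P^+u\equiv 0$; it then upgrades the inclusion to an equality by noting that $t\mapsto X_A(t)|_{E^-}$ is a continuous path of injective semi-Fredholm operators starting at the identity, hence of index zero, hence surjective. You bypass this two-step argument by working directly in the Banach algebra $\mathcal{L}(E^-)$: the restricted path $A_-$ has its own fundamental solution $Y=X_{A_-}$, which is invertible on $E^-$ by Proposition \ref{inv}, and uniqueness of the Cauchy problem in $E$ identifies $X_A(t)|_{E^-}$ with $Y(t)$, giving inclusion and surjectivity in one stroke. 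Both proofs then close with the uniqueness clause of Theorem \ref{rec}(i). Your version avoids invoking the index stability of Proposition \ref{index-is-constant} and is the more economical of the two; the paper's version, on the other hand, illustrates the utility of the block decomposition (\ref{diff}) that was already set up in Proposition \ref{tow}.
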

\begin{proof}
  For any $ W\subseteq E $ such that $ X_A (t) W = E^- (A(+\infty)) $ we can 
  set $ t = 0 $ to get $ W = E^- (A(+\infty)) $, hence
  \begin{gather}
    \label{xav}
    X_A (t) E^{-} = E^{-}
  \end{gather}
  for any $ t\geq 0 $. Now, fix $ \orl{t}\in\R^{+} $ and let $ x\in E^{-} $, 
  $ \orl{x} = X_A (\orl{t}) ^{-1} x $. By the (\ref{xav}) the curve
  $ u(t) = X_A (t) \orl{x} $ is $ C^1 $ and takes values in $ E^{-} $, 
  therefore $ u'(t)\in E^- $ for any $ t\in\R^{+} $. Hence
  \begin{gather*}
    E^-\ni u'(\orl{t}) = A(\orl{t}) X_A (\orl{t}) \orl{x} = 
    A(\orl{t}) X_A (\orl{t}) X_A (\orl{t}) ^{-1} x = A(\orl{t}) x.
  \end{gather*}
Conversely, assume that the second condition is true for any $ t\in\R^{+} $.
First we prove that $ X_A (t) E^{-} \subseteq E^{-} $. Let $ x\in E^{-} $
and let $ u(t) = X_A (t) x $. In the second of (\ref{diff}) 
we have $ A_{\pm} x = 0 $ by hypothesis, thus $ y' = A_{+} y $. Hence
\begin{gather*}
P^+ u(t) = X_{A_{+}} (t) P^+ u(0);
\end{gather*}
since $ P^+ u(0) = 0 $ we have $ P^+ u = 0 $ and from the first of 
(\ref{diff}) we obtain $ u(t)\in E^- $. Now, $ X_A $ sets a continuous path of
semi-Fredholm operators on $ E^- $. By Proposition \ref{index-is-constant} 
these operators have the same index for any $ t\in\R^{+} $. Since 
$ X_A (0) = Id $ the index of these operators is zero. Since every
$ X_A (t) $ is restriction of an invertible operator they are injective,
thus surjective, that is $ X_A (t) E^{-} = E^{-} $. In particular
$ X_A (t) E^{-} $ converges to $ E^{-} $. By i) of Theorem \ref{rec}
$ E^{-} = W_A ^s $.
\end{proof}
\begin{proposition}
\label{the_patching_argument}
Given a pair of splitting subspaces $ (X,Y) $ in $ E $ there exists a path
$ A $, continuous and asymptotically hyperbolic on $ \mathbb{R} $, such that 
$ W_A ^s = X $, $ W_A ^u = Y $.
\end{proposition}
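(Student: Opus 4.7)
The plan is to build $A$ as a piecewise path that equals a constant hyperbolic operator outside a bounded interval and passes through the zero operator at the origin, so that the stable/unstable analyses on the two half-lines can be carried out separately via Lemma \ref{invariance}.

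Concretely, since $X,Y\in G_s(E)$ I would pick projectors $P_X,P_Y\in\mathcal{P}(E)$ with $\ran P_X=X$ and $\ran P_Y=Y$, and set
\[
A_+ = I - 2P_X,\qquad A_- = 2P_Y - I.
\]
Both are involutions, hence hyperbolic with $\sigma(A_\pm)=\{-1,+1\}$, and a direct check gives $E^-(A_+)=\ran P_X=X$ and $E^+(A_-)=\ran P_Y=Y$. Then I would define
\[
A(t) = \begin{cases} A_+ & t\geq 1,\\ tA_+ & 0\leq t\leq 1,\\ -tA_- & -1\leq t\leq 0,\\ A_- & t\leq -1, \end{cases}
\]
which is continuous (both middle pieces vanish at $t=0$ and match the outer pieces at $t=\pm 1$), bounded, and asymptotically hyperbolic since $A(\pm\infty)=A_\pm$.

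To conclude that $W_A^s=X$, I would observe that $A(t)$ preserves $X$ for every $t\geq 0$: on $[0,1]$, $A(t)=tA_+$ and $A_+X=X$ (because $A_+x=-x$ for $x\in X$), while on $[1,\infty)$, $A(t)=A_+$ preserves $X$ trivially. Applying Lemma \ref{invariance} to the restriction of $A$ to $\mathbb{R}^+$ gives $X_A(t)W_A^s=E^-(A(+\infty))=X$ for every $t\geq 0$, and setting $t=0$ yields $W_A^s=X$.

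The unstable side is handled symmetrically through the reflection $\check A(t)=A(-t)$, using the identity $W_A^u=W_{-\check A}^s$ recalled after Proposition \ref{from}. Here $-\check A(+\infty)=-A_-$, whose negative spectral subspace is $E^-(-A_-)=E^+(A_-)=Y$. On $[0,1]$, $-\check A(t)=-tA_-$ preserves $Y$ because $A_-Y=Y$, and on $[1,\infty)$ the path is the constant $-A_-$. Lemma \ref{invariance} applied to $-\check A$ on $\mathbb{R}^+$ therefore gives $W_A^u=W_{-\check A}^s=Y$. There is no real obstacle in the argument; the only point requiring care is choosing the two tail operators and the interpolation so that $X$ (respectively $Y$) is stabilized by $A(t)$ on the whole of $[0,\infty)$ (respectively $(-\infty,0]$), which is precisely why the cheap choice $A(0)=0$ is so convenient—it lets the two half-line invariance conditions be imposed independently while keeping the path continuous.
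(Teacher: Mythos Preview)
Your proof is correct and follows essentially the same construction as the paper: build the path from the two hyperbolic involutions $I-2P_X$ and $2P_Y-I$ on the tails, and interpolate continuously through a common value at the origin. The only minor differences are that you interpolate linearly to $0$ and invoke Lemma~\ref{invariance}, whereas the paper interpolates smoothly to $I$ via a bump function and verifies $W_A^s=X$, $W_A^u=Y$ by solving the resulting ODE explicitly.
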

\begin{proof}
Let $ P $, $ Q $ be two projectors on $ X $ and $ Y $ respectively. We build 
first a path $ A^s $ on $ \R^+ $ such that $ W_{A^s} ^s = X $. 
Let $ A^s $ be the constant path $ I - 2P $ which is hyperbolic because
$ (I - 2P)^2 = I $. The spectral projector on the negative and 
positive eigenprojectors are, respectively, $ P $ and $ I - P $. A solution
$ x + y $ of (\ref{diff}) satisfies
\begin{align*}
x' &= A^s _- x + A^s _{\mp} y = - x \\
y' &= A^s _+ y + A^s _{\pm} x = y.
\end{align*}
Thus $ X_{A^s} (t) = e^{-t} P + e^t (I - P) $ and the stable space is $ X $.
Similarly we can define $ A^u (t) = 2Q - I $ for $ t < 0 $. The joint path 
$ A^u\# A^s $ is piecewise continuous. In order to find a smooth path 
consider a smooth function $ \vfi $ such that $ \vfi([-1/2,1/2]) = 1 $ and
$ \vfi (^c (-1,1)) = -1 $. Thus the path
\begin{gather*}
A = 
\left\{
\begin{array}{ll}
\vfi(t) P + (I - P) & t \geq 0 \\
\vfi(t) (I - Q) + Q & t < 0
\end{array}
\right.
\end{gather*}
is smooth. The solution of (\ref{diff}) with starting point $ x(0) + y(0) $
is
\begin{gather*}
\left\{
\begin{array}{ll}
x(t) &= e^{\Phi(t)} x(0) \\
y(t) &= e^t y(0)
\end{array}
\right.
\end{gather*}
where $ \Phi $ is the smooth function such that $ \Phi(0) = 0 $ and 
$ \Phi'(t) = \varphi(t) $. Since $ \Phi $ diverges to $ -\infty $ as 
$ t\rightarrow +\infty $ the stable space is $ X $. Since $ \Phi $ diverges
to $ +\infty $ as $ t\rightarrow -\infty $, hence the unstable space is $ Y $.
\end{proof}
\section{Perturbation of the stable space}
In the previous sections we have defined the stable (and unstable)
space and proved that is an element of $ G_s (E) $, the Grassmannian of
splitting subspaces. Thus, in the set
\begin{gather*}
  C_h (\mathbb{R}^+,\mathcal{L}(E)) = 
\set{A\in C(\orl{\mathbb{R}}^+,\mathcal{L}(E))}%
{\sigma(A(+\infty))\cap \img\mathbb{R} = \emptyset}
\end{gather*}
endowed with the uniform topology it is defined an application that maps 
$ A $ to $ W_A ^s $. In the next two theorems we prove that it is continuous 
and that if two paths differ by a path of compact operators then
the stable spaces are compact perturbation one of each other.
\begin{theorem}\rm{(cf. \cite{AM03b}, \textsc{Theorem} 3.1).}
  \label{small_perturbations}
  The map $ A\mapsto W_A ^s $ is continuous.
\end{theorem}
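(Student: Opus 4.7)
The plan is to reduce continuity at an arbitrary $ A\in C_h(\R^+,\al{L}(E)) $ to the parameter-dependence of the contraction-mapping construction in Proposition \ref{tow}. Fix $ A $, set $ A_\infty = A(+\infty) $ with its spectral decomposition $ E = E^- \oplus E^+ $ and projectors $ P^\pm $. Pick $ \tau \geq 0 $ large enough that $ \sup_{t\geq\tau}\|A(t) - A_\infty\| < \eta/2 $, where $ \eta $ is half the threshold on $ \|H\|_\infty $ appearing in the hypothesis of Proposition \ref{tow}. Then for every $ B $ with $ \|B - A\|_\infty < \eta/2 $, the shifted path $ B_\tau(t) := B(t + \tau) $ satisfies $ \|B_\tau - A_\infty\|_\infty < \eta $, so Proposition \ref{tow} furnishes an operator $ S_B \in \al{L}(E^-,E^+) $ whose graph equals $ X_B(\tau) W_B^s $.

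The central step is to show $ B\mapsto S_B $ is continuous in operator norm at $ A $. Formula (\ref{graph_stab}) exhibits $ S_B $ as
\[
S_B = P^+\circ ev_0\circ (I - L_{B_\tau})^{-1}\circ J_{B_\tau},
\]
where $ L_{B_\tau} $ is the bounded operator on $ C_b(\R^+,E^-\oplus E^+) $ defined in (\ref{linear_part}) and $ J_{B_\tau}\colon E^-\to C_b $ is the affine piece $ x_0 \mapsto (X_{(B_\tau)_-}(\cdot)x_0,0) $. Both objects are built from the diagonal propagators $ X_{(B_\tau)_\pm} $ and the off-diagonal blocks $ H_{\pm},H_{\mp} $ of $ B_\tau - A_\infty $. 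Each of these depends continuously on $ B $ in the relevant weighted sup norm: the Volterra identity of Proposition \ref{b-a} together with the exponential bound of Proposition \ref{op:1} shows that $ B\mapsto X_{(B_\tau)_\pm} $ is continuous with the exponential weights $ e^{\mp\mu_\pm t} $, while the blocks are continuous by construction. Estimates (\ref{1})–(\ref{2}) and (\ref{choice2}) then give a uniform (in $ B $) contraction bound on $ L_{B_\tau} $ throughout the neighborhood, so the Neumann series for $ (I - L_{B_\tau})^{-1} $ converges uniformly and a standard parameter-dependent fixed-point argument yields operator-norm continuity of $ B\mapsto(I - L_{B_\tau})^{-1} $. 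Composing with the continuous pieces $ P^+\circ ev_0 $ and $ J_{B_\tau} $ gives continuity of $ B\mapsto S_B $.

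From here the rest is soft. By Proposition \ref{continuity_of_graph} the map $ S\mapsto\graph(S) $ is continuous into $ G_s(E^-\times E^+) = G_s(E) $, so $ B\mapsto X_B(\tau)W_B^s $ is continuous. The map $ B\mapsto X_B(\tau) $ (and hence $ B\mapsto X_B(\tau)^{-1} = X^{-B,1}(\tau) $) is continuous in $ GL(E) $ by Proposition \ref{b-a} applied on the compact interval $ [0,\tau] $, and the action
\[
GL(E)\times G(E)\lra G(E),\quad (T,Y)\mapsto T\cdot Y,
\]
is continuous by Proposition \ref{continuity_of_images}. Writing
\[
W_B^s = X_B(\tau)^{-1}\cdot\graph(S_B)
\]
then exhibits $ B\mapsto W_B^s $ as a composition of continuous maps, concluding continuity at $ A $.

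The main obstacle is securing operator-norm (not merely strong) continuity of $ B\mapsto(I - L_{B_\tau})^{-1} $ on the non-compact half-line space $ C_b $; the uniform contraction rate (\ref{choice2}) across a neighborhood of $ A $ is exactly what is needed to make the Neumann series converge uniformly and to turn the parameter continuity of the integral kernels into bona fide operator-norm continuity of the inverse.
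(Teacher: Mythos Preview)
Your approach is essentially the paper's: shift by $\tau$ into the regime of Proposition~\ref{tow}, show the graph operator $S_B$ depends continuously on $B$, then pull back via $X_B(\tau)^{-1}$ using Propositions~\ref{continuity_of_graph} and~\ref{continuity_of_images}. The only difference is in the execution of the middle step. You argue abstractly for operator-norm continuity of $B\mapsto L_{B_\tau}$ and hence of $(I-L_{B_\tau})^{-1}$ via the uniform contraction bound; the paper instead expands the Neumann series and proves convergence term by term, tracking each iterate $L_{A_n}^k X_{A_{n-}}(\cdot)x$ by induction on $k$ (noting the iterates alternate between $E^-$ and $E^+$) and applying dominated convergence to the integrals over $[0,t]$ and $[t,\infty)$. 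Both routes rest on the same exponential kernel bounds from Proposition~\ref{op:1} and the same uniform contraction constant from (\ref{choice2}); the paper's version is just more explicit about how the non-compact half-line is handled, which is exactly the point you flag as the main obstacle.
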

\begin{proof}
Since $ C_h (\R^+,\mathcal{L}(E)) $ is a metric space it is enough to prove that
the map is sequentially continuous. Let
$ \{\hskip .1em A_n \hskip .1em| \hskip .1em n\in\mb{N}\hskip .1em\} $ be a 
sequence in $ C_h (\R^+,\mathcal{L}(E)) $ converging to an asymptotically 
hyperbolic path $ A $. Let $ A(+\infty) = A_0 $. Call $ P^{\pm} $ the
spectral projectors on $ E^- (A_0) $ and $ E^+ (A_0) $ respectively. Since
$ A_0 $ is hyperbolic, there exist a pair $ (c,\lambda) $ such that
  \begin{gather*}
    \| e^{tA_0} P^{-} \|\leq c e^{-\lambda t },\quad
    \| e^{- t A_0} P^{+} \| \leq c e^{-\lambda t },\quad t\geq 0.
  \end{gather*}
The sequence $ \{A_n\} $ converges to $ A_0 $ uniformly as 
$ n\rightarrow\infty $. Moreover $ A(t) $ converges to $ A_0 $, as 
$ t\rightarrow +\infty $. Using triangular inequalities we can find
$ \tau\in\mathbb{R}^+ $ and $ N\in\mathbb{N} $ such that, for
every $ t\geq\tau $ and $ n\geq N $
\begin{gather}
\label{small_perturbations:1}
\no{A_n (t) - A_0}\leq\frac{\lambda}{Mc(1 + \sqrt{c})}.
\end{gather}
where $ M = \max\{\no{P^+},\no{P^-}\} $. Therefore for every $ n\geq N $ the 
paths $ A_{n,\tau} $, together with $ A_{\tau} $, fulfill the conditions of 
Proposition \ref{tow}. In 
particular there are $ S_n,S \in\mathcal{L}(E^-,E^+) $ such that
\begin{gather*}
X_{A_n} (\tau) W_{A_n} ^s = W_{A_{n,\tau}} ^s = \graph(S_n), \ \
X_A (\tau) W_A ^s = \graph(S). 
\end{gather*}
It is enough to prove that $ S_n $ converges to $ S $. In fact, by Proposition
\ref{continuity_of_graph},  this implies that $ X_{A_n} (\tau) W_{A_n} ^s $
converges to $ X_A (\tau) W_A ^s $ and the conclusion follows because 
$ X_{A_n} $ converges to $ X_A $ point-wise. For the remainder of the proof
we omit the subscript $ \tau $ from the paths. We recall that, by 
(\ref{graph_stab}), given $ x\in E^- $
\begin{gather}
S_n x = P^{+} ev_0 (I - L_{A_n})^{-1}
(X_{A_{n-}}(\cdot) x) = P^{+}
\sum_{k = 0} ^{\infty} ev_0 [L_{A_n} ^k
(X_{A_{n-}}(\cdot) x)].
\end{gather}
Since the estimate (\ref{small_perturbations:1}) holds for every $ n\geq N $
we can apply the Proposition \ref{op:1} to $ A_{n-} $ and $ A_{n+} $ 
in order to obtain uniform exponential estimates
\begin{align*}
&\no{X_{A_{n-}} (t) X_{A_{n-}} (s) ^{-1} x}\leq c e^{-\mu_- (t - s)} |x| \\
&\no{X_{A_{n+}} (t) X_{A_{n+}} (r) ^{-1} x}\leq c e^{\mu_+ (t - r)} |x|
\end{align*}
where $ \mu_{-} $ and $ \mu_+ $ are the same constants defined in 
Proposition \ref{tow}. By (\ref{1}) and (\ref{2}) there exists
$ 0 < \alpha < 1 $ such that $ \no{L_{A_n}}\leq\alpha $ for every 
$ n\geq N $. Then
\begin{gather}
\label{conv_dom}
|[L^k _{A_n} X_{A_{n-}} (\cdot) x]|\leq c\alpha ^k |x|.
\end{gather} 
In order to prove that $ S_n $ converges to $ S $ we show, by induction on 
$ k\in\mathbb{N} $, that $ L^k _{A_n} X_{A_{n-}} (\cdot) x $ converges to
$ L^k _{A} X_{A_{-}} (\cdot) x $ point-wise. Therefore the series
\begin{gather*}
\sum_{k = 0} ^{\infty} ev_0 [L_{A_n} ^k
(X_{A_{n-}}(\cdot) x)]
\end{gather*}
converges point-wise and, by (\ref{conv_dom}), is dominated uniformly on 
$ \mathbb{N} $ by the series of the sequence $ \{\alpha^k\} $. This is enough
to obtain the convergence of series to the point-wise limit. We claim that for
every $ t\geq 0 $
\begin{gather*}
\lim_{n\rightarrow\infty} L_{A_n} ^k X_{A_{n-}} (t) x = 
L_{A_0} ^k X_{A_{0-}} (t) x, \\
L_{A_n} ^k X_{A_{n-}} (t) x \in E^-, \ \text{if} \ k\text{ is even},\\
 L_{A_n} ^k X_{A_{n-}} (t) x \in E^+, \ \text{if} \ k\text{ is odd}.
\end{gather*}
If $ k = 0 $ the thesis follows since $ x\in E^- $ by hypothesis. 
Suppose it is true for $ k\in\mathbb{N} $. If $ k $ is odd, by 
(\ref{linear_part})
\begin{gather}
\label{kodd}
L_{A_n} ^{k + 1} X_{A_{n-}} (t) x = 
\int_0 ^t X_{A_{n-}} (t) X_{A_{n-}} (\tau)^{-1} A_{n\mp} (\tau) 
L_{A_n} ^k X_{A_{n-}} (\tau) x d\tau
\end{gather}
which belongs to $ E^- $. The last term converges to 
$ L_{A_n} ^k X_{A_{n-}} (t) x $ by inductive hypothesis. The other converges
by Proposition \ref{b-a} and the fact that $ A_n $ converges to $ A $. 
The integrand of (\ref{kodd}) is bounded in $ [0,t] $ by
\begin{gather*}
c^2 e^{-\mu_- (t - \tau)}\sup_n \no{A_n}_{\infty} \alpha^k |x|.
\end{gather*}
Then, by the dominate convergence theorem, the left member of (\ref{kodd}) 
converges point-wise. If $ k $ is even, by (\ref{linear_part})
\begin{gather}
\label{keven}
L_{A_n} ^{k + 1} X_{A_{n-}} (t) x = -\int_t ^{\infty} 
X_{A_{n+}} (t) X_{A_{n+}} (\tau)^{-1} A_{n\pm} (\tau)
L_{A_n} ^k X_{A_{n-}} (\tau) x d\tau.
\end{gather}
Similarly the integrand converges point-wise and is dominated by 
\begin{gather*}
c^2 \alpha^k e^{\mu_+ (t - \tau)} | x | 
\sup_n \no{A_n}_{\infty}\in L^1 (\mathbb{R}^+).
\end{gather*}
Again, by the dominate convergence theorem, we clinch the point-wise convergence
of (\ref{keven}) and the inductive step is concluded.
Thus
\begin{align*}
\lim_{n\rightarrow\infty} & ev_0 [L^k _{A_n} X_{A_{n-}} (\cdot) x] = 
ev_0 [L^k _{A_0} X_{A_{0-}} (\cdot) x],\\
|&ev_0 [L^k _{A_n} X_{A_{n-}} (\cdot) x]|\leq c\alpha^k |x|
\end{align*}
for every $ k\in\mathbb{N} $ we have convergence of the series. 
\end{proof}
We state without proof a couple of facts on compactness useful for the
next theorem.
\begin{lemma}\label{res1}
Let $ J $ be an interval of the real line, 
$ K\in L^1 (J,\mathcal{L}(E)) $ such that $ K(t)\in\mathcal{L}_c (E) $ almost 
everywhere. Then the map
\begin{gather*}
C_b (J,E)\ni u\longmapsto\int_J K(\tau)u(\tau)d\tau\in E
\end{gather*}
is a compact operator.
\end{lemma}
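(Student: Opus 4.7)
The plan is to approximate the kernel $K$ in the $L^1$ norm by a sequence of simple functions taking values in the ideal $\mathcal{L}_c(E)$, to show that each approximating operator is compact, and then to conclude by the fact that compact operators form a closed subspace of $\mathcal{L}(C_b(J,E),E)$ in the operator norm.

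\textbf{Step 1 (the operator is bounded).} Denote the linear map in the statement by $T$. From $|Tu| \leq \int_J \no{K(\tau)}\, d\tau \cdot \no{u}_\infty$ we read off $\no{T} \leq \no{K}_{L^1}$, so $T \in \mathcal{L}(C_b(J,E),E)$.

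\textbf{Step 2 (approximation by simple kernels).} Since $K$ is Bochner integrable and the set $\set{t\in J}{K(t)\in\mathcal{L}_c(E)}$ has full measure, and since $\mathcal{L}_c(E)$ is a closed linear subspace of $\mathcal{L}(E)$, a standard Bochner approximation yields simple functions
\[
K_n(t) = \sum_{i=1}^{N_n} \mathbf{1}_{A_i^n}(t)\, T_i^n,
\qquad T_i^n\in\mathcal{L}_c(E),\ \mu(A_i^n)<\infty,
\]
such that $\no{K-K_n}_{L^1}\to 0$. I would point either to a standard reference (e.g., Diestel--Uhl) or carry out the argument directly by truncating, splitting $J$ into pieces of small measure where $K$ is close to a constant compact operator, and summing.

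\textbf{Step 3 (each $T_n$ is compact).} Let $T_n u := \int_J K_n(\tau)u(\tau)\,d\tau = \sum_{i=1}^{N_n} T_i^n \Bigl(\int_{A_i^n} u(\tau)\,d\tau\Bigr)$. For each $i$ the linear map $u\mapsto \int_{A_i^n} u(\tau)\,d\tau$ sends the unit ball of $C_b(J,E)$ into a set bounded in $E$ by $\mu(A_i^n)$; applying the compact operator $T_i^n$ yields a relatively compact subset of $E$. A finite sum of relatively compact sets is relatively compact, hence $T_n(B_{C_b(J,E)})$ is relatively compact and $T_n\in\mathcal{L}_c(C_b(J,E),E)$.

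\textbf{Step 4 (passage to the limit).} The bound proved in Step 1, applied to $K-K_n$, gives
\[
\no{T-T_n} \leq \no{K-K_n}_{L^1} \longrightarrow 0.
\]
Since $\mathcal{L}_c(C_b(J,E),E)$ is closed in $\mathcal{L}(C_b(J,E),E)$ with respect to the operator norm, $T$ is compact. The main technical point is the measurable selection in Step 2, ensuring that the simple approximants can be chosen with values in $\mathcal{L}_c(E)$; once that is in place the remaining estimates are immediate.
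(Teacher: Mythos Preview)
Your proposal is correct and follows essentially the same approach as the paper: reduce to simple functions with values in $\mathcal{L}_c(E)$, observe that the associated operator is a finite sum of compositions with compact operators, and pass to the limit using $\no{T-T_n}\le\no{K-K_n}_{L^1}$ together with the closedness of compact operators. Your write-up is more detailed (you make the boundedness estimate and the $\mathcal{L}_c(E)$-valued approximation explicit), but the underlying argument is the same.
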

\begin{proof}
When $ K $ is constant the map is obtained by composition on the left with
a compact operator. If $ K $ is a characteristic function on $ J $ it is sum
of compact operators. We conclude with the density of characteristic functions
in $ L^1 (J,\mathcal{L}(E)) $ and closeness of compact operators.
\end{proof}
\begin{theorem}[Ascoli-Arzel\`a]
\label{ascoli-arzela}
Let $ X $ be a compact metric space, $ E $ a Banach space. A bounded subset
$ \mathcal{W}\subset C(X,E) $ is relatively compact if and only is 
equicontinuous and, for every $ x\in X $, the set 
$ \mathcal{W}(x) = \set{f(x)}{f\in\mathcal{W}} $ is relatively compact in 
$ E $. 
\end{theorem}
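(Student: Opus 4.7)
The plan is to prove the two implications separately, using that $C(X,E)$ is a Banach space with the sup norm (since $E$ is Banach and $X$ compact), so relative compactness is equivalent to total boundedness together with pre-compactness of the closure.

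For the necessity direction, assume $\mathcal{W}$ is relatively compact. Pointwise relative compactness of $\mathcal{W}(x)$ is immediate because the evaluation map $\mathrm{ev}_x\colon C(X,E)\to E$, $f\mapsto f(x)$, is 1-Lipschitz and hence maps relatively compact sets to relatively compact sets. For equicontinuity, given $\varepsilon>0$, cover the (compact) closure of $\mathcal{W}$ by finitely many $\varepsilon/3$-balls centered at $f_1,\dots,f_n\in C(X,E)$. Each $f_i$ is uniformly continuous on the compact space $X$, so there is $\delta_i>0$ such that $d_X(x,y)<\delta_i$ implies $|f_i(x)-f_i(y)|<\varepsilon/3$. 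Setting $\delta=\min_i\delta_i$ and using the triangle inequality, any $f\in\mathcal{W}$ satisfies $|f(x)-f(y)|<\varepsilon$ when $d_X(x,y)<\delta$.

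For the sufficiency direction, the goal is to show total boundedness of $\mathcal{W}$ in $C(X,E)$. Given $\varepsilon>0$, equicontinuity furnishes $\delta>0$ with $|f(x)-f(y)|<\varepsilon/3$ for all $f\in\mathcal{W}$ whenever $d_X(x,y)<\delta$. By compactness of $X$, pick finitely many points $x_1,\dots,x_n$ whose $\delta$-balls cover $X$. The finite union $\mathcal{W}(x_1)\cup\cdots\cup\mathcal{W}(x_n)$ is relatively compact in $E$, so it admits a finite cover by balls $B(y_1,\varepsilon/3),\dots,B(y_m,\varepsilon/3)$. For each map $\varphi\colon\{1,\dots,n\}\to\{1,\dots,m\}$, set
\[
\mathcal{W}_\varphi = \{f\in\mathcal{W} : |f(x_i)-y_{\varphi(i)}|<\varepsilon/3\text{ for every }i\}.
\]
There are finitely many such $\varphi$ and, by construction, they cover $\mathcal{W}$. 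For any $f,g\in\mathcal{W}_\varphi$ and any $x\in X$, choose $x_i$ with $d_X(x,x_i)<\delta$; then $|f(x)-g(x)|\leq|f(x)-f(x_i)|+|f(x_i)-y_{\varphi(i)}|+|y_{\varphi(i)}-g(x_i)|+|g(x_i)-g(x)|<\varepsilon$. Hence each $\mathcal{W}_\varphi$ has sup-diameter at most $\varepsilon$, proving total boundedness.

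Neither direction presents a real obstacle; the only subtle point, and the one I would be careful with, is that in the sufficiency argument one must combine the \emph{uniform} bound from equicontinuity (which provides the same $\delta$ for all $f\in\mathcal{W}$) with the pointwise compactness at the finitely many sample points $x_i$. The compactness of $X$ is essential precisely to reduce the infinitely many equicontinuity conditions to a finite sample, and the completeness of $E$ is used only implicitly through the fact that total boundedness of $\mathcal{W}$ in the complete space $C(X,E)$ suffices for relative compactness.
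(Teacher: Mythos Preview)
The paper does not give its own proof of this theorem; it simply refers the reader to Dieudonn\'e (\cite{Die87}, pp.~142--143). Your argument is the standard one and is essentially correct, so there is nothing substantive to compare.

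One small arithmetic slip: in the sufficiency direction you bound $|f(x)-g(x)|$ by four terms, each less than $\varepsilon/3$, which gives $4\varepsilon/3$ rather than $\varepsilon$. Either replace $\varepsilon/3$ by $\varepsilon/4$ throughout, or simply note that the nonempty $\mathcal{W}_\varphi$ have diameter at most $4\varepsilon/3$, which is still enough for total boundedness.
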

For a proof see \cite{Die87}, pp. 142--143.
\begin{theorem}\rm{(cf. \cite{AM03b}, \textsc{Theorem} 3.6).}
\label{compact}
Let $ A,B\in C_h (\R^+,\mathcal{L}(E)) $ be such that $ K = B - A $ is a compact 
operator for every $ t $. Then $ W_A ^s $ is a compact perturbation of
$ W_B ^s $ and
\begin{gather*}
\dim (W_A ^s,W_B ^s) = \dim (E^- (A(+\infty)),E^- (B(+\infty))).
\end{gather*}
\end{theorem}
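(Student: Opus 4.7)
The plan rests on two compactness inputs. First, from the integral formula of Proposition \ref{b-a},
\[
X_A(t)^{-1} X_B(t) - I = \int_0^t X_A(\tau)^{-1} K(\tau) X_B(\tau)\,d\tau,
\]
so since $K(\tau)$ is compact for each $\tau$ and $\mathcal{L}_c(E)$ is closed in norm, $X_A(t) - X_B(t) \in \mathcal{L}_c(E)$ for every $t$. Second, $A(+\infty) - B(+\infty) \in \mathcal{L}_c(E)$ as a norm limit of the compacts $K(t)$; the essential spectra of $A(+\infty)$ and $B(+\infty)$ therefore agree, and choosing a common contour $\Gamma \subset \C \setminus \img\R$ that simultaneously encloses $\sigma^-(A(+\infty))$ and $\sigma^-(B(+\infty))$, adjoining small loops around the finitely many isolated eigenvalues that differ, the Riesz formula combined with the resolvent identity gives $P^-(A(+\infty)) - P^-(B(+\infty)) \in \mathcal{L}_c(E)$.

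The core step is to compare, as $t$ varies, the conjugated projectors
\[
\Pi_A(t) = X_A(t)^{-1} P^-(A(+\infty)) X_A(t), \qquad
\Pi_B(t) = X_B(t)^{-1} P^-(B(+\infty)) X_B(t).
\]
Setting $M(t) = X_A(t)^{-1} X_B(t) = I + C(t)$ with $C(t) \in \mathcal{L}_c(E)$, one writes $\Pi_A(t) - \Pi_B(t)$ as the sum of the sandwich $X_A(t)^{-1}[P^-(A(+\infty)) - P^-(B(+\infty))] X_A(t)$ and a commutator-type term involving $[C(t),\,\cdot\,]$; both summands are compact. By Proposition \ref{equivalence_of_definitions}, $\ran \Pi_A(t)$ and $\ran \Pi_B(t)$ are compact perturbations of each other for every $t \geq 0$, and since the ranges depend continuously on $t$ in $G_s(E)$, Theorem \ref{stability_dimension} yields that $\dim(\ran \Pi_A(t), \ran \Pi_B(t))$ is constant on the connected set $\R^+$, equal at $t = 0$ to $\dim(E^-(A(+\infty)), E^-(B(+\infty)))$.

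For the final identification, fix $T$ large so that Propositions \ref{tow} and \ref{from} apply to $A(\cdot + T)$ and $B(\cdot + T)$: then $X_A(T) W_A^s = \graph(S_A(T))$ over $E^-(A(+\infty))$ with $\|S_A(T)\|$ arbitrarily small, similarly for $B$, while $Z := X_B(T)^{-1} E^+(B(+\infty))$ is, by Theorem \ref{rec}, simultaneously a complement of $\ran \Pi_B(T) = X_B(T)^{-1} E^-(B(+\infty))$ and of $W_B^s$. The continuous path $s \mapsto X_A(T)^{-1} \graph(s S_A(T))$ in $G_s(E)$ connects $\ran \Pi_A(T)$ to $W_A^s$; by Theorem \ref{perturbation} applied pointwise and a standard connectedness argument on $[0,1]$, the pair $(X_A(T)^{-1} \graph(s S_A(T)), Z)$ remains Fredholm with constant index throughout the deformation. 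Using (\ref{alternative}) both at $s=0$ (where $Z$ is a complement of $\ran \Pi_B(T)$) and at $s=1$ (where $Z$ is a complement of $W_B^s$), this yields
\[
\dim(W_A^s, W_B^s) = \ind(W_A^s, Z) = \ind(\ran \Pi_A(T), Z) = \dim(\ran \Pi_A(T), \ran \Pi_B(T)),
\]
which combined with the previous paragraph gives the claimed formula. The compact-perturbation conclusion for the pair $(W_A^s, W_B^s)$ will come from analyzing the projector $\hat{\Pi}_A := X_A(T)^{-1}[(I + S_A(T)) P^-(A(+\infty))] X_A(T)$, whose range is exactly $W_A^s$, and verifying that $\hat{\Pi}_A - \hat{\Pi}_B$ is compact. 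The main obstacle is precisely this last verification: one must track the compactness of $K$ through the Banach fixed-point construction of $S_A(T), S_B(T)$ in Proposition \ref{tow} in order to conclude that $S_A(T) - S_B(T)$ (and hence $\hat{\Pi}_A - \hat{\Pi}_B$, after conjugation) inherits compactness from the pointwise compactness of $K$ and of $P^-(A(+\infty)) - P^-(B(+\infty))$.
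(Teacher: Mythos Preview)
Your proposal is incomplete at exactly the point you flag: you defer the verification that $S_A(T)-S_B(T)$ (equivalently $\hat\Pi_A-\hat\Pi_B$) is compact, and this is not a minor loose end but the entire technical content of the theorem. Worse, your dimension-formula argument as written is circular: the identity $\dim(W_A^s,W_B^s)=\ind(W_A^s,Z)$ via (\ref{alternative}) already presupposes that $W_A^s$ is a compact perturbation of $W_B^s$, which is the very thing you have postponed. So the compact-perturbation step must come first in any ordering, and without it nothing stands.

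The paper attacks precisely this step. Recalling from (\ref{graph_stab}) that $S_A P^-(A)=P^+(A)\,ev_0\,(I-L_A)^{-1}\,X_{A_-}(\cdot)P^-(A)$, the key is to show that the linear map $L_A-L_B$ is compact on $C_b(\R^+,E)$. The paper does this via Ascoli--Arzel\`a: equicontinuity of $(L_A-L_B)\mathcal W$ for bounded $\mathcal W$ comes from the ODE satisfied by $L_Au$, while pointwise relative compactness of $\{(L_A-L_B)u(t):u\in\mathcal W\}$ is obtained by interpolating the four integrals defining $L_A$ and $L_B$ term by term and applying Lemma~\ref{res1}, using that $X_A(t)-X_B(t)$, $A(t)-B(t)$ and $P^{\pm}(A)-P^{\pm}(B)$ are all compact. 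Once $L_A-L_B$ is compact, so is $S_AP^-(A)-S_BP^-(B)$, and since $W_A^s=\ker\bigl(P^+(A)-S_AP^-(A)\bigr)$ and similarly for $B$, a single application of Proposition~\ref{cp:2} yields \emph{both} the compact-perturbation conclusion and the dimension formula $\dim(W_A^s,W_B^s)=\dim(E^+(B(+\infty)),E^+(A(+\infty)))=\dim(E^-(A(+\infty)),E^-(B(+\infty)))$.

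Your conjugated-projector observation $\Pi_A(t)-\Pi_B(t)\in\mathcal L_c(E)$ and the homotopy $s\mapsto X_A(T)^{-1}\graph(sS_A(T))$ are correct in themselves, but once the compactness of $S_AP^-(A)-S_BP^-(B)$ is in hand they are unnecessary detours: Proposition~\ref{cp:2} gives everything directly.
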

\begin{proof}
Up to a time shift we can assume that $ A $ and $ B $ satisfy the conditions 
of the Proposition \ref{tow}. Then $ W_A ^s $ and $ W_B ^s $ are graph of operators
\begin{align*}
S_A &\in\mathcal{L}(E^- (A(+\infty)),E^+ (A(+\infty))), \\ 
S_B &\in\mathcal{L}(E^- (B(+\infty)),E^+ (B(+\infty))).
\end{align*}
Let $ P^- (A) $ and $ P^- (B) $ be the spectral projectors of the negative
eigenspaces. Observe that
\begin{equation*}
W_A ^s = \ker(P^+ (A) - S_A P^- (A)), \ W_B ^s = \ker(P^+ (B) - S_B P^- (B)).
\end{equation*}
The differences $ P^\pm (A) - P^\pm (B) $ are compact operators; we wish
to prove that $ S_A P^- (A) - S_B P^- (B) $ is also compact. Therefore 
$ W_A ^s $ is a compact perturbation of $ W_B ^s $ and, by Proposition 
\ref{cp:2},
\begin{equation*}
\begin{split}
\dim(W_A ^s, W_B ^s) &= 
\dim(\ker(P^+ (A) - S_A P^- (A)),\ker(P^+ (B) - S_B P^- (B))) \\
&= \dim(\ran(P^+ (B) - S_B P^- (B)),\ran((P^+ (A) - S_A P^- (A))) \\
&= \dim(E^+ (B(+\infty)),E^+ (A(+\infty))) \\
&= \dim(E^- (A(+\infty)),E^- (B(+\infty))),
\end{split}
\end{equation*}
which is the thesis when $ W_A ^s $ and $ W_B ^s $ are graphs. In the general 
case there exists a real $ \tau $ such that $ A(\cdot +\tau) $ and 
$ B(\cdot +\tau) $ satisfy the conditions of Proposition \ref{tow}. Then
\begin{equation*}
\begin{split}
\dim (W^s _{A(\cdot +\tau)},W^s _{B(\cdot +\tau)}) &= 
\dim (X_A (\tau) W_A ^s, X_B (\tau) W_B ^s) \\
&=\dim (W_A ^s, X_A (\tau)^{-1} X_B(\tau) W_B ^s) \\
&=\dim(W_A ^s, W_B^s) + \dim(W_B ^s, X_A (\tau)^{-1} X_B(\tau) W_B ^s)
\end{split}
\end{equation*}
The last term of the equality is $ 0 $ because $ X_A (\tau)^{-1} X_B (\tau) $ 
can be written as 
$ I + (X_A (\tau)^{-1} - X_B (\tau) ^{-1}) X_B(\tau) $ which is an invertible
operator of the Fredholm group. Then the
conclusion follows from Proposition \ref{cp:2}. Now we write, by 
(\ref{graph_stab})
\begin{gather*}
S_A P^{-} (A) = P^+ (A) ev_0 [(I - L_A)^{-1} X_{A_{-}}(\cdot)P^{-}(A)], \\
S_B P^{-} (B) = P^+ (B) ev_0 [(I - L_B)^{-1} X_{B_{-}}(\cdot)P^{-}(B)].
\end{gather*}
Using the Theorem of Ascoli--Arzel\`a we prove first that $ L_A - L_B $ is a 
compact operator on $ C_b (\R^+,E) $. In fact let $ \mathcal{W} $ be a bounded 
subset of $ C_b (\R^+,E) $. Given $ u\in\mathcal{W} $ for every 
$ t\in\mathbb{R}^+ $ we have
\begin{align*}
(L_A u)' (t) &= [P^+ (A) A(t) P^+ (A) + P^- (A) A(t) P^- (A)](L_A - I) u (t) + 
A(t) u (t)\\
(L_B u)' (t) &= [P^+ (B) B(t) P^+ (B) + P^- (B) B(t) P^- (B)](L_B - I) u (t) + 
B(t) u (t).
\end{align*}
Since $ A $ and $ B $ are bounded the set 
$ \set{(L_A - L_B)u(t)}{u\in\mathcal{W}} $ is bounded by a constant that 
depends on $ t $ at most. Then $ (L_A - L_B) \mathcal{W} $ is 
equicontinuous. Now we prove that the set
\begin{gather*}
\set{(L_A - L_B)u (t)}{u\in\mathcal{W}} 
\end{gather*}
is relatively compact. The prove is carried on interpolating $ L_A $ and
$ L_B $ and applying Lemma \ref{res1} to the differences as follows
\begin{equation*}
\begin{split}
(L_A - L_B)u(t) &= P^- (A)\int_0 ^t X_{A_-} (t) X_{A_-} (\tau)^{-1} 
P^- (A) A (\tau) P^+ (A) u(\tau) d\tau \\
&- P^- (B)\int_0 ^t X_{B_-} (t) 
X_{B_-} (\tau)^{-1} P^- (B) B (\tau) P^+ (B) u(\tau) 
d\tau \\
&- P^+ (A)\int_t ^\infty X_{A_+} (t) X_{A_+} (\tau)^{-1} 
P^+ (A) A (\tau) P^- (A) u(\tau) d\tau \\
&+ P^+ (B)\int_t ^\infty X_{B_+} (t) 
X_{B_+} (\tau)^{-1} P^+ (B) B (\tau) P^- (B) u(\tau) 
d\tau.
\end{split}
\end{equation*}
Since $ X_A (t) - X_B (t) $ and $ A(t) - B(t) $ are compact by interpolation
we obtain the sum of two integrals on $ [0,t] $ and $ [t,+\infty) $ 
with compact integrands. We conclude by applying Lemma \ref{res1} to 
the two integrands. By composition $ S_A P^- (A) - S_B P^- (B) $ is compact.
\end{proof}
\chapter{Ordinary differential operators on Banach spaces}
\label{chap4}
Given a path $ A \in C(\mathbb{R},\mathcal{L}(E)) $ we study the properties of
the differential operator $ F_A u = u' - A u $. %
\glsadd{labFA}
When $ E $ is a Hilbert space the operator can be defined in 
$ H^1 (\mathbb{R},E) $ with values in $ L^2 (\mathbb{R},E) $. 
By \textsc{Theorem} 5.1 of \cite{AM03b} the operator $ F_A $ is Fredholm if 
and only if the pair $ (W_A ^s,W_A^u) $ is a Fredholm pair and
\begin{gather*}
\ind F_A = \ind(W_A ^s,W_A ^u).
\end{gather*}
In this chapter we prove the same result when $ E $ is a Banach space and
the operator $ F_A $ is defined on $ C^1 _0 (\mathbb{R},E) $ and takes
values in $ C_0 (\mathbb{R},E) $, where
\begin{align*}
C_0 (\mathbb{R},E) &= \set{u\in C(\mathbb{R},E)}%
{\lim_{t\rightarrow\pm\infty} u(t) = 0} \\
C^1 _0 (\mathbb{R},E) &= \set{u\in C^1 (\mathbb{R},E)}%
{\lim_{t\rightarrow\pm\infty} u(t) = 0, \ 
\lim_{t\rightarrow\pm\infty} u'(t) = 0}.
\glsadd{labC0}
\glsadd{labC10}
\end{align*} 
We remark that the result also holds when $ F_A $ is defined on the
Sobolev space $ W^{1,p} (\mathbb{R},E) $ with values in $ L^p (\mathbb{R},E) $
with $ p\geq 1 $. 
\section{The operators $ F^+ _A $ and $ F_A ^- $}
Consider the spaces
\begin{align*}
C _0 (\mathbb{R}^+,E) &= \set{u\in C^1 (\mathbb{R}^+,E)}%
{\lim_{t\rightarrow +\infty} u(t) = 0} \\
C^1 _0 (\mathbb{R}^+,E) &= \set{u\in C^1 (\mathbb{R}^+,E)}%
{\lim_{t\rightarrow +\infty} u(t) = 0,\ \lim_{t\rightarrow +\infty} u' (t) = 0}
;
\end{align*}
\glsadd{labC10+}
\glsadd{labC0+}
we define the operator
\begin{gather*}
F_A ^+\colon C^1 _0 (\mathbb{R}^+,E)\rightarrow C_0 (\mathbb{R}^+,E), \ 
u\mapsto u' - Au
\end{gather*}
and similarly $ F_A ^- $ on $ C^1 _0 (\mathbb{R}^-,E) $. We wish to prove
that when $ A $ is asymptotically hyperbolic $ F_A ^+ $ has a right inverse. 
First observe that in special case $ A \equiv A_0 $
the operator $ F_{A_0} $ is invertible and its inverse is given by
\begin{gather}
R_{A_0} h = G_{A_0} * h
\end{gather}
for any $ h\in C^1 _0 (\mathbb{R},E) $, where
\begin{gather}
G_{A_0} (t) = e^{tA_0}
\left[ P^- (A_0) 1_{\R^+} - P^+ (A_0) 1_{\R^{-}} \right]
\end{gather}
where $ P^- (A_0) $ and $ P^+ (A_0) $ are the spectral projectors of $ A_0 $ 
relative to decomposition $ \sigma(A_0) = \sigma^+ \cup \sigma^- $ and 
$ 1_{\mathbb{R}^+} $ and $ 1_{\mathbb{R}^-} $ are the characteristic
functions of the subsets $ \mathbb{R}^+ $ and $ \mathbb{R}^- $. Exponential
estimates of $ G_{A_0} $ makes $ G_{A_0} * h $ a continuously differentiable
function in $ C^1 _0 (\mathbb{R},E) $. Moreover
\glsadd{lab1S}
\begin{equation*}
\begin{split}
F_{A_0} (G_{A_0} * h) (t) &= (G_{A_0} * h)' - A_0 (G_{A_0} * h) \\
&= A_0 (G_{A_0} * h) + P^- h(t) + P^+ h(t) - A_0 (G_{A_0} * h) = h;
\end{split}
\end{equation*}
hence $ R_{A_0} $ is a right inverse of $ F_{A_0} $. Otherwise
\begin{equation*}
\begin{split}
G_{A_0} * F_{A_0} u &= 
\int_{-\infty} ^t e^{(t - \tau) A_0} P^- (u' - A_0 u) d\tau - 
\int_t ^{+\infty} e^{(t - \tau) A_0} P^+ (u' - A_0 u) d\tau \\
\end{split}
\end{equation*}
integration by parts lead to
\begin{equation*}
\begin{split}
&\int_{-\infty} ^t e^{(t - \tau) A_0} P^- (u' - A_0 u) d\tau = P^- u(t) \\
-&\int_t ^{+\infty} e^{(t - \tau) A_0} P^+ (u' - A_0 u) d\tau = P^+ u(t)
\end{split}
\end{equation*}
taking the sum we conclude.
If $ A $ is a asymptotically hyperbolic path we know that $ W_A ^s $ 
and $ W_A ^u $ are closed and have topological complements. Choose $ X_s $ 
and $ X_u $ such that $ X_s \oplus W_A ^s = E = X_u \oplus W_A ^u $ and let 
$ P_s = P(W_A ^s,X_s) $, $ P_u = P(W_A ^u,X_u) $. Define
\begin{gather}
G_{A,P_s}^+ (t,\tau) = X_A (t) 
\left[P_s 1_{\mathbb{R}^{+}} - (I - P_s) 1_{\mathbb{R}^{-}} \right] 
X_A (\tau) ^{-1}
\label{GAt:1} \\
G_{A,P_u}^- (t,\tau) = X_A (t) 
\left[(I - P_u) 1_{\mathbb{R}^{+}} - P_u 1_{\mathbb{R}^{-}} \right] 
X_A (\tau)^{-1}
\label{GAt:2}
\end{gather}
\begin{proposition}
If $ A $ is an asymptotically hyperbolic path there are positive 
constants $ (c,\lambda) $ such that
\begin{gather}
\label{conv}
\| G_{A,P_s} ^{+} (t,\tau) \| \leq c e^{-\lambda | t - \tau |}
\end{gather}
for every $ (t,\tau)\in\R^+ \times \R^+ $.
\end{proposition}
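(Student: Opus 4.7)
The strategy is to split the estimate according to the sign of $t-\tau$, corresponding to the two pieces in (\ref{GAt:1}), and in each region to reduce to a pointwise dichotomy for the shifted path $A(\cdot+\sigma)$ via the cocycle identity $X_A(t)X_A(\tau)^{-1}=X_{A(\cdot+\tau)}(t-\tau)$ of Proposition \ref{ca:ba}. Throughout I will make systematic use of the conjugated projector
\[
P_s^\tau := X_A(\tau)\,P_s\,X_A(\tau)^{-1},
\]
which, by (\ref{stab}) together with Theorem \ref{rec}, is a projector onto $W^s_{A(\cdot+\tau)}=X_A(\tau)W_A^s$ along $X_A(\tau)X_s$, and satisfies the intertwining identity $P_sX_A(\tau)^{-1}=X_A(\tau)^{-1}P_s^\tau$.

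For $t\geq\tau\geq 0$ I would factor
\[
X_A(t)\,P_s\,X_A(\tau)^{-1} \;=\; X_{A(\cdot+\tau)}(t-\tau)\circ P_s^\tau.
\]
Since $P_s^\tau x$ lies in the stable space of the shifted path, Theorem \ref{rec}(ii) applied to $A(\cdot+\tau)$ (using $X_{A(\cdot+\tau)}(0)=I$) yields
\[
\lvert X_{A(\cdot+\tau)}(t-\tau)P_s^\tau x\rvert \;\leq\; c_\tau\,e^{-\lambda_\tau(t-\tau)}\,\lvert P_s^\tau x\rvert.
\]
For $0\leq t<\tau$ I would dually write
\[
X_A(t)(I-P_s)X_A(\tau)^{-1} \;=\; X_{A(\cdot+t)}(\tau-t)^{-1}\circ (I-P_s^\tau),
\]
observe that $(I-P_s^\tau)x\in X_A(\tau)X_s = X_{A(\cdot+t)}(\tau-t)\bigl(X_A(t)X_s\bigr)$, and invoke Theorem \ref{rec}(iv) applied to the shifted path $A(\cdot+t)$ with complement $V=X_A(t)X_s$ of $W^s_{A(\cdot+t)}$: the resulting exponential expansion on $V$, read backwards, gives
\[
\lvert X_A(t)(I-P_s)X_A(\tau)^{-1}x\rvert \;\leq\; c'_t\,e^{-\lambda'_t(\tau-t)}\,\lvert (I-P_s^\tau)x\rvert.
\]

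The main obstacle is securing \emph{uniformity} in $\tau$ (respectively in $t$) of both the dichotomy constants and the projector norms, since otherwise the resulting factors $c_\tau$, $c'_t$, $\lVert P_s^\tau\rVert$ are only finite pointwise. Two ingredients are needed. First, for the decay/expansion rates: since $A(t)\to A_0$ as $t\to\infty$, I can pick $T$ so large that $\lVert H(\cdot+\sigma)\rVert_\infty$, with $H:=A-A_0$, falls below the threshold of Proposition \ref{tow} for every $\sigma\geq T$; this yields uniform dichotomy constants on $[T,+\infty)$, while on the compact interval $[0,T]$ the required bounds follow from Proposition \ref{blambda} and continuity of the cocycle. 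Second, for the projector norms $\lVert P_s^\tau\rVert$ and $\lVert I-P_s^\tau\rVert$: Theorem \ref{rec}(i) and (iii) give $X_A(\tau)W_A^s\to E^-$ and $X_A(\tau)X_s\to E^+$ in $G_s(E)$, and since $E^-\oplus E^+=E$ has positive minimum gap, Proposition \ref{continuity_of_splits} forces $P_s^\tau$ to converge in operator norm to $P^-$, so that $\sup_{\tau\geq 0}\lVert P_s^\tau\rVert<\infty$. Assembling these pieces across the two regions produces the claimed estimate $\lVert G_{A,P_s}^+(t,\tau)\rVert \leq c\,e^{-\lambda\lvert t-\tau\rvert}$.
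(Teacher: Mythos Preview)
Your argument is correct, and for the region $t\geq\tau$ it matches the paper: you factor through the conjugated projector $P_s^\tau$ (the paper uses $P(t)=X_A(t)P_sX_A(t)^{-1}$, but either choice works), invoke the stable decay coming from Proposition~\ref{tow}(iv)/Theorem~\ref{rec}(ii), and bound $\sup_\tau\lVert P_s^\tau\rVert$ exactly as the paper does, via the convergence $X_A(\tau)W_A^s\to E^-$, $X_A(\tau)X_s\to E^+$ together with Proposition~\ref{continuity_of_splits}.

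Where you genuinely diverge is the case $0\leq t<\tau$. You treat it directly, reading the exponential expansion on the complement (Theorem~\ref{rec}(iv)) backwards for the shifted path $A(\cdot+t)$, and then argue uniformity in $t$ by splitting into a tail $[T,\infty)$ (where Propositions~\ref{tow}/\ref{from} apply with fixed constants and $X_A(t)X_s$ is uniformly close to $E^+$) and a compact piece $[0,T]$. This works, but the bookkeeping is heavier than what the paper does: the paper simply observes that $I-P_s^*$ projects onto $(W_A^s)^\perp=W_{-A^*}^s$ (Theorem~\ref{rec}(v)) and hence
\[
\bigl(G_{A,P_s}^+(t,\tau)\bigr)^{\!*} \;=\; G_{-A^*,\,I-P_s^*}^{+}(\tau,t),
\]
so the case $t<\tau$ for $A$ is precisely the case $\tau>t$ for the asymptotically hyperbolic path $-A^*$, already handled. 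The duality trick buys you the second half of the estimate for free, with no separate uniformity argument and no need to control the growth on a moving complement; your route, on the other hand, is self-contained and does not pass to the dual space.
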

\begin{proof}
By the Theorem \ref{rec}, if $ P_s $ is a projector on $ W_A ^s $,
$ I - P_s ^* $ is a projector on $ (W_A ^s)^{\bot} = W_{-A^*} ^s $.
Hence $ (G_{A,P_s}^+ (t,\tau))^*  = G_{-A^*, I - P_s ^*} (\tau,t) $ and
it's enough to prove the statement for $ t\geq\tau\geq 0 $. We have
\begin{equation}
\begin{split}
\label{GAt}
\| G_{A,P_s}^+ (t,\tau) \| &\leq \| X_A (t) P_s X_A (t)^{-1} \| 
\cdot \| X_A (t) X_A (\tau)^{-1} \| \\
&\leq c' e^{-\lambda (t - \tau)} \| X_A (t) P_s X_A (t)^{-1} \|. 
\end{split}
\end{equation}
For every $ t\in\R^{+} $ $ P(t) = X_A (t) P_s X_A (t)^{-1} $ is a 
projector onto $ X_s (t) = X_A (t) W_A ^s $ and $ I - P(t) $ onto
$ X_u (t) = X_A (t) X_u $. By Theorem \ref{rec}, i) and iii), $ X_s (t) $ 
converges to $ E^- (A(+\infty)) $, and $ X_u (t) $ to $ E^+ (A(+\infty)) $.
Then by Proposition \ref{continuity_of_splits} the $ P(t) $ is bounded 
(in fact converges to a projector). Then the last term of (\ref{GAt}) is 
estimated by $ Mc' e^{-\lambda (t - \tau)} $.
\end{proof}
This allows us to prove the following
\begin{proposition}
\label{rinverse}
Let $ A $ be a bounded continuous path on $ \R^+ $. Then $ F_{A,+} $ is a 
bounded operator. Moreover if $ A $ is asymptotically hyperbolic $ F_A ^+ $ has
right inverse also and one is given by
\begin{gather}
\label{total}
R_{A,P_s} ^+ h (t) = \int_{\mathbb{R}} G_{A,P_s} ^+ (t,\tau) 
h(\tau) 1_{\mathbb{R}^+} (\tau) d\tau.
\glsadd{labRAPs}
\glsadd{labRAPu}
\end{gather}
where $ P_s $ is a projector onto the stable space. 
\end{proposition}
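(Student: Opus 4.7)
The boundedness of $F_A^+$ is immediate. For $u\in C^1_0(\mathbb{R}^+,E)$ the function $F_A^+u=u'-Au$ is continuous; since $A$ is bounded and both $u,u'$ vanish at infinity, so does $F_A^+u$, and
\[
\no{F_A^+u}_\infty\leq\no{u'}_\infty+\no{A}_\infty\no{u}_\infty,
\]
so $F_A^+\colon C^1_0(\mathbb{R}^+,E)\to C_0(\mathbb{R}^+,E)$ is bounded.

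For the second claim, the plan is to verify directly that $R:=R_{A,P_s}^+$ sends $C_0(\mathbb{R}^+,E)$ into $C^1_0(\mathbb{R}^+,E)$ and satisfies $F_A^+R=\mathrm{id}$. First, I would show that $Rh$ is well-defined and lies in $C_0(\mathbb{R}^+,E)$: by the pointwise estimate (\ref{conv}), the integrand is majorised in norm by $ce^{-\lambda|t-\tau|}\no{h}_\infty$, which is integrable in $\tau\in\mathbb{R}^+$, so $Rh(t)$ is defined and bounded by $2c\lambda^{-1}\no{h}_\infty$. Continuity in $t$ follows from dominated convergence. To check that $Rh(t)\to 0$ as $t\to +\infty$, I would split the integral as $\int_0^{t/2}+\int_{t/2}^{\infty}$: the first piece is bounded by $c\lambda^{-1}e^{-\lambda t/2}\no{h}_\infty$, and the second by $2c\lambda^{-1}\sup_{\tau\geq t/2}|h(\tau)|$; both tend to zero since $h\in C_0$.

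Second, I would compute the derivative of $u:=Rh$. Splitting according to the sign of $t-\tau$,
\[
u(t)=\int_0^tX_A(t)P_sX_A(\tau)^{-1}h(\tau)\,d\tau-\int_t^{\infty}X_A(t)(I-P_s)X_A(\tau)^{-1}h(\tau)\,d\tau,
\]
and differentiating by Leibniz's rule (justified by the exponential estimates on $G_{A,P_s}^+$ and continuity of $A$), the two boundary contributions at $\tau=t$ combine to $X_A(t)[P_s+(I-P_s)]X_A(t)^{-1}h(t)=h(t)$, while the $t$-derivatives inside the integrands produce $A(t)u(t)$. Hence $u'=Au+h$, i.e.\ $F_A^+u=h$; since $A$ is bounded and both $u,h\in C_0$, also $u'\in C_0$, so $u\in C^1_0(\mathbb{R}^+,E)$ and $R$ is the desired right inverse.

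The main delicate point I expect is the treatment of the unit jump of the kernel $G_{A,P_s}^+(t,\tau)$ along the diagonal $\tau=t$: it is precisely this jump, $X_A(t)\bigl[P_s-(-(I-P_s))\bigr]X_A(t)^{-1}=I$, that is responsible for producing the $h(t)$ term when one differentiates under the integral sign. Everything else rests on the exponential bound (\ref{conv}), which itself depends on Theorem \ref{rec} and Proposition \ref{continuity_of_splits} in order to guarantee that $X_A(t)P_sX_A(t)^{-1}$ remains uniformly bounded in $t\in\mathbb{R}^+$; without this uniformity neither well-definedness of $Rh$ nor its vanishing at infinity would follow.
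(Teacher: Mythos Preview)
Your proof is correct and follows essentially the same approach as the paper: split the integral at $\tau=t$, differentiate using Leibniz's rule so that the boundary terms combine to $h(t)$ and the interior terms to $A(t)u(t)$, then use boundedness of $A$ to conclude $u\in C^1_0$. Your splitting argument $\int_0^{t/2}+\int_{t/2}^\infty$ for the vanishing of $Rh$ at infinity is in fact more careful than the paper's estimate (\ref{bnd:1}), which as written claims $\|Rh(t)\|\leq\lambda^{-1}\|h\|_\infty e^{-\lambda t}$; that bound is too strong (the convolution of a bounded function with $e^{-\lambda|\cdot|}$ need not decay exponentially), whereas your argument correctly uses only that $h\in C_0$.
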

\begin{proof}
That $ F_A ^+ $ is bounded it's clear from the definition. We prove 
that $ R_{A,P_s} ^+ $ maps $ C_0 (\mathbb{R}^+,E) $ in 
$ C^1 _0 (\mathbb{R}^+, E) $. In fact if $ h\in C_0 $ then 
$ R_{A,P_s} ^+ h(t) $ is
\begin{gather*}
\int_0 ^t X_A (t) P_s X_A (\tau)^{-1} h(\tau) d\tau -
\int_t ^{+\infty} X_A (t) (I - P_s) X_A (\tau)^{-1} h(\tau) d\tau
\end{gather*}
hence is continuous and continuously differentiable. 
By the (\ref{conv}) we have
\begin{equation}
\label{bnd:1}
\begin{split}
\| R_{A,P_s} ^+ h(t) \| &\leq\int_{\R^+} 
c e^{-\lambda |t - \tau|} | h(\tau) | d\tau
\leq\no{h}_{\infty}\int_{\R^+} e^{-\lambda |t - \tau|} d\tau \leq 
\frac{\| h \|_{\infty}}{\lambda} e^{-\lambda t}
\end{split}
\end{equation}
hence $ R_{A,P_s} ^+ h \in C_0 (\mathbb{R}^+,E) $. Since its derivative is
\begin{gather}
\label{bnd:2}
(R_{A,P_s} ^+ h)' = A R_{A,P_s} ^+ h + h 
\end{gather}
and $ A $ is bounded, we have $ R_{A,P_s} ^+ h\in C^1 _0 (\mathbb{R}^+,E) $. 
Actually (\ref{bnd:1}) and (\ref{bnd:2}) say that $ R_{A,P_s} $ is a bounded
operator. Still from (\ref{bnd:2}) 
\begin{gather*}
F_A ^+ R_{A,P_s} ^+ h = (R_{A,P_s} ^+ h) ' - A R_{A,P_s} ^+ h = 
 A R_{A,P_s} ^+ h + h - A R_{A,P_s} ^+ h = h.
\end{gather*}
Then $ R_{A,P_s} ^+ $ is a right inverse of $ F_A ^+ $.
\end{proof}
Similarly we have
\begin{proposition}
\label{linverse}
If $ A $ is a bounded continuous path on $ \R^- $ the operator 
$ F_{A,-} $ is bounded and admits a right inverse if $ A $ is asymptotically 
hyperbolic. One is given by
\begin{gather*}
R_{A,P_u} ^- h (t) = \int_{\mathbb{R}} G_{A,P_u} ^- (t,\tau) 
h(\tau) 1_{\mathbb{R}^-} (\tau) d\tau.
\end{gather*}
where $ P_u $ is a projector onto the unstable space.
\end{proposition}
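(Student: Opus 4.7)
The plan is to prove this statement in direct parallel to Proposition \ref{rinverse}, exploiting the reflection $ t \mapsto -t $ to reduce every assertion to the corresponding one on $ \R^+ $. Concretely, set $ \check{A}(t) = A(-t) $ for $ t \geq 0 $. The remark following Theorem \ref{rec} gives $ X_A(-t) = X_{-\check{A}}(t) $ and $ W_A^u = W_{-\check{A}}^s $, so any projector $ P_u $ onto $ W_A^u $ is automatically a projector onto the stable subspace of $ -\check{A} $. A direct substitution in (\ref{GAt:2}) then shows
\[
G^-_{A,P_u}(t,\tau) = G^+_{-\check{A},P_u}(-t,-\tau), \qquad (t,\tau)\in\R^-\times\R^-,
\]
which transports the exponential estimate already established in (\ref{conv}) to
\[
\|G^-_{A,P_u}(t,\tau)\|\leq c\, e^{-\lambda |t-\tau|}, \qquad (t,\tau)\in\R^-\times\R^-.
\]

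Next, boundedness of $ F_A^- $ is immediate from $ \|F_A^- u\|_\infty \leq \|u'\|_\infty + \|A\|_\infty\|u\|_\infty $, requiring only that $ A $ is bounded and not yet that it is asymptotically hyperbolic. For the right inverse, I would rewrite the defining formula for $ R_{A,P_u}^- h(t) $, with $ h\in C_0(\R^-,E) $, as
\[
R_{A,P_u}^- h(t) = \int_{-\infty}^{t} X_A(t)(I - P_u) X_A(\tau)^{-1} h(\tau)\,d\tau - \int_{t}^{0} X_A(t)\, P_u\, X_A(\tau)^{-1} h(\tau)\,d\tau.
\]
The exponential kernel estimate then yields, exactly as in (\ref{bnd:1}),
\[
|R_{A,P_u}^- h(t)|\leq \frac{\|h\|_\infty}{\lambda} e^{\lambda t},
\]
so $ R_{A,P_u}^- h\in C_0(\R^-,E) $. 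Differentiation under the integral sign (the boundary terms at $ \tau=t $ combine to $ (I-P_u)+P_u=I $) gives the analog of (\ref{bnd:2}),
\[
(R_{A,P_u}^- h)'(t) = A(t)\, R_{A,P_u}^- h(t) + h(t),
\]
which simultaneously verifies that $ R_{A,P_u}^- h\in C_0^1(\R^-,E) $, that $ R_{A,P_u}^- $ is bounded, and that $ F_A^- R_{A,P_u}^- h = h $.

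The only point deserving genuine care, rather than routine transcription, is matching the signs and orientations: under the reflection $ v(t)=u(-t) $ one finds $ (F^+_{-\check{A}} v)(t) = -(F_A^- u)(-t) $, and one must check that the sign convention used in the definitions (\ref{GAt:1})--(\ref{GAt:2}) of $ G^+ $ and $ G^- $ is compatible with this reflection, i.e.\ that the projector onto the stable/unstable subspace appearing in $ G^- $ matches what the stable-subspace formula for $ -\check{A} $ would prescribe after the change of variable. Once this bookkeeping is done the statement follows with no new analytic input beyond Proposition \ref{rinverse} and the identification $ W_A^u = W_{-\check{A}}^s $.
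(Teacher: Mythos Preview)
Your proposal is correct and follows exactly the route the paper has in mind: the paper itself omits the proof entirely, saying only ``The proof is completely similar and we omit it,'' so your write-up is a faithful expansion of Proposition~\ref{rinverse} transported to $\R^-$. One small correction worth recording: the reflection identity should read $G^-_{A,P_u}(t,\tau) = -\,G^+_{-\check{A},P_u}(-t,-\tau)$ (a sign appears because the characteristic functions swap under $t-\tau\mapsto \tau-t$), but you already flag this bookkeeping as the one delicate point, and since only the norm of the kernel is used for the exponential estimate and your direct differentiation of $R^-_{A,P_u}h$ is done independently and correctly, nothing in the argument is affected.
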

The proof is completely similar and we omit it. 
\begin{example}
Notice that if $ A_0 $ is 
invertible but not hyperbolic these operators  can be non surjective. For 
example let $ E $ be the Euclidean space $ \R^2 $ and define 
\begin{gather*}
A_0 = 
\begin{pmatrix}
0 & b \\
-b & 0
\end{pmatrix}
, \ 
e^{A_0} = 
\begin{pmatrix}
\cos\theta & \sin\theta \\
- \sin\theta & \cos\theta 
\end{pmatrix}
= R_{\theta}.
\end{gather*}
First observe that $ F_{A_0} ^+ $ is injective: 
given $ u $ in $ C^1 _0 (\mathbb{R}^+,E) $ such that $ F_{A_0} ^+ u = 0 $. We 
have $ u(t) = R_{t\theta} u(0) $ by uniqueness of the solutions of 
(\ref{cauchyl}). Since $ R_{\theta} $ is an isometry 
$ | u(t) | = | u(0) | $ for every $ t\geq 0 $. Taking the limit as 
$ t\rightarrow +\infty $ we obtain $ u(0) = 0 $, hence $ u $ is zero.
Now let $ h $ be a continuous function on $ \mathbb{R}^+ $ that vanishes
at $ +\infty $ and $ u $ in $ C^1 _0 (\mathbb{R}^+,E) $ 
such that $ F_{A_0} ^+ u = h $. Since $ F_{A_0} ^+ $ is injective
\begin{gather}
\label{fund}
u(t) = e^{t A_0} \left(\int_0 ^t e^{-s A_0} h(s) ds + u(0)\right)
\end{gather}
is the only solution of the problem. Fix $ v_0 $ in $ E\setminus\{0\} $ and
$ \alpha $ in $ C_0 (\R^+,\R^+) $ not integrable. Let 
$ h(s) = \alpha(s) R_{s\theta} v_0 $. Since $ R_{\theta} $ is an isometry, 
the norm of $ u(t) $ is equal to the one of
\begin{gather}
\label{norm}
\int_0 ^t R_{-s\theta} h(s) ds + u(0) = \int_0 ^t \alpha(s) R_{-s\theta}
(R_{s\theta} ) v_0 ds + u(0) = \int_0 ^t \alpha(s) ds \hskip 2pt v_0 + u(0).
\end{gather}
Since the last term of (\ref{norm}) does not converge to $ 0 $ as 
$ t\ra +\infty $ the function $ h $ is not in the image of $ F_{A_0} ^+ $.
\end{example}
Given a continuous function $ h $ in $ C_0 (\mathbb{R}^+,E) $ evaluating
$ R_{A,P_s} ^+ h $ at $ t = 0 $ we obtain a vector of $ \ker P_s $. Similarly
we can evaluate $ R_{A,P_u} ^- h $ and we have a continuous functions
\begin{align*}
&r_{A,P_s} ^+ \colon C_0 (\mathbb{R}^+,E)\rightarrow X_s, \ h\mapsto 
ev_0 R_{A,P^s} ^+ h \\ 
&r_{A,P_u} ^- \colon C_0 (\mathbb{R}^-,E)\rightarrow X_u, \ h\mapsto
ev_0 R_{A,P^u} ^- h.
\end{align*} 
\glsadd{labrAPs}
\glsadd{labrAPu}
When no ambiguity occurs on the choice of the path $ A $ and the projectors
we simply denote them by $ r^+ $ and $ r^- $ respectively.
We have the following 
\begin{proposition}\rm{(cf. \cite{AM03b}, \textsc{Lemma} 4.2).}
\label{sur}
The functions $ r^+ $ and $ r^- $ are linear and continuous
applications and map $ C^{\infty} _c ((0,+\infty),E) $ onto $ X_s $ and 
$ C^{\infty} _c ((-\infty,0),E) $ onto $ X_u $.
\end{proposition}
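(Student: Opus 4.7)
Linearity of $r^{\pm}$ is clear from their integral definitions. Continuity of $r^+$ follows from Proposition \ref{rinverse}: $r^+ = \ev\circ R_{A,P_s}^+$ is the composition of the bounded operator $R_{A,P_s}^+\colon C_0(\R^+,E)\to C^1_0(\R^+,E)$ with the evaluation at $0$, and its image lies in the closed subspace $X_s=\ker P_s\subset E$ because at $t=0$ only the $-(I-P_s)1_{\R^-}$ contribution of $G_{A,P_s}^+(0,\tau)$ survives when $\tau>0$. The continuity of $r^-$ follows symmetrically from Proposition \ref{linverse}.

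For the surjectivity claim, reading the defining formula \eqref{total} at $t=0$ gives the explicit expression
\[
r^+(h)=-(I-P_s)\int_0^{+\infty} X_A(\tau)^{-1}h(\tau)\,d\tau.
\]
Fix $t_0>0$ and, for each $\var>0$ with $(t_0-\var,t_0+\var)\subset(0,+\infty)$, pick a mollifier $\chi_\var\in C^{\infty}_c((0,+\infty),\R)$ with $\supp\chi_\var\subset(t_0-\var,t_0+\var)$ and $\int\chi_\var=1$. For every $v\in E$ the product $h_v(\tau):=\chi_\var(\tau)\,v$ belongs to $C^{\infty}_c((0,+\infty),E)$, and substitution yields
\[
r^+(h_v)=-(I-P_s)M_\var v,\qquad M_\var:=\int_0^{+\infty}\chi_\var(\tau)X_A(\tau)^{-1}\,d\tau\in\mathcal{L}(E).
\]
Because $\tau\mapsto X_A(\tau)^{-1}=X^{-A,1}(\tau)$ is a $C^1$ curve by Proposition \ref{inv}, hence norm-continuous, $M_\var\to X_A(t_0)^{-1}$ in the operator norm as $\var\to 0^+$. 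Since $GL(E)\subset\mathcal{L}(E)$ is open and $X_A(t_0)^{-1}\in GL(E)$, a Neumann-series argument shows $M_\var\in GL(E)$ for all sufficiently small $\var$; fix one such value.

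Then the linear map $v\mapsto -(I-P_s)M_\var v$ is the composition of the topological isomorphism $M_\var$ of $E$ with the surjection $-(I-P_s)\colon E\to \ker P_s=X_s$, and is therefore surjective onto $X_s$. Given any $\xi\in X_s$, the choice $v=-M_\var^{-1}\xi$ produces, using $(I-P_s)\xi=\xi$, a function $h_v\in C^{\infty}_c((0,+\infty),E)$ with $r^+(h_v)=\xi$; this proves $r^+\bigl(C^{\infty}_c((0,+\infty),E)\bigr)=X_s$. The argument for $r^-$ is formally identical, using a mollifier concentrated at some $t_0<0$ inside $(-\infty,0)$, the analogous expression $r^-(h)=(I-P_u)\int_{-\infty}^{0} X_A(\tau)^{-1}h(\tau)\,d\tau$, and the surjection $(I-P_u)\colon E\to X_u$. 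The only step demanding any real verification is the norm convergence $M_\var\to X_A(t_0)^{-1}$, which is routine from the uniform continuity of $X_A^{-1}$ on a compact neighbourhood of $t_0$, so no serious obstacle remains.
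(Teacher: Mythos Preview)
Your proof is correct and follows essentially the same approach as the paper: both reduce surjectivity to finding a scalar bump function $\varphi\in C^\infty_c((0,+\infty),\R)$ such that the operator $\int\varphi(\tau)X_A(\tau)^{-1}\,d\tau$ is invertible, then feed in $h=\varphi\cdot w$ for suitable $w\in E$. The paper simply asserts that such a $\varphi$ exists, while your mollifier argument (concentrating $\chi_\var$ near a point $t_0$ so that $M_\var\to X_A(t_0)^{-1}$ in norm) supplies the missing justification.
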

\begin{proof}
We prove the assertion for $ r^+ $. Since $ R_{A,P_s} ^+ $ is bounded, 
$ r^+ $ is bounded. Let $ v $ be a vector of $ E $ and 
$ \varphi\in C^{\infty} _c ((0,+\infty),\mathbb{R}) $ a smooth function such 
that
\begin{gather*}
\label{operator_U}
U = - \int_{\mathbb{R}} \varphi (\tau) X_A (\tau)^{-1} d\tau
\end{gather*}
is an invertible operator on $ E $. We choose 
$ h = \vfi\cdot U^{-1} v $ 
\begin{equation*}
\begin{split}
r ^+ h &= - (I - P_s) \int_0 ^{+\infty} X_A (\tau)^{-1} \vfi(\tau)
U^{-1} v d\tau \\ 
&=- (I - P_s) \int_0 ^{+\infty} X_A (\tau)^{-1} \vfi(\tau) d\tau U^{-1} v = 
 (I - P_s) v. 
\end{split}
\end{equation*}
\end{proof}
In the above proof one could remark that choosing a smooth compact
supported function $ \psi $ on $ \mathbb{R}^+ $ such that 
$ \int \psi = 1 $, for every $ v\in X_s $ the function 
$ h (t) = - \psi(t) X_A (t)\cdot v $ still works. However $ h $ is at
most as regular as $ X_A $.
\section{Fredholm properties of $ F_A $}
We show that, as in the Hilbert setting, that the Fredholmness of $ F_A $
depend on the Fredholmness of the pair of subspaces
$ (W_A ^s, W_A ^u) $.
\begin{lemma}\rm{(cf. \cite{AM03b}, \textsc{Proposition} 5.2).}
\label{charact_fred}
We have the following characterizations of $ \ker F_A $ and $ \ran F_A $:
\begin{align}
\ker F_A &= \set{u\in C^1 _0}{u(0)\in W_A ^s \cap W_A ^u}\label{kr:1} \\
\ran F_A &= \set{h\in C_0}
{r_{A,P_s} ^+ h - r_{A,P_u} ^- h\in W_A ^s + W_A ^u} \label{kr:2} \\
\orl{\ran F_A} &= \set{h\in C_0}
{r_{A,P_s} ^+ h - r_{A,P_u} ^- h\in\orl{W_A ^s + W_A ^u}}\label{kr:3}
\end{align}
\end{lemma}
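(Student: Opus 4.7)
The plan is to prove the three characterizations in order: (i) directly from uniqueness for the ODE $u'=Au$, (ii) by gluing at $t=0$ the two partial solutions produced by the right inverses $R_{A,P_s}^+$ and $R_{A,P_u}^-$ of Propositions \ref{rinverse} and \ref{linverse}, and (iii) by routing the gluing obstruction through a bona fide continuous surjection onto the Banach space $X_u$ and invoking the open mapping theorem. For (i), if $F_A u=0$ then by local uniqueness $u(t)=X_A(t)u(0)$ for every $t\in\R$. Since $A$ is bounded, $u'=Au$ vanishes at $\pm\infty$ as soon as $u$ does, so $u\in C^1_0$ is equivalent to $X_A(t)u(0)\to 0$ at both $+\infty$ and $-\infty$, i.e.\ $u(0)\in W_A^s\cap W_A^u$; the converse is immediate from the same formula.

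For (ii), given $h\in C_0(\R,E)$, set $u^+:=R_{A,P_s}^+(h|_{\R^+})\in C^1_0(\R^+,E)$, so that $F_A^+u^+=h|_{\R^+}$ and $u^+(0)=r_{A,P_s}^+h\in X_s$. Any other $C^1_0(\R^+,E)$-solution of $F_A^+u=h|_{\R^+}$ differs from $u^+$ by a homogeneous solution $X_A(\cdot)v^+$ which must vanish at $+\infty$, forcing $v^+\in W_A^s$; symmetrically on $\R^-$ one has $u^-(0)=r_{A,P_u}^-h\in X_u$ with ambiguity $X_A(\cdot)v^-$, $v^-\in W_A^u$. A global $C^1_0(\R,E)$-solution exists iff the two partial solutions can be glued at $t=0$, i.e.\ iff there exist $v^+\in W_A^s$, $v^-\in W_A^u$ with $r_{A,P_s}^+h+v^+=r_{A,P_u}^-h+v^-$. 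Rearranging gives $r_{A,P_s}^+h-r_{A,P_u}^-h=v^--v^+\in W_A^u+W_A^s$, which is (\ref{kr:2}).

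For (iii), set $\phi(h):=r_{A,P_s}^+h-r_{A,P_u}^-h$ and $Z:=W_A^s+W_A^u$. Since $I-P_u$ is the projector onto $X_u$ along $W_A^u$, a straightforward verification shows that $v\in Z$ iff $(I-P_u)v\in (I-P_u)W_A^s$: the forward direction is immediate, and the converse takes $s\in W_A^s$ with $(I-P_u)(v-s)=0$, so $v-s\in\ker(I-P_u)=W_A^u$. Passing to closures, the decomposition $v=P_uv+(I-P_u)v$ with $P_uv\in W_A^u\subseteq\overline Z$ and the inclusion $(I-P_u)W_A^s\subseteq Z$ give, by continuity of $I-P_u$, that $v\in\overline Z$ iff $(I-P_u)v\in\overline{(I-P_u)W_A^s}$. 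Therefore, defining the continuous linear map
\[
\tau\colon C_0(\R,E)\to X_u,\qquad \tau(h)=(I-P_u)r_{A,P_s}^+h-r_{A,P_u}^-h,
\]
the content of (\ref{kr:2}) and (\ref{kr:3}) translates to the assertions $\ran F_A=\tau^{-1}((I-P_u)W_A^s)$ and $\overline{\ran F_A}=\tau^{-1}(\overline{(I-P_u)W_A^s})$.

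The final ingredient is the surjectivity of $\tau$ onto the Banach space $X_u$: for any $w\in X_u$, Proposition \ref{sur} supplies $h\in C^\infty_c((-\infty,0),E)$ with $r_{A,P_u}^-h=-w$, and such $h$ automatically satisfies $r_{A,P_s}^+h=0$, so $\tau(h)=w$. Being a continuous linear surjection between Banach spaces, $\tau$ is open by the open mapping theorem; equivalently, there is a constant $C$ so that every $y\in X_u$ is the image under $\tau$ of some $g\in C_0$ with $\|g\|\le C\|y\|$. Given $h$ with $\tau(h)\in\overline{(I-P_u)W_A^s}$, pick $v_n\in (I-P_u)W_A^s$ with $v_n\to\tau(h)$ and choose $g_n\in C_0$ with $\tau(g_n)=v_n-\tau(h)$ and $\|g_n\|\le C\|v_n-\tau(h)\|$; then $h+g_n\in\tau^{-1}((I-P_u)W_A^s)=\ran F_A$ and $h+g_n\to h$, which settles (\ref{kr:3}). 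The main obstacle is precisely this closure step: one cannot apply open mapping directly to $\phi$, whose image $X_s+X_u$ need not be closed in $E$, so the passage through $I-P_u$ is what converts the problem into one for an honest continuous surjection between Banach spaces.
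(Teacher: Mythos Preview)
Your proof is correct. Parts (\ref{kr:1}) and (\ref{kr:2}) follow exactly the paper's argument: the gluing of the two half-line solutions $u^{\pm}=R^{\pm}h + X_A(\cdot)v^{\pm}$ at $t=0$, with the observation that $C^0$-matching already forces $C^1$-matching because both sides satisfy $u'=Au+h$.

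For (\ref{kr:3}) your route differs from the paper's. The paper stays with $\phi(h)=r^+h-r^-h$ and corrects $h$ directly: given $w=\phi(h)\in\overline{W_A^s+W_A^u}$ and $x\in W_A^s+W_A^u$ close to $w$, it uses the \emph{explicit} right inverse from the proof of Proposition~\ref{sur} (namely $h_\delta=-\varphi\,U^{-1}(w-x)$ supported in $(0,\infty)$) to produce $h_\delta$ with $r^+h_\delta=(I-P_s)(w-x)$ and $\|h_\delta\|$ controlled by $|w-x|$; then $\phi(h-h_\delta)=x+P_s(w-x)\in W_A^s+W_A^u$. No open mapping theorem is needed because the bound is explicit. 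You instead observe that $\phi$ lands in $X_s+X_u$, which may fail to be closed, and therefore factor through the projector $I-P_u$ to obtain a genuine surjection $\tau\colon C_0\to X_u$ onto a Banach space, then invoke open mapping. This is a clean structural reformulation: the equivalence $v\in\overline{W_A^s+W_A^u}\Leftrightarrow (I-P_u)v\in\overline{(I-P_u)W_A^s}$ is the extra lemma you pay for, and in return you avoid reaching into the proof of Proposition~\ref{sur} for the quantitative constant. Either approach works; the paper's is more hands-on, yours more abstract.
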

\begin{proof}
We omit the proof of (\ref{kr:1}) that comes straightforwardly from
the definition of stable and unstable subspaces. Let $ h\in\ran F_A $ and
$ u\in C^1 _0 $ such that $ F_A u = h $. By Proposition \ref{rinverse} we 
have a decomposition 
$ C^1 _0 (\mathbb{R}^+,E) = \ker F_A ^+ \oplus\ran R_{A,P_s} ^+ $. Thus
\begin{equation}
\label{pm}
\begin{split}
u^+ &= X_A (t) u_0 + R_{A,P_s} ^+ h^+ \\
u^- &= X_A (t) v_0 + R_{A,P_u} ^- h^-
\end{split}
\end{equation}
where $ u^+ $ and $ u^- $ are the restrictions of $ u $ to the positive 
(respectively negative) real line. Evaluating in $ 0 $ and taking the 
difference of the two equations we
obtain 
\begin{gather*}
W_A ^s + W_A ^u \ni u_0 - v_0 = r^- _{A,P_u} h - r^+ _{A,P_s} h.
\end{gather*}
To prove the converse let $ h\in C_0 $ such that 
$ r ^+ h - r ^- h\in W_A ^s + W_A ^u $. By Propositions
\ref{rinverse} and \ref{linverse} we have $ u^+ $ and $ u^{-} $ such that
\begin{gather}
F_A ^+ u^+ = h^+,\quad F_A ^- u^- = h^-.
\end{gather}
In order to exhibit an element of $ C^1 _0 $ such that $ F_A u = h $ we want 
to find suitable $ u^+ $ and $ u^- $ such that $ u^- \# u^+ $ is a continuous
function and continuously differentiable. Hence it's enough to choose
$ u_0 $ and $ v_0 $ in (\ref{pm}) such that 
\begin{gather}
u^+ (0) = u^{-} (0) \label{cond:1}\\
{u^+} '(0) = {u^{-}} ' (0) \label{cond:2},
\end{gather}
as before evaluate (\ref{pm}) in $ 0 $ and set (\ref{cond:1}) in the
left sides. If we choose $ u_0 $ and $ v_0 $ such that 
$ u_0 - v_0 = r^+ h - r^- h = w $ the joint function
$ u_-\# u_+ $ is continuous. Differentiating the (\ref{pm})
\begin{equation*}
\begin{split}
{u^+}' (t) &= A(t) X_A (t) u_0 + A(t) R^+ _{A,P_s} h^+ (t) + h^+ (t) \\
{u^-}' (t) &= A(t) X_A (t) v_0 + A(t) R^- _{A,P_u} h^- (t) + h^- (t)
\end{split}
\end{equation*}
we get $ A(0) (u_0 - v_0 - w) = 0 $, hence any choice in 
$ W_A ^s \times W_A ^u $ that makes $ u^- \# u^+ $ continuous it also
makes it $ C^1 $.
\vskip .2em
The proof of the left inclusion of (\ref{kr:3}) is completely
similar to the above step. Conversely suppose that $ h $ belongs
to the right set of the (\ref{kr:3}). Let $ \var > 0 $ and
$ \delta = 1/(\| I - P_s \|\cdot \| U^{-1} \|) $ where $ U $ is the
operator defined in (\ref{operator_U}). Set 
$ w = r_{A,P_s} ^+ h - r_{A,P_u} ^- h $. There exists 
$ x\in W_A ^s + W_A ^u $ such that $ | w - x | < \delta $. By Proposition
\ref{sur} 
\begin{gather*}
r^+ h_{\delta} = (I - P_s) (w - x), \ \ 
h_{\delta} = - \varphi U^{-1} (w - x)
\end{gather*}
and $ \no{h_{\delta}} < \var $. Since $ h_{\delta} $ has compact support
in $ (0,+\infty) $ it can be extended on $ \mathbb{R}^- $ with the constant
value $ 0 $. Thus
\begin{gather*}
r^+ (h - h_{\delta}) - r^- (h - h_{\delta}) = w - r^+ h_{\delta} = 
x + P_s (w - x)
\end{gather*}
is an element of $ W_A ^s + W_A ^u $ hence, by (\ref{kr:2}), 
$ h - h_{\delta} $ is in the image of $ F_A $. 
\end{proof}
We conclude the chapter with the relationship between the Fredholm
properties of $ F_A $ and the Fredholm properties of the 
pair $ (W_A ^s, W_A ^u) $.
\begin{theorem}\rm{(cf. \cite{AM03b}, \textsc{Theorem} 5.1).}
\label{facts}
If $ A $ is an asymptotically hyperbolic path the following facts hold:
\begin{enumerate}
\item $ F_A $ has closed range if and only if $ W_A ^s + W_A ^u $ is
closed,
\item $ F_A $ is onto if and only if $ W_A ^s + W_A ^u = E $,
\item $ F_A $ is semi-Fredholm if and only $ (W_A ^s, W_A ^u ) $ is a
semi-Fredholm pair; in this case we also have 
$ \ind F_A = \ind(W_A ^s,W_A ^u) $.
\end{enumerate}
\end{theorem}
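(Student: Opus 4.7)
The plan is to use Lemma \ref{charact_fred} to reduce everything about $F_A$ to statements about the pair $(W_A^s,W_A^u)$ via a single auxiliary linear map. Set
\[
T\colon C_0(\mathbb{R},E)\rightarrow E,\qquad T(h)=r^+_{A,P_s}(h^+)-r^-_{A,P_u}(h^-),
\]
where $h^\pm$ denotes the restriction of $h$ to $\mathbb{R}^\pm$ and $X_s,X_u$ are the kernels of $P_s,P_u$. By Proposition \ref{sur}, extending functions in $C_c^\infty((0,+\infty),E)$ and $C_c^\infty((-\infty,0),E)$ by zero and combining, one sees $T(C_0)=X_s+X_u$; by continuity of $r^\pm$ the map $T$ is bounded.

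Next I would exploit that $E=W_A^s\oplus X_s$, which forces the composition
\[
\pi T\colon C_0(\mathbb{R},E)\longrightarrow E/(W_A^s+W_A^u)
\]
with the canonical projection $\pi$ to be surjective (already its restriction to the image of functions supported on $\mathbb{R}^+$ surjects, because every $e\in E$ differs from some $x_s\in X_s$ by an element of $W_A^s$). By the characterization (\ref{kr:2}), $\ker(\pi T)=\operatorname{ran} F_A$, so $\pi T$ descends to an algebraic isomorphism $C_0/\operatorname{ran}F_A\cong E/(W_A^s+W_A^u)$; in particular $\operatorname{coker}F_A$ is naturally identified with $E/(W_A^s+W_A^u)$. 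In parallel, (\ref{kr:1}) together with existence and uniqueness for $u'=Au$ gives an isomorphism $\ker F_A\cong W_A^s\cap W_A^u$ via $u\mapsto u(0)$ (surjective because $X_A(t)x\to 0$ as $t\to\pm\infty$ for $x$ in both stable and unstable space; injective by uniqueness of Cauchy problems).

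From here each item of the theorem is short. For (i), if $W_A^s+W_A^u$ is closed then $E/(W_A^s+W_A^u)$ is Banach and $\operatorname{ran}F_A=\ker(\pi T)$ is closed; conversely, using (\ref{kr:3}) the same map $\pi T$ sends $\overline{\operatorname{ran}F_A}/\operatorname{ran}F_A$ onto $\overline{W_A^s+W_A^u}/(W_A^s+W_A^u)$, so closedness of $\operatorname{ran}F_A$ forces closedness of the sum. For (ii), $F_A$ is onto iff $T(C_0)\subseteq W_A^s+W_A^u$, i.e.\ iff $X_s+X_u\subseteq W_A^s+W_A^u$; combined with $E=W_A^s+X_s$ this is equivalent to $W_A^s+W_A^u=E$. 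For (iii), combine (i) with the two identifications $\ker F_A\cong W_A^s\cap W_A^u$ and $\operatorname{coker}F_A\cong E/(W_A^s+W_A^u)$: the semi-Fredholm conditions match term by term, and the index identity $\operatorname{ind}F_A=\operatorname{ind}(W_A^s,W_A^u)$ is then immediate.

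The only place that requires real care is the surjectivity of $\pi T$ and the compatibility of (\ref{kr:2}) with the correct choice of complements, since a priori $T$ only takes values in $X_s+X_u$ and not in $E$; the key observation is that, modulo $W_A^s+W_A^u$, the complement $X_s$ already covers the whole quotient. Once this is nailed down, the rest is bookkeeping from Lemma \ref{charact_fred}, with no further appeal to the dynamics of $X_A$.
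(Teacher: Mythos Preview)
Your proof is correct and follows essentially the same route as the paper: both arguments reduce everything to Lemma~\ref{charact_fred} and Proposition~\ref{sur} via the map $h\mapsto r^+h-r^-h$, establish the algebraic identification $\ker F_A\cong W_A^s\cap W_A^u$ from (\ref{kr:1}) and $\coker F_A\cong E/(W_A^s+W_A^u)$ from (\ref{kr:2}), and use (\ref{kr:3}) for the converse of (i). The only difference is organizational---you package the map $T$ and the quotient $\pi T$ at the outset and argue with them uniformly, whereas the paper introduces the isomorphism (\ref{isomorphism}) only after establishing (i) and handles the converse of (i) element by element; the content is the same.
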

\begin{proof}
If $ W_A ^s + W_A ^u $ is closed the two sets on the right of
(\ref{kr:1}) and (\ref{kr:2}) are equal, hence $ \ran F_A $ coincides with
its closure. Conversely, suppose $ \ran F_A $ is
closed and let $ w $ be an element of $ W_A ^s + W_A ^u $. By Proposition
\ref{sur}, there exists $ h $ smooth with compact support such that
\begin{gather*}
w = P_s w + (I - P_s) w = P_s w + r^+ h - r^- h
\end{gather*}
hence $ r^+ h - r^- h $ is in the closure of $ W_A ^s + W_A ^u $. Then, by
hypothesis $ r^+ h - r^- h\in W_A ^s + W_A ^u $, hence 
$ w\in W_A ^s + W_A ^u $ and i) is proved. Suppose $ F_A $ is onto, that is
the range of $ F_A $ is closed. By i) $ W_A ^s + W_A ^u $ is also closed
and there is an isomorphism of Banach spaces
\begin{gather}
\label{isomorphism}
C_0 /\ran F_A \rightarrow E/W_A ^s + W_A ^u, \ \ 
h + \ran F_A \mapsto r^+ h - r^-h.
\end{gather}
It is injective by (\ref{kr:2}). Given $ x\in E $ the element 
$ h + \ran F_A $ such that $ r^+ h - r^- h = (I - P^s) x $ is in the 
counter-image of $ x + W_A ^s + W_A ^u $, therefore is surjective. The 
continuity follows straightforwardly from the definition of the norm for
a quotient space. In fact, for every $ u\in C^1 _0 $, we have
\begin{equation*}
\begin{split}
\dist(r^+ h - r^-h,W_A ^s + W_A ^u)&\leq
\dist(r^+ h - r^-h,r^+ F_A u - r^- F_A u) \\
&\leq(\no{r^+} + \no{r^-})|h - F_A u|.
\end{split}
\end{equation*}
Taking the infimum over $ C^1 _0 $ we prove that the application is bounded. 
We conclude with the open mapping theorem. If $ F_A $ is onto the quotient 
spaces $ C_0 /\ran F_A $ is the null space, then, by (\ref{isomorphism}) 
$ W_A ^s + W_A ^u = E $ and the converse is similar, hence ii) is proved. 
If $ F_A $ is semi-Fredholm $ \ran F_A $ is closed, hence $ W_A ^s + W_A ^u $ 
is also closed. By (\ref{kr:1}) and (\ref{isomorphism}) the index of $ F_A $ 
and the one of the pair $ (W_A ^s,W_A ^u) $ coincide, this proves iii).
\end{proof}
\chapter{Spectral flow}
\label{chap5}
Given a continuous path of essentially hyperbolic operators, we can define
an integer called \textsl{spectral flow}. The definition we provide in this
chapter generalizes the one given by J.Phillips for paths
of Fredholm and self-adjoint operators and coincides with the one given
by C.~Zhu and Y.~Long in \cite{ZL99}. We wish to make our notation
coherent with the latter, thus we use the notation
$ [P - Q] $ for relative dimension $ \dim(P,Q) $ when $ P - Q $ is a compact
operator. We show that the definition of spectral flow depends only on the 
class of fixed-endpoints homotopy of a path. Moreover, the spectral flow of 
the catenation of two paths is the sum of the spectral flows of the paths, 
hence we have a group homomorphism
\begin{gather*}
\Sf_{A_0}\colon\pi_1 (e\mathcal{H}(E),A_0)\rightarrow\mathbb{Z}.
\end{gather*}%
\glsadd{labSfA}
\glsadd{labeHE}
In chapter \ref{chap2} we established a homotopy equivalence between 
the space of essentially hyperbolic operators $ e\mathcal{H}(E) $ and the
space of idempotents $ \mathcal{P}(\mathcal{C}) $ of the Calkin algebra, we 
denoted it by $ \Psi $ and defined it as
\begin{gather*}
\Psi(A) = P^+ (p(A))
\end{gather*}
where $ P^+ (p(A)) $ is the eigenprojector relative to the positive complex
half-plane. In Theorem \ref{f=-p} we prove that there is a strict relation 
between the spectral flow and the homomorphism $ \varphi $ defined through 
the exact sequence of the bundle 
$ (\mathcal{P}(E),\mathcal{P}(\mathcal{C}),\prc) $. Precisely,
\[
\Sf_{A_0} = - \varphi_{P\sp + (A_0)} \circ\Psi
\]
Thus the spectral flow inherits all the properties of the index $ \varphi $.
The equality holds for every Banach space and gives a characterization of
the paths whose spectral flow is zero and necessary and sufficient conditions
in order to have nontrivial spectral flow.\vskip .2em
In the last section we extend the definition of spectral flow to 
asymptotically hyperbolic and essentially hyperbolic paths. We prove that 
if $ A $ is also an \textsl{essentially splitting path} the differential 
operator $ F_A $ is Fredholm and
\begin{gather*}
\ind F_A = -\Sf(A) = \dim(E^- (A(+\infty)),E^- (A(-\infty))).
\end{gather*}
In general, none of the these equalities holds. Counterexamples are known
even in the Hilbert spaces.
\section{Essentially hyperbolic operators} 
\label{Essentially hyperbolic operators}
We recall that an operator $ A $ is said \emph{essentially hyperbolic} if 
$ A + \mathcal{L}_c $ is a hyperbolic element of the Calkin algebra
$ \mathcal{C} $. 
We denote by $ e\al{H}(E) $ the set of the essentially hyperbolic operators.
\begin{lemma}[Structure of the spectrum]
\label{structure}
Let $ A $ be a bounded operator, $ D $ the set of isolated points of 
$ \sigma(A) $. Then $ \p \sigma(A)\setminus D \subset\sigma_e(A) $.
\end{lemma}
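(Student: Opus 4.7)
The plan is to argue by contradiction. Suppose $\lambda\in\partial\sigma(A)\setminus D$ but $\lambda\notin\sigma_e(A)$, i.e.\ $A-\lambda I$ is a Fredholm operator. The strategy is to show that this forces $\lambda$ to be an isolated point of $\sigma(A)$, contradicting $\lambda\notin D$.

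First I would invoke stability of semi-Fredholm operators under small perturbations to obtain an open disc $U=B(\lambda,r)$ such that $A-\mu I$ is Fredholm for every $\mu\in U$, with index constant along $U$ (by continuity of the index on the connected set of Fredholm operators, cf.\ Proposition \ref{index-is-constant}). Because $\lambda$ lies on the boundary of the spectrum, there exists $\mu_0\in U\setminus\sigma(A)$; then $A-\mu_0 I\in GL(E)$, hence is Fredholm of index $0$, and so $\ind(A-\mu I)=0$ for all $\mu\in U$.

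Next I would apply the analytic Fredholm theorem to the holomorphic family $\mu\mapsto A-\mu I$ on the connected open set $U$: since it consists of Fredholm operators of index $0$ and contains at least one invertible element (namely $A-\mu_0 I$), the map $\mu\mapsto (A-\mu I)^{-1}$ extends to a meromorphic $\mathcal{L}(E)$-valued function on $U$, and the set $\{\mu\in U:A-\mu I\text{ is not invertible}\}$ is discrete in $U$. In particular $\sigma(A)\cap U$ is a discrete subset of $U$, so $\lambda$ is an isolated point of $\sigma(A)$, i.e.\ $\lambda\in D$, contradicting the assumption.

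The only delicate step is invoking the analytic Fredholm theorem: if one does not wish to assume it as a black box, one can replace it by a direct Riesz-type argument using the spectral projector $p(A;\{\lambda\})$ of Appendix C, noting that the Fredholm hypothesis at $\lambda$ guarantees that the generalized eigenspace and the associated quotient are finite-dimensional, so a local parametrix reduces $(A-\mu I)^{-1}$ on $U$ to the inversion of a finite matrix depending holomorphically on $\mu$, whose determinant then has only isolated zeros. Either route yields discreteness of $\sigma(A)\cap U$ near $\lambda$, which is the sole content needed to reach the contradiction.
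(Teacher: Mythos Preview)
Your argument is correct and follows the same contradiction scheme as the paper: assume $\lambda\in\partial\sigma(A)\setminus D$ is not in $\sigma_e(A)$, use Fredholm stability to get a disc where $A-\mu$ is Fredholm of constant index, pick a nearby resolvent point to force index $0$, and conclude that $\sigma(A)$ meets the disc only in isolated points, contradicting $\lambda\notin D$.

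The only difference is the tool invoked at the key step. The paper does not call on the analytic Fredholm theorem; instead it applies Theorem~\ref{perturbation_of_spaces} (Kato, Ch.~IV, Thm.~5.31), already stated in Appendix~\ref{app:fredholm}, which says that $\dim\ker(A-\mu)$ and $\dim\coker(A-\mu)$ are \emph{constant} on a punctured disc about $\lambda$. One resolvent point in that punctured disc then forces both dimensions to vanish identically there, so the whole punctured disc lies in $\rho(A)$ and $\lambda$ is isolated. This is a slightly more elementary and fully self-contained route within the paper, whereas your version imports the analytic Fredholm theorem as a black box; the content is the same.

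One small caveat on your suggested alternative: forming the Riesz projector $p(A;\{\lambda\})$ presupposes that $\{\lambda\}$ is already a spectrally isolated set, which is precisely what you are trying to prove. The finite-dimensional reduction you sketch can be made to work, but it should start from a Fredholm parametrix of $A-\lambda$ rather than from a spectral projector.
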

\begin{proof}
We argue by contradiction: let $ \lambda_0 \in\sigma(A)\setminus D $. If 
$ \lambda_0 \not\in\sigma_e (A) $ $ A - \lambda_0 $ is Fredholm of index 
$ k $. There exists $ r > 0 $ such that for every 
$ \lambda\in B(\lambda_0,r)\setminus\{\lambda_0\} $ the operator 
$ A - \lambda $ is Fredholm of the same index and $ \dim\ker (A - \lambda) $ 
and $ \dim\coker (A - \lambda) $ have constant dimension, by 
Theorem \ref{perturbation_of_spaces}. Since $ \lambda_0 $ is a boundary 
point there are $ z,w\in B(\lambda_0,r)\setminus\{\lambda_0\} $ such 
that $ z\in\sigma(A) $ and $ w\in\rho(A) $. But $ A - w\in GL(E) $ implies
that $ B(\lambda_0,r)\setminus\{\lambda_0\}\subset\rho(A) $, hence 
$ z\in\rho(A) $ and we get a contradiction.
\end{proof}
\begin{theorem}
\label{finite}
An operator $ B $ is essentially hyperbolic
if and only if $ B = A + K $, $ K \in\mathcal{L}_c (E) $, $ A $ hyperbolic.
\end{theorem}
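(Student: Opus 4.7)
The plan is to prove the two directions separately, the forward direction being nearly immediate and the reverse requiring a spectral decomposition argument based on Lemma \ref{structure}.

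For the implication ($\Leftarrow$), if $B = A + K$ with $A$ hyperbolic and $K$ compact, then $p(B) = p(A)$ in the Calkin algebra $\mathcal{C}$. Using the characterization (\ref{characterization_essential}) of the essential spectrum, I obtain
\[
\sigma_e(B) = \sigma(p(B)) = \sigma(p(A)) \subseteq \sigma(A),
\]
where the last inclusion follows since the quotient projection $p$ shrinks spectra. As $\sigma(A) \cap \img\mathbb{R} = \emptyset$ by hypothesis, $B$ is essentially hyperbolic.

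For the harder direction ($\Rightarrow$), suppose $B\in e\mathcal{H}(E)$. Since $\sigma_e(B)$ is compact and disjoint from the imaginary axis, there exists $\varepsilon > 0$ such that $\sigma_e(B)\cap\{|\re z| < \varepsilon\} = \emptyset$. I claim that $\sigma(B)\cap\{|\re z| < \varepsilon\}$ consists of finitely many isolated points. Indeed, any point of $\sigma(B)$ with $|\re z| < \varepsilon$ lies outside $\sigma_e(B)$, so $B - \lambda$ is Fredholm; by Lemma \ref{structure} every non-isolated point of $\partial\sigma(B)$ belongs to $\sigma_e(B)$, and since non-essential spectral points can only accumulate on the essential spectrum, only finitely many such $\lambda$ exist. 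Call this finite set $\Lambda$ and let $\Sigma = \sigma(B)\setminus\Lambda$, so that $\sigma(B) = \Sigma\sqcup\Lambda$ is a separation of $\sigma(B)$ by the disjoint open sets $\{|\re z| > \varepsilon/2\}$ and $\{|\re z| < \varepsilon\}$ (possibly refining to isolate each eigenvalue).

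Next, applying the spectral projector construction recalled in section \S 2.1 (choosing closed curves $\gamma_1,\gamma_2$ surrounding $\Sigma$ and $\Lambda$ in disjoint open sets), I obtain projectors $P_1 = p(B;\Sigma)$ and $P_2 = p(B;\Lambda)$ with $P_1 + P_2 = I$, $P_1 P_2 = 0$, both commuting with $B$. Since $\Lambda$ is a finite set of isolated points of $\sigma(B)$ where $B - \lambda$ is Fredholm, the range of $P_2$ is finite-dimensional (the sum of the finite-dimensional generalized eigenspaces), and therefore $P_2$ is compact. Correspondingly $E$ decomposes as $\ran P_1 \oplus \ran P_2$ into $B$-invariant subspaces.

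Finally, define
\[
A = B P_1 + A_2 P_2,
\]
where $A_2$ is any hyperbolic operator on the finite-dimensional space $\ran P_2$ (e.g.\ the identity on $\ran P_2$). Then $K := B - A = (B - A_2) P_2$ has finite rank, hence is compact. By block-diagonality with respect to the decomposition, $\sigma(A) = \sigma(B|_{\ran P_1})\cup\sigma(A_2|_{\ran P_2}) = \Sigma\cup\sigma(A_2)$, both disjoint from $\img\mathbb{R}$; therefore $A$ is hyperbolic, which completes the proof.

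The main obstacle is the claim that only finitely many spectral points of $B$ lie near $\img\mathbb{R}$: this requires combining Lemma \ref{structure} with the fact that isolated eigenvalues outside $\sigma_e(B)$ can accumulate only on $\sigma_e(B)$, together with compactness of the essential spectrum and its positive distance from the imaginary axis. Once this finiteness is established, the spectral projector decomposition and the construction of $A$ are routine.
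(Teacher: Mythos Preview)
Your approach is essentially the same as the paper's: the easy direction via invariance of the essential spectrum under compact perturbation, and the hard direction by isolating finitely many spectral points near the imaginary axis, peeling off a finite-rank piece via the Riesz projector, and checking that the remainder is hyperbolic.

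There is, however, a real gap in your finiteness argument. Lemma~\ref{structure} tells you only that non-isolated points of $\partial\sigma(B)$ lie in $\sigma_e(B)$; it says nothing about interior points of $\sigma(B)$. Your blanket assertion that ``non-essential spectral points can only accumulate on the essential spectrum'' is false in general: for the unilateral shift $S$ on $\ell^2$, $\sigma(S)$ is the closed unit disc while $\sigma_e(S)$ is the unit circle, so every point of the open disc is a non-essential spectral point and an accumulation point of other non-essential spectral points. Thus, as written, you have not excluded the possibility that $\sigma(B)\cap\{|\re z|<\varepsilon\}$ contains interior points of $\sigma(B)$ and is infinite.

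The paper closes this gap by a direct argument using Theorem~\ref{perturbation_of_spaces}: for $\lambda\in\sigma(B)\cap \img\mathbb{R}$, it looks at the connected component of $\lambda$ in $\sigma(B)\cap \img\mathbb{R}$, takes its upper endpoint $ia$, and uses the punctured-disc constancy of $\dim\ker(B-w)$ and $\dim\coker(B-w)$ to show that a resolvent point just above $ia$ forces everything in the punctured disc to be resolvent, collapsing the component to $\{\lambda\}$ and making $\lambda$ isolated. You can repair your version along the same lines, or more cleanly by noting that the strip $\{|\re z|<\varepsilon\}$ is connected, disjoint from $\sigma_e(B)$, and meets the resolvent set (since $\sigma(B)$ is bounded and the strip is unbounded); hence $\ind(B-\lambda)=0$ throughout the strip, and Theorem~\ref{perturbation_of_spaces} then forces $\sigma(B)$ to be discrete there. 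Once finiteness is established, your spectral decomposition and the construction $A=BP_1+P_2$, $K=(B-I)P_2$ match the paper's~(\ref{structure:1}) exactly.
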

\begin{proof}
Let $ A $ be a hyperbolic operator. We want to prove that $ A + K $ is 
essentially hyperbolic, in fact, by Proposition \ref{t+k_is_fredholm}
we have $ \sigma_e (A + K) = \sigma_e (A) $. Since $ A $ is hyperbolic
its spectrum does not meet the imaginary axis. Suppose $ B $ is essentially
hyperbolic. We show that 
$ F = \sigma(B)\cap \img\mathbb{R} $ is an isolated set in 
$ \sigma(B) $ and therefore finite (since is compact). 
We argue by contradiction. Suppose $ \lambda $ is not isolated. 
By hypothesis $ B - \lambda $ is Fredholm. Let $ C $ be
the connected component of $ \lambda $ in $ \sigma(B)\cap i\mathbb{R} $.
It is a closed interval of the imaginary axis. Let
\begin{gather*}
J = -i (C\cap \img\mathbb{R}), \ a = \max J.
\end{gather*}
By Proposition \ref{index-is-constant} $ B - a $ is Fredholm with the same
index as $ B - \lambda $. By Theorem \ref{perturbation_of_spaces} there 
exists $ r > 0 $ such that, for every
$ w\in B(ia,r) $ the operator $ B - w $ is Fredholm and
\begin{gather*}
\dim\ker (B - w), \ \ \dim\coker (B - w)
\end{gather*}
are constants, for every $ w\in B(ia,r)\setminus\{ia\} $. Since a 
connected component is maximal respect to the inclusion $ ia $ is not an 
internal point of $ \sigma(B)\cap i\mathbb{R} $, hence there exists 
$ 0 < t < r $ such that
$ i(a + t) $ is not in the spectrum of $ B $, hence $ B - i(a + t) $ is
invertible and its kernel and co-kernel are the null space, hence
$ B - i(a - t) $ is also invertible, thus the connected component of 
$ \lambda $ consists of $ \{\lambda\} $. This proves that $ \lambda $ is
not an internal point of $ \sigma(B) $; it is not isolated neither, by
hypothesis. Therefore Lemma \ref{structure} allows us to conclude that
$ \lambda\in\sigma_e (B) $ which contradicts the hypothesis.\par
Now we can write the spectrum as 
$ \sigma(B) = \sigma^+ \cup \sigma^- \cup\{\lambda_1,\dots,\lambda_n\} $
and choose a family of paths that surrounds $ \sigma (B) $ in $ \mathbb{C} $, 
say $ \Gamma = \{\gamma^+,\gamma^-,\gamma_1,\dots,\gamma_n\} $. We have
projectors $ \{P^+,P^-,P_i\} $. Since all the points of 
$ \sigma(B)\cap \img\mathbb{R} $ are isolated \textsl{eigenvalues} of $ B $, 
each $ B - \lambda_i $ is a Fredholm operator of index $ 0 $. By 
\textsc{Theorem} 5.28, Ch. IV, \S 5.4 of \cite{Kat95}, each eigenprojector
$ P_i $ has finite rank. Thus
\begin{gather}
\label{structure:1}
B = \left( B (P^+ + P^-) + \sum_{i = 1}^n P_i \right) 
- (I - B)\sum_{i = 1} ^n P_i.
\end{gather}
\end{proof}
The space $ e\mathcal{H}(E) $ is an open subset of $ \mathcal{L}(E) $ hence
is locally arcwise connected. Theorem \ref{finite} and Proposition 
\ref{retratto} allow us to connect the operator $ B $ to the square root of 
unit
\begin{gather*}
P^+ (B) - P^- (B) + \sum_{i = 1}^n P_i.
\end{gather*}
Moreover, if there exists a path that connects $ 2P - I $ and $ 2Q - I $
in $ e\mathcal{H}(E) $, by Theorem \ref{Calkin_bundle}, there exists 
$ T $ invertible such that $ TP T^{-1} - Q $ is a compact operator. For 
instance, if $ P $ is a finite rank projector and $ E $ is an infinite
dimensional space we always have at least the components: the one that contains
$ 2P - I $ and the one of $ 2(I - P) - I $. We denote them by 
$ e\mathcal{H}_+ (E) $ and $ e\mathcal{H}_- (E) $ respectively. 
By Theorem \ref{finite} we have
\begin{align*}
e\mathcal{H}_+ (E) &= \set{A\in\mathcal{H}(E)}%
{\re z > 0\ \forall z\in\sigma_e (A)} \\
e\mathcal{H}_- (E) &= \set{A\in\mathcal{H}(E)}%
{\re z < 0\ \forall z\in\sigma_e (A)}.
\glsadd{labeHpE}
\glsadd{labeHmE}
\end{align*}
These are star-shaped to $ I $ and $ -I $ respectively, hence
contractible. There are infinite dimensional Banach spaces 
(see \textsc{Corollary} 19 of \cite{GM93}) where the only complemented 
subspaces are the finite dimensional and the closed infinite dimensional.
For such spaces $ e\mathcal{H}_+ (E) $ and $ e\mathcal{H}_- (E) $ are the only
connected components of $ e\mathcal{H}(E) $.
\section{The spectral flow in Banach spaces}
A definition of spectral flow for Banach spaces and essentially hyperbolic
operators has been given in \cite{ZL99}. For sake of completeness we
restate it and show that, using the homotopy lifting properties of
the locally trivial bundle $ (\mathcal{P}(E),\mathcal{P}(\mathcal{C}),p) $, 
the spectral flow can be computed more easily and some properties, like 
homotopy invariance can be proved without considering partitions of
the unit interval.

Let $ A $ be a continuous path on $ [0,1] $ of essentially hyperbolic 
operators. By composition, we have a continuous path
\[
a(t) = \prc(P\sp + (A(t)))\in\mathcal{P}(\mathcal{C}(E)).
\]
By Theorem \ref{Calkin_bundle}, there exists a continuous path of
projectors, $ P $ such that $ \prc(P) = a $. We have an integer associated
to it
\begin{equation}
\label{eq:sf}
\Sf(A;P) = \big[P(0) - P\sp + (A(0))\big] - \big[P(1) - P\sp + (A(1))\big].
\glsadd{labSfAP}
\end{equation}
Moreover, given $ Q $ such that $ \prc(Q) = a $, by Theorem 
\ref{stability_dimension}, we have
\[
\begin{split}
\Sf(A;Q) &= \big[Q(0) - P\sp + (A(0))\big] - \big[Q(1) - P\sp + (A(1))\big]
= [Q(0) - P(0)] + \big[P(0) - P\sp + (A(0))\big] \\
&- [Q(1) - P(1)] - \big[P(1) - P\sp + (A(1))\big] = \Sf(A;P).
\end{split}
\]
\begin{definition}
Given $ A $ as above, we define the \textsl{spectral flow} as
the integer $ \Sf(A;P) $ where $ P $ is any of the paths of projectors
such that $ \prc(P(t)) = P\sp + (p(A(t))) $. We denote it by $ \Sf(A) $.
\glsadd{labSf}
\end{definition}
\begin{proposition}
\label{prop:sf-properties}
The spectral flow satisfies the following properties:
\begin{enumerate}
\item It is well behaved with respect to the catenation of paths; thus,
on the fundamental group, the spectral flow induces a $ \mb{Z} $-valued
group homomorphism;
\item the spectral flow of a constant path or a path in $ \mathcal{H}(E) $
is zero;
\item it is invariant for free-endpoints homotopies in 
$ \mathcal{H}(E) $ and for fixed-endpoints homotopies in $ e\mathcal{H}(E) $.
\end{enumerate}
\end{proposition}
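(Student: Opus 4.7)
The plan is to treat the three properties in order, exploiting the lifting mechanism underlying Definition (\ref{eq:sf}).

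For (i), given paths $A,B\colon[0,1]\to e\mathcal{H}(E)$ with $A(1)=B(0)$, Theorem \ref{Calkin_bundle} allows us to lift $t\mapsto\prc(P\sp +(A(t)))$ to a continuous path $P_A\colon[0,1]\to\mathcal{P}(E)$, and then, starting from the endpoint $P_A(1)$, to lift the analogous Calkin-algebra path for $B$, producing $P_B$. The concatenation $P_A*P_B$ is a continuous lift of the concatenation of the two base paths, and since $P_A(1)=P_B(0)$ and $P\sp +(A(1))=P\sp +(B(0))$, the two middle brackets $-[P_A(1)-P\sp +(A(1))]$ and $+[P_B(0)-P\sp +(B(0))]$ in $\Sf(A)+\Sf(B)$ cancel, yielding $\Sf(A*B)=\Sf(A)+\Sf(B)$. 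Together with the well-definedness argument already appearing just after (\ref{eq:sf}), this produces the induced homomorphism $\Sf_{A_0}$ on $\pi_1(e\mathcal{H}(E),A_0)$.

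For (ii), if $A$ is constant then $P\equiv P\sp +(A(0))$ is a continuous lift and both brackets in (\ref{eq:sf}) vanish. If instead $A$ takes values in $\mathcal{H}(E)$, the spectral projector $P\sp +(A(t))$ depends continuously on $t$ by the functional-calculus argument used in Proposition \ref{retratto}, so $P(t)=P\sp +(A(t))$ is itself a valid continuous lift and both brackets are zero.

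The main work is (iii). The free-endpoints claim follows from (ii), since every intermediate path in such a homotopy lies entirely in $\mathcal{H}(E)$ and hence has vanishing spectral flow. For the fixed-endpoints version in $e\mathcal{H}(E)$, let $H\colon[0,1]\sp 2\to e\mathcal{H}(E)$ be a continuous homotopy with $H(s,0)$ and $H(s,1)$ independent of $s$, and consider $h(s,t)=\prc(P\sp +(H(s,t)))$ in $\mathcal{P}(\mathcal{C})$. By Theorem \ref{Calkin_bundle} together with the homotopy lifting property for locally trivial fiber bundles over a paracompact base (cf.\ \cite{Ste51}, \S11), $h$ lifts to a continuous $\widetilde{H}\colon[0,1]\sp 2\to\mathcal{P}(E)$ with $\prc\circ\widetilde{H}=h$; set $A_s(t)=H(s,t)$ and $P_s(t)=\widetilde{H}(s,t)$. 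For each boundary time $t\in\{0,1\}$ the map $s\mapsto P_s(t)$ is continuous and $P\sp +(A_s(t))$ is constant in $s$ by the fixed-endpoint hypothesis, while $P_s(t)-P\sp +(A_s(t))$ is compact because the two projectors have the same image under $\prc$. Theorem \ref{stability_dimension}, applied to the continuous pair of ranges $(\ran P_s(t),\,\ran P\sp +(A_s(t)))$, then shows that $s\mapsto[P_s(t)-P\sp +(A_s(t))]$ is locally constant, hence constant, for $t=0$ and $t=1$; therefore $s\mapsto\Sf(A_s;P_s)$ is constant on $[0,1]$. The principal obstacle is simply checking that the fiber bundle of Theorem \ref{Calkin_bundle} supports lifting of a two-parameter homotopy; this reduces to paracompactness of $\mathcal{P}(\mathcal{C})$, which is automatic since it is a metric subspace of a Banach algebra.
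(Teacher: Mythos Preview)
Your proof is correct and follows essentially the same approach as the paper: for (i) and (ii) your argument is identical to the paper's, and for the fixed-endpoints part of (iii) you both lift the full two-parameter homotopy through the bundle of Theorem~\ref{Calkin_bundle} and invoke Theorem~\ref{stability_dimension} at the endpoints.

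The one substantive difference is your treatment of the free-endpoints case in (iii). You dispatch it via (ii), observing that every slice of a homotopy lying in $\mathcal{H}(E)$ already has vanishing spectral flow. The paper instead runs a single unified argument for both cases: it lifts the homotopy $H$ to $\mathcal{P}(E)$ and notes that at each endpoint $i\in\{0,1\}$ the projector $P^+(H(i,s))$ is either constant in $s$ (fixed endpoints) or continuous in $s$ (endpoints in $\mathcal{H}(E)$), so Theorem~\ref{stability_dimension} applies in either situation. This buys the paper a slightly stronger conclusion than the statement literally claims---invariance under homotopies in $e\mathcal{H}(E)$ whose \emph{endpoints} move inside $\mathcal{H}(E)$, not just homotopies lying entirely in $\mathcal{H}(E)$---whereas your shortcut proves exactly what is stated, more cheaply. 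Your remark about paracompactness of $\mathcal{P}(\mathcal{C})$ justifying the two-parameter lift is a detail the paper leaves implicit.
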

\begin{proof}
i). Let $ A,B $ be two paths such that $ A(1) = B(0) $. We can choose
paths of projectors $ P $ and $ Q $ such that $ \prc(P) = \Psi(A) $ and
$ \prc(Q) = \Psi(B) $, with $ Q(0) = P(1) $. Denote by $ C $ and $ R $ the
catenation of $ A,B $ and $ P,Q $ respectively. Then,
\[
\begin{split}
\Sf(A*B) &= [R_0 - P\sp + (C_0)]
- [R_1 - P\sp + (C_1)]\\
&= [P_0 - P\sp + (A_0)] -
[Q_1 - P\sp + (B_1)] = [P_0 - P\sp + (A_1)] \\
&- [P_1 - P\sp + (A_1)]
+ [Q_0 - P\sp + (B_0)] - [Q_1 - P\sp + (B_1)] \\
&= \Sf(A) + \Sf(B).
\end{split}
\]\par
\noindent ii). If $ A $ is constant, the path $ P $ can be chosen to
be constant. Hence the spectral flow is zero. If $ A $ is hyperbolic,
$ P\sp + (A(t)) $ is continuous and can be chosen as lifting path of
$ \Psi(A) $. Hence, 
\[
\Sf(A) = [P\sp + (A(0)) - P\sp + (A(0))] - [P\sp + (A(1)) - P\sp + (A(1))] = 0.
\]\par
\noindent iii). Let $ H\colon I\times I \ra e\mathcal{H}(E) $ be a continuous
map. There exists $ P\colon I\times I\ra \mathcal{P}(E) $ such that
\[
P(t,s) - P\sp + (H(t,s))\in\mathcal{L}_c (E),\text{ for every } t,s.
\]
Let $ H(\cdot,0) = A $ and $ H(\cdot,1) = B $. We have
\[
\Sf(A) = [P(0,0) - P\sp + (H(0,0))] - [P(1,0) - P\sp + (H(1,0))].
\]
For $ i=0,1 $ and every $ s $, the operator $ P(i,s) - P\sp + (H(i,s)) $ 
is compact. The right summand is constant or continuous, 
whether the homotopy has fixed endpoints in $ e\mathcal{H}(E) $ or laying in 
$ \mathcal{H}(E) $. In both cases
\[
[P(i,s) - P\sp + (H(i,s))] = k_i\text{ for all } s,i=0,1.
\]
Thus, $ \Sf(A) = k_0 - k_1 = \Sf(B) $.
\end{proof}
Given a projector $ P $ of $ E $, we consider the connected component of
$ e\mathcal{H}(E) $ of the hyperbolic element $ 2P - I $. On its
fundamental group, we have defined the spectral flow. We have the following
\begin{theorem}
\label{f=-p}
For every projector $ P $, $ \Sf_{2P - I} = -\vfi_P\circ\Psi_* $.
\end{theorem}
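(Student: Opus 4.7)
The plan is to show that, for a loop $A$ based at $2P-I$, both $\Sf_{2P-I}([A])$ and $-\vfi_P(\Psi_*[A])$ can be computed from the \emph{same} lifted path in $\mathcal{P}(E)$ of the loop $\Psi\circ A$ through the fiber bundle $\prc$, so the equality will emerge from a direct unwinding of the two definitions. Concretely, given a loop $A\colon[0,1]\to e\mathcal{H}(E)$ with $A(0)=A(1)=2P-I$, set $a = \Psi\circ A$; this is a loop in $\mathcal{P}(\mathcal{C})$, since $\Psi(2P-I)=P^+(2p(P)-1)=p(P)=\prc(P)$. By Theorem \ref{Calkin_bundle}, $\prc$ is a locally trivial fiber bundle, so by the homotopy lifting property I can choose a continuous path $Q\colon[0,1]\to\mathcal{P}(E)$ with $\prc\circ Q = a$ and with the initial condition $Q(0)=P$.

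Using this common lift, I compute both invariants. For the spectral flow, definition (\ref{eq:sf}) gives
\[
\Sf(A) = \big[Q(0) - P^+(A(0))\big] - \big[Q(1) - P^+(A(1))\big].
\]
Because $A(0)=A(1)=2P-I$, the positive spectral projector at either endpoint is $P$, and the choice $Q(0)=P$ makes the first bracket vanish, leaving $\Sf(A) = -\dim(\ran Q(1),\ran P) = -\dim(Q(1),P)$. For the index homomorphism, the construction in the proof of Theorem \ref{index_homomorphism} applied to the loop $a$ and the lift $Q$ (which starts at the basepoint $P$) gives $\vfi_P([a]) = \dim(Q(1),Q(0)) = \dim(Q(1),P)$. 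Comparing the two identities yields
\[
\Sf_{2P-I}([A]) = -\vfi_P(\Psi_*[A])
\]
for every class $[A]\in\pi_1(e\mathcal{H}(E),2P-I)$.

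The main steps in the order I would write them are: (i) set up $a$ and verify $a(0)=a(1)=\prc(P)$; (ii) invoke Theorem \ref{Calkin_bundle} and the homotopy lifting property to produce $Q$ with $Q(0)=P$; (iii) evaluate the spectral flow using this $Q$; (iv) evaluate $\vfi_P$ using the same $Q$; (v) note that both $\Sf_{2P-I}$ and $\vfi_P\circ\Psi_*$ are group homomorphisms and depend only on the homotopy class, so the loopwise identity lifts to the identity of homomorphisms.

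I do not expect a serious obstacle. The only point that needs a moment's care is justifying that the two definitions admit the \emph{same} lift: each is independently shown to be insensitive to the choice of lift (for $\vfi_P$ via Theorem \ref{stability_dimension}, used in equation (\ref{eq:well-defined}); for $\Sf$ via the argument immediately following (\ref{eq:sf})), so it is legitimate to fix one common $Q$ and read off both brackets from it. A minor bookkeeping item is reconciling the two notations $[R-S]$ and $\dim(R,S)$ for the relative dimension; this is settled by the convention recorded at the end of \S 1.5.
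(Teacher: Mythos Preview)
Your proposal is correct and follows essentially the same approach as the paper: both compute $\Sf$ and $\vfi_P\circ\Psi_*$ via a common lift of $\Psi\circ A$ through the bundle $\prc$ and compare the resulting relative dimensions. The only cosmetic difference is that you normalize the lift to start at $P$ (so one bracket vanishes), whereas the paper takes an arbitrary lift and inserts $P^+(A_1)=P^+(A_0)$ via the cocycle identity for the relative dimension; the content is identical.
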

\begin{proof}
Given a loop $ A $ in $ e\mathcal{H}(E) $, there exists a path of projectors
$ P $ such that $ P - P\sp + (A_t) $ is compact. By definition of $ \vfi_P $,
\[
\vfi_P (\Psi_* (A)) = [P_1 - P_0] = 
[P_1 - P\sp + (A_1)] - [P_0 - P\sp + (A_1)] = -\Sf_{2P - I} (A).
\]
\end{proof}
The theorem says, in particular, that the homomorphisms have the same kernel.
Hence we have a characterization of the kernel of the spectral flow.
\begin{proposition}
\label{sf=0}
A path loop $ A $ has spectral flow equal to zero if and only if there exists
a continuous loop $ \beta $ in $ \mathcal{P}(E) $ such that
\begin{gather*}
\beta(t) - P(A(t);\mathbb{H}^+)
\end{gather*}
is compact for every $ t\in [0,1] $.
\end{proposition}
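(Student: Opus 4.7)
The plan is to combine Theorem \ref{f=-p}, which identifies $\Sf_{2P-I}$ with $-\vfi_P\circ\Psi_*$, with the exactness of the sequence
\[
\pi_1(\mathcal{P}(E),P)\xrightarrow{\prc_*}\pi_1(\mathcal{P}(\mathcal{C}),\prc(P))\xrightarrow{\vfi_P}\mathbb{Z}
\]
from Theorem \ref{index_homomorphism}. Taking $P := P(A(0);\mathbb{H}^+)$, the hypothesis $\Sf(A) = 0$ is equivalent to $\vfi_P(\Psi_*[A]) = 0$, which by exactness places $\Psi_*[A]$ in the image of $\prc_*$. Concretely, there is a loop $\tilde\beta$ in $\mathcal{P}(E)$ based at $P$ whose projection $\prc\circ\tilde\beta$ is homotopic rel basepoint to $\Psi\circ A$ inside $\mathcal{P}(\mathcal{C})$.

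The main step is to upgrade this homotopy-class assertion into an honest pointwise lift that remains a loop. I would pick a rel-endpoint homotopy $H\colon I\times I\to\mathcal{P}(\mathcal{C})$ from $\prc\circ\tilde\beta$ to $\Psi\circ A$, and invoke the homotopy lifting property of the locally trivial bundle $(\mathcal{P}(E),\mathcal{P}(\mathcal{C}),\prc)$ established in Theorem \ref{Calkin_bundle}. Define a partial lift on the frame $I\times\{0\}\cup\{0,1\}\times I$ by $\tilde H(t,0) = \tilde\beta(t)$ and $\tilde H(0,s) = \tilde H(1,s) = P$, noting that these values agree at the two corners because $\tilde\beta$ is a loop at $P$. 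Extending to a full lift $\tilde H\colon I\times I\to\mathcal{P}(E)$ with $\prc\circ\tilde H = H$, the curve $\beta(t) := \tilde H(t,1)$ is then a loop at $P$ satisfying $\prc\circ\beta = \Psi\circ A$ pointwise. Since $\Psi(A(t)) = \prc(P(A(t);\mathbb{H}^+))$, this is exactly the statement that $\beta(t) - P(A(t);\mathbb{H}^+)$ is compact for every $t$.

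The converse is immediate from the definition of the spectral flow in (\ref{eq:sf}): given such a loop $\beta$, it serves as a legitimate pointwise lift of $\Psi\circ A$, so
\[
\Sf(A) = [\beta(0) - P(A(0);\mathbb{H}^+)] - [\beta(1) - P(A(1);\mathbb{H}^+)] = 0,
\]
since $\beta(0) = \beta(1)$ and $A(0) = A(1)$.

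The most delicate point I anticipate is the partial-to-total lifting step, since the partial lift lives on the three-sided frame $I\times\{0\}\cup\{0,1\}\times I$ rather than on the bottom edge alone. This frame is a strong deformation retract of $I\times I$, so the Serre fibration property implied by the local triviality of Theorem \ref{Calkin_bundle} is sufficient; if one prefers an explicit construction, the lift can be built strip by strip over a finite open cover of $I\times I$ on which $\prc$ trivializes, using the local sections produced in Theorem \ref{th:gl-p}.
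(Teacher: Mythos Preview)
Your argument is correct and matches the paper's route: the paper derives Proposition~\ref{sf=0} in one line from Theorem~\ref{f=-p} (so that $\Sf_{2P-I}$ and $\vfi_P\circ\Psi_*$ share a kernel) together with the exactness in Theorem~\ref{index_homomorphism}, leaving implicit the passage from ``$\Psi_*[A]\in\mathrm{Im}(\prc_*)$'' to an honest pointwise loop lift. Your relative homotopy-lifting step over the three-sided frame is precisely the standard justification for that passage, and the converse via the defining formula (\ref{eq:sf}) is exactly right.
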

The theorem states also that they have the same images. Thus we have a
characterization of the image of the spectral flow also. 
\begin{proposition}
\label{sf=k}
Given a Banach space $ E $ and a projector $ P $ there exists a loop of
essentially hyperbolic operators based on $ 2P - I $ with spectral flow $ k $ 
if and only if the projector $ P $ is connected to a projector $ Q $ such that
$ P - Q $ is compact and $ [P - Q] = k $.
\end{proposition}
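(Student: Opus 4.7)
The plan is to reduce this proposition to the combination of Theorem \ref{f=-p} and property h1) from the corollary to Theorem \ref{index_homomorphism}, using that $ \Psi_* $ is an isomorphism of fundamental groups.

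First I would verify that the base point matches: for $ A = 2P - I $, the image $ p(A) = 2\prc(P) - 1 $ in the Calkin algebra has spectrum contained in $ \{-1,+1\} $, and the spectral projector relative to the positive real half-plane is precisely $ \prc(P) $. Hence $ \Psi(2P - I) = \prc(P) $, which is the base point at which $ \vfi_P $ is defined. Since $ \Psi\colon e\mathcal{H}(E)\to\mathcal{P}(\mathcal{C}) $ is a homotopy equivalence (Theorem \ref{topology}), the induced map
\[
\Psi_*\colon\pi_1 (e\mathcal{H}(E),2P - I)\longrightarrow\pi_1 (\mathcal{P}(\mathcal{C}),\prc(P))
\]
is a group isomorphism.

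Next I would apply Theorem \ref{f=-p}, which gives $ \Sf_{2P - I} = -\vfi_P\circ\Psi_* $. Since $ \Psi_* $ is a bijection, taking images yields
\[
\mathrm{Im}(\Sf_{2P - I}) = -\mathrm{Im}(\vfi_P)
\]
as subsets of $ \mathbb{Z} $. Thus a loop of essentially hyperbolic operators with spectral flow $ k $ exists if and only if $ -k\in\mathrm{Im}(\vfi_P) $.

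Finally I would invoke property h1) of the corollary to Theorem \ref{index_homomorphism}: the integer $ m $ lies in $ \mathrm{Im}(\vfi_P) $ if and only if $ P $ is connected in $ \mathcal{P}(E) $ to some projector $ Q $ with $ Q - P $ compact and $ \dim(Q,P) = m $. Applying this with $ m = -k $ and using the elementary identity $ [P - Q] = \dim(P,Q) = -\dim(Q,P) $ from \S 1.5, the condition $ -k\in\mathrm{Im}(\vfi_P) $ translates exactly into the existence of $ Q $ connected to $ P $ with $ P - Q $ compact and $ [P - Q] = k $. Combining the forward and reverse implications of h1) with the isomorphism $ \Psi_* $ yields both directions of the proposition. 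There is no real obstacle here beyond bookkeeping the signs and base points carefully; all the substantive work has already been carried out in the previous chapters.
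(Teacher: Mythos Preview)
Your proof is correct and follows exactly the route the paper takes: the paper simply remarks, immediately before stating Proposition~\ref{sf=k}, that Theorem~\ref{f=-p} forces $\Sf_{2P-I}$ and $\vfi_P$ to have the same image (via the isomorphism $\Psi_*$), and then reads off the statement from condition~h1). Your treatment is in fact more explicit than the paper's, which gives no separate argument beyond that one-line remark; the base-point check and sign bookkeeping you provide are the right details to fill in (and you could even shorten the sign discussion by noting that $\mathrm{Im}(\vfi_P)$, being a subgroup of $\mathbb{Z}$, is automatically closed under negation).
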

In general all the facts proved for the index $ \varphi $ are true for the
spectral flow: if $ P\in\mathcal{P}(E) $ and the hypotheses h1), with
$ m = 1 $ and h2) hold,
the spectral flow is an isomorphism on $ \pi_1 (e\mathcal{H}(E),2P - I) $ with
$ \mathbb{Z} $.	If $ E $ satisfies the hypotheses of Proposition 
\ref{surjective} it is surjective.
\vskip .4em
\subsubsection*{s-sections of spectral projectors}
Essentially hyperbolic operators coincides are the \textsl{admissible} 
operator defined in \cite{ZL99}, for which C.~Zhu and Y.~Long define
the spectral flow. In order to compute the spectral flow, we use a 
continuous path of projectors $ P $ such that $ P - P\sp + (A(t)) $
is compact for every $ t $. According to their \textsc{Definition} 2.1,
$ P $ is a \textsl{s-section} for $ P\sp + (A(t)) $ on $ [0,1] $.
In \textsc{Definition} 2.6 of \cite{ZL99}, in order to compute the spectral 
flow, they divide the unit interval in sub-intervals where a \textsl{s-section}
of \textsl{spectral projectors} exists. In fact, globally defined s-sections of
spectral projectors do not exist in general. Consider, for instance
\[
A(t) = (2P - 1) + (2t - 1) P_m
\]
where $ P,P_m $ are projectors such that $ P P_m = P_m P = 0 $ and
$ P_m $ has finite rank $ m $ and $ P $ has infinite-dimensional kernel
and image. In conclusion, if we do not put restrictions on the choice
of an s-section, we always have \textsl{globally defined} s-sections.
\section{The Fredholm index and the spectral flow}
Given an asymptotically hyperbolic path $ A $ in $ e\al{H}(E) $ the spectral
flow can be defined as follows: since $ \mathcal{H}(E) $ is an open subset
of $ \mathcal{L}(E) $ there exists $ \delta > 0 $ such that
$ A((-\infty,-\delta]\cup[\delta,+\infty))\subset\mathcal{H}(E) $. Then
define
\begin{equation}
\label{sf_hyp}
\Sf(A) = \Sf(A,[-\delta,\delta]).
\end{equation}
That the definition does not depend on the choice of $ \delta $ follows from
ii) of Proposition \ref{prop:sf-properties}.
\begin{definition}
A splitting $ E = E_1 \oplus E_2 $ is called \emph{essential} for an operator
$ T $ if there exists a compact perturbation $ T_0 $ of $ T $ such that 
$ T_0 (E_i)\subset E_i $.
\end{definition}
In fact it is easy to check that the above splitting is essential for an 
operator $ T $ if and only if $ [T,P(E_1,E_2)] $ is a compact operator.
Given an asymptotically hyperbolic path $ A $ we denote by $ E^+ (+\infty) $
and $ E^- (+\infty) $ the images of the spectral projectors of $ A(+\infty) $.
Similarly we define $ E^+ (-\infty) $ and $ E^- (-\infty) $.
\begin{definition}
An asymptotically hyperbolic path is called \emph{essentially splitting} if 
and only if the following conditions hold:
\begin{enumerate}
\item the
splittings $ E = E^+ (+\infty)\oplus E^- (+\infty) $ and 
$ E = E^+ (-\infty)\oplus E^- (-\infty) $ are essential for $ A(t) $, 
$ t > 0 $ and $ t \leq 0 $ respectively;
\item $ E^- (-\infty) $ is compact perturbation of $ E^- (+\infty) $.
\end{enumerate}
\end{definition}
We can prove the following
\begin{theorem}\rm{(cf. \textsc{Theorem} 6.3, \cite{AM03b}).}
\label{essential_ind}
If $ A $ is asymptotically hyperbolic and essentially splitting, the
operator $ F_A $ is Fredholm and %
\glsadd{labFA}%
$ \ind F_A = \dim(E^- (A(+\infty)),E^- (A(-\infty))) $.
\end{theorem}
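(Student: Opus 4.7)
The plan is to reduce the problem, via Theorem \ref{facts}, to computing the Fredholm index of the pair $ (W_A^s, W_A^u) $, and to exploit the essentially splitting hypothesis to replace $ A $ on each half-line by a compact perturbation whose stable (resp.\ unstable) subspace is the limiting spectral subspace. Concretely, on $ [0,+\infty) $ I would introduce the path
\[
B^+(t) = P^+(+\infty)\, A(t)\, P^+(+\infty) + P^-(+\infty)\, A(t)\, P^-(+\infty),
\]
where $ P^\pm(+\infty) $ are the spectral projectors of $ A(+\infty) $. The difference $ A(t) - B^+(t) $ consists of the two off-diagonal blocks $ P^\pm(+\infty) A(t) P^\mp(+\infty) $, which are compact precisely when $ [A(t), P^+(+\infty)] $ is compact, i.e.\ exactly under essential splitting condition~(i). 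Since $ A(+\infty) $ commutes with its own spectral projectors, $ B^+(+\infty) = A(+\infty) $, so $ B^+ $ is bounded and asymptotically hyperbolic with the same limit as $ A $. By construction $ B^+(t) $ leaves $ E^-(+\infty) $ invariant, so Lemma \ref{invariance} (at $ t = 0 $) yields $ W_{B^+}^s = E^-(+\infty) $. Theorem \ref{compact} then gives that $ W_A^s $ is a compact perturbation of $ E^-(+\infty) $ with $ \Dim(W_A^s, E^-(+\infty)) = \Dim(E^-(A(+\infty)), E^-(B^+(+\infty))) = 0 $. A symmetric argument on $ (-\infty, 0] $ shows that $ W_A^u $ is a compact perturbation of $ E^+(-\infty) $ with vanishing relative dimension.

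Next I would show that $ (E^-(+\infty), E^+(-\infty)) $ is a Fredholm pair of the expected index. From condition~(ii), $ E^-(-\infty) $ is a compact perturbation of $ E^-(+\infty) $; checking the compositions in Definition \ref{compact_perturbation} against the specific projectors $ P^\pm(\pm\infty) $ (using Proposition \ref{equivalence_of_definitions}) shows that $ E^+(-\infty) $ is likewise a compact perturbation of $ E^+(+\infty) $. The dimension identity
\[
\Dim(E^+(-\infty), E^+(+\infty)) = \Dim(E^-(+\infty), E^-(-\infty))
\]
then follows from Theorem \ref{suitable_projector} and Proposition \ref{cp:2}: one picks projectors with compact difference whose ranges are $ E^-(\pm\infty) $, applies \ref{cp:2} to read off the relative dimension of their kernels, and observes that any two complements of a fixed splitting subspace have zero relative dimension. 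Since the complementary pair $ (E^-(+\infty), E^+(+\infty)) $ is trivially Fredholm of index $ 0 $, Proposition \ref{transitivity_of_dimension} yields
\[
\ind(E^-(+\infty), E^+(-\infty)) = \Dim(E^-(+\infty), E^-(-\infty)).
\]

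To finish, two more applications of Proposition \ref{transitivity_of_dimension}, first replacing $ E^-(+\infty) $ by $ W_A^s $ and then $ E^+(-\infty) $ by $ W_A^u $, preserve both the Fredholm property and the index, since the two intervening relative dimensions vanish. Hence $ (W_A^s, W_A^u) $ is Fredholm with $ \ind(W_A^s, W_A^u) = \Dim(E^-(+\infty), E^-(-\infty)) $, and Theorem \ref{facts} delivers the claim.

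The principal obstacle is bookkeeping rather than conceptual: one has to verify carefully that $ B^+ $ (and its negative-half-line analogue) genuinely satisfies the hypotheses of Theorem \ref{compact}, in particular boundedness, continuity, and matching asymptotic limits, and one must juggle the three notions of compact perturbation (subspaces, projectors with compact difference, commensurability) so that the auxiliary identity $ \Dim(E^+(-\infty), E^+(+\infty)) = \Dim(E^-(+\infty), E^-(-\infty)) $ can be extracted from the tools developed in Chapter I.
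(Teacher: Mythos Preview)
Your proposal is correct and follows essentially the same route as the paper: replace $A$ on each half-line by a compact perturbation that preserves the limiting spectral splitting, invoke Lemma~\ref{invariance} and Theorem~\ref{compact} to identify $W_A^s$ and $W_A^u$ as compact perturbations of $E^-(+\infty)$ and $E^+(-\infty)$ with zero relative dimension, and then finish via Proposition~\ref{transitivity_of_dimension} and Theorem~\ref{facts}. The paper uses $A_+(t)=A(t)-[A(t),P^-(+\infty)]$ rather than your block-diagonal $B^+$, but both differ from $A$ by compact off-diagonal terms and both leave $E^-(+\infty)$ invariant, which is all Lemma~\ref{invariance} needs.

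One simplification: your detour through $E^+(-\infty)\sim_c E^+(+\infty)$ and the identity $\Dim(E^+(-\infty),E^+(+\infty))=\Dim(E^-(+\infty),E^-(-\infty))$ is unnecessary. Hypothesis~(ii) already gives $E^-(+\infty)$ as a compact perturbation of $E^-(-\infty)$, and since $(E^-(-\infty),E^+(-\infty))$ is a complementary pair of index~$0$, a single application of Proposition~\ref{transitivity_of_dimension} yields directly $\ind(E^-(+\infty),E^+(-\infty))=\Dim(E^-(+\infty),E^-(-\infty))$. This is how the paper handles that step.
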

\begin{proof}
Denote by $ P^{\pm} (+\infty) $ and $ P^{\pm} (-\infty) $ the spectral 
projectors of $ A(\pm\infty) $. The following paths
\begin{align*}
A_+ (t) = A(t) - [A(t),P^- (+\infty)] & \ \text{ if } t > 0 \\
A_- (t) = A(t) - [A(t),P^+ (-\infty)] & \ \text{ if } t \leq 0
\end{align*}
are compact perturbations of $ A $ and leave respectively 
$ E^{\pm} (+\infty) $ and $ E^{\pm} (-\infty) $ invariant. Since 
$ A_+ (+\infty) = A(+\infty) $ by Lemma \ref{invariance} we have 
\begin{gather*}
W_{A_+} ^s =  E^- (+\infty), \ W_{A_-} ^u = E^+ (-\infty).
\end{gather*}
By Theorem \ref{compact} $ W_A ^s $ and $ W_A ^u $ are compact perturbation of
$ E^- (+\infty) $ and $ E^+ (-\infty) $. respectively. By hypothesis 
$ (E^- (+\infty),E^+ (-\infty)) $ is a Fredholm pair. By Proposition
\ref{transitivity_of_dimension}, the pair $ (W_A ^s,W_A ^u) $ is Fredholm, 
hence, by Theorem \ref{facts}, $ F_A $ is Fredholm and
\begin{equation*}
\begin{split}
\ind F_A &= \dim (W_A ^s,W_A ^u) = \dim(W_A ^s,E^- (+\infty)) + 
\ind (E^- (+\infty),E^+ (-\infty)) \\ 
&+ \dim(E^+ (-\infty),W_A ^u) = \dim(E^- (+\infty),E^- (-\infty)).
\end{split}
\end{equation*}
\end{proof}
For essential splitting path we are able to compute the spectral flow. 
First we need the following
\begin{lemma}
\label{essential_splitting}
Let $ A $ be an asymptotically hyperbolic and essentially hyperbolic path. 
It is essentially splitting also if and only if the set 
$ \set{P^+ (A(t))}{t\in\R} $ is contained in the same class of compact 
perturbation.
\end{lemma}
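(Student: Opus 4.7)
The plan is to reduce both sides of the claimed equivalence to a single statement about the continuous path $t\mapsto\Psi(A(t))=P\sp+(p(A(t)))$ in $\mathcal{P}(\mathcal{C})$. Two projectors on $E$ lie in the same class of compact perturbation iff their images agree in the Calkin algebra, so the right-hand side is equivalent to $p(P\sp+(A(t)))$ being constant on $\R$. Because $p$ is an algebra homomorphism and the Riesz projector $P\sp+(A(t))$ is obtained by a contour integral around $\sigma^+(A(t))\supset\sigma_e(A(t))\cap\{\re z>0\}$, one has $p(P\sp+(A(t)))=P\sp+(p(A(t)))=\Psi(A(t))$. Thus the right-hand side is: $\Psi(A(t))\equiv q$ for some $q\in\mathcal{P}(\mathcal{C})$, with $q=\Psi(A(\pm\infty))$ by continuity and asymptotic hyperbolicity.

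On the left-hand side, the preliminary algebraic observation (implicit in the definition of essentially splitting) is that a splitting $E=E_1\oplus E_2$ is essential for $T$ iff $[T,P(E_1,E_2)]\in\mathcal{L}_c(E)$: given $[T,P]$ compact, set $T_0=PTP+(I-P)T(I-P)$ and use $PT(I-P)=-P[T,P]$ and $(I-P)TP=[T,P]P$ to see that $T-T_0$ is compact; the converse is immediate. Consequently, condition (1) in the definition becomes $[p(A(t)),p(P\sp+(+\infty))]=0$ for $t>0$ and $[p(A(t)),p(P\sp+(-\infty))]=0$ for $t\leq 0$ in $\mathcal{C}$, and condition (2) becomes $p(P\sp+(+\infty))=p(P\sp+(-\infty))=:q$ via Proposition \ref{equivalence_of_definitions}. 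Together these say: $p(A(t))$ commutes with the single Calkin projector $q$ for every $t\in\R$, and $q=\Psi(A(\pm\infty))$.

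The reverse direction is then immediate: if $\Psi(A(t))\equiv q$, then $q=P\sp+(p(A(t)))$ is a spectral projector of $p(A(t))$ and therefore commutes with $p(A(t))$ for every $t$, yielding the commutator conditions; and $q=\Psi(A(\pm\infty))$ gives condition (2) as above.

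For the forward direction we have to upgrade ``$p(A(t))$ commutes with $q$'' to ``$q$ is the positive spectral projector of $p(A(t))$'', i.e.\ that the spectrum of $qp(A(t))q$ in the unital Banach subalgebra $q\mathcal{C}q$ lies in $\{\re z>0\}$ for every $t\in\R$. The argument is an open--open decomposition of $\R$: the set $S^+=\{t:\sigma_{q\mathcal{C}q}(qp(A(t))q)\subset\{\re z>0\}\}$ is open by upper semi-continuity of the spectrum, and the set $S^-=\{t:\sigma_{q\mathcal{C}q}(qp(A(t))q)\cap\{\re z<0\}\neq\emptyset\}$ is open by the standard Riesz-projector argument (enclose an offending point in a small disc $D\subset\{\re z<0\}$; the associated Riesz projector in $q\mathcal{C}q$ varies continuously with $t$ and is nonzero at $t_0$, hence nonzero nearby). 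Because $p(A(t))$ is hyperbolic in $\mathcal{C}$ for every $t$, the spectrum never touches $i\R$, so $\R=S^+\cup S^-$; asymptotic hyperbolicity forces a neighbourhood of $+\infty$ into $S^+$, and connectedness of $\R$ yields $S^+=\R$. The same argument applied to $1-q$ shows $\sigma((1-q)p(A(t))(1-q))\subset\{\re z<0\}$, and hence $\Psi(A(t))=q$ for all $t$. The main obstacle is to carry out the continuity-of-spectrum argument cleanly in the abstract Banach algebra $q\mathcal{C}q$ rather than on a concrete operator space; this is where one must invoke upper semi-continuity of the spectrum and continuity of the Riesz contour integral in a purely algebraic setting.
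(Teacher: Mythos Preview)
Your proof is correct in spirit and takes a genuinely different route from the paper. The paper works concretely: it writes $A(t)$ in block form with respect to $E^+(+\infty)\oplus E^-(+\infty)$, observes that the diagonal blocks $A_\pm$ are essentially hyperbolic in $E^\pm$, and then uses the connected-component structure of $e\mathcal{H}(E^\pm)$ (namely the contractible pieces $e\mathcal{H}_\pm$) to force $P^+(A_+(t))$ to have finite corank and $P^+(A_-(t))$ finite rank; the compact-perturbation statement then drops out by comparing finite-rank corrections. Your argument bypasses all of this by passing to the Calkin algebra at the outset via the identity $p(P^+(A(t)))=\Psi(A(t))$, which reduces the entire lemma to showing that the continuous path $\Psi(A(\cdot))$ in $\mathcal{P}(\mathcal{C})$ is constant. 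This is cleaner and more conceptual; the paper's block argument is more hands-on but requires tracking several finite-dimensional pieces.

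There is one genuine gap you should fix. You claim that condition~(ii) of the definition becomes $p(P^+(+\infty))=p(P^+(-\infty))$ ``via Proposition~\ref{equivalence_of_definitions}''. That proposition only tells you that $(I-P^-(+\infty))P^-(-\infty)$ and $(I-P^-(-\infty))P^-(+\infty)$ are compact, which in the Calkin algebra says $ab=a$ and $ba=b$ for $a=p(P^+(+\infty))$, $b=p(P^+(-\infty))$; this is the Grassmannian relation of~\eqref{range_relation}, not $a=b$. (Example~1.22 in the paper shows exactly that two projectors with the same range need not have compact difference.) The fix is painless: do not invoke condition~(ii) at all in the forward direction. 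Run your connectedness argument separately on $[0,\infty)$ with $q=p(P^+(+\infty))$ and on $(-\infty,0]$ with $q'=p(P^+(-\infty))$, using only condition~(i); this yields $\Psi(A(t))=q$ for $t\ge0$ and $\Psi(A(t))=q'$ for $t\le0$, and continuity of $\Psi(A(\cdot))$ at $t=0$ forces $q=q'$. (Incidentally, this shows that condition~(ii) is redundant once one also assumes essential hyperbolicity; the paper's own proof contains the same unjustified assertion ``by hypothesis, $P^+(+\infty)-P^+(-\infty)$ is compact''.)
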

\begin{proof}
Suppose $ A $ is essentially splitting and consider the 
restriction on half line $ \R^+ $; hence, using the decomposition 
$ E = E^+ \oplus E^- $, we can write
\begin{gather*}
A(t) = 
\begin{pmatrix}
A_+ & K_{\pm} \\
K_{\mp} & A_{-}
\end{pmatrix}
\end{gather*} 
where $ K_{\pm} $ and $ K_{\mp} $ are compact operators because $ A $ is
essentially splitting. Since $ A_+ (+\infty) $ is hyperbolic there exists 
$ t_+ > 0 $ such that $ A_+ ([t_+,+\infty))\subset \mathcal{H}(E^+) $ and 
\begin{gather*}
\no{P^+ (A_+ (t)) - P^+ (A_+ (+\infty))} < 1.
\end{gather*}
But $ A_+ (+\infty) $ has positive spectrum, hence 
$ P^+ (A_+ (+\infty)) = I $. Since at distance smaller than $ 1 $ from the
identity there are not projectors other than the identity, 
$ P^+ (A_+ (t)) $ is the identity too on $ E^+ $ if $ t\in [t_+,+\infty) $. 
Since $ A $ is essentially hyperbolic on $ E $, $ A_+ $ is also essentially 
hyperbolic on $ E^+ $ and we have a path in $ [0,t^+] $
\begin{gather*}
A_+\colon [0,t_+]\rightarrow e\al{H}(E^+),\ A_+ (t_+)\in e\al{H}_+ (E^+);
\end{gather*}
since $ e\al{H}_+ (E^+) $ is a connected component $ A_+ ([0,t^+]) $ is 
contained in $ e\mathcal{H}_+ (E^+) $. Thus the positive
eigenspaces have finite co-dimension for every $ t > 0 $. It is easy to
check that two projectors $ P^+ (A_+ (s)) $ and $ P^+ (A_+ (s')) $
with ranges of finite co-dimension have compact difference: the operator
\begin{gather*}
P^+ (A_+ (s)) - P^+ (A(s')) = (P^+ (A_+ (s)) - I) + I - P^+ (A(s'))
\end{gather*}
is sum of finite rank operators. Similarly $ P^+ (A_- (+\infty)) = 0 $ and
there exists $ t_- < 0 $ such that the positive projector of 
$ A_- (t) $ is zero for $ t\leq t_- $. Thus $ A_- (t) $ for $ 0\geq t\geq t_- $
is a path of continuous essentially hyperbolic operators that intersect
a connected component, that is $ e\mathcal{H}_- (E^-) $; by continuity
of $ A $ the whole path lies $ e\mathcal{H}_- (E^-) $. 
If $ t_0\geq\max\{t_+,-t_-\} $ we can write for every $ t\geq 0 $
\begin{equation*}
\begin{split}
P^+ (A(t))&\sim_c P^+ (A_+ (t)) + P^+ (A_- (t))\sim_c 
P^+ (A_+ (t_0)) + P^+ (A_- (t_0)) \\
&= I_{E^+} \oplus 0_{E^-} = P^+ (+\infty)
\end{split}
\end{equation*}
where $ \sim_c $ denotes the relation of compact perturbation. Similarly, we 
can prove that $ P^+ (A(t)) $ is compact perturbation of $ P^+ (-\infty) $ 
for every $ t\leq 0 $. By hypothesis, $ P^+ (+\infty) - P^+ (-\infty) $ is 
compact, hence all the positive projectors (and thus the negative) are 
compact perturbation one of each other. Conversely, if 
$ \set{P^+ (A(t)}{t\in\mathbb{R}} $ is in the same class of compact 
perturbation, we have
\[
[A(t),P^- (+\infty)] = [A(t),P^- (A(t))] - [A(t),P^- (A(t)) - P^- (+\infty)]
\]
for $ t > 0 $. The first term of the second member is $ 0 $, the last is 
compact by hypothesis. The proof for $ t\leq 0 $ is similar.
\end{proof}
We conclude the chapter with the proof that for an asymptotically 
hyperbolic path which is essentially splitting and essentially hyperbolic 
there holds $ \Sf(A) = -\ind F_A $.
\begin{theorem}
\label{sf_of_ess_split}
Let $ A $ be an asymptotically hyperbolic path and essentially hyperbolic
such that $ \set{P^+ (A(t))}{t\in\R} $ are compact perturbation of
each other. Then
\begin{equation}
\label{eq:sf_of_ess_split}
\Sf(A) = - \dim (E^- (A(+(\infty))), E^- (A(-\infty)))
\end{equation}
\end{theorem}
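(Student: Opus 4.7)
The plan is to exploit the definition of spectral flow in (\ref{eq:sf}) directly. By hypothesis, all spectral projectors $\{P^+(A(t))\}_{t\in\R}$ are compact perturbations of one fixed projector, call it $P_0 := P^+(A(-\infty))$. This means the constant path $P(t) \equiv P_0$ is a valid lifting for $\Psi\circ A$, since $\prc(P_0) = \prc(P^+(A(t)))$ for every $t$. First I would choose $\delta > 0$ such that $A$ is hyperbolic on the complement of $[-\delta,\delta]$, so that by (\ref{sf_hyp}) the spectral flow of $A$ equals $\Sf(A,[-\delta,\delta])$, computed with this constant lifting
\[
\Sf(A) = [P_0 - P^+(A(-\delta))] - [P_0 - P^+(A(\delta))].
\]

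Next, I would push the evaluation points from $\pm\delta$ out to $\pm\infty$. On $[\delta,+\infty]$ the path $t\mapsto A(t)$ is continuous with values in $\mathcal{H}(E)$ and converges to $A(+\infty)$, so $t\mapsto P^+(A(t))$ is continuous in that range (continuity of spectral projectors for hyperbolic operators) and each $P^+(A(t))$ differs from $P_0$ by a compact operator. By Theorem \ref{stability_dimension}, the relative dimension $\dim(\ran P_0,\ran P^+(A(t)))$ is locally constant in $t$, hence constant on the connected interval $[\delta,+\infty]$. The same argument on $[-\infty,-\delta]$ yields
\[
[P_0 - P^+(A(-\delta))] = [P_0 - P^+(A(-\infty))] = 0,\qquad
[P_0 - P^+(A(\delta))] = [P_0 - P^+(A(+\infty))],
\]
so
\[
\Sf(A) = - [P_0 - P^+(A(+\infty))] = -\dim\bigl(\ran P_0,\ran P^+(A(+\infty))\bigr)
       = \dim\bigl(E^+(A(+\infty)), E^+(A(-\infty))\bigr),
\]
where the last equality uses transitivity of $\dim$ (from the properties following Definition \ref{relative_dimension:1}) together with $\ran P_0 = E^+(A(-\infty))$.

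Finally, I would convert ranges into kernels. Since $P^+(A(+\infty)) - P^+(A(-\infty))$ is compact by hypothesis, Proposition \ref{cp:2} applied to the pair of projectors (which trivially have closed images and splitting kernels and images) gives
\[
\dim\bigl(E^+(A(+\infty)), E^+(A(-\infty))\bigr) = -\dim\bigl(E^-(A(+\infty)), E^-(A(-\infty))\bigr),
\]
because $E^-(A(\pm\infty)) = \ker P^+(A(\pm\infty))$. Combining the two displays yields (\ref{eq:sf_of_ess_split}).

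The only genuinely delicate point I expect is the step that pushes $t=\pm\delta$ to $t=\pm\infty$: one has to justify that $P^+(A(t))\to P^+(A(\pm\infty))$ in a way compatible with the relative dimension. For this I would rely on the fact that on the set of hyperbolic operators the spectral projector depends continuously on the operator (via the resolvent integral representation used throughout the paper, cf.\ Theorem \ref{decomposition_of_spectrum}), combined with the local constancy of the relative dimension on $G_c(\ran P_0;E)$ provided by Theorem \ref{stability_dimension}. Everything else is bookkeeping with the additivity $\dim(X,Z) = \dim(X,Y) + \dim(Y,Z)$ and $\dim(X,Y) = -\dim(Y,X)$.
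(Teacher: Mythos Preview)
Your argument is correct and follows essentially the same route as the paper: choose a constant lifting (the paper picks $P\equiv P^+(A(\delta))$ rather than your $P_0=P^+(A(-\infty))$, a cosmetic difference), then use continuity of $t\mapsto P^+(A(t))$ on the hyperbolic tails together with Theorem~\ref{stability_dimension} to replace $\pm\delta$ by $\pm\infty$. The only point you spell out more carefully than the paper is the passage from $E^+$ to $E^-$ via Proposition~\ref{cp:2}; the paper absorbs this sign flip into a single line.
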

\begin{proof}
Let $ \delta > 0 $ such that 
$ A((-\infty,-\delta]\cup [\delta,+\infty))\subset\mathcal{H}(E) $. Since
all the projectors are compact perturbation of each other, the spectral
flow can be computed by using $ P\equiv P\sp + (A(\delta)) $. Hence
\[
\Sf(A) = [P\sp + (A(\delta)) - P\sp + (A(-\delta))].
\]
Since $ A $ is hyperbolic in $ (-\infty-\delta]\cup [\delta,+\infty) $ the 
path $ P^+ (A(t)) $ is continuous on this subset. 
By Theorem \ref{stability_dimension},
\[
\dim(E^- (A(+\infty)),E^- (A(-\infty))) = 
- [P^+ (A(\delta)) - P^+ (A(-\delta))] = -\Sf(A).
\]
\end{proof}
Thus, Theorems \ref{sf_of_ess_split} and \ref{essential_splitting} give for
essentially splitting paths in $ e\mathcal{H}(E) $ the equality 
$ \ind F_A = - \Sf(A) $. If
$ A $ is not essentially splitting counterexamples are known even in
a Hilbert space; here we describe the \textsc{Example} 7 of 
\cite{AM03b}, Ch. 7.
\begin{example}
In Proposition \ref{the_patching_argument} we showed how to patch a 
discontinuity of a path $ A $ without changing the stable space of $ A_+ $
and the unstable space of $ A_- $. Here we describe another method; let 
$ X $ and $ Y $ be closed isomorphic subspaces that admit isomorphic 
topological complements $ X' $ and $ Y' $. Define $ P = P(X,X') $ and 
$ Q = P(Y,Y') $. We have a piecewise continuous path
\begin{gather*}
A(t) = 
\left\{
\begin{array}{ll}
2P - I  & t \geq 1 \\
2Q - I  & t \leq -1
\end{array}
\right.;
\end{gather*}
call $ A^s $ and $ A^u $ the restrictions of $ A $ to the positive and negative
half-line; by Proposition \ref{the_patching_argument}, we know that 
$ W_{A^s} ^s = X $, $ W_{A^u} ^u = Y $. There exists an invertible operator
$ T $ such that $ T Q T^{-1} = P $ which means, in particular, that
$ T Y = X $. If $ GL(E) $ is connected, there also exists a path $ U $ that 
$ U(-1) = I $ and $ U(1) = T $. Define
\begin{gather*}
A_U (t) = 
\left\{
\begin{array}{lc}
2P - I  & t \geq 1 \\
U(t) (2P - I) U(t)^{-1}   & -1\leq t\leq 1 \\ 
2Q - I  & t \leq -1
\end{array}
\right.;
\end{gather*}
the path $ A_U $ is continuous and hyperbolic, hence, by ii) of
Proposition \ref{prop:sf-properties}, $ \Sf(A_U) $ is zero. 
By iii) of Theorem \ref{facts}, the operator $ F_A $ is Fredholm if and only 
if the pair $ (X,Y) $ is Fredholm. Thus,
\begin{gather*}
\Sf(A_U)\neq - \ind F_{A_U}
\end{gather*}
if $ (X,Y) $ is a Fredholm pair of index $ k \neq 0 $.
\end{example}
The result of Theorem \ref{sf_of_ess_split} is meaningful for Hilbert spaces
too. It is interesting detecting a class of paths of essentially hyperbolic 
operators such that (\ref{eq:sf_of_ess_split}) holds and the spectral
flow does not depend on the endpoints alone, but also on the homotopy class
of the path. 
\fancyhead[CE]{}
\renewcommand{\chaptermark}[1]{%
\markboth{\footnotesize\Alph{chapter}.\, #1}{}}
\numberwithin{remark}{chapter}
\appendix
\chapter{The Cauchy problem}
\label{app:cauchy-problem}
Let $ E $ be a Banach space and let
$ f $ be a function defined on a open subset 
$ \Omega\subseteq \R\times E $ with values in $ E $. We denote by
$ \Omega_t = \set{u\in E}{(t,u)\in\Omega} $. We require $ f $ to have these 
properties:
\begin{enumerate}
  \item $ f $ is continuous
  \item for any $ t\in\R $ such that $ \Omega_t \neq\emptyset $ 
    there exists an open subset $ \R\supseteq U_t\ni t $ and a constant
    $ M $ such that $ f(t',\cdot) $ is a 
    Lipschitz function with constant $ M $ for every $ t' $ in $ U_t $.
\end{enumerate}
\begin{theorem}[Cauchy]
\label{Cau}
  Let $ f $ and $ \Omega $ be as above. Then for every 
  $ (t_0, u_0) \in\Omega $ there exists an open ball $ B(t_0,r) $ and 
  $ u\in C^1 (B(t_0,r), E) $ such that $ (t,u(t))\in\Omega $ for every
  $ t\in B(t_0,r) $ and	
  \begin{gather*}
    \left\{
    \begin{array}{l}
      u'(t) = f(t,u(t)) \\
      u(t_0) = u_0
    \end{array}
    \right.;
  \end{gather*}
  moreover, if there exists an open interval $ J\ni t_0 $ and 
  $ v\in C^1 (J,E) $ satisfying the same conditions as $ (u,B(t_0,r)) $ $ u $
  and $ v $ coincide in the intersection $ B(t_0,r)\cap J $. 
\end{theorem}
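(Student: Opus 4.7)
The plan is to recast the Cauchy problem as a fixed-point equation for the Picard operator and apply the Banach contraction principle. By integrating both sides of $u'(t)=f(t,u(t))$ with the initial condition $u(t_0)=u_0$, a $C^1$ function $u$ solves the problem if and only if it satisfies
\[
u(t) = u_0 + \int_{t_0}^t f(s,u(s))\, ds.
\]
So I would first choose $\rho > 0$ such that the closed cylinder $K = [t_0 - \rho, t_0 + \rho] \times \overline{B(u_0,\rho)}$ is contained in $\Omega$; this is possible because $\Omega$ is open. By continuity of $f$, the bound $M_0 = \sup_K |f|$ is finite. Using property ii), I can shrink $\rho$ further so that $f(t,\cdot)$ is Lipschitz on $\overline{B(u_0,\rho)}$ with a common constant $L$ for every $t \in [t_0-\rho,t_0+\rho]$.

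Next, I would fix $r \in (0,\rho]$ satisfying $r M_0 \leq \rho$ and $rL < 1$, and work in the complete metric space
\[
X_r = \left\{ u \in C(\overline{B(t_0,r)}, E) \;\middle|\; \sup_{|t-t_0|\leq r} |u(t) - u_0| \leq \rho \right\},
\]
which is a closed subset of the Banach space $C(\overline{B(t_0,r)}, E)$ with the sup norm. Define the Picard operator $T\colon X_r \to C(\overline{B(t_0,r)},E)$ by $(Tu)(t) = u_0 + \int_{t_0}^t f(s,u(s))\, ds$. The estimate $|(Tu)(t)-u_0| \leq |t-t_0|\, M_0 \leq rM_0 \leq \rho$ shows $T(X_r)\subseteq X_r$, and the Lipschitz condition yields
\[
|(Tu)(t) - (Tv)(t)| \leq \int_{t_0}^t L\, |u(s)-v(s)|\, ds \leq rL \|u-v\|_\infty,
\]
so $T$ is a contraction. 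The Banach fixed-point theorem then produces a unique $u \in X_r$ with $Tu = u$; this $u$ is automatically continuously differentiable with $u'(t) = f(t,u(t))$, and $(t,u(t)) \in \Omega$ for every $t \in \overline{B(t_0,r)}$.

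For the local uniqueness statement, suppose $v \in C^1(J,E)$ is another solution with $v(t_0)=u_0$ on some open interval $J \ni t_0$. Consider the set $S = \{t \in J \cap B(t_0,r) : u(t)=v(t)\}$; it is nonempty (contains $t_0$) and closed by continuity. To see it is open, fix $\tau \in S$ and note that near $\tau$ both $u$ and $v$ take values in a small ball where $f$ is Lipschitz uniformly in $t$; then Gronwall's inequality (Lemma \ref{gron}) applied to $\alpha(t) = |u(t)-v(t)|$, which satisfies $\alpha(t) \leq \int_\tau^t L\, \alpha(s)\, ds$, forces $\alpha \equiv 0$ in a neighborhood of $\tau$. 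By connectedness, $S$ coincides with $J \cap B(t_0,r)$. The main subtlety is the coordination of the two smallness conditions on $r$ (self-mapping and contraction), but both reduce to elementary estimates once the cylinder $K$ has been fixed using openness of $\Omega$ and local uniform Lipschitzness.
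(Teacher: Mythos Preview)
Your proof is correct and follows essentially the same route as the paper's: the Picard integral operator on a closed ball of continuous functions, shown to be a self-map and a contraction once $r$ is small enough, followed by a connectedness argument for local uniqueness. One small caution worth tightening: in an infinite-dimensional $E$ the cylinder $K$ is not compact, so $\sup_K|f|<\infty$ does not follow from continuity alone---you should first shrink $\rho$ using continuity of $f$ at $(t_0,u_0)$ (or, as the paper does, use compactness of the time interval $D(t_0,a)$ to patch together local bounds) before fixing $K$.
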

\begin{proof}
Set $ z_0 = (t_0,u_0) $. There exists an open neighbourhood of $ z_0 $,
$ D(t_0,a)\times B(u_0,b')\subseteq\Omega $. By compactness of $ D(t_0,a) $ 
we can find a open ball $ B(u_0,b) $ such that 
$ f(D(t_0,a)\times B(u_0, b)) $ is bounded, call $ m $ its bound. 
For any $ r\leq a $ let $ E_r $ be the space $ C(J_r, B(u_0,b)) $ endowed with the supremum topology. If $ v\in E_r $ 
$ (t,v(t))\in J_r \times B(u_0, b)\subseteq\Omega $, thus we can define
\begin{gather*}
\Phi_f (v) = u_0 + \int_{t_0} ^t f(s,v(s)) ds.
\end{gather*}
Since $ \left|\int_{t_0} ^t f(s,v(s)) ds \right| \leq rM $ for every 
$ t\in J_r $ we have
\begin{gather*}
\Phi_f (v) (t)\in B(u_0,mr).
\end{gather*}
Still by compactness of $ D(t_0,a) $, by property iii), there 
exists $ k\in\R^+ $ such that for every $ t \in D(t_0,a) $ 
the function $ f(t,\cdot) $ is Lipschitz with constant $ k $ in $ \Omega_t $. 
Let $ v,w\in E_r $. Hence
\begin{gather*}
\no{\Phi_f (v) - \Phi_f (w)}\leq kr\no{v - w}.
\end{gather*}
If we choose $ rm < b $ and $ kr < 1 $ we make $ \Phi_f $ a contraction
of $ E_r $ into itself. Hence $ \Phi_f $ has a unique fixed point 
$ u $. Then $ (u, B(t_0,r)) $ fulfills the requirements.
\end{proof}
\begin{proposition}
Suppose $ f $ and $ \Omega $ as in the theorem. If $ u $ and $ v $ are
two solutions defined on a connected open interval $ J $ and coincide in
$ t_0 \in J $ then $ u $ and $ v $ coincide in $ J $.
\end{proposition}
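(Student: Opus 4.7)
The plan is to use a standard connectedness argument: I will show that the coincidence set
\[
S = \set{t\in J}{u(t) = v(t)}
\]
is nonempty, closed in $J$, and open in $J$, whence $S=J$ since $J$ is connected.

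First, $S$ is nonempty because $t_0\in S$ by hypothesis. Next, $S$ is closed in $J$: the map $t\mapsto u(t)-v(t)$ is continuous on $J$ with values in $E$, and $S$ is the preimage of the closed singleton $\{0\}$. The main step is openness. Fix $t_1\in S$ and set $w_0 = u(t_1) = v(t_1)$, so $(t_1,w_0)\in\Omega$. I would invoke the local existence and uniqueness clause of Theorem~\ref{Cau} at the point $(t_1,w_0)$: this produces a radius $r>0$ and a (unique) $C^1$ function $w\colon B(t_1,r)\to E$ with $w(t_1)=w_0$ and $w'(t)=f(t,w(t))$ on $B(t_1,r)$, together with the assertion that any other $C^1$ solution to the same Cauchy problem defined on an open interval around $t_1$ must agree with $w$ on the intersection with $B(t_1,r)$. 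Up to shrinking $r$ I may assume $B(t_1,r)\subset J$. Applied to $u$ and to $v$, both restrictions solve the Cauchy problem at $(t_1,w_0)$, so both agree with $w$ on $B(t_1,r)$, and hence $u\equiv v$ there. Thus $B(t_1,r)\subset S$, proving openness.

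Connectedness of $J$ then forces $S=J$. The only subtlety is making sure that the ``uniqueness'' output of Theorem~\ref{Cau} applies here — the theorem is stated for Cauchy data $(t_0,u_0)\in\Omega$, and I am applying it at an arbitrary $t_1\in J$ with initial value $w_0=u(t_1)=v(t_1)$; this is legitimate because $(t_1,u(t_1))\in\Omega$ by the definition of a solution on $J$. No further obstacle is expected: the argument is a direct reduction of the global uniqueness claim on a connected interval to the local uniqueness already established.
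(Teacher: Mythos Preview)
Your proof is correct and follows essentially the same approach as the paper: both argue that the coincidence set is nonempty, closed by continuity, and open by invoking the local existence and uniqueness from Theorem~\ref{Cau} at each point of the set, then conclude by connectedness of $J$. Your version is slightly more explicit about why $(t_1,u(t_1))\in\Omega$ and about shrinking $r$ so that $B(t_1,r)\subset J$, but the argument is the same.
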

\begin{proof}
Let $ A = \set{t\in J}{u(t) = v(t)} $. Since $ u $ and $ v $ are
continuous $ A $ is a closed subset of $ J $. By hypothesis we know 
that is nonempty. We prove that $ A $ is also open (hence $ A = J $).
Let $ t'\in A $, $ u_0 = u(t') = v(t') $. By Theorem \ref{Cau} there exists a
solution $ w\in C^1 (B(t',r_0),E) $ such that $ w(t') = u_0 $. By uniqueness
of local solutions $ B(t',r_0)\subseteq A $.
\end{proof}
\begin{definition}
  Let $ (u, J) $ be a solution. Then $ (v, J' ) $ is a 
  \emph{prolongation} of $ (u,J) $ if $ J\supseteq I $ and 
$ v(t) = u(t) $ for every $ t\in J $.
\end{definition}
Using Zorn's Lemma it is easy to prove that for a solution $ (u,J) $ there 
exists a unique maximal prolongation $ (v,J') $. There many criterions to 
establish when a solution $ (u,J) $ can be extended to a bigger interval 
$ J' $. Here's an example:
\begin{lemma}\label{to_bound}
Let $ (u,B(t_0,r)) $ be a solution of $ (f,\Omega) $ and suppose that the
set $ \{f(t,u(t))\} $ is bounded in $ E $ and $ I_{t_0 + r} $ and 
$ I_{t_0 - r} $ are nonempty. Then there is a prolongation 
$ (w, B(t_0,r')) $, $ r' > r $.
\end{lemma}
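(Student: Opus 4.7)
The plan is to show that $u$ extends continuously to the endpoints of the interval, then invoke Theorem \ref{Cau} at each endpoint to get a local solution that agrees with the extension by uniqueness, thereby producing a strictly larger interval of definition.

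First I would set $M = \sup_{t \in B(t_0,r)} |f(t,u(t))|$, which is finite by hypothesis. Since $u'(t) = f(t, u(t))$, we get $|u(t) - u(s)| \leq M |t - s|$ for all $t, s \in B(t_0, r)$. Consequently $u$ is uniformly Cauchy as $t \to (t_0 \pm r)^{\mp}$, and since $E$ is complete the limits
\[
u_{\pm} := \lim_{t \to (t_0 \pm r)^{\mp}} u(t)
\]
exist. Define $\bar{u}$ as the extension of $u$ to the closed interval $[t_0 - r, t_0 + r]$ by $\bar{u}(t_0 \pm r) = u_{\pm}$; by construction $\bar{u}$ is continuous.

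Next I would use the hypothesis that $\Omega_{t_0 \pm r}$ is nonempty (read together with the fact that $(t, u(t)) \in \Omega$ for $t \in B(t_0, r)$ and $\Omega$ is open) to conclude that the endpoint data $(t_0 \pm r, u_{\pm})$ belong to $\Omega$. Applying Theorem \ref{Cau} at the point $(t_0 + r, u_{+})$ produces a local solution $v_{+}$ on some ball $B(t_0 + r, \rho_{+})$ with $v_{+}(t_0 + r) = u_{+}$; similarly one gets $v_{-}$ on $B(t_0 - r, \rho_{-})$. Set $r' = r + \min\{\rho_{+}, \rho_{-}\}$. On the overlap $(t_0 + r - \rho_{+}, t_0 + r)$ both $v_{+}$ and $u$ are $C^{1}$ solutions of the Cauchy problem issued from $(t_0 + r - \rho_{+}/2, u(t_0 + r - \rho_{+}/2))$, hence coincide by the uniqueness part of Theorem \ref{Cau}; the analogous statement holds at the left endpoint. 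Gluing $u$ with $v_{\pm}$ yields a function $w \in C^{1}(B(t_0, r'), E)$ satisfying $w' = f(\cdot, w)$, and $(w, B(t_0, r'))$ is the desired prolongation.

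The main obstacle in this argument is the endpoint inclusion $(t_0 \pm r, u_{\pm}) \in \Omega$: the Lipschitz estimate gives continuous extendability of $u$, but one still has to read the hypothesis on $\Omega_{t_0 \pm r}$ as ensuring that the specific limit points $u_{\pm}$ lie in $\Omega_{t_0 \pm r}$, for otherwise Theorem \ref{Cau} cannot be invoked. All the remaining steps are routine applications of completeness of $E$, openness of $\Omega$, and local uniqueness.
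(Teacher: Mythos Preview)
The paper does not supply a proof of this lemma: it is stated and immediately used, and at the end of the appendix the reader is referred to \cite{Die87} for proofs and further details. Your approach---bounded $u'$ gives a Lipschitz estimate, completeness of $E$ gives continuous extension to the closed interval, then local existence at each endpoint via Theorem~\ref{Cau} and gluing by uniqueness---is exactly the standard argument one finds in Dieudonn\'e, so in that sense you are in full agreement with what the paper has in mind.

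Your self-identified obstacle is real and is not a defect of your argument but of the lemma as stated. The hypothesis that $\Omega_{t_0\pm r}$ (written $I_{t_0\pm r}$ in the paper, almost certainly a notational slip for $\Omega_{t_0\pm r}$) be nonempty does not, for a general open $\Omega$, force the specific limit points $(t_0\pm r,u_\pm)$ to lie in $\Omega$; one could have the trajectory run to $\partial\Omega$. In the paper's only use of the lemma, Proposition~\ref{whole}, the domain is $\Omega=J\times E$, so $\Omega_{t_0\pm r}$ nonempty simply means $t_0\pm r\in J$, and then $\Omega_{t_0\pm r}=E$ automatically contains $u_\pm$. So the lemma is correct as applied, even if its statement would need an extra hypothesis (e.g.\ that the graph of $u$ remain in a closed subset of $\Omega$) to hold for arbitrary $\Omega$. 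Your proof is the right one once that reading is adopted.
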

The Lemma can be used to prove the existence of global maximal solution in
some particular case. First we need the 
\begin{lemma}[Gronwall]\label{gron}
Let $ w,\phi,\psi $ be continuous real valued functions on the compact
interval $ [a,b] $ such that the estimate
\begin{gather*}
w(t)\leq\phi(t) + \int_a ^t w(s)\psi(s) ds;
\end{gather*}
for every $ a\leq t\leq b $. Then for every $ t $ in the interval
the estimate
\begin{gather*}
w(t)\leq\phi(t) + 
\int_a ^t \phi(s)\left(\exp\int_s ^t \psi(\xi)d\xi\right) ds
\end{gather*}
also holds.
\end{lemma}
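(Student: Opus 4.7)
The plan is to reduce the integral inequality to a differential inequality by introducing the auxiliary function
\[
v(t)=\int_a^t w(s)\psi(s)\,ds,
\]
and then solve the differential inequality by the standard integrating-factor trick. I would assume tacitly (as is standard for Gronwall and implicit throughout the chapter, where $\psi=\|A\|_\infty\ge 0$) that $\psi$ is nonnegative; otherwise the sense of the inequality is not preserved.

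First I would note that $v$ is $C^1$ on $[a,b]$ with $v(a)=0$ and $v'(t)=w(t)\psi(t)$. Substituting the hypothesis $w(t)\le\phi(t)+v(t)$ and using $\psi\ge 0$, I obtain
\[
v'(t)\le\psi(t)\phi(t)+\psi(t)v(t),\qquad t\in[a,b].
\]
Next I would introduce the integrating factor $\mu(t)=\exp\bigl(-\int_a^t\psi(\xi)\,d\xi\bigr)$, multiply the previous inequality by $\mu(t)>0$, and observe that the left-hand side becomes the derivative of the product:
\[
\frac{d}{dt}\bigl(\mu(t)v(t)\bigr)=\mu(t)\bigl(v'(t)-\psi(t)v(t)\bigr)\le\mu(t)\psi(t)\phi(t).
\]
Then I would integrate from $a$ to $t$, using $\mu(a)v(a)=0$, to get
\[
\mu(t)v(t)\le\int_a^t\mu(s)\psi(s)\phi(s)\,ds,
\]
and finally divide by $\mu(t)$, which turns the exponential factor $\mu(s)/\mu(t)$ into $\exp\bigl(\int_s^t\psi(\xi)\,d\xi\bigr)$. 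This yields
\[
v(t)\le\int_a^t\phi(s)\psi(s)\exp\!\Bigl(\int_s^t\psi(\xi)\,d\xi\Bigr)ds,
\]
and adding $\phi(t)$ on both sides together with the hypothesis $w(t)\le\phi(t)+v(t)$ delivers the desired estimate.

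There is no real obstacle; the only subtle point is the sign condition on $\psi$ that makes the step from $w(t)\le\phi(t)+v(t)$ to $v'(t)\le\psi(t)(\phi(t)+v(t))$ legitimate. I would also remark that the form printed in the statement appears to drop a factor $\psi(s)$ inside the integrand; the proof above produces the standard form with $\psi(s)$ present, which is what is used in the exponential estimate of Proposition \ref{blambda}.
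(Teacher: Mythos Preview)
The paper states Gronwall's Lemma without proof (it appears in Appendix~A between Lemma~\ref{to_bound} and Proposition~\ref{whole}, with no argument given), so there is nothing to compare against. Your integrating-factor argument is the standard one and is correct under the tacit assumption $\psi\ge 0$, which is indeed how the lemma is applied in Proposition~\ref{blambda}.

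Your closing remark is also well taken: the conclusion as printed in the paper is missing the factor $\psi(s)$ in the integrand. The correct bound produced by your argument is
\[
w(t)\le\phi(t)+\int_a^t\phi(s)\,\psi(s)\exp\!\Bigl(\int_s^t\psi(\xi)\,d\xi\Bigr)ds,
\]
and this is the form actually needed (and implicitly used) in the exponential estimate of Proposition~\ref{blambda}, where $\phi\equiv 1$ and $\psi(\tau)=\|A_{(\cdot+s)}(\tau)\|$.
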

Using Gronwall's Lemma we can prove the following statement.
\begin{proposition}\label{whole}
Suppose $ \Omega $ is the product $ J\times E $ where $ J $ an open connected 
interval of $ \R $. If for every $ t_0\in J $ there exists a function 
$ k\in C(J,E) $ such that
\begin{gather*}
|f(t,u) - f(t,v)|\leq k(|t - t_0|) |u - v|, \ \ t_0\in J;
\end{gather*}
then every solution admits a prolongation to the whole 
interval $ J $.
\end{proposition}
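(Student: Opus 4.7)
The plan is to argue by contradiction: let $(u, I)$ be the maximal prolongation of a given solution and suppose $I = (\alpha,\beta)$ is a proper open sub-interval of $J$. Without loss of generality, assume the right endpoint $\beta$ belongs to $J$ (the argument for the left endpoint being symmetric, or handled by reversing time). Fix $t_0 \in I$ with $t_0 < \beta$. The strategy is to show that $u$ extends continuously to $\beta$ and then to contradict maximality by invoking local existence (Theorem \ref{Cau}) at the point $(\beta, u(\beta))\in\Omega$.

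The key a~priori bound on $u$ comes from Gronwall's Lemma \ref{gron}. Writing the ODE in integral form, $u(t) = u(t_0) + \int_{t_0}^t f(s,u(s))\,ds$, and applying the Lipschitz hypothesis centered at $t_0$,
\[
|f(s,u(s))| \leq |f(s,0)| + k(|s - t_0|)\,|u(s)|,
\]
I would set $\phi(t) = |u(t_0)| + \int_{t_0}^t |f(s,0)|\,ds$ and $\psi(s) = k(|s - t_0|)$, both continuous on the compact interval $[t_0,\beta]\subset J$. Lemma \ref{gron} then yields
\[
|u(t)|\leq \phi(t) + \int_{t_0}^t \phi(s)\exp\!\left(\int_s^t \psi(\xi)\,d\xi\right) ds,
\]
so $|u(t)|$ is uniformly bounded by some constant $R$ on $[t_0,\beta)$.

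Inserting this bound back into the Lipschitz estimate, $|f(t,u(t))|\leq |f(t,0)| + k(|t-t_0|)R$ is bounded on $[t_0,\beta)$ by a constant $M$. Since $u'(t) = f(t,u(t))$, the function $u$ is $M$-Lipschitz on $[t_0,\beta)$, hence satisfies the Cauchy criterion as $t\to\beta^-$, so the limit $u_\beta := \lim_{t\to\beta^-} u(t)\in E$ exists. Extending $u$ continuously by $u(\beta) = u_\beta$ gives, together with the integral equation and continuity of $f$, that the extension still satisfies $u(t) = u(t_0) + \int_{t_0}^t f(s,u(s))\,ds$ at $t = \beta$; thus the extension is $C^1$ on $(\alpha,\beta]$ with $u'(\beta) = f(\beta,u_\beta)$.

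Finally, since $(\beta, u_\beta)\in J\times E = \Omega$, Theorem \ref{Cau} provides a local solution $v$ on some ball $B(\beta,r)$ with $v(\beta) = u_\beta$. Catenating $u$ on $(\alpha,\beta]$ with $v$ on $[\beta,\beta+r)$ yields a $C^1$ solution on $(\alpha,\beta+r)$ (continuity of the derivative at $\beta$ follows because both one-sided derivatives equal $f(\beta,u_\beta)$), strictly prolonging $(u,I)$ and contradicting maximality. The only mildly delicate step is verifying that the extension is actually a $C^1$ solution across $\beta$; I expect this to follow cleanly from the integral representation together with the continuity of $f$ and of the extended $u$, but it is where one must be careful not to invoke uniqueness on a domain where no solution has yet been constructed.
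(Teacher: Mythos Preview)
Your proof is correct and follows essentially the same strategy as the paper: bound the maximal solution via Gronwall's Lemma \ref{gron}, deduce that $f(t,u(t))$ is bounded near the endpoint, and conclude that the solution extends, contradicting maximality. The only difference is packaging: the paper invokes Lemma \ref{to_bound} as a black box for the extension step, whereas you spell out its content (Cauchy criterion at $\beta$, continuity of the integral representation, and re-application of Theorem \ref{Cau}) by hand.
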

It is easy to check that the pair $ (f,\Omega) $ satisfies the three conditions
of the Theorem \ref{Cau}. Thus, given $ (t_0,u_0) $, there exists a maximal 
solution $ (u,B(t_0,r)) $. Since the domain $ \Omega $ is a product 
the sets $ I_{t_0 + r} $ and $ I_{t_0 - r} $ are
nonempty. Moreover, for every $ t\in B(t_0,r) $ we have the estimate
\begin{gather*}
|u(t)|\leq |u_0| + \int_{t_0} ^t k(|s - t_0|) |u(s) - u_0| ds;
\end{gather*}
applying the Gronwall's Lemma we can conclude that $ u $ is bounded, hence
admits a prolongation by Lemma \ref{to_bound}.
\vskip .2em
The Proposition \ref{whole} applies to the particular case: let 
$ \Omega = J\times E $ be the domain of $ f $ and 
$ A\in C(J,\mathcal{L}(E)) $, $ b\in C(J,E) $ be two continuous functions. The
Cauchy problem
\begin{gather*}
f(t,u) = A(t) u + b(t), \ \ \Omega = J\times E
\end{gather*}
admits unique global solutions defined on $ J $. We conclude by remarking that
the theorems of existence, prolongation and the related results can be
restated in a more general setting: by \textsl{step function} we mean
a finite sum of characteristic functions. Let $ \scr{C}(J,E) $ be the vector 
space of step function. As a subset of $ L^{\infty} (J,E) $ we can consider 
the closure $ \orl{\scr{C}} $.
\begin{definition}
\label{regulated_function}
An element of $ \orl{\scr{C}} $ is called \emph{regulated function}.
\end{definition}
Here are the hypotheses of the Theorem \ref{Cau} for regulated functions:
we $ f $ and $ \Omega $ to solve the conditions
\begin{enumerate}
\item for every $ w\in C(J,E) $ such that $ \{(t,w(t))\}\subseteq\Omega $
$ f(t,w(t)) $ is regulated,
\item for any point $ (t,u)\in\Omega $ there are an open neighbourhood
$ B(t,r)\times B(u,b) $ and $ M\in\R^+ $ such that 
$ f $ is bounded $ B(t,r)\times B(u,b) $, and $ f(s,\cdot) $ is Lipschitz 
with constant $ M $.
\end{enumerate}
For the proofs and more details see \cite{Die87}.
\newpage
\chapter{Fredholm operators}
\label{app:fredholm}
Given an operator 
$ T\colon E\rightarrow F $ we can consider the spaces $ \ker T $ and 
$ E/\ran T $. The latter is called co-kernel and is denoted by 
$ \coker T $.
\begin{definition}
An operator $ T \in \mathcal{L}(E,F) $ is called semi-Fredholm if $ \ker T $ and
$ \ran T $ are closed and at least one of $ \ker T $ and $ \coker T $ 
has finite dimension. It is said Fredholm if both have finite dimension.
\end{definition}
The \textsl{Fredholm index} of a (semi)Fredholm operator 
is $ \ind T = \dim\ker T - \dim\coker T $. We denote by $ \al{F}(E,F) $ 
the set of Fredholm operators.
\begin{proposition}
\label{t+k_is_fredholm}
If $ T\colon E\rightarrow F $ is a Fredholm operator and $ K $ a compact
operator then $ T + K $ is Fredholm operator and $ \ind (T + K) = \ind T $.
\end{proposition}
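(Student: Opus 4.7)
The plan is to derive both Fredholmness and the index equality from two previously stated tools: the characterization of Fredholm operators as essentially invertible operators (Proposition \ref{essential_inverse}), and the local constancy of the Fredholm index along continuous paths of Fredholm operators (Proposition \ref{index-is-constant}, part ii).

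First I would show that $T+K$ is Fredholm. Since $T$ is Fredholm, there exist $S \in \mathcal{L}(F,E)$ and compact operators $H \in \mathcal{L}_c(E)$, $H' \in \mathcal{L}_c(F)$ with $ST = I_E + H$ and $TS = I_F + H'$. Then
\[
S(T+K) = I_E + H + SK, \qquad (T+K)S = I_F + H' + KS.
\]
Because the composition of a bounded operator with a compact one is compact, $SK$ and $KS$ are compact, so $H + SK$ and $H' + KS$ are compact. Thus $S$ is an essential inverse of $T+K$, and $T+K$ is Fredholm by Proposition \ref{essential_inverse}.

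For the index equality, I would consider the continuous path $\gamma \colon [0,1] \to \mathcal{L}(E,F)$ given by $\gamma(t) = T + tK$. Replacing $K$ by $tK$ in the argument above (with the same essential inverse $S$) shows that $\gamma(t)$ is Fredholm for every $t \in [0,1]$. Hence $\gamma$ is a continuous path in the space of Fredholm operators joining $T$ to $T+K$. By Proposition \ref{index-is-constant} (ii), the Fredholm index is locally constant, hence constant along $\gamma$, so $\ind(T+K) = \ind \gamma(1) = \ind \gamma(0) = \ind T$.

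No substantive obstacle is expected: the proof is entirely a packaging of the two tools above, with the only mild point being the verification that the essential inverse of $T$ remains an essential inverse of every $T + tK$ (which is immediate from the ideal property of $\mathcal{L}_c$). All the analytic content is absorbed in Propositions \ref{essential_inverse} and \ref{index-is-constant}.
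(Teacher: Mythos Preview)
The paper does not supply its own proof of this proposition; it is stated in Appendix~\ref{app:fredholm} as a standard fact and then used freely afterwards. So there is no ``paper's proof'' to compare your argument to directly.

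That said, within the logical structure of this appendix your argument is circular. You invoke Proposition~\ref{essential_inverse} (essentially invertible $\Rightarrow$ Fredholm) and Proposition~\ref{index-is-constant} (local constancy of the index). But look at the proofs the paper gives for those two results: the converse direction of Proposition~\ref{essential_inverse} explicitly appeals to Proposition~\ref{t+k_is_fredholm} to conclude that $I+K$ is Fredholm when $K$ is compact; and the index computation at the end of Proposition~\ref{index-is-constant} again cites Proposition~\ref{t+k_is_fredholm} (together with Proposition~\ref{sum-of-index}) to identify $\ind(T+H)$. So both tools you rely on sit logically \emph{after} the statement you are trying to prove.

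Mathematically your outline is of course sound once one has independent proofs of those two ingredients --- the only fact really needed is the Riesz--Schauder theory that $I+K$ is Fredholm of index zero for compact $K$, after which everything else follows as you wrote. But if the goal is to fit into the paper's order of development, you would need to establish that special case $T=I$ directly (finite-dimensional kernel, closed range, finite-codimensional range via the Riesz lemma), rather than calling on results that already presuppose the general proposition.
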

\begin{proposition}
  \label{essential_inverse}
An operator $ T\in\mathcal{L}(E,F) $ is Fredholm if and only if is 
essentially invertible, that is, there exists $ S\in\mathcal{L}(F,E) $ such that 
\begin{align*}
    S T &= I + K\\
    T S &= I + H
\end{align*}
where $ K $ and $ H $ are compact operators on $ E $ and $ F $ respectively.
\end{proposition}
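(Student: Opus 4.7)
The plan is to treat the two implications separately, constructing an essential inverse explicitly in the forward direction and extracting finite-dimensional kernel and cokernel from the algebraic identities in the backward direction.

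For the forward direction, suppose $T$ is Fredholm. Since $\ker T$ is finite-dimensional it admits a closed topological complement: $E = \ker T \oplus E_1$. Since $\coker T$ is finite-dimensional, $\ran T$ has finite codimension in $F$ (and is closed by assumption), so we can fix a finite-dimensional complement $F = \ran T \oplus F_1$. The restriction $T_1 := T|_{E_1}\colon E_1 \to \ran T$ is a continuous bijection between Banach spaces, hence an isomorphism by the open mapping theorem. I would then define $S\in\mathcal{L}(F,E)$ by $S = T_1^{-1}$ on $\ran T$ and $S = 0$ on $F_1$. A direct computation gives $ST = P(E_1,\ker T)$ and $TS = P(\ran T, F_1)$, so $I - ST$ and $I - TS$ are the projectors onto $\ker T$ and $F_1$ respectively; both have finite rank, hence are compact.

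For the converse, assume $ST = I + K$ and $TS = I + H$ with $K,H$ compact. To control $\ker T$, note that for $x \in \ker T$ the identity $STx = x + Kx$ forces $Kx = -x$; therefore $K|_{\ker T} = -\mathrm{id}_{\ker T}$. Since $K$ is compact, the identity operator on $\ker T$ is compact, and by Riesz's lemma this implies $\dim\ker T < \infty$. For $\ran T$ and $\coker T$, observe that $\ran T \supseteq \ran(TS) = \ran(I + H)$. By the classical Riesz--Schauder theory, $I + H$ is Fredholm of index zero, so $\ran(I + H)$ is closed and has finite codimension in $F$. Consequently $\ran T$ is the sum of a closed subspace of finite codimension and (at most) a finite-dimensional subspace, so $\ran T$ itself is closed with $\dim\coker T < \infty$. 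This shows $T$ is Fredholm.

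The only genuinely nontrivial input is the Fredholmness of $I + H$ for compact $H$, which is already available to us: it underlies Proposition \ref{t+k_is_fredholm} and is the content of classical Riesz theory, so I would simply invoke it. The rest consists of elementary splittings and the open mapping theorem. The main place where one must be careful is in verifying that $\ran T$ is closed, rather than merely of finite codimension in the algebraic sense: this step uses that $\ran(I + H)$ is already closed, which is crucial since a finite codimensional subspace need not be closed in general.
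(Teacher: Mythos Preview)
Your proof is correct and follows essentially the same approach as the paper: the same splittings $E=\ker T\oplus E_1$, $F=\ran T\oplus F_1$ and the same pseudo-inverse $S$ in the forward direction, and the same inclusions $\ker T\subset\ker(I+K)$, $\ran T\supset\ran(I+H)$ together with the Fredholmness of $I+K$ and $I+H$ in the converse. You are in fact slightly more explicit than the paper about why $\ran T$ is closed, which the paper's proof leaves implicit.
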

\begin{proof}
Since $ \ker T $ and $ \ran T $ are complemented subspaces of $ E $ and
$ F $ respectively there are $ X\subset E $ and $ Y\subset F $ such that
$ E = \ker T\oplus X $ and $ F = Y\oplus\ran T $. The restriction of $ T $
to $ X $ maps isomorphically $ X $ onto $ \ran T $, let $ \sigma $ be its
inverse. Hence, given a pair $ (y,r) $ in $ F $ we have
\begin{gather*}
T\circ (0\oplus\sigma) (y,r) = r;
\end{gather*}
hence
\begin{gather*}
T\circ (0\oplus\sigma) = P(\ran T,Y) = I - P(Y,\ran T)
\end{gather*}
where the last term denotes the projector onto $ Y $
along $ \ran T $. Since $ Y $ has finite dimension it is a perturbation of
the identity by a finite-rank operator, hence compact. Similarly
\begin{gather*}
(0\oplus\sigma)\circ T = P(X,\ker T) = I - P(\ker T,X)
\end{gather*}
is a compact perturbation of the identity. Hence we can choose 
$ S = 0\oplus\sigma $. In order to prove the converse observe that if $ S $
is an essential inverse of $ T $ we have the inclusions
\begin{align*}
  \ker T &\subset\ker S\circ T = \ker (I + K),\\
  \ran T &\supset\ran T\circ S = \ran (I + H)
\end{align*}
where the right members have finite dimension and finite co-dimension because
by Proposition \ref{t+k_is_fredholm} a compact perturbation of the identity
is Fredholm.
\end{proof}
\begin{proposition}
\label{sum-of-index}
  Let $ A\in\mathcal{L} (E,F) $ and $ B\in\mathcal{L}(F,G) $ be two
  Fredholm operators. Then $ BA $ is Fredholm and its index is
  $ \ind B + \ind A $.
\end{proposition}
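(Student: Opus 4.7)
The plan is to prove first that $BA$ is Fredholm, and then to construct a six-term exact sequence of finite dimensional spaces whose alternating sum of dimensions gives the additivity of the index.

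First I would verify Fredholmness of $BA$ via Proposition \ref{essential_inverse}. Let $A'\in\mathcal{L}(F,E)$ and $B'\in\mathcal{L}(G,F)$ be essential inverses of $A$ and $B$, so $A'A = I_E + K_1$, $AA' = I_F + K_2$, $B'B = I_F + H_1$, $BB' = I_G + H_2$ with $K_i,H_i$ compact. Then
\[
(A'B')(BA) = A'(I_F + H_1)A = I_E + K_1 + A'H_1 A,
\]
\[
(BA)(A'B') = B(I_F + K_2)B' = I_G + H_2 + BK_2 B',
\]
so $A'B'$ is an essential inverse of $BA$, making $BA$ Fredholm.

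Next, to compute the index, I would build the six-term exact sequence
\[
0\to\ker A\xrightarrow{i}\ker(BA)\xrightarrow{A}\ker B\xrightarrow{\pi}\coker A\xrightarrow{\widetilde{B}}\coker(BA)\xrightarrow{q}\coker B\to 0,
\]
where $i$ is inclusion, $A$ is the restriction of $A$ to $\ker(BA)$ (which lands in $\ker B$), $\pi$ sends $y\in\ker B$ to its class $y+\ran A$, the map $\widetilde{B}$ sends $y+\ran A$ to $By+\ran(BA)$ (well defined since $B(\ran A)=\ran(BA)$), and $q$ sends $z+\ran(BA)$ to $z+\ran B$. Exactness is a routine diagram chase: at $\ker(BA)$ it reduces to $\ker A\cap\ker(BA)=\ker A$; at $\ker B$ the image of the restriction of $A$ is exactly $\ker B\cap\ran A$, which equals the kernel of $\pi$; at $\coker A$ one checks that $By\in\ran(BA)$ iff $y\in\ker B+\ran A$; at $\coker(BA)$ the kernel of $q$ consists of classes of elements of $\ran B$, which is precisely the image of $\widetilde{B}$; surjectivity of $q$ is immediate.

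Since $A$, $B$, and $BA$ are all Fredholm, every term in the sequence is finite dimensional, and the alternating sum of dimensions vanishes:
\[
\dim\ker A-\dim\ker(BA)+\dim\ker B-\dim\coker A+\dim\coker(BA)-\dim\coker B=0.
\]
Rearranging yields $\ind(BA)=\ind A+\ind B$. The only subtle point is the well-definedness and exactness at $\coker A$, which hinges on the identity $\ker(\widetilde{B})=(\ker B+\ran A)/\ran A$; once this is verified, the conclusion is immediate.
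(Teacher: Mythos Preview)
Your proof is correct, but it follows a different route from the paper's. The paper does not invoke the kernel--cokernel exact sequence; instead it performs a direct dimension count: it decomposes $\ker(BA)=\ker A\oplus X$ with $A|_X$ an isomorphism onto $\ker B\cap\ran A$, and computes $\dim\coker(BA)$ by comparing the inclusions $B(\ran A)\subset\ran B\subset G$ and using $\codim_F(\ran A+\ker B)=\codim\ran A-\dim\ker B+\dim(\ker B\cap\ran A)$. Subtracting then gives the index formula.

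Your six-term exact sequence packages exactly these identities: the quantity $\dim(\ker B\cap\ran A)$ in the paper's count is the dimension of the image of $\ker(BA)\to\ker B$ (equivalently of $\ker\pi$), and the paper's computation of $\codim(\ran A+\ker B)$ is the exactness check at $\coker A$. So the two arguments are dual presentations of the same bookkeeping; yours is more conceptual and reusable (and your preliminary verification of Fredholmness via essential inverses is a clean addition the paper's proof leaves implicit), while the paper's is slightly more elementary in that it requires no homological language.
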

\begin{proof}
For the sake of simplicity we denote by $ k $ and $ c $ the dimension of
the kernel and the co-kernel respectively. Set $ T = BA $.
Since $ A $ is Fredholm there exists a finite-dimensional subspace 
$ X\subset E $ such that
\begin{gather*}
\ker T = \ker A \oplus X;
\end{gather*}
the restriction of $ A $ to $ X $ is an isomorphism with $ \ker B\cap\ran A $.
Thus
\begin{gather}
\label{kt}
k(T) = k(A) + \dim\ker B\cap\ran A.
\end{gather}
The image of $ T $ is $ B(\ran A) $. Consider the inclusion of subspaces
\begin{gather*}
B(\ran A)\subset\ran B\subset G;
\end{gather*}
the co-dimension of $ B(\ran A) $ in $ \ran B $ can be computed as the 
co-dimension of $ \ran A + \ker B $ in $ F $, hence
\begin{equation}
\label{ct}
\begin{split}
c(T) &= c(B) + \codim(\ran A + \ker B) \\
&= c(B) + \codim\ran A - (k(B) + \dim\ran A\cap\ker B).
\end{split}
\end{equation}
Thus adding the results of (\ref{kt}) and (\ref{ct}) we obtain
\begin{equation*}
\begin{split}
\ind T &= k(T) - c(T) = k(A) + \dim\ker B\cap\ran A - c(B) - c(A) \\
&- k(B) - \dim\ran A\cap\ker B = \ind A + \ind B.
\end{split}
\end{equation*}
\end{proof}
\begin{proposition}
\label{index-is-constant}
The subset $ \al{F}(E,F)\subset\mathcal{L}(E,F) $ is open and the Fredholm
index is a locally constant function with values in $ \mathbb{Z} $.
\end{proposition}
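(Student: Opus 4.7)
The plan is to use Propositions \ref{essential_inverse}, \ref{sum-of-index}, and \ref{t+k_is_fredholm} together: these reduce the proposition to a Neumann-series perturbation of an essential inverse, combined with the multiplicativity of the index.

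Given $ T\in\mathcal{F}(E,F) $, by Proposition \ref{essential_inverse} pick $ S\in\mathcal{L}(F,E) $ and compact $ K,H $ with $ ST=I+K $ and $ TS=I+H $ (if $ S=0 $ then $ I+K=0 $ and $ I+H=0 $ force $ E $ and $ F $ to be finite-dimensional, and the conclusion is elementary; so assume $ S\neq 0 $). For $ T'=T+R $ with $ \|R\|<\|S\|^{-1} $ both $ \|SR\|<1 $ and $ \|RS\|<1 $, so $ I+SR $ and $ I+RS $ are invertible by Von Neumann series. Setting $ S_1=(I+SR)^{-1}S $ and $ S_1'=S(I+RS)^{-1} $, a direct computation gives
\[
S_1T'=I+(I+SR)^{-1}K,\qquad T'S_1'=I+H(I+RS)^{-1},
\]
each a compact perturbation of the identity (since $ \mathcal{L}_c(E) $ and $ \mathcal{L}_c(F) $ are two-sided ideals). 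To pass from these one-sided essential inverses to a single two-sided one, as required by Proposition \ref{essential_inverse}, I would compute $ S_1T'S_1' $ in two ways, obtaining $ S_1-S_1'=K_1S_1'-S_1H_1 $ with $ K_1,H_1 $ the compact remainders above; this difference is therefore compact, and consequently $ T'S_1=T'S_1'+T'(S_1-S_1') $ is also $ I $ plus a compact operator. By Proposition \ref{essential_inverse}, $ T'\in\mathcal{F}(E,F) $, which gives openness.

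For the local constancy of the index I would apply Proposition \ref{sum-of-index} twice. From $ ST=I+K $ and Proposition \ref{t+k_is_fredholm} we get $ \ind(ST)=\ind(I+K)=\ind I=0 $, hence $ \ind S=-\ind T $; the same reasoning with $ S_1,T' $ in place of $ S,T $ gives $ \ind S_1=-\ind T' $. On the other hand $ S_1=(I+SR)^{-1}S $ is the composition of an invertible operator (index zero) with $ S $, so Proposition \ref{sum-of-index} yields $ \ind S_1=\ind((I+SR)^{-1})+\ind S=\ind S $. Combining, $ \ind T'=\ind T $.

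The main obstacle is the algebraic upgrade from the two one-sided essential inverses $ S_1 $ and $ S_1' $ to a genuine two-sided essential inverse of $ T' $, since Proposition \ref{essential_inverse} is stated in two-sided form. The identity $ S_1-S_1'=K_1S_1'-S_1H_1 $, extracted by equating $ (I+K_1)S_1'=S_1T'S_1'=S_1(I+H_1) $, is the key step and relies only on $ \mathcal{L}_c $ being an ideal. Once this is in hand, openness and the index identity are purely formal consequences of the preceding propositions, requiring no additional estimates.
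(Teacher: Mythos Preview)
Your proof is correct and matches the paper's approach almost exactly: perturb the essential inverse by a Neumann series and read off the index via Propositions \ref{sum-of-index} and \ref{t+k_is_fredholm}. Your ``main obstacle'' is in fact no obstacle at all: the identity $S(I+RS)=(I+SR)S$ gives $(I+SR)^{-1}S=S(I+RS)^{-1}$, so $S_1=S_1'$ automatically and a single two-sided essential inverse is already in hand---the paper uses precisely these two expressions (written there as $(I+SH)^{-1}S$ and $S(I+HS)^{-1}$) without further comment.
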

\begin{proof}
We use the Proposition \ref{essential_inverse}. Let $ T $ be a Fredholm
operator and $ S $ be an essential inverse, that is
$ TS - I $ is a compact operator. For every operator $ H $ such that
$ \no{H} < \no{S}^{-1} $ we have
\begin{equation*}
   (T + H) S = TS + HS = I + K + HS = (I + HS) + K
\end{equation*}
where $ K $ is a compact operator; since $ I + HS $ is invertible we can
multiply both terms by its inverse and obtain
\begin{gather*}
    (T + H) S(I + HS)^{-1} = I + K(I + HS)^{-1}
\end{gather*}
hence $ S(I_F + HS)^{-1} $ is an essential right inverse for $ T + H $.
Similarly we can write $ S(T + H) = I + SH + K' $ where $ K' $ is compact.
Since $ I + SH $ is invertible we obtain
\begin{equation*} 
     (I + SH)^{-1} S (T + H) = I + (I + SH)^{-1} K' 
  \end{equation*}
and prove that $ T + H $ has an essential left inverse also. Hence 
$ B(T,\no{S}^{-1})\subset\al{F}(E,F) $. We compute the index of
$ T + H $ using the Propositions \ref{sum-of-index} and \ref{t+k_is_fredholm}
\begin{equation*}
 \begin{split} 
   \ind(T + H) &= - \ind S(I + HS)^{-1} = - \ind S - 
\ind(I + HS)^{-1} \\
  &= - \ind S = \ind T.
 \end{split} 
\end{equation*}
\end{proof}
The preceding statement and the Proposition \ref{t+k_is_fredholm} say that 
the index of a Fredholm operator is stable under small or compact 
perturbations. Here we state a more specific result regarding the dimension 
of the kernel and the co-kernel
\begin{theorem}%
{\rm (cf. \textsc{Theorem} 5.31, ch. IV \S 5.5 of \cite{Kat95}.)}
\label{perturbation_of_spaces}
Let $ T $ be a semi-Fredholm operator from $ E $ to $ F $ and $ A $ bounded. 
There exists $ \delta > 0 $ such that, for every $ 0 < |\lambda| < \delta $ 
the quantities
\begin{gather*}
\dim\ker(T + \lambda A), \ \ \dim\coker(T + \lambda A)
\end{gather*}
are constants.
\end{theorem}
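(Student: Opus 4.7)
The plan is to reduce the problem to a finite-dimensional analytic-family problem and then exploit the fact that the rank of a matrix of analytic functions can drop only at isolated points. Without loss of generality I would assume that $\dim\ker T<\infty$; the other semi-Fredholm case $\dim\coker T<\infty$ follows by passage to adjoints, since $(T+\lambda A)^{*}=T^{*}+\lambda A^{*}$ is still a continuous path and $\dim\ker T^{*}=\dim\coker T$, $\dim\coker T^{*}=\dim\ker T$ under the semi-Fredholm identification.

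Under the assumption $\dim\ker T<\infty$, I would fix a topological complement $E=\ker T\oplus X$ (finite-dimensional subspaces are always complemented) and set $U:=(T|_{X})^{-1}\colon\ran T\to X$, which is bounded by the open mapping theorem since $\ran T$ is closed and $T|_X$ is a bounded bijection. The kernel equation $(T+\lambda A)(k+x)=0$ for $k\in\ker T$, $x\in X$ becomes $Tx=-\lambda A(k+x)$, together with the compatibility $\pi A(k+x)=0$ in $F/\ran T$, where $\pi\colon F\to F/\ran T$ is the quotient map. The first equation forces $\lambda A(k+x)\in\ran T$, and, applying $U$, can be rewritten as $(I_X+\lambda U\,\tilde A_X)x=-\lambda U\,\tilde A_{\ker T}k$ where $\tilde A$ denotes $A$ followed by the canonical identification of its $\ran T$-component. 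For $|\lambda|<1/(\|U\|\|A\|)$ the Neumann series gives $x=S_\lambda k$ with $S_\lambda\colon\ker T\to X$ analytic in $\lambda$. Substituting into the compatibility condition yields a linear map
\[
M_\lambda\colon\ker T\longrightarrow F/\ran T,\qquad M_\lambda=\pi\,A\,(I+S_\lambda),
\]
analytic in $\lambda$, whose kernel is naturally isomorphic to $\ker(T+\lambda A)$.

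Because $\dim\ker T<\infty$, once we pick a basis $M_\lambda$ becomes a matrix-valued analytic function of $\lambda$ (with codomain possibly infinite-dimensional, but this does not matter for the rank argument, since the rank equals the rank of the finite matrix of coordinates of its columns in any linearly independent collection realizing the maximum rank). The rank of an analytic matrix family is lower semicontinuous and equals its generic value except on a discrete subset; equivalently $\dim\ker M_\lambda$ is upper semicontinuous and locally constant on a punctured neighbourhood of $0$. This gives the desired constancy of $\dim\ker(T+\lambda A)$ on $0<|\lambda|<\delta$. The constancy of $\dim\coker(T+\lambda A)$ then follows: in the Fredholm case from Proposition~\ref{index-is-constant}, since the index $\dim\ker-\dim\coker$ is locally constant and $\dim\ker$ is now known to be constant on a punctured neighbourhood; in the properly semi-Fredholm case $\dim\coker T=\infty$, one checks that $\dim\coker(T+\lambda A)$ remains infinite (a small perturbation of $T$ cannot give a finite-codimensional range because $\ran(T+\lambda A)+\ran T$ differs from $\ran T$ only by a finite-dimensional contribution controlled by $\ker T$, so finite codimension of the perturbed range would force finite codimension of $\ran T$).

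The most delicate step, and the one I expect to be the main obstacle, is the construction of $S_\lambda$ when $\ran T$ is not topologically complemented in $F$, which can occur precisely when $\dim\coker T=\infty$. The argument as written implicitly uses a projector onto $\ran T$; the cleanest way around this is to note that the equation $Tx=-\lambda A(k+x)$ does not actually require decomposing $A(k+x)$ along $F=\ran T\oplus\mathrm{(something)}$, but only that the whole vector $A(k+x)$ belong to $\ran T$, which is guaranteed by the compatibility condition. Thus $U$ is applied only to elements already known to lie in $\ran T$, and the Neumann-series construction of $S_\lambda$ proceeds inside $X$ without any complement of $\ran T$ in $F$. Verifying this carefully, together with the joint analyticity of the system in $\lambda$ and the rank-stability of $M_\lambda$ when its codomain is infinite-dimensional, is the technical core of the proof.
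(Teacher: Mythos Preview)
Your Lyapunov--Schmidt-plus-analyticity approach is genuinely different from the paper's and is correct in the Fredholm case: when $\ran T$ is complemented, the reduction yields an analytic family $M_\lambda\colon\ker T\to F/\ran T$ between fixed spaces with finite-dimensional domain, and constancy of $\dim\ker M_\lambda$ on a punctured disk follows from the standard rank behaviour of analytic families. The paper instead uses Kato's filtration: with $E_0=E$, $E_{n+1}=A^{-1}(TE_n)$, one shows $\ker(T+\lambda A)\subset E_\omega:=\bigcap_n E_n$ for all $\lambda\neq0$, and that the restriction $T_\omega\colon E_\omega\to F_\omega$ is \emph{surjective} with $\ker T_\omega\subset\ker T$ finite-dimensional; then $\dim\ker(T+\lambda A)=\ind(T_\omega+\lambda A_\omega)$ is constant by Proposition~\ref{index-is-constant} and the openness of surjectivity among Fredholm operators.

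You correctly identify the strictly semi-Fredholm case ($\dim\ker T<\infty$, $\dim\coker T=\infty$, hence $\ran T$ possibly uncomplemented) as the obstacle, but the proposed fix does not work. The Neumann iteration $x_{n+1}=-\lambda\,U A(k+x_n)$ requires $A(k+x_n)\in\ran T=\mathrm{dom}\,U$ at every step, and nothing guarantees this: the condition $A(k+x)\in\ran T$ is the very compatibility constraint you are trying to isolate, so assuming it for each iterate is circular. Equivalently, to write $(I_X+\lambda U\tilde A_X)x=-\lambda U\tilde A_{\ker T}k$ you need a bounded linear map $\tilde A\colon E\to\ran T$ agreeing with $A$ modulo a complement of $\ran T$, and no such map exists without that complement. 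This case also cannot be reached by duality, since the adjoint then has infinite-dimensional kernel. The virtue of the filtration argument is precisely that it sidesteps the complementation problem: on $E_\omega$ the restricted $T$ is onto $F_\omega$, so its range is the whole target and no projector is needed. (A smaller point: your argument that $\dim\coker(T+\lambda A)$ stays infinite is not right as written, since $A(E)$ need not be finite-dimensional; the correct reason is the local constancy of the extended index $\ind\in\mathbb Z\cup\{\pm\infty\}$ for semi-Fredholm operators.)
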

In order to prove the theorem we need the following lemma.
\begin{lemma}
Let $ T $ be an operator with finite-dimensional kernel from $ E $ to $ F $ 
and $ X\subset E $ a closed subspace. Then $ T(X) \subset F $ is closed.
\end{lemma}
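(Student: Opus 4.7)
The plan is to reduce to the case of an injective operator with closed range by factoring $T$ through the quotient $E/\ker T$; the hypothesis that $\ker T$ is finite dimensional (together with the closed range property implicit in the semi-Fredholm setting in which this lemma is applied) will let me conclude via the open mapping theorem.

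First I would replace $X$ by $X+\ker T$. This does not change the image, since $T(X+\ker T)=T(X)$, and the sum $X+\ker T$ is closed in $E$ because adding a finite-dimensional subspace to a closed subspace preserves closedness. Next, let $\pi\colon E\rightarrow E/\ker T$ be the quotient projection and write $T=\widehat{T}\circ\pi$, where $\widehat{T}\colon E/\ker T\rightarrow F$ is bounded and injective with image $\ran T$. Since $T$ is semi-Fredholm, $\ran T$ is closed, so $\widehat{T}$ is a continuous bijection between the Banach spaces $E/\ker T$ and $\ran T$, hence a topological isomorphism by the open mapping theorem.

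Then I would observe that $\pi(X+\ker T)=\pi(X)$ is closed in $E/\ker T$. Indeed, $\pi$ is a quotient map, and the preimage $\pi^{-1}(\pi(X+\ker T))=X+\ker T$ is closed by the previous step. Applying the homeomorphism $\widehat{T}$ to this closed subset of $E/\ker T$ yields a closed subset of $\ran T$, which is itself closed in $F$. Therefore $T(X)=\widehat{T}(\pi(X))$ is closed in $F$, as required.

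The only delicate point is the use of the closed-range property of $T$, which is not spelled out in the statement but is automatic in the application to the perturbation theorem where $T$ is semi-Fredholm. Once this is in place, everything reduces to the two standard Banach-space facts that the sum of a closed subspace and a finite-dimensional subspace is closed, and that a continuous bijection between Banach spaces is open; no new analytic ingredient is needed.
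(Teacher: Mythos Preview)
Your proof is correct and follows essentially the same route as the paper: replace $X$ by $X+\ker T$ (closed since $\ker T$ is finite-dimensional), then use the open mapping theorem to see that such a subspace is carried to a closed subspace. Your remark about the closed-range hypothesis is apt; the paper's proof invokes it implicitly via the phrase ``an open linear operator,'' which is justified only in the semi-Fredholm context where the lemma is applied.
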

\begin{proof}
We use the fact that an open linear operator maps closed subspaces containing
the kernel in closed subspaces. The purpose is to show that there exists
$ Y\subset E $ closed such that $ T(Y) = T(X) $ and $ Y\supset\ker T $.
Such space can be taken as $ Y = \ker T + X $ which is closed because the
kernel has finite dimension.
\end{proof}
We are now able to prove the theorem. First we show that the theorem cannot
be extended to a neighbourhood of zero. Let $ P $ be a projector of
finite co-dimension non surjective, hence it is a Fredholm operator and
let $ A = I - P $. Let $ x\in\ker (P + \lambda A) $ with $ \lambda\neq 0 $.
We can write
\begin{gather*}
Px = -\lambda (I - P)x
\end{gather*}
hence both $ -\lambda (I - P)x $ and $ Px $ are zero. Since $ \lambda\neq 0 $ 
we also have $ (I - P)x = 0 $ thus $ x = Px + (I - P)x = 0 $. We have
proved that $ P + \lambda A $ is injective, but $ P $ is not injective.\par
Suppose first that $ \ker T $ has finite dimension. Using induction we can 
build two decreasing sequences of closed subspaces 
$ \{E_n\} $, $ \{F_n\} $ of $ E $ and $ F $ respectively as follows
\begin{align*}
\left\{
\begin{array}{l}
E_0 = E \\
E_{n + 1} =  A^{-1} (T E_n)
\end{array}
\right. \ \ \ 
\left\{
\begin{array}{l}
F_0 = F \\
F_{n + 1} = T E_n
\end{array}
\right.
\end{align*}
these are all closed spaces by the previous lemma. We have 
$ A E_n \subset F_n $ and $ T E_n = F_{n + 1} $ for any $ n\in\mb{N} $. Let 
\begin{align*}
E_{\omega} &= \bigcap_{n\geq 0} E_n \\  
F_{\omega} &= \bigcap_{n\geq 0} F_n ;
\end{align*}
If $ x\in\ker(T + \lambda I) $ and $ \lambda\neq 0 $ using induction on
the equality $ \lambda^{-1} T x = - A x $ it is easy to check that 
$ x\in E_{\omega} $. It is clear that $ T(E_{\omega})\subset F_{\omega} $;
we prove now that $ T(E_{\omega}) = F_{\omega} $. Given $ y\in E_\omega $
\[
T^{-1} (\{y\})\cap E_{\omega} = T^{-1} (\{y\}) \cap
    \big (\bigcap_{n\geq 1} E_n \big) = 
    \bigcap_{n\geq 1} \big (T^{-1} (\{y\}) \cap E_n );
\]
since $ F_{n + 1} = T(E_n) $ for $ n\geq 1 $ the last member is a
decreasing intersection of finite-dimensional, since $ \ker T $ has
finite dimension, of affine subspaces. Hence the intersection is nonempty. 
Call $ T_{\omega} $ the restriction of $ T $ to $ E_{\omega} $. We 
proved that $ T_{\omega} $ is surjective and Fredholm. By 
Proposition \ref{index-is-constant} there exists $ \delta > 0 $ such that the 
operator $ T_{\omega} + \lambda A_{\omega} $ is Fredholm, of constant index, 
and surjective. If $ |\lambda| < \delta $ and $ \lambda\neq 0 $ 
\begin{gather*}
\ind (T_{\omega} + \lambda A_{\omega}) = \dim\ker (T + \lambda A).
\end{gather*}
and is still constant as long as $ \lambda\neq 0 $. If $ \coker T $ has
finite dimension the same steps can be repeated for $ T^* $.
\chapter{Spectral decomposition}
\label{app:spectrum}
We recall some basic definitions and results on spectral theory. Given
a Banach algebra $ \mathcal{B} $ with unit $ 1 $, the spectrum of an element 
$ x\in\mathcal{B} $ is the set
\[
\set{\lambda\in\mathbb{C}}{x - \lambda\cdot 1\not\in G(\mathcal{B})}
\glsadd{labGb}
\] 
where $ G(\mathcal{B}) $ is set of invertible elements of the algebra; since
this is an open subset of the algebra, the spectrum is a closed subset of
the complex plane. It is usually denoted by $ \sigma(x) $ or 
$ \sigma_{\mathcal{B}} (x) $. Moreover, the following properties hold:
\begin{enumerate}
\item $ \sigma(x) $ is compact;
\item $ \sigma(tx) = t\sigma(x) $ and $ \sigma(x + t) = \sigma(x) + t $;
\item given $ p $ and $ q $ idempotents elements of $ \mathcal{B} $ such
that $ p + q = 1 $, we define two sub-algebras
\[
\mathcal{B}_p = \{pyp:y\in\mathcal{B}\},\ \ 
\mathcal{B}_q = \{qyq:y\in\mathcal{B}\}.
\glsadd{labBp}
\]
Each of the elements $ pxp $ and $ qxq $ has a spectrum in the respective
algebra it belongs and
\[
\sigma(x) = \sigma_{\mathcal{B}_p} (pxp) \cup \sigma_{\mathcal{B}_q} (qxq).
\]
\end{enumerate}
\begin{definition}
\label{defn:surround}
Let $ \Omega\subset\C $ be an open subset of the complex plane and 
$ K\subset\Omega $, compact. Let $ \Gamma $ be a collection of continuous 
curves $ \gamma_i:[a,b]\ra\C $ such that $ \gamma_i\cap K = \emptyset $.
We say that $ \Gamma $ \textsl{surrounds} $ K $ in $ \Omega $ if
\[
\Ind_{\Gamma} (\zeta) = 
\frac{1}{2\pi i}\int_{\Gamma} \frac{d\lambda}{\lambda - \zeta} = 
\left\{
\begin{array}{ll}
1 & \text{ if } \zeta\in K \\
0 & \text{ if } \zeta\notin\Omega
\end{array}
\right.
\]
where $ \Ind_{\Gamma} (\zeta) $ is the sum of $ \ind_{\gamma_i} (\zeta) $.
\end{definition}
\begin{lemma}
\label{surr}
Suppose $ \mathcal{B} $ is a Banach algebra, $ x\in \mathcal{B} $, 
$ \alpha\in\mathbb{C} $, $ \alpha\notin\sigma(x) $ and $ \Gamma $ surrounds 
$ \sigma(x) $ in $ \Omega $. Then
\begin{gather*}
\frac{1}{2\pi i}\int_{\Gamma} (\alpha - \lambda)^n (\lambda - x)^{-1}
d\lambda = (\alpha - x)^n.
\end{gather*}
for every $ n\in\mathbb{Z} $.
\end{lemma}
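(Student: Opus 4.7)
I would proceed by induction on $|n|$, anchored at $n=0$ and splitting into the positive and negative branches, all the algebra being driven by the elementary resolvent identity
\[
(\lambda-x)^{-1}-(\alpha-x)^{-1}=(\alpha-\lambda)(\lambda-x)^{-1}(\alpha-x)^{-1},
\]
which follows from $(\alpha-x)-(\lambda-x)=\alpha-\lambda$ and the fact that $(\lambda-x)^{-1}$ and $(\alpha-x)^{-1}$ commute.

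The base case $n=0$ is the basic identity $\tfrac{1}{2\pi i}\int_{\Gamma}(\lambda-x)^{-1}d\lambda=1$. To prove it, I would use that $(\lambda-x)^{-1}$ is holomorphic on $\Omega\setminus\sigma(x)$ and, using that $\Ind_{\Gamma}(\zeta)=0$ off $\Omega$ and $=1$ on $\sigma(x)$, deform $\Gamma$ (in the homological sense) to a circle $C_R=\{|\lambda|=R\}$ of radius $R>\|x\|$; on $C_R$ the Neumann series $(\lambda-x)^{-1}=\sum_{k\geq 0}\lambda^{-k-1}x^k$ converges uniformly, and term-by-term integration leaves only the $k=0$ term, contributing the unit $1$.

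For positive $n\geq 1$, rewriting the resolvent identity above as $(\alpha-\lambda)(\lambda-x)^{-1}=(\alpha-x)(\lambda-x)^{-1}-1$, multiplying by $(\alpha-\lambda)^{n-1}$ and integrating gives
\[
\frac{1}{2\pi i}\int_{\Gamma}(\alpha-\lambda)^{n}(\lambda-x)^{-1}d\lambda
=(\alpha-x)\cdot\frac{1}{2\pi i}\int_{\Gamma}(\alpha-\lambda)^{n-1}(\lambda-x)^{-1}d\lambda
-\frac{1}{2\pi i}\int_{\Gamma}(\alpha-\lambda)^{n-1}d\lambda.
\]
The second integral vanishes because $(\alpha-\lambda)^{n-1}$ is a polynomial, hence entire, so its integral along any cycle is zero; the first equals $(\alpha-x)^{n}$ by the inductive hypothesis. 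For negative $n$, I would divide the resolvent identity by $\alpha-\lambda$ to obtain $(\alpha-\lambda)^{-1}(\lambda-x)^{-1}=(\lambda-x)^{-1}(\alpha-x)^{-1}+(\alpha-\lambda)^{-1}(\alpha-x)^{-1}$ and integrate; the first term on the right contributes $(\alpha-x)^{-1}$ by the $n=0$ case, and the second contributes $-\Ind_{\Gamma}(\alpha)(\alpha-x)^{-1}$.

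The expected obstacle is precisely this last step: to conclude $(\alpha-x)^{-1}$ one needs $\Ind_{\Gamma}(\alpha)=0$, which is not explicit in the hypothesis $\alpha\notin\sigma(x)$. I would dispose of this by invoking the freedom in the choice of $\Omega$: since $\sigma(x)$ is compact and $\alpha\notin\sigma(x)$, one may shrink $\Omega$ to an open set still containing $\sigma(x)$ but missing $\alpha$, so that $\alpha$ lies in the component where $\Ind_{\Gamma}=0$. With this normalization the $n=-1$ step closes, and induction using the same identity with an extra factor $(\alpha-\lambda)^{-1}$ handles every $n\leq -2$ (each inductive step again produces an integral of an entire function---namely $(\alpha-\lambda)^{-k}(\alpha-x)^{-1}$ coupled with a removable pole once one of the factors is pulled out---which vanishes).
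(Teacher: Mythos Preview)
Your approach---induction on $n$ with the base case $n=0$ established via the Neumann series on a large circle---is exactly what the paper sketches (it says only ``induction on $n$; the case $n=0$ is provided by the Neumann series, see Rudin, Lemma~10.24''), so you have in fact written out what the paper leaves implicit.

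Two small wobbles in your write-up are worth cleaning up. First, the fix you propose for the $\Ind_{\Gamma}(\alpha)$ obstacle is circular as phrased: shrinking $\Omega$ does not change $\Gamma$, and hence does not change $\Ind_{\Gamma}(\alpha)$; replacing $\Gamma$ by a cycle in a smaller $\Omega'\not\ni\alpha$ only gives the same integral if $\Gamma$ and the new cycle are homologous in $\mathbb{C}\setminus(\sigma(x)\cup\{\alpha\})$, which already presupposes $\Ind_{\Gamma}(\alpha)=0$. The honest resolution is that the lemma (like Rudin's version) tacitly takes $\alpha\notin\Omega$, and then the surrounding condition gives $\Ind_{\Gamma}(\alpha)=0$ directly. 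Second, your final parenthetical is garbled: $(\alpha-\lambda)^{-k}$ is not entire. The correct reason the extra term vanishes in the inductive step for $n\le -2$ is simply that $(\alpha-\lambda)^{n}$ has a pole of order $|n|\ge 2$ at $\lambda=\alpha$ with zero residue, so $\int_{\Gamma}(\alpha-\lambda)^{n}\,d\lambda=0$ regardless of the winding number; only the case $n=-1$ genuinely needs $\Ind_{\Gamma}(\alpha)=0$.
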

The proof is made by induction on $ n $. The case $ n = 0 $ is provided
by the Neumann series (see \cite{Rud91}, \textsc{Lemma} 10.24).

Let $ x\in\mathcal{B} $ and $ \sigma_+ $ and $ \sigma_- $ closed subsets 
of $ \sigma(x) $ such that $ \sigma(x) = \sigma_- \cup \sigma_+ $ and 
$ \sigma_-\cap \sigma_+ $. There is a pair of open subsets 
\begin{gather*}
\sigma_+ \subset \Omega_+,\ \sigma_-\subset\Omega_-\\
\p\Omega_+ = \gamma_+,\ \p\Omega_- = \gamma_-
\end{gather*}
where $ \gamma_\pm $ are continuous curves and $ \gamma_\pm $ surrounds
$ \sigma_\pm $ in $ \Omega_\pm $. 
\begin{theorem}
\label{decomposition_of_spectrum}
Let $ x $ and $ \gamma_\pm $ as above. Then, the integrations
\begin{gather*}
p^+ (x) = \frac{1}{2\pi\img}\int_{\gamma^+} (\lambda - x) ^{-1} d\lambda\\
p^- (x) = \frac{1}{2\pi\img}\int_{\gamma^-} (\mu - x) ^{-1} d\mu .
\end{gather*}
are projectors of $ \mathcal{B} $, called \emph{spectral projectors}. 
In the Banach algebras $ p^+\mathcal{B} p^+ $ and $ p^- \mathcal{B} p^- $ the
elements $ x p^+ $ and $ x p^- $ have spectrum $ \sigma_+ $ and $ \sigma_- $ 
respectively.
\end{theorem}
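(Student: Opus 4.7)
The plan is to exploit the holomorphic functional calculus, specifically the resolvent identity
\[
(\lambda - x)^{-1}(\mu - x)^{-1} = \frac{(\mu - x)^{-1} - (\lambda - x)^{-1}}{\lambda - \mu},
\]
together with Lemma \ref{surr} and Definition \ref{defn:surround}. I proceed in three movements.

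First I would prove idempotence and orthogonality. To obtain $(p^+)^2 = p^+$, replace one copy of $\gamma^+$ by a second contour $\gamma^+_2$ also surrounding $\sigma^+$ in $\Omega_+$, but lying strictly inside $\gamma^+_1$. Forming the product of the two integral expressions, inserting the resolvent identity, and applying Fubini turns the calculation into two iterated scalar integrals. For $\lambda\in\gamma^+_1$ outside $\gamma^+_2$, the inner integral $\int_{\gamma^+_2}(\mu-\lambda)^{-1}d\mu$ vanishes by Definition \ref{defn:surround}; for $\mu\in\gamma^+_2$ inside $\gamma^+_1$, the inner integral $\int_{\gamma^+_1}(\lambda-\mu)^{-1}d\lambda$ equals $2\pi\img$. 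The surviving term collapses to $\frac{1}{2\pi\img}\int_{\gamma^+_2}(\mu-x)^{-1}d\mu = p^+$. The same argument with disjoint contours around $\sigma^+$ and $\sigma^-$ (neither enclosing the other) gives $p^+p^- = p^-p^+ = 0$. Finally, applying Lemma \ref{surr} with $n=0$ and $\alpha=0$ to the compound contour $\gamma^+\cup\gamma^-$, which surrounds all of $\sigma(x)$, yields $p^+ + p^- = 1$.

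Next I would record the commutation $xp^+ = p^+x = p^+xp^+$, which follows immediately because $x$ commutes with every $(\lambda-x)^{-1}$ for $\lambda\notin\sigma(x)$; in particular $xp^+\in p^+\mathcal{B}p^+$. This places $xp^+$ inside the correct sub-algebra, whose unit is $p^+$.

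For the spectral identification I reduce the problem to proving $\sigma_{p^+\mathcal{B}p^+}(xp^+)\subseteq\sigma^+$ (and symmetrically for $-$): combined with property (iii) of the preamble to this appendix, $\sigma(x) = \sigma_{\mathcal{B}_{p^+}}(xp^+)\cup\sigma_{\mathcal{B}_{p^-}}(xp^-)$, and with the disjointness $\sigma^+\cap\sigma^- = \emptyset$, equality in both spectra follows. For the inclusion, given $\alpha\notin\sigma^+$ I would choose $\gamma^+$ surrounding $\sigma^+$ in a neighborhood excluding $\alpha$ and define
\[
y = \frac{1}{2\pi\img}\int_{\gamma^+}(\alpha-\lambda)^{-1}(\lambda - x)^{-1}\,d\lambda.
\]
Using the algebraic identity $(\alpha-x)(\lambda-x)^{-1} = 1 + (\alpha-\lambda)(\lambda-x)^{-1}$, one computes
\[
(\alpha-x)y = \frac{1}{2\pi\img}\int_{\gamma^+}\frac{d\lambda}{\alpha-\lambda} + \frac{1}{2\pi\img}\int_{\gamma^+}(\lambda - x)^{-1}d\lambda = 0 + p^+,
\]
the first integral vanishing because $\alpha$ is not enclosed by $\gamma^+$. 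A nested-contour Fubini calculation, identical in structure to the one establishing $(p^+)^2 = p^+$, shows that $yp^+ = p^+y = y$, so $y\in p^+\mathcal{B}p^+$. Hence $y$ is a two-sided inverse of $\alpha p^+ - xp^+$ inside the sub-algebra, proving $\alpha\notin\sigma_{p^+\mathcal{B}p^+}(xp^+)$.

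The main obstacle is the careful bookkeeping with nested and disjoint contours, in particular verifying that the constructed $y$ belongs to the correct sub-algebra rather than merely to $\mathcal{B}$; once the Fubini swap for Banach-algebra-valued integrals is justified and Definition \ref{defn:surround} is applied cleanly to reduce inner integrals to winding numbers, the remainder is elementary.
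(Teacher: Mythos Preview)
Your proposal is correct and follows essentially the same approach as the paper: Fubini on the double contour integral for orthogonality, Lemma~\ref{surr} with $n=0$ for $p^+ + p^- = 1$, and the standard resolvent construction for the spectral inclusion. The only minor difference is that the paper derives idempotence for free from $p^+p^-=0$ and $p^++p^-=1$ (since $p^+ = p^+(p^++p^-) = (p^+)^2$), sparing the nested-contour computation you carry out directly; your route is slightly longer but equally valid, and your treatment of the spectral identification is in fact more detailed than what the paper records.
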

Using the \textsl{Fubini-Tonelli} theorem it can be checked that 
$ p^+ p^- = p^- p^+ = 0 $. Applying the previous lemma with $ n = 0 $ we also
have $ p^+ (x) + p^{-} (x) = 1 $. Hence 
\[
{p^+}^2 = p^+,\ \ {p^-}^2 = p^-.
\]

\begin{theorem}
\rm
\label{thm:functional-calculus}
Let $ \Omega\subset\C $ and $ x\in\mathcal{B} $ such that 
$ \sigma(x)\subset\Omega $. Let $ f $ be a holomorphic function on $ \Omega $
and $ \Gamma $ surrounding $ \sigma(x) $ in $ \Omega $. Thus, the integration
\[
\widehat{f}(x) = \frac{1}{2\pi i}\int_{\Gamma} f(z) (x - z)\sp{-1} dz
\]
defines an element of $ \mathcal{B} $. The following properties hold:
\begin{enumerate}
\item $ \widehat{fg} (x) = \widehat{f}(x)\widehat{g}(x) $;
\item $ \widehat{g\circ f}(x) = \widehat{g}(\widehat{f}(x)) $;
\item $ \sigma(\widehat{f}(x)) = f(\sigma(x)) $;
\item on the subset $ \{x:\sigma(x)\subset\Omega\} $, $ \widehat{f} $
is continuous.
\end{enumerate}
\end{theorem}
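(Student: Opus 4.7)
The plan is to treat the four assertions in an order that lets (iii) and (ii) rest on the multiplicativity proved in (i). Throughout I let $\Gamma$ be any collection of paths surrounding $\sigma(x)$ in $\Omega$; by Lemma~\ref{surr} (with $n=0$), $\widehat{1}(x)=1$, and by a standard Cauchy-theorem argument, $\widehat{f}(x)$ is independent of the choice of $\Gamma$.

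For (i), I would compute $\widehat{f}(x)\widehat{g}(x)$ by choosing two surrounding systems $\Gamma_1$ and $\Gamma_2$, both inside $\Omega$, with $\Gamma_2$ lying entirely in the unbounded component of $\mathbb{C}\setminus\Gamma_1$ (so that $\Gamma_1$ is ``inside'' $\Gamma_2$). Then
\[
\widehat{f}(x)\widehat{g}(x)=\frac{1}{(2\pi\img)^2}\int_{\Gamma_1}\!\int_{\Gamma_2} f(z)g(w)(x-z)^{-1}(x-w)^{-1}\,dw\,dz,
\]
and the resolvent identity $(x-z)^{-1}(x-w)^{-1}=(w-z)^{-1}\bigl[(x-z)^{-1}-(x-w)^{-1}\bigr]$ splits this into two terms. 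The inner integral $\frac{1}{2\pi\img}\int_{\Gamma_2}\frac{g(w)}{w-z}\,dw$ equals $g(z)$ for $z\in\Gamma_1$ (since $\Gamma_2$ surrounds $z$), producing $\widehat{fg}(x)$; the other inner integral $\frac{1}{2\pi\img}\int_{\Gamma_1}\frac{f(z)}{w-z}\,dz$ vanishes for $w\in\Gamma_2$ (since $w$ lies in the unbounded component of $\mathbb{C}\setminus\Gamma_1$ while $f/(w-\cdot)$ is holomorphic in a neighborhood of everything $\Gamma_1$ surrounds). In particular $\widehat{f}(x)$ commutes with $x$, taking $g(z)=z$.

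For (iii), I use (i) to establish both inclusions. Given $\lambda\in\sigma(x)$, the function $q(z)=\bigl(f(z)-f(\lambda)\bigr)/(z-\lambda)$ is holomorphic on $\Omega$, and $(z-\lambda)q(z)=f(z)-f(\lambda)$, so by (i),
\[
\widehat{f}(x)-f(\lambda)\cdot 1 \;=\;(x-\lambda\cdot 1)\,\widehat{q}(x)\;=\;\widehat{q}(x)\,(x-\lambda\cdot 1);
\]
if the left-hand side were invertible, so would be $x-\lambda\cdot 1$, contradicting $\lambda\in\sigma(x)$. Conversely, if $\mu\notin f(\sigma(x))$, compactness of $\sigma(x)$ yields an open $\Omega'\subset\Omega$ with $\sigma(x)\subset\Omega'$ and $f(z)\neq\mu$ on $\Omega'$, so $h(z)=1/(f(z)-\mu)$ is holomorphic there and $\widehat{h}(x)(\widehat{f}(x)-\mu\cdot 1)=\widehat{1}(x)=1$ by (i), giving invertibility.

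For (ii), by (iii) the spectrum $\sigma(\widehat{f}(x))=f(\sigma(x))$ lies in the domain of $g$, so $\widehat{g}(\widehat{f}(x))$ makes sense. I would pick a surrounding system $\Gamma'$ of $f(\sigma(x))$ inside the domain of $g$, and then a surrounding system $\Gamma$ of $\sigma(x)$ inside $\Omega$ such that $f(\Gamma)$ lies in the unbounded component of $\mathbb{C}\setminus\Gamma'$. Inserting the definition of $\widehat{f}$ into the resolvent of $\widehat{g}\circ\widehat{f}$ and applying (i) to the algebra-valued function $w\mapsto (w-\widehat{f}(x))^{-1}$, one gets
\[
(w-\widehat{f}(x))^{-1}=\frac{1}{2\pi\img}\int_{\Gamma}\frac{(x-z)^{-1}}{w-f(z)}\,dz\qquad (w\in\Gamma'),
\]
a resolvent-identity argument analogous to (i); substituting this and swapping the order of integration (Fubini is legitimate because $\Gamma$, $\Gamma'$ are compact and the integrand is continuous) yields
\[
\widehat{g}(\widehat{f}(x))=\frac{1}{2\pi\img}\int_{\Gamma}(x-z)^{-1}\left(\frac{1}{2\pi\img}\int_{\Gamma'}\frac{g(w)}{w-f(z)}\,dw\right)dz
=\widehat{g\circ f}(x),
\]
the inner integral equalling $g(f(z))$ because $\Gamma'$ surrounds $f(z)$ for $z\in\Gamma$. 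The main obstacle here, and what I expect to need the most care, is arranging the two contours so that the two directions of Cauchy evaluation come out correctly; in practice one just takes $\Gamma'$ surrounding $f(\overline{U})$ where $U$ is an open neighborhood of $\sigma(x)$ bounded by $\Gamma$.

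For (iv), the key observation is that if $\Gamma$ surrounds $\sigma(x_0)$ in $\Omega$ and $K=\Gamma$ is compact with $K\cap\sigma(x_0)=\emptyset$, then $\delta=\inf_{z\in K}\|(x_0-z)^{-1}\|^{-1}>0$, and for $\|x-x_0\|<\delta$ the Neumann series shows $(x-z)^{-1}$ exists for all $z\in K$ and depends continuously (in fact analytically) on $(x,z)$ uniformly on $K$. Hence the same $\Gamma$ may be used to represent $\widehat{f}(x)$ for $x$ near $x_0$, and the estimate
\[
\|\widehat{f}(x)-\widehat{f}(x_0)\|\leq\frac{\mathrm{length}(\Gamma)}{2\pi}\sup_{z\in\Gamma}|f(z)|\cdot\sup_{z\in\Gamma}\|(x-z)^{-1}-(x_0-z)^{-1}\|
\]
gives continuity at $x_0$.
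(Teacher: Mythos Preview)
The paper does not actually prove this theorem: it is stated in Appendix~\ref{app:spectrum} as a standard fact of the holomorphic functional calculus (the surrounding material points to \cite{Rud91}, Chapter~10), and is followed immediately by Example~\ref{ex:square-root} with no intervening proof. So there is nothing to compare your argument against; your write-up is the standard textbook proof and is essentially correct.

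One genuine slip to fix in part~(ii): you first say you choose $\Gamma$ so that $f(\Gamma)$ lies in the \emph{unbounded} component of $\mathbb{C}\setminus\Gamma'$, but two lines later you use that $\Gamma'$ \emph{surrounds} $f(z)$ for $z\in\Gamma$ in order to evaluate the inner Cauchy integral as $g(f(z))$. These are incompatible: if $f(z)$ lies in the unbounded component of $\mathbb{C}\setminus\Gamma'$ then $\Ind_{\Gamma'}(f(z))=0$ and the inner integral vanishes. The correct arrangement is the one you give in your closing sentence: pick an open $U\supset\sigma(x)$ with $\partial U=\Gamma\subset\Omega$, then choose $\Gamma'$ surrounding the compact set $f(\overline{U})$ inside the domain of $g$; then for every $z\in\Gamma$ one has $\Ind_{\Gamma'}(f(z))=1$ (so the Cauchy evaluation gives $g(f(z))$) and for every $w\in\Gamma'$ one has $w\notin f(\overline{U})$ (so $z\mapsto (w-f(z))^{-1}$ is holomorphic on a neighbourhood of $\overline{U}$ and the resolvent formula for $(w-\widehat{f}(x))^{-1}$ is valid with the contour $\Gamma$). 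Simply replace the erroneous sentence with this setup and your argument goes through.
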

\begin{example}
\label{ex:square-root}
Let $ A $ be a bounded operator such that $ \|A\| < 1 $. There exists $ R $
such that $ R\sp 2 = I + A $. We consider the power series expansion in
a neighbourhood of the origin of $ f(z) = \sqrt{1 + z} $. Thus,
$ R = \widehat{f}(A) $ is a solution of the equation. Moreover, the
path
\[
t\mapsto\widehat{f}(tA)
\]
is continuous and connects the operator $ R $ to the identity.
\end{example}

\chapter{Continuous sections of linear maps}
\label{app:sections}
We recall some classical theorem that regards continuous selection
We begin with the result of Bartle and Graves. Let $ X $ and $ Y $ be
Banach spaces and let $ L\colon E\rightarrow F $ be a linear surjective 
application. We do not require $ L $ to be bounded. Define
\begin{gather*}
I(L) = \sup_{|y| = 1} \inf_{Lx = y} |x|.
\end{gather*}
It is easy to check that if $ L $ is injective also and $ L^{-1} $ is 
bounded $ I(L) = \no{L^{-1}} $. Let $ T $ be a paracompact Hausd\"orff space. 
The conditions of the theorem are the following: for every $ t\in T $ 
we are given a bounded surjective operator $ S(t)\in\mathcal{L}(X,Y) $ which
is \textsl{strongly continuous}. Define
\begin{gather*}
M_0 (S) = \sup_{t\in T} \no{S(t)}, \ N_0 (S) = \sup_{t\in T} I(S(t))
\end{gather*}
the map $ s\colon C(T,X)\rightarrow C(T,Y), \ x\mapsto sx (t) = S(t) x(t) $
is well defined. Structures of Banach space on $ C(T,X) $ and $ C(T,Y) $ are
not required. 
\begin{theorem}
\label{bartle_graves}
Suppose both $ M_0 $ and $ N_0 $ are finite. Fix $ N > N_0 $ and 
$ \var > 0 $. For every $ y\in C(T,Y) $ there exists $ x\in C(T,X) $ such
that $ sx = y $ and
\begin{gather}
|x(t)|\leq N |y(t)| + \var.
\end{gather}
for every $ t\in T $.
\end{theorem}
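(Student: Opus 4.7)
The plan is to construct $x$ by an iterative scheme: at each stage, produce a continuous approximate lift whose residual is a definite fraction of the previous one, then sum the corrections and verify the norm estimate in the limit. Pick $N'$ with $N_0 < N' < N$, a ratio $q\in(0,1)$ small enough that $N'/(1-q)\le N$, and a sequence $\varepsilon_n>0$ with $\sum_n\varepsilon_n\le \varepsilon(1-q)/N'$.

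The core step I would isolate as a lemma: for every $z\in C(T,Y)$ and every $\delta>0$, there exists $\tilde x\in C(T,X)$ such that pointwise
\[
|\tilde x(t)|\le N'|z(t)|+\delta,\qquad |z(t)-S(t)\tilde x(t)|\le q\,|z(t)|+q\,\delta.
\]
To build $\tilde x$, at each $t_0\in T$ use the inequality $I(S(t_0))\le N_0<N'$ together with the definition of $I$ to pick $\xi_{t_0}\in X$ with $S(t_0)\xi_{t_0}=z(t_0)$ and $|\xi_{t_0}|\le N'|z(t_0)|+\delta$. Strong continuity of $S(\cdot)\xi_{t_0}$ and continuity of $z$ make $t\mapsto S(t)\xi_{t_0}-z(t)$ continuous and vanishing at $t_0$, so there is an open $U_{t_0}\ni t_0$ on which $|S(t)\xi_{t_0}-z(t)|\le q\,|z(t)|+q\,\delta$ and $|z(t_0)|\le |z(t)|+\delta$ (the latter used for the norm estimate). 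By paracompactness of $T$ choose a locally finite refinement $\{V_i\}_{i\in I}$ of $\{U_{t_0}\}_{t_0\in T}$, with each $V_i\subset U_{t_0(i)}$, and a subordinate partition of unity $\{\varphi_i\}$. Set $\tilde x(t)=\sum_{i}\varphi_i(t)\xi_{t_0(i)}$. Then $\tilde x\in C(T,X)$, the norm bound follows from $|\xi_{t_0(i)}|\le N'|z(t_0(i))|+\delta\le N'|z(t)|+(N'+1)\delta$ on $\supp\varphi_i$ (absorb the constant into the $\delta$ by rescaling), and applying $S(t)$ inside the convex combination gives the residual estimate.

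Armed with the lemma, set $y_0=y$ and construct recursively $\tilde x_n\in C(T,X)$ such that
\[
|\tilde x_n(t)|\le N'|y_n(t)|+\varepsilon_n,\qquad y_{n+1}:=y_n-s\tilde x_n,\qquad |y_{n+1}(t)|\le q\,|y_n(t)|+q\,\varepsilon_n.
\]
A trivial induction gives $|y_n(t)|\le q^n|y(t)|+\sum_{k<n}q^{n-k}\varepsilon_k$, and hence $|\tilde x_n(t)|\le N'q^n|y(t)|+N'\sum_{k\le n}q^{n-k}\varepsilon_k+\varepsilon_n$, which is a summable dominating sequence independent of $t$. Therefore $x(t):=\sum_{n\ge 0}\tilde x_n(t)$ defines an element of $C(T,X)$ (uniform convergence in $n$ preserves continuity). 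Boundedness of $S(t)$ (by $M_0<\infty$) together with $y_n\to 0$ uniformly on bounded sets of $t$ yields $sx=y$. Summing the norm estimates,
\[
|x(t)|\le\frac{N'}{1-q}\,|y(t)|+\frac{N'}{1-q}\sum_{n}\varepsilon_n+\sum_n\varepsilon_n\le N|y(t)|+\varepsilon,
\]
by the choice of $N'$, $q$ and $\{\varepsilon_n\}$.

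The main obstacle I anticipate is a bookkeeping one, not a conceptual one: the local selection $\xi_{t_0}$ makes the estimate $|\xi_{t_0}|\le N'|z(t_0)|+\delta$ depend on $|z(t_0)|$, not on $|z(t)|$, so when one forms the convex combination $\sum_i\varphi_i(t)\xi_{t_0(i)}$ the shrinkage of the neighborhoods $U_{t_0}$ must be taken small enough that $|z(t_0(i))|\le|z(t)|+\delta$ on $\supp\varphi_i$; this is where continuity of $z$ and the freedom in the additive error $\delta$ must be used in concert. A second, minor point is that the hypothesis $M_0<\infty$ is needed only to pass $s$ inside the infinite sum; the selection step itself uses only $N_0<\infty$.
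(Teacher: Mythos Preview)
The paper does not give its own proof of this statement; it simply refers the reader to \textsc{Theorem}~4 of \cite{BK73}. Your proposal is the classical iterative approximate-selection argument (local preimages chosen via the definition of $I(S(t))$, glued by a partition of unity on the paracompact base, then geometric iteration to kill the residual), which is exactly the method used in the cited reference, so your approach is the expected one. Two minor points of bookkeeping to tighten: the convergence of $\sum_n \tilde x_n$ is only \emph{locally} uniform on $T$ since $|y(\cdot)|$ need not be bounded, but local uniform convergence suffices for continuity of $x$ and for passing $S(t)$ through the sum; and your final constant estimate drops a factor (the exact tally is $\frac{N'}{1-q}|y(t)|+\big(\frac{N'q}{1-q}+1\big)\sum_n\varepsilon_n$), so choose $\sum_n\varepsilon_n$ accordingly.
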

For the proof see \cite{BK73}, \textsc{Theorem} 4. As application of this 
results consider the situation of two Banach spaces $ E,F $. Let $ T $ be a
topological space and $ y\in C(T,F) $ and $ x\in C(T,E) $ such that 
$ x(t)\neq 0 $ for every $ t\in T $. Let $ \hat{x} = x/|x| $
\begin{corollary}
\label{cont_sel_of_op}
For every $ \delta,\var > 0 $ there exists 
$ U_{\delta} ^{\var}\in C(T,\mathcal{L}(E,F)) $ such that 
$ U_{\delta} ^{\var}(t) x(t) = y(t) $ and
\begin{gather*}
\no{U_{\delta} ^{\var} (t)}\leq (1 + \delta) \frac{y(t)}{|x(t)|} + \var
\end{gather*}
\end{corollary}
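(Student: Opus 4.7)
The plan is to reduce the statement to a direct application of Theorem \ref{bartle_graves}. A naive attempt, taking the evaluation map $A \mapsto A x(t)$ on $\mathcal{L}(E,F)$, would force the bounds $M_0$ and $N_0$ to involve $\sup_t |x(t)|$ and $\sup_t 1/|x(t)|$, neither of which is assumed finite. The cure is to normalise: since $x$ is continuous and never vanishes, $\hat x = x/|x|$ belongs to $C(T,E)$, takes values in the unit sphere, and the rescaled target $\tilde y(t) := y(t)/|x(t)|$ lies in $C(T,F)$.

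Next, I would introduce, for each $t\in T$, the linear map
\[
S(t)\colon \mathcal{L}(E,F)\rightarrow F,\quad S(t)A = A\hat x(t).
\]
Three properties need to be checked. (a) $S(t)$ is surjective: given $z\in F$, by Hahn--Banach there exists $\xi\in E^*$ with $|\xi|=1$ and $\langle\xi,\hat x(t)\rangle = 1$; the rank-one operator $A v = \langle\xi,v\rangle z$ satisfies $S(t)A = z$ and $\|A\| = |z|$. (b) Clearly $\|S(t)\|\leq|\hat x(t)|=1$, and the construction above shows $I(S(t))=1$; hence $M_0(S)=N_0(S)=1$. (c) $S$ is strongly continuous: for fixed $A\in\mathcal{L}(E,F)$, $t\mapsto A\hat x(t)$ is continuous because $\hat x$ is continuous and $A$ is bounded.

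With these verifications in hand, I would apply Theorem \ref{bartle_graves} with $N = 1+\delta > N_0 = 1$ and the element $\tilde y\in C(T,F)$ to obtain a continuous section $U_\delta^\varepsilon\in C(T,\mathcal{L}(E,F))$ with
\[
U_\delta^\varepsilon(t)\hat x(t) = \tilde y(t),\quad
\|U_\delta^\varepsilon(t)\|\leq (1+\delta)\,|\tilde y(t)| + \varepsilon = (1+\delta)\,\frac{|y(t)|}{|x(t)|}+\varepsilon.
\]
Multiplying the first identity by the positive scalar $|x(t)|$ gives $U_\delta^\varepsilon(t)x(t)=y(t)$, which is the desired conclusion.

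The only real difficulty lies in the preliminary reduction: the bound in the conclusion is non-uniform in $t$, so one must recognise that normalising $x$ to $\hat x$ converts the problem into one with uniform constants $M_0 = N_0 = 1$, to which the Bartle--Graves theorem applies directly. Everything afterwards---Hahn--Banach for surjectivity, strong continuity of $S$, and the final rescaling---is routine.
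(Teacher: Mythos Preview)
Your proof is correct and follows essentially the same approach as the paper: normalise to $\hat x = x/|x|$ and $\tilde y = y/|x|$, set $S(t)A = A\hat x(t)$, use Hahn--Banach with a rank-one operator to check surjectivity and $I(S(t))=1$, then apply Theorem \ref{bartle_graves} with $N = 1+\delta$. The paper's argument is virtually identical, differing only in minor presentational details.
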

\begin{proof}
We briefly check that the conditions of the theorem are fulfilled. As Banach
spaces we choose $ X = \mathcal{L}(E,F) $ and $ Y = F $. Since 
$ x(t)\neq 0 $ for every $ t\in T $ we have a map
\begin{gather*}
S\colon C(T,\mathcal{L}(E,F))\rightarrow C(T,F), \ U\mapsto 
U\cdot (x/|x|).
\end{gather*}
Strong continuity is trivial. Let $ t\in T $ and $ y\in F $. By Hahn-Banach
there exists $ \xi\in E^* $ such that $ \bra\xi,\hat{x}(t)\ket = 1 $, 
$ |\xi| = 1 $. Then the operator
\begin{gather*}
U\cdot z = \bra\xi,z\ket y
\end{gather*}
maps $ \hat{x}(t) $ in $ y $ and $ \no{U} = |y| $. On the other side there 
can be no operator $ U $ such that $ U\hat{x}(t) = y $ and $ \no{U} < | y | $.
This proves that $ s(t) $ is surjective and $ I(s(t)) = 1 $. Thus 
$ N_0 (S) = 1 $ and clearly $ M_0 (S) = 1 $. Fix $ \delta,\var > 0 $. Let
$ y\in C(T,F) $ be a continuous function. 
Since $ 1 + \delta > N_0 $ there exists $ U\in C(T,\mathcal{L}(E,F)) $ such 
that
\begin{gather*}
U(t)\hat{x}(t) = y(t)/|x(t)|, \ 
\no{U(t)}\leq (1 + \delta) \frac{|y(t)|}{|x(t)|} + \var.
\end{gather*}
Thus $ U(t) x(t) = y(t) $ for every $ t\in T $.
\end{proof}
\begin{proposition}
\label{cont_glb_sect}
Let $ E, F $ Banach spaces and $ f\in\mathcal{L}(E,F) $ a bounded surjective
operator. There exists a continuous map $ s\in C(F,E) $ such that 
$ f\circ s = id $.
\end{proposition}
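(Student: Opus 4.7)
The plan is to deduce this from the Bartle-Graves selection theorem \ref{bartle_graves} by using a trivial "constant family" of surjective operators indexed by $F$ itself, and selecting the identity.

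More precisely, I would set $T = F$ (which is paracompact, being a metric space), $X = E$, $Y = F$, and take the constant family $S(t) \equiv f$ for every $t \in F$. Strong continuity is immediate because the family is constant, and each $S(t)$ is surjective by hypothesis. Thus $M_0(S) = \|f\|$ is finite. The finiteness of $N_0(S) = I(f)$ is where the open mapping theorem enters: since $f$ is a bounded surjective operator between Banach spaces, the induced map $\widetilde{f}\colon E/\ker f \to F$ is a continuous bijection, hence an isomorphism by the open mapping theorem, and consequently
\[
I(f) = \sup_{|y| = 1}\inf_{fx = y} |x| = \|\widetilde{f}^{-1}\| < \infty.
\]

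With the hypotheses of Theorem \ref{bartle_graves} verified, I would apply it to the continuous function $y \in C(F,F)$ defined by $y(t) = t$ (the identity map of $F$, which is continuous and hence lies in $C(T,Y) = C(F,F)$). Fixing any $N > N_0(S)$ and any $\varepsilon > 0$, the theorem provides $x \in C(F,E)$ such that $S(t)x(t) = f(x(t)) = t$ for every $t \in F$, together with the norm control $|x(t)| \le N|t| + \varepsilon$. Setting $s := x$, we obtain a continuous map $s\colon F \to E$ with $f \circ s = \mathrm{id}_F$, which is the required section.

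There is essentially no obstacle here beyond correctly packaging the setup: the whole proof consists of choosing the right paracompact space, the right family, and the right continuous function to which to apply Theorem \ref{bartle_graves}. The only nontrivial ingredient hidden in the hypothesis check is the open mapping theorem, needed to guarantee $N_0(S) < \infty$; once that is in hand, the conclusion follows immediately. As a bonus, the norm estimate from Bartle-Graves shows that $s$ can be chosen with at most linear growth, although this is not needed for the bare statement.
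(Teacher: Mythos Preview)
Your proposal is correct and follows essentially the same approach as the paper: take $T=F$, the constant family $S(t)\equiv f$, and apply Theorem \ref{bartle_graves} to the identity $y(t)=t$. If anything, your write-up is more careful, since you explicitly invoke the open mapping theorem to verify $N_0(S)=I(f)<\infty$, a point the paper's proof passes over in silence.
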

\begin{proof}
The Theorem \ref{bartle_graves} can be applied as follows: since $ F $ is 
metric is a paracompact space. For every $ x\in F $ we define
\begin{gather*}
L(x)\colon C(F,E)\rightarrow C(F,F), \ s\mapsto f\circ s.
\end{gather*}
Since $ L $ is constant on $ F $ is clearly strongly continuous, in fact
is bounded. Then there exists $ s\in C(F,E) $ such that $ Ls = id $, thus 
$ f\circ s = id $.
\end{proof}
\begin{proposition}
\label{alg_hom}
Let $ \mathcal{A} $ and $ \mathcal{B} $ Banach algebras, 
$ \varphi\colon\mathcal{A}\rightarrow\mathcal{B} $ a surjective homomorphism.
There are local section of 
$ \varphi\colon G(\mathcal{A})\rightarrow\varphi(G(\mathcal{A})) $.
\end{proposition}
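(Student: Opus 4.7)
The plan is to combine a continuous (but not necessarily multiplicative) global section of $\varphi$ as a linear map, furnished by Proposition \ref{cont_glb_sect}, with the openness of $G(\mathcal{A})$ in $\mathcal{A}$ and a translation trick so that the section lands in the invertibles on a small enough neighbourhood.

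First I would view $\varphi\colon\mathcal{A}\rightarrow\mathcal{B}$ as a bounded surjective linear map between the underlying Banach spaces and apply Proposition \ref{cont_glb_sect} to obtain a continuous map $s\in C(\mathcal{B},\mathcal{A})$ with $\varphi\circ s=\mathrm{id}_{\mathcal{B}}$. Note that $s$ has no reason to be a homomorphism or to take invertibles to invertibles, which is precisely the obstruction to be circumvented.

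Fix now $b_0\in\varphi(G(\mathcal{A}))$ and choose $a_0\in G(\mathcal{A})$ with $\varphi(a_0)=b_0$. I would then define
\[
\sigma(b) = a_0 + s(b) - s(b_0).
\]
Since $\varphi$ is linear and $\varphi\circ s=\mathrm{id}$, a direct computation gives $\varphi(\sigma(b))=b_0+b-b_0=b$, so $\sigma$ is a continuous section of $\varphi$ with $\sigma(b_0)=a_0$. The key step is then to observe that $G(\mathcal{A})$ is open in $\mathcal{A}$ and contains $a_0$: by continuity of $s$ there exists an open neighbourhood $U$ of $b_0$ in $\varphi(G(\mathcal{A}))$ such that $\|s(b)-s(b_0)\|$ is small enough to force $\sigma(b)\in G(\mathcal{A})$ for every $b\in U$. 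The restriction $\sigma|_U\colon U\rightarrow G(\mathcal{A})$ is then the desired continuous local section around $b_0$.

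The only point that might cause trouble is the use of Proposition \ref{cont_glb_sect}, which requires $\varphi$ to be a bounded surjective operator between Banach spaces; one should either assume continuity of the algebra homomorphism (as is the case in the applications of interest, e.g.\ the quotient projection onto the Calkin algebra) or, alternatively, invoke the open mapping theorem to bypass this issue. Beyond that the argument is essentially a perturbation of $a_0$ inside the open set $G(\mathcal{A})$, so no further difficulty is expected.
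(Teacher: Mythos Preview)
Your proof is correct and essentially identical to the paper's: both pull back a Bartle--Graves section $s$ via Proposition~\ref{cont_glb_sect}, shift it by $a_0 - s(b_0)$ so that it hits the chosen invertible $a_0$ at $b_0$, and then use openness of $G(\mathcal{A})$ to restrict to a neighbourhood. Your remark on the implicit boundedness hypothesis for $\varphi$ is a fair observation that the paper does not address explicitly.
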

\begin{proof}
First let $ s $ be a continuous right inverse of 
$ \varphi\colon\mathcal{A}\rightarrow\mathcal{B} $. Such a section exists
by Proposition \ref{cont_glb_sect}. Let $ y_0 $ in $ \varphi(G(\mathcal{A})) $ 
and $ x_0\in G(\mathcal{A}) $ such that $ \varphi(x_0) = y_0 $. We can define 
another right inverse of $ \varphi $ such that
\[
S (y) = s(y) + x_0 - s(y_0), \ S(y_0) = x_0.
\]
Since $ G(\mathcal{A}) \subset\mathcal{A} $ is open, there exists 
$ \delta > 0 $ such that $ B(x_0,\delta)\subset G(\mathcal{A}) $. Thus 
$ S^{-1} (B(x_0,\delta))\subset\varphi(G(\mathcal{A})) $ and the restriction
of $ S $ to $ S^{-1} (B(x_0,\delta)) $ is a local section
on a neighbourhood of $ y_0 $.
\end{proof}
\pagestyle{empty}
\nocite{*}
\clearpage
\def\polhk#1{\setbox0=\hbox{#1}{\ooalign{\hidewidth
  \lower1.5ex\hbox{`}\hidewidth\crcr\unhbox0}}}

\printglossary
\end{document}